\numberwithin{equation}{subsection}
\newcommand\cA{{\mathcal A}}
\newcommand\cC{{\mathcal C}}
\newcommand\cM{{\mathcal M}}
\newcommand\bA{{\mathbb A}}
\newcommand\C{{\mathbb C}}
\newcommand\bF{{\mathbb F}}
\newcommand\R{{\mathbb R}}
\newcommand\Z{{\mathbb Z}}
\newcommand\uR{{\underline R}}
\newcommand\Alb{{\rm Alb}}
\newcommand\ch{{\rm ch}}
\newcommand\A{{\mathbb A}}
\DeclareMathOperator{\Hom}{Hom}
\DeclareMathOperator{\im}{im}
\DeclareMathOperator{\ord}{ord}
\DeclareMathOperator{\Pic}{Pic}
\DeclareMathOperator{\rank}{rank}
\DeclareMathOperator{\rk}{rk}
\DeclareMathOperator{\pr}{pr}
\DeclareMathOperator{\alb}{alb}
\newcommand{\set}[1]{\left\{ #1 \right\}}
\newcommand{\ca}[1]{{\mathcal{#1}}}
\def\F{{\mathbb F}}
\newtheorem{theorem}{Theorem}[section]
\newtheorem{corollary}[theorem]{Corollary}
\newtheorem{lemma}[theorem]{Lemma}
\newtheorem{proposition}[theorem]{Proposition}
\theoremstyle{definition}
\newtheorem{definition}[theorem]{Definition}
\newtheorem{problem}[theorem]{Problem}
\newtheorem{remark}[theorem]{Remark}
\newtheorem{step}{Step}
\newtheorem{stepp}{Step}
\newtheorem{case}{Case} 
\newcommand{\bprimezero}{t}
\newcommand{\bzero}{t_0}
\newcommand{\ts}{h_s}
\newcommand{\nb}{{\rm nb}}
\newcommand{\red}{{\rm red}}
\lstdefinelanguage{Magma}{
  morekeywords={for, in, do, end, if, then, else, elif, function, return, true, false, repeat, until, while, break, continue},
  sensitive=true,
  morecomment=[l]{//},
  morestring=[b]",
}
\title[Matroids and the integral Hodge conjecture]{Matroids and the integral Hodge conjecture for abelian varieties} 
\author[Engel]{Philip Engel}
\address{Department of Mathematics, Statistics, 
and Computer Science, University of Illinois in Chicago (UIC),
851 S Morgan St, Chicago, IL 60607, USA.}
\email{pengel@uic.edu}
\author[de Gaay Fortman]{Olivier de Gaay Fortman}
\address{Department of Mathematics, Utrecht University,  Budapestlaan 6, 3584 CD Utrecht, The Netherlands.}
\email{a.o.d.degaayfortman@uu.nl}
\author[Schreieder]{Stefan Schreieder}
\address{Leibniz University Hannover, Institute of Algebraic Geometry, Welfengarten 1, 30167 Hannover, Germany.}
\email{schreieder@math.uni-hannover.de}
\begin{document}

\date{\today}
\subjclass[2020]{05B35, 14C25, 14C30, 14E08} 


\keywords{integral Hodge conjecture, algebraic cycles, abelian varieties, stable rationality, matroids}


\maketitle

\begin{abstract}  
We prove that the cohomology class of any curve on a very general principally polarized abelian variety of dimension at least 4 is an even multiple of the minimal class.
The same holds for the intermediate Jacobian of a very general cubic threefold.
This disproves the integral Hodge conjecture for abelian varieties and shows that very general cubic threefolds are not stably rational.
Our proof is motivated by tropical geometry; 
it relies on multivariable Mumford constructions, monodromy considerations, and the combinatorial theory of matroids.
\end{abstract}

\tableofcontents

\section{Introduction}

To a regular matroid $\uR$ of rank $g$ on an $n$ element
set, one may associate a degeneration $\pi\colon X\to B$ of 
principally polarized abelian varieties of dimension $g$ over a $n$-dimensional base $B$.
The goal of this paper is to connect combinatorial properties of $\uR$ to algebro-geometric properties of the very general fiber of $\pi$---primarily, the existence of algebraic curves representing multiples of the minimal class. 
This leads to solutions to two long-standing open problems, one in the field of algebraic cycles on abelian varieties and one concerning rationality questions.

\subsection{Integral Hodge conjecture for abelian varieties} 
\label{subsec:intro:1}
By \cite{atiyah-hirzebruch,kollar1992trento}, 
the integral Hodge conjecture fails in general. 
So its holding is a property of a variety.
Understanding whether or not this 
property holds for a particular class of varieties is crucial
for understanding their geometry. 

An important  case that has been open until now is
the integral Hodge conjecture for abelian varieties. 
The specific case of curve classes 
on principally polarized abelian varieties  
is of particular interest; 
this question goes back (at least) to the 
work of Barton and Clemens 
\cite[p.~66]{barton-clemens} 
and has been advertised for instance by
Voisin \cite{voisin-universalCHgroup,voisin2022cycle}, 
who found a relation to the stable rationality 
problem for rationally connected threefolds.

If $(X,\Theta)$ is a very general principally polarized abelian variety of dimension $g$, then the group of Hodge classes in $H^{2c}(X,\Z)$ is generated by $[\Theta]^c/c!$, which is well-known to be a primitive integral cohomology class.
Therefore, the integral Hodge conjecture for such $X$ reduces to the question which multiples of $[\Theta]^c/c!$  
are represented by a linear combination of closed subvarieties. 
Our result on this problem is as follows. 

\begin{theorem} \label{thm:main:IHC:intro}
Let $(X,\Theta)$ be a very general principally polarized abelian variety of dimension $g\geq 4$. 
Let $Z\in {\rm CH}^c(X)$ be an algebraic cycle of codimension $2\leq c\leq g-1$. 
Then $[Z]=m\cdot [\Theta]^{c}/c!\in H^{2c}(X,\Z)$ with $m$ even.
\end{theorem}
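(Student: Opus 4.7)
The plan is to deduce Theorem~\ref{thm:main:IHC:intro} from the matroidal Theorem~\ref{thm:matroidal-intro} by combining specialization, the limiting mixed Hodge structure, and the combinatorial theory of matroids. First, embed $(X,\Theta)$ as a very general fiber of a Mumford-style degeneration $\pi\colon \mathcal{X}\to B$ associated to a regular matroid $\uR$ of rank $g$ supplied by Theorem~\ref{thm:matroidal-intro}; any such $\uR$ gives a family with full-dimensional image in $\mathcal{A}_g$. Given an algebraic cycle $Z$ of codimension $c$ on $X$, countability of Hilbert schemes means that after shrinking $B$, $Z$ extends to a relative algebraic cycle $\mathcal{Z}$ on $\mathcal{X}$; in particular, $[Z]\in H^{2c}(X,\mathbb{Z})$ is invariant under the monodromy action of $\pi_1(B^{\mathrm{sm}},b)$.

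Second, analyze the limiting mixed Hodge structure of $H^{2c}(X,\mathbb{Z})$ at a point of maximal degeneration. The Mumford construction makes the weight filtration and its graded pieces combinatorially explicit in terms of $\uR$, so that the sublattice of classes which extend to relative cycles over all of $B$ --- which contains $[Z]$ for every algebraic cycle $Z$ on the very general fiber --- can be identified with an explicit lattice $\Lambda(\uR,c)\subset H^{2c}(X,\mathbb{Z})$ described combinatorially from $\uR$. The minimal class $[\Theta]^c/c!$ corresponds to a distinguished element $\lambda_0\in \Lambda(\uR,c)$.

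The combinatorial core, which I expect to be the main obstacle, is to show that for a suitable choice of $\uR$ every element of $\Lambda(\uR,c)$ is an \emph{even} multiple of $\lambda_0$. This requires producing a $\mathbb{Z}/2$-valued obstruction distinguishing $\lambda_0$ from $2\lambda_0$ inside the lattice of ``potentially algebraic'' classes, presumably sensitive to some parity of combinatorial data (e.g.\ of flats, circuits, or spanning structures) in $\uR$. The hypothesis $g\geq 4$ should be essential here, consistent with the fact that the integral Hodge conjecture holds for ppavs of dimension at most three.

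Finally, for $c=g-1$ the conclusion is immediate. For intermediate $2\leq c\leq g-2$, either run the analogous matroidal argument in the weight piece of $H^{2c}$ corresponding to codimension~$c$, or reduce to the curve case by intersecting with $[\Theta]^{g-1-c}$: writing $[Z]=m\cdot [\Theta]^c/c!$, the curve $Z\cdot \Theta^{g-1-c}$ has class $\tfrac{m(g-1)!}{c!}\cdot [\Theta]^{g-1}/(g-1)!$, which the curve case forces to be an even multiple of the minimal curve class; when $(g-1)!/c!$ is odd this immediately yields $m$ even, and the remaining parities are absorbed by the matroidal argument in codimension~$c$ directly.
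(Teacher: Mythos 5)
Your starting point --- invoke Theorem \ref{thm:matroidal-intro} for a non-cographic matroid of rank $g$ and transfer the conclusion to the very general ppav --- is the right one, but two of your steps do not go through as written. First, a matroidal family associated to $\uR$ has base of dimension $|S|$, which is in general smaller than $\dim\mathcal A_g=g(g+1)/2$ (e.g.\ $M(K_{3,3})$ gives a $9$-dimensional base versus $\dim\mathcal A_4=10$), so the very general ppav is \emph{not} a fiber of such a family; one must instead specialize the very general $(X,\Theta)$ to the very general fiber $X_t$ of the matroidal family and use that algebraicity of an odd multiple of the minimal class survives specialization. This is fixable. Much more seriously, your fallback of ``running the matroidal argument in the weight piece of $H^{2c}$'' via a lattice $\Lambda(\uR,c)$ of potentially algebraic classes cannot work and is not an argument: every multiple of $[\Theta]^c/c!$ is a monodromy-invariant Hodge class that extends over the whole base, so no analysis of the limit mixed Hodge structure of $H^{2c}$ alone can separate $\lambda_0$ from $2\lambda_0$. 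The $\Z/2$-obstruction you say ``needs to be produced'' is precisely the content of Sections 3--6 of the paper, and it is produced only for \emph{curve} classes, using the geometry of a degenerating curve representing the class (dual graphs, quadratic splittings, Albanese graphs), not the cohomology of $X$ in degree $2c$. You are therefore deferring the entire difficulty of the intermediate codimensions to an unproven claim.

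The missing idea is the paper's reduction of general $c$ to the curve case. Your intersection trick $Z\cdot\Theta^{g-1-c}$ gives $m\cdot\frac{(g-1)!}{c!}$ even, which forces $m$ even only when $(c+1)(c+2)\cdots(g-1)$ is odd, i.e.\ only for $c=g-1$, or $c=g-2$ with $g$ even; already for $g=5$, $c=2$ it yields nothing. The paper instead argues by induction on $g$: having settled $g=4$ (where $c=3$ is the curve case via $M(K_{3,3})$ and $c=2$ follows because $[\Theta]\cdot[\Theta]^2/2=3\cdot[\Theta]^3/3!$ is an odd multiple of the minimal curve class), it specializes $(X,\Theta)$ of dimension $g\geq 5$ to a product $(Y,\Theta_Y)\times(E,\Theta_E)$ with $Y$ very general of dimension $g-1$, and reads off from the class of the specialized cycle $Z_0$ both $p_\ast[Z_0]=m[\Theta_Y]^{c-1}/(c-1)!$ and $\iota^\ast[Z_0]=m[\Theta_Y]^c/c!$; one of the two codimensions $c-1,c$ always lies in the inductive range for $Y$. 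Without this (or an equivalent) device your proof covers only $c=g-1$ and, for even $g$, $c=g-2$.
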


The above theorem is equivalent to saying that for any  cycle $Z\in {\rm CH}^c(X)$ of codimension $2\leq c\leq g-1$, the intersection number $Z\cdot \Theta^{g-c}$ is divisible by $2\cdot \frac{g!}{c!}$.
For instance, Theorem \ref{thm:main:IHC:intro} implies that for any curve $C \subset X$, the intersection number $C \cdot \Theta$ of $C$ with $\Theta$ is divisible by $2g$. 
The proof of Theorem \ref{thm:main:IHC:intro} quickly reduces to the case where $c=g-1$, i.e.\ to the case of curve classes that we will handle in this paper.

While the rational Hodge conjecture for abelian varieties of dimension four and five 
has recently been proved by Markman \cite{markman, markman2}, the image of the integral cycle class map remains mysterious. 
Theorem \ref{thm:main:IHC:intro} solves that problem for the very general principally polarized abelian four- and fivefold:

\begin{corollary} \label{cor:abelian-4-folds-IHC}
Let $(X,\Theta)$ be a very general principally polarized abelian of dimension $g\in\{4,5\}$. 
Then 
$$
\im({\rm cl}^c\colon {\rm CH}^c(X)\to H^{2c}(X,\Z))=\begin{cases}
2\Z[\Theta]^c/c!\quad &\text{if $2\leq c\leq g-1$ }\\
\Z[\Theta]^c/c! \quad &\text{otherwise}.
\end{cases}
$$
\end{corollary}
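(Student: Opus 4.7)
I would prove the corollary by treating each codimension separately. The cases $c=0$ and $c=4$ are immediate: $[X]$ generates the image in codimension zero, while $[\Theta]^4 = 24\,[{\rm pt}]$ on a principally polarized abelian fourfold means $[{\rm pt}] = [\Theta]^4/4!$, so any closed point realizes the image in codimension four. For $c=1$, a very general principally polarized abelian fourfold has Néron--Severi group $\Z\,[\Theta]$, which gives the claim. For $c=2$, Theorem \ref{thm:main:IHC:intro} supplies the upper bound $\im({\rm cl}^2) \subseteq 2\Z\,[\Theta]^2/2!$, while the intersection cycle $\Theta \cdot \Theta$ realizes $[\Theta]^2 = 2\cdot[\Theta]^2/2!$, giving the reverse inclusion.

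The only substantive case is $c=3$. Theorem \ref{thm:main:IHC:intro} yields $\im({\rm cl}^3) \subseteq 2\Z\,[\Theta]^3/3!$, and since $[\Theta]^3 = 6\cdot[\Theta]^3/3!$ is algebraic, the image must be either $2\Z\,[\Theta]^3/3!$ or $6\Z\,[\Theta]^3/3!$. To rule out the latter, one needs to exhibit an algebraic curve on $(X,\Theta)$ whose cohomology class is $m\cdot [\Theta]^3/3!$ with $m$ even but not divisible by $6$. The natural source is the classical Abel--Prym construction: for an \'etale double cover $\tilde C \to C$ of a smooth curve of genus $5$, the Prym variety $P = {\rm Prym}(\tilde C/C)$ is a principally polarized abelian fourfold, and the Abel--Prym image of $\tilde C$ inside $P$ has class $2\cdot [\Theta_P]^3/3!$ by a theorem of Beauville.

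To conclude I would invoke the classical dominance of the Prym map $\mathcal{R}_5 \to \mathcal{A}_4$, which ensures that the very general $(X,\Theta) \in \mathcal{A}_4$ arises as a Prym variety. Working in the universal Prym family over (an open subset of) $\mathcal{R}_5$, the Abel--Prym curves form an algebraic family whose specialization over a very general point of $\mathcal{A}_4$ is an algebraic curve of class $2\cdot [\Theta]^3/3!$ in the corresponding fourfold. Combined with the upper bound from Theorem \ref{thm:main:IHC:intro}, this gives $\im({\rm cl}^3) = 2\Z\,[\Theta]^3/3!$. The main obstacle is the parity bound furnished by the main theorem; once this is in hand, the matching lower bound is supplied by the classical Prym construction together with the dominance of $\mathcal{R}_5 \to \mathcal{A}_4$.
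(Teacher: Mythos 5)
Your proposal is correct and follows essentially the same route as the paper: the upper bound in codimensions $2$ and $3$ comes from Theorem \ref{thm:main:IHC:intro}, the class $2[\Theta]^2/2!=[\Theta]^2$ is obviously algebraic, and $2[\Theta]^3/3!$ is realized by the Abel--Prym curve since the very general principally polarized abelian fourfold is a Prym variety (dominance of $\mathcal R_5\to\mathcal A_4$). The paper states this in one line; you have merely spelled out the same argument, including the easy cases $c=0,1,4$.
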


\subsection{Cubic threefolds} \label{subsec:intro:2} 

Our method is flexible and works for 
other families of abelian varieties, 
including the important case of 
intermediate Jacobians of cubic threefolds:

\begin{theorem} \label{thm:main:cubics:intro}
Let $Y\subset \mathbb P^4_{\C}$ be a very general cubic hypersurface.
Then the homology class of any curve $C\subset JY$ on its intermediate Jacobian $JY$ is an even multiple of the minimal class $[\Theta_Y]^4/4!\in H_2(JY,\Z)$.
\end{theorem}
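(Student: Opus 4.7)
The plan is to derive Theorem \ref{thm:main:cubics:intro} from the matroidal Theorem \ref{thm:matroidal-intro} by exhibiting, inside the $10$-dimensional locus of intermediate Jacobians in $\cA_5$, a degeneration whose semi-abelian limit is of the multivariable Mumford type controlled by that theorem. One cannot simply invoke Theorem \ref{thm:main:IHC:intro} for generic principally polarized abelian fivefolds, since the very general intermediate Jacobian of a cubic threefold is \emph{not} a very general ppav; the matroid governing the degeneration must be realized by genuine cubic threefolds.

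The first step is to construct a family of cubic threefolds $\cY \to B$ over a smooth base $B$ together with a toroidal partial compactification $B \hookrightarrow \oB$, so that over the boundary the fibers acquire sufficiently many ordinary double points in general position. By Collino's results on degenerations of intermediate Jacobians and their extensions (e.g.\ by Casalaina-Martin--Laza and Casalaina-Martin--Grushevsky--Hulek--Laza), the relative intermediate Jacobian $J(\cY/B)$ then extends to a family of semi-abelian varieties of Mumford type over $\oB$, with a toric part of maximal possible rank over the deepest stratum.

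The second step is to identify the regular matroid $\uR$ of rank $5$ associated to this Mumford family. Its ground set consists of the vanishing $3$-cycles $\delta_i$ at the nodes of the degenerate fiber, and its combinatorics is determined by the intersection pairing on the vanishing cohomology of a nearby smooth fiber together with the Clemens--Schmid weight filtration. This computation can be made explicit via the classical description of the intermediate Jacobian of a nodal cubic threefold as a Prym variety of the $(2{:}1)$ cover of lines through a node, together with the combinatorics of the resulting nodal curves as one degenerates further.

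Finally, verify that $\uR$ satisfies the combinatorial hypothesis of Theorem \ref{thm:matroidal-intro}, and deduce that curve classes on the very general fiber of the Mumford family are even multiples of the minimal class. Since $\cY \to B$ dominates the moduli of cubic threefolds at a generic point, a monodromy-and-specialization argument, combined with the fact that $[\Theta_Y]^4/4!$ generates the Hodge classes in $H^8(JY,\Z)$ for very general $Y$, yields the theorem. The hard part will be steps two and three: unlike for generic ppav's, one is not free to choose the matroid, and the essential geometric input is to show that a matroid realizable by degenerating cubic threefolds actually meets the required combinatorial evenness condition.
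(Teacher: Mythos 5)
Your overall strategy is exactly the one the paper follows: degenerate cubic threefolds to a nodal one, read off the regular matroid realized by the vanishing cycles on the associated Mumford-type degeneration of intermediate Jacobians, check that it is not cographic, and invoke Theorem \ref{thm:matroidal-intro}. You are also right that one cannot quote Theorem \ref{thm:main:IHC:intro} directly, and right that the whole difficulty is that the matroid is forced on you by the geometry of cubics.

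However, the proposal stops precisely at the decisive point. Steps two and three, which you defer as ``the hard part,'' are the entire content of the argument, and without naming a specific degeneration and its matroid you have not shown that \emph{any} matroid arising from cubic threefolds is non-cographic --- a priori all such degenerations could be cographic, in which case Theorem \ref{thm:matroidal-intro} gives nothing. What closes the gap in the paper is a single, very specific choice: the Segre cubic $\{\sum_{i=0}^{5}x_i=\sum_{i=0}^{5}x_i^3=0\}\subset\mathbb P^5$, the unique cubic threefold with $10$ nodes, whose universal deformation gives a maximal ($10$-parameter, rank-$5$ torus) degeneration of intermediate Jacobians. A theorem of Gwena \cite{gwena} identifies the matroid realized by its $10$ vanishing cycles as $\uR_{10}$, which is neither graphic nor cographic (cf.\ \cite[Corollary 13.2.5]{oxley-matroids}); Theorem \ref{thm:matroidal-intro} then applies, and since $JY_t$ has Picard rank $1$ for very general $t$, every curve class is a multiple of the minimal class, hence an even multiple. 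Your suggested route to computing the matroid (Prym/conic-bundle descriptions plus Clemens--Schmid) is plausible in principle but is not carried out, and nothing in the proposal guarantees the answer would be non-cographic; indeed for a ``random'' nodal degeneration with few nodes the resulting matroid can easily be cographic. So the proposal is an accurate outline of the paper's proof with its essential input missing rather than a complete alternative argument.
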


By \cite{voisin-universalCHgroup}, Theorem \ref{thm:main:cubics:intro} implies:

\begin{corollary} \label{cor:cubic-intro}
    Very general cubic threefolds $Y\subset \mathbb P^4_{\C}$ do not admit a decomposition of the diagonal. Hence they are neither stably rational, nor retract rational, nor $\mathbb A^1$-connected.
\end{corollary}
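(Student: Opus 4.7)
The plan is to derive the corollary directly from Theorem \ref{thm:main:cubics:intro} by invoking Voisin's criterion from \cite{voisin-universalCHgroup}, which relates an integral Chow-theoretic decomposition of the diagonal on a smooth projective threefold $Y$ to the algebraicity of the minimal class on its intermediate Jacobian $JY$. Once the non-existence of such a decomposition is established, the three rationality-type properties will follow from standard implications in the literature.

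The key input is the following consequence of Voisin's work: if a smooth projective threefold $Y$ with principally polarized intermediate Jacobian $(JY,\Theta_Y)$ of dimension $g$ admits an integral Chow-theoretic decomposition of the diagonal, then the minimal class $[\Theta_Y]^{g-1}/(g-1)!\in H_2(JY,\Z)$ is represented by an algebraic $1$-cycle on $JY$. For a smooth cubic threefold $Y\subset \mathbb P^4_{\C}$, one has $g=5$, so the minimal class in question is $[\Theta_Y]^4/4!$. Now I would apply Theorem \ref{thm:main:cubics:intro}: for very general $Y$, every curve class on $JY$ is an even multiple of $[\Theta_Y]^4/4!$, and by $\Z$-linearity the same holds for every algebraic $1$-cycle class. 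Since $1$ is odd, the minimal class itself is not algebraic, contradicting the hypothesized decomposition. Hence no such decomposition can exist.

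To conclude, I would invoke the standard implications that stable rationality, retract rationality, and $\mathbb A^1$-connectedness each force the existence of a Chow-theoretic decomposition of the diagonal (see \cite{voisin-universalCHgroup} and the references therein). None of these three properties can therefore hold for a very general cubic threefold. The whole reduction is essentially formal once Voisin's criterion is taken for granted; the substantive content of the corollary is carried by Theorem \ref{thm:main:cubics:intro}, so the only conceptual point to check is that the formulation of Voisin's criterion one cites is strong enough to cover all three notions (stable rationality, retract rationality, $\mathbb A^1$-connectedness) uniformly, which it is because each of them implies universal ${\rm CH}_0$-triviality.
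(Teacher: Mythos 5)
Your proposal is correct and follows essentially the same route as the paper: Theorem \ref{thm:main:cubics:intro} shows the minimal class on $JY$ is not algebraic, Voisin's criterion from \cite{voisin-universalCHgroup} then rules out a decomposition of the diagonal, and the three rationality-type properties are excluded because each implies such a decomposition. No gaps.
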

 
The above result should be compared to the celebrated 
result of Clemens--Griffiths 
\cite{clemensgriffiths-cubicthreefold}, who showed 
that for a smooth cubic threefold $Y$, the intermediate
Jacobian $(JY,\Theta_Y)$ is not isomorphic to a 
product of Jacobians, which by Matsusaka's 
criterion is equivalent to saying that the 
minimal class $[\Theta_Y]^4/4!$ is not represented 
by the class of an effective algebraic curve on $JY$.
This implied that $Y$ is not rational, while the 
stronger conclusion that $[\Theta_Y]^4/4!$ is not 
algebraic for very general $Y$ implies that such 
cubics are not stably rational.
While every smooth cubic threefold is 
irrational, the smooth cubic
threefolds which admit a decomposition of 
the diagonal form a (non-empty) countable
union of subvarieties of their moduli, 
see \cite{voisin-universalCHgroup}.

We remark that in contrast to the case treated in Theorem \ref{thm:main:IHC:intro}, the minimal surface class $[\Theta_Y]^3/3!$ is algebraic and in fact effective on $JY$, represented by the Abel--Jacobi image of the Fano surface of lines on $Y$, see \cite{clemensgriffiths-cubicthreefold}.

In \cite{beckmann-degaayfortman}, it has been shown that the integral Hodge conjecture holds for products of Jacobians of curves,
and hence for abelian varieties that admit a split embedding into such a product. 
Voisin \cite{voisin2022cycle} proved the converse, showing that in fact the integral Hodge conjecture for curve classes on abelian varieties is equivalent to the statement that any abelian variety admits a split embedding into a product of Jacobians of curves.
Our results show that this fails in general:

\begin{corollary} \label{cor:isogeny-product}
Let $(X,\Theta)$ be a very general principally polarized abelian variety of dimension at least $4$  or the intermediate Jacobian of a very general cubic threefold.
Then, for any abelian variety $Y$, any isogeny $f\colon X\times Y\to \prod JC_i$ to a product of Jacobians of curves has even degree.
In particular, $X$ does not admit a split embedding into a product of Jacobians of curves.
\end{corollary}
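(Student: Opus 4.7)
The plan is to transfer algebraicity from $J := \prod_i JC_i$, where the integral Hodge conjecture holds by~\cite{beckmann-degaayfortman}, to $X$, where Theorems~\ref{thm:main:IHC:intro} and~\ref{thm:main:cubics:intro} force every algebraic curve class to be an even multiple of the minimal class $\theta := [\Theta]^{g-1}/(g-1)! \in H^{2g-2}(X,\Z)$ (with $g=\dim X$). Given an isogeny $f\colon X\times Y\to J$ of degree $d$, the goal is to exhibit an algebraic curve class on $X$ whose cohomology class equals $d\cdot\theta$; the conclusion that $d$ is even will then be immediate from the main theorems.

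First, I would introduce the integral Künneth Hodge class
\[
\alpha \;:=\; \theta \otimes [{\rm pt}_Y]^{\vee} \ \in\ H^{2(g+h)-2}(X\times Y,\Z)
\]
of Hodge type $(g+h-1,g+h-1)$, where $h=\dim Y$ and $[{\rm pt}_Y]^\vee\in H^{2h}(Y,\Z)$ denotes the class of a point. Its pushforward $f_*\alpha\in H^{2(g+h)-2}(J,\Z)$ is again an integral Hodge class, so by the integral Hodge conjecture for products of Jacobians it is represented by some algebraic cycle $\Gamma\in {\rm CH}_1(J)$.

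Next, I would pull $\Gamma$ back along the flat (in fact étale) isogeny $f$ and then push forward to $X$ via the first projection, producing the algebraic curve class $Z := (p_1)_*f^*\Gamma \in {\rm CH}_1(X)$. Its cohomology class I would compute by combining (i) the identity $f^*f_* = d\cdot{\rm id}$ on $H^*(X\times Y,\Z)$---valid because any isogeny of abelian varieties is an étale Galois cover whose Galois group consists of translations, acting trivially on cohomology---and (ii) the projection formula for $p_1$ together with $\int_Y [{\rm pt}_Y]^\vee=1$. This yields $[Z] = d\cdot\theta$, whence Theorem~\ref{thm:main:IHC:intro} (or Theorem~\ref{thm:main:cubics:intro}) forces $d$ to be even.

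The \emph{in particular} clause is then immediate: any split embedding $X\hookrightarrow \prod JC_i$ gives rise to an isomorphism $X\times Y \cong \prod JC_i$ for some complementary abelian subvariety $Y$, i.e.\ to an isogeny of odd degree~$1$, contradicting what has just been proved. Once the two main theorems and the Beckmann--de Gaay Fortman result are available, the argument is essentially formal; the only step that merits a moment's verification is the integral identity $f^*f_* = d\cdot{\rm id}$, which, as noted, is standard for isogenies of abelian varieties.
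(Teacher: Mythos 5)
Your argument is correct and is exactly the derivation the paper intends (the corollary is stated without a written-out proof, as a formal consequence of Theorems \ref{thm:main:IHC:intro} and \ref{thm:main:cubics:intro} together with the integral Hodge conjecture for products of Jacobians from \cite{beckmann-degaayfortman}): representing $f_*(\theta\otimes[\mathrm{pt}_Y])$ algebraically on the product of Jacobians, pulling back along the isogeny, and projecting to $X$ produces an algebraic $1$-cycle of class $d\cdot\theta$, so the main theorems force $d$ to be even. The one step worth flagging, the identity $f^*f_*=d\cdot\mathrm{id}$ on integral cohomology via the translation action of $\ker f$, is indeed standard and you justify it correctly.
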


Note that any abelian variety $X$ is, up to isogeny, a factor in the 
Jacobian of a curve, namely in the Jacobian of any curve $C\subset X$ which generates $X$ as an abelian variety.
The above corollary shows that in general, there are restrictions on the degree of 
the resulting isogeny $f \colon X \times Y \to JC$. 
Related results have been discussed in \cite{dGF-Sch}.

\subsection{Algebraic curves on matroidal families} \label{subsec:algcurves-matroidal}
In this section we explain a more precise result, which implies the results in Sections \ref{subsec:intro:1} and \ref{subsec:intro:2}.

Let $\underline R$ be a regular matroid of rank $g$ on a ground set $S$.
To this data we may associate a smooth projective family $ X^\star_{(\Delta^\star)^S}\to (\Delta^\star)^S$ of $g$-dimensional principally polarized abelian varieties  over the punctured polydisk $(\Delta^\star)^S$ with base point $t$, such that the associated vanishing cycles $y_s$, viewed via the polarization as linear forms on ${\rm gr}^W_0H_1(X_{t},\Z) $, realize the matroid $\underline R$. 
The monodromy about the $s$-th coordinate hyperplane is then encoded by some multiple of the quadratic form $y_s^2$, cf.\ Definition \ref{def:monodromy-bilinear-form}. 
We may further choose the family so that it 
extends to an algebraic family $\pi^\star\colon X^\star\to B^\star$ over a smooth quasi-projective base $B^\star$ with $(\Delta^\star)^S\subset B^\star$, see \cite[Propositions 4.10]{survey}.  
We call $\pi^\star$ a \emph{matroidal family} 
associated to $\underline R$, see Definition \ref{def:matroidal-family} below.
 
\begin{theorem} \label{thm:matroidal-intro}
Let $\uR$ be a regular matroid of rank $g$. 
Let $\pi^\star\colon X^\star\to B^\star$ be a matroidal family of $g$-dimensional principally polarized abelian 
varieties associated to $\uR$.
Let $\iota\colon C_t\to X_t$ be a morphism from a 
projective curve into a very 
general fiber of $\pi$, with
$$
\iota_\ast [C_t]=m\cdot 
[\Theta]^{g-1}/(g-1)!\in H_2(X_t,\Z).
$$ 
If $\uR$ is not cographic, 
then $m$ is even.
\end{theorem}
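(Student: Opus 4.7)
The plan is to reduce the statement to a mod-$2$ combinatorial computation attached to the matroid $\uR$ via a Mumford-style degeneration, and then invoke a matroid-theoretic characterization of cographicity.

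First, I would extend $\pi^\star$ to a semistable model over a toroidal completion of $B^\star$. Near a deepest boundary point, the multivariable Mumford construction produces a family $\pi \colon X \to \Delta^S$ whose central fiber $X_0$ is a union of toric varieties, with dual complex combinatorially controlled by $\uR$ and with gluing encoded by the monodromy bilinear forms $y_s^2$. After a finite ramified base change, the morphism $\iota$ extends across the boundary, yielding a limit $1$-cycle $C_0 \subset X_0$ whose intersections with the strata refine $[\iota_\ast C_t] \in H_2(X_t,\Z)$.

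Second, I would compute $[\iota_\ast C_t] \pmod 2$ combinatorially. The Clemens--Schmid/weight spectral sequence expresses $H_2(X_t,\Z/2)$ in terms of (co)homology of the strata of $X_0$, and the image of the minimal class $[\Theta]^{g-1}/(g-1)! \pmod 2$ in the associated graded is a distinguished element $\mu$ depending only on the polarization, i.e.\ on the quadratic form $\sum_s y_s^2 \pmod 2$. On the other hand, any algebraic $1$-cycle on $X_0$ contributes a class in a specific subspace $F \subset \mathrm{gr}^W H_2(X_t,\Z/2)$ of ``tropical $1$-cycles'' supported combinatorially on the matroidal polytopal decomposition, that is, mod-$2$ flows compatible with the circuit structure of $\uR$.

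The final, and presumably hardest, step is the purely matroidal claim that $\mu \in F$ if and only if $\uR$ is cographic. The ``if'' direction is witnessed, in the cographic case, by the Abel--Jacobi image of a suitable curve in the limiting Jacobian. For the converse, I would invoke Seymour's decomposition theorem to reduce to the irreducible regular matroids: for $R_{10}$ and for the graphic matroid $M(K_n)$ with $n \geq 5$ (which is not cographic), a direct combinatorial check should show $\mu \notin F$. Propagating this non-existence through the $1$-, $2$-, and $3$-sum operations of Seymour's decomposition would then complete the argument, with the main obstacle being the careful bookkeeping to see that these sums preserve the non-representability of $\mu$ as a mod-$2$ flow outside the cographic case.
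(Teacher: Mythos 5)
Your overall shape (degenerate, extract a combinatorial obstruction from the limit, characterize cographicity) matches the paper's, but three steps would fail or are missing as written. First, the claim that ``after a finite ramified base change, the morphism $\iota$ extends across the boundary, yielding a limit $1$-cycle $C_0\subset X_0$'' is exactly the point where the paper reports that finite base changes do not work: the curve exists only on a \emph{very general} fibre, so spreading it out forces a generically finite cover $B'\to B$, and making $\tau^{-1}(H)$ and $\tau^{-1}(0)$ snc forces $\tau$ to be an honest alteration, not finite. Consequently there is no single limit curve $C_0$; the paper must instead glue the dual graphs of the limits of $C_t$ over the various strata of $\tau^{-1}(0)$ via a tree of compatible specializations and a carefully constructed admissible resolution $X''\to X'$ (Sections 3--4). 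Your proposal silently assumes away this difficulty. Second, working directly with $H_2(X_t,\Z/2)$ and ``mod-$2$ flows'' loses the positivity input that the paper flags as essential: the key structural fact (that each edge functional $x_e$ restricts on $U$ to a multiple of a single $y_s$, Proposition \ref{prop:multiple-y-x}) is proved by a sum-of-squares argument requiring coefficients in a subring of $\R$, i.e.\ $\Z_{(2)}$, and one only reduces mod $2$ at the very end. A purely mod-$2$ formulation of the subspace $F$ is not obviously the right obstruction and you give no mechanism to identify it.

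Third, your combinatorial endgame via Seymour's decomposition is a genuinely different route from the paper's, and the harder one. The paper proves that the obstruction (existence of a $2$-indivisible $\F_2$-solution in the Albanese graph $\Alb_{2,1}(\uR)$) is closed under taking minors, and then applies Tutte's excluded-minor characterization of cographic matroids, so that only two finite rank computations ($M(K_5)$ and $M(K_{3,3})$) are needed. Propagating non-representability through $1$-, $2$-, and especially $3$-sums is delicate --- a sum of cographic pieces need not be cographic and vice versa, so you would have to track exactly how the obstruction interacts with the sum operations, which you acknowledge but do not address. If you can make the spreading-out and the positivity steps precise, I would strongly recommend replacing the Seymour route by a minor-closure argument plus Tutte's theorem.
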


A matroid $M^*(G)$ is {\it cographic} if it can be 
realized by the natural map $E\to H_1(G,\Z)^\ast$,
where $G$ is some oriented graph with edge set $E=E(G)$. 
On the other hand, the map $E\to \Z^E/H_1(G,\Z)$ 
defines the {\it graphic} matroid $M(G)$.
The graphic matroid of $G$
is isomorphic to a cographic matroid
if and only if $G$ is planar.
In addition to graphic and cographic matroids, 
there are regular matroids that are not 
related to graphs at all, such as the 
$\uR_{10}$ matroid or any matroid 
with $\uR_{10}$ as a minor, 
cf.\ \cite[Corollary 13.2.5]{oxley-matroids}. 

Our obstruction is topological in the sense that the matroidal information is encoded in the monodromy, which in turn depends only on the topological type of the family $ X^\star_{(\Delta^\star)^S}\to (\Delta^\star)^S$ over the punctured polydisc, that is, only on the homeomorphism
type of the corresponding real torus bundle over $(S^1)^S$. 
Cographic matroids are precisely those that appear as the vanishing cycles of a degeneration of curves.  
In fact,  $\underline R$ is cographic if and only if $ X^\star_{(\Delta^\star)^S}\to (\Delta^\star)^S$ deforms to a family of Jacobians of curves, see \cite[Remark 2.31]{survey} and Corollary \ref{cor:matroidal-topological-body} below.  
The above theorem thus shows that for a matroidal family of principally polarized abelian varieties, the integral Hodge conjecture fails 
for curve classes on very general fibers, unless the family is deformation equivalent over 
$(\Delta^\star)^S$ to a family of Jacobians, see Lemma \ref{lem:alg-min->l-prime-multiple-is-effective} and Corollary \ref{cor:matroidal-topological-body} below.

Theorem \ref{thm:main:IHC:intro} for curve classes on abelian fourfolds will follow by applying Theorem \ref{thm:matroidal-intro} to the graphic matroid $M(K_5)$ associated to the complete graph $K_5$, which is not planar, hence the graphic matroid $M(K_5)$ of rank 4 is not cographic.
Similarly, Theorem \ref{thm:main:cubics:intro} follows by applying Theorem \ref{thm:matroidal-intro} to a matroidal family of intermediate Jacobians of cubic threefolds associated to $\underline R_{10}$ due to Gwena \cite{gwena}, see Section \ref{sec:application} below.

\subsection{Obstruction and combinatorial results} \label{subsec:obstruction}
We sketch some ideas behind Theorem 
\ref{thm:matroidal-intro}. 
Replacing $C_t$ by its normalization, 
we may assume that $C_t$ is smooth projective.
We then get a morphism $f\colon JC_t\to X_t$.
Using the principal polarizations on both sides, 
the dual of $f$ defines a map $f^\vee:X_t\to JC_t$.
Since $f_\ast[C_t]=m\cdot [\Theta]^{g-1}/(g-1)!$, 
we find that $(f^\vee)^\ast [\Theta_{C_t}]=m\cdot [\Theta]$ 
and the composition $f\circ f^\vee\colon X_t\to X_t$ is 
multiplication by $m$.

Let $\Lambda$ be a ring in which $m$ is invertible.
Then the above observations show that 
$\frac{1}{m}f_\ast^\vee$ splits the map 
$f_\ast$ and we get a canonical decomposition
\begin{align} \label{eq:decomposition-intro}
H_1(C_t,\Lambda)\cong  H_1(X_t,\Lambda) \oplus 
\ker(f_\ast\colon H_1(C_t,\Lambda)\to H_1(X_t,\Lambda)).
\end{align}
One checks that this decomposition is orthogonal 
with respect to $\Theta_{C_t}$ 
(Lemma \ref{lem:orthogonal-complement}).
Since $t\in B^\star$ is very general, there exists a generically finite cover ${B'}^\star \to B^\star$ such that we may spread out the curve $C_t$ over an open subset of ${B'}^\star$. 
To explain the mechanism of our proof, we will 
ignore this (important) technical difficulty
and assume that ${B'}^\star=B^\star$, i.e.\ that the above decomposition holds 
over a Zariski open subset of $B^\star$,
and hence is respected by all monodromy operators. 

Let us now fix $t\in (\Delta^\star)^S\subset B^\star$.
Then the decomposition \eqref{eq:decomposition-intro} 
is respected by the monodromy operator $T_s$ 
about the $s$-th coordinate hyperplane 
of $\Delta^S$.
Assume moreover that $T_s$ is unipotent with 
nilpotent logarithm $N_s$ and note that the decomposition 
\eqref{eq:decomposition-intro} is compatible with the monodromy weight filtration.  
Taking ${\rm gr}^W_0$,
we obtain a decomposition
$$
{\rm gr}^W_0H_1(C_t)\cong 
{\rm gr}^W_0H_1(X_t) \oplus \ker(f_\ast \colon 
{\rm gr}^W_0H_1(C_t)\to 
{\rm gr}^W_0H_1(X_t)) ,
$$
where homology is taken with $\Lambda$ coefficients, and where we assume $\Lambda\subset \R$ for simplicity.
We may then define the monodromy bilinear form $Q_s$ on 
${\rm gr}^W_0H_1(C_t)$ by the formula $Q_s(u,v)\coloneqq \Theta_{C_t}(N_su,v)$, see Definition 
\ref{def:monodromy-bilinear-form} below.
The above decomposition is orthogonal 
with respect to $Q_s$ and the restriction of $Q_s$ 
to ${\rm gr}^W_0H_1(X_t,\Lambda)$ agrees with $m$ times
the monodromy bilinear form $B_s(u,v)=\Theta(N_su,v)$.
By the assumption that $\pi^\star \colon X^\star \to B^\star$ 
is matroidal, these monodromies are given by a multiple 
of the bilinear form $u\otimes v\mapsto y_s(u)y_s(v)$, 
which we denote by $y_s^2$.

Let us, in addition, assume that the curve $C_t$ has a unique nodal limit $C_0$ over the origin of $\Delta^S$.
Then ${\rm gr}^W_0H_1(C_t,\Z)\cong H_1(\Gamma(C_0),\Z)$ identifies to the homology of the dual complex of $C_0$, cf.~\cite[Proposition 5.12]{survey}. 
Under all of the above assumptions, we then see that $(\underline R,S)$ admits a $\Lambda$-splitting of level $d=m$ in the cographic matroid $M^*(G)$ of some graph $G$, in the following sense (cf.\ Section \ref{subsec:degenerations-abelian-varieties} below):

\begin{definition} \label{def:d-Lambda-splitting-general}
Let $(\underline R,S)$ and 
$(\underline M,E)$ be regular matroids 
with integral realizations $S\to U^\ast$, $s\mapsto y_s$, 
and $E\to V^\ast$, $e\mapsto x_e$, respectively.  
Let $\Lambda$ be a ring and let $d$ be a positive integer.
    A {\it quadratic $\Lambda$-splitting of level $d$} 
    of $(\underline R,S)$ in $(\underline M,E)$ 
    consists of an embedding $U_\Lambda\hookrightarrow V_\Lambda$ and a decomposition
        \begin{align} \label{eq:def-H_1(G)=U+U'-intro}
        V_{\Lambda}=U_\Lambda\oplus U'
        \end{align} 
        for some $U'\subset V_\Lambda$, 
        together with a matrix $(a_{se})\in \Z_{\geq 0}^{S\times E}$ of non-negative integers, such that, for all $s\in S$, the bilinear form  $Q_s\coloneqq \sum_{e}a_{se}x_e^2$ has the following properties:
        \begin{enumerate}
            \item the decomposition \eqref{eq:def-H_1(G)=U+U'-intro} is orthogonal with respect to $Q_s$;
            \item the restriction of $Q_s$ to $U_\Lambda$ agrees with $d\cdot y^2_s$.
        \end{enumerate} 
\end{definition}

The above definition is key to our proof of Theorem \ref{thm:matroidal-intro}, which falls naturally into two parts: the reduction to combinatorics,  and the proof of the resulting combinatorial statement.
The reduction to combinatorics is achieved by the following algebro-geometric result, which is proven without any of the aforementioned simplifying assumptions.

\begin{theorem} \label{thm:reduction-to-combinatorics-intro}
Let $\pi^\star \colon X^\star\to B^\star$ be a matroidal family
of principally polarized abelian varieties associated to a regular matroid $(\underline R,S)$. 
Let $\iota\colon C_t\to X_t$ be a morphism from a projective curve into a very general fiber of $\pi^\star$, with 
$$
\iota_\ast [C_t]=m\cdot [\Theta]^{g-1}/(g-1)!\in H_2(X_t,\Z).
$$ 
Let $\ell$ be a prime that is coprime to $m$.
Then there is a positive integer $d$ such that $(\underline R,S)$ admits a quadratic $\Z_{(\ell)}$-splitting of level $d$ in a cographic matroid.
\end{theorem}

The main combinatorial result that we prove and which allows us to finish the proof of Theorem \ref{thm:matroidal-intro} is then the following:
 
\begin{theorem}\label{thm:splitting-in-cographic=cographic-intro}
Let $(\underline R,S)$ be a regular matroid with integral realization $S\to U^\ast$.
Then $(\underline R,S)$ admits a quadratic $\Z_{(2)}$-splitting of some level $d \geq 1$ in a cographic matroid if and only if $(\underline R,S)$ is cographic.
\end{theorem}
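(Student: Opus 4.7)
The ``if'' direction is immediate: if $(\uR,S)$ is already cographic with integral realization $S\to U^\ast$, one takes $(\uM,E)=(\uR,S)$, $V=U$, $U'=0$, $d=1$, and $a_{se}=\delta_{se}$. The strategy for the ``only if'' direction is to use the excluded-minor characterization of cographic matroids: among regular matroids, $\uR$ is cographic if and only if it contains neither $M(K_5)$ nor $M(K_{3,3})$ as a minor.

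The first step is to show that the property of admitting a quadratic $\Z_{(2)}$-splitting of some level $d\geq 1$ in a cographic matroid is closed under taking matroid minors. For deletion $\uR\setminus s$, one simply drops the form $Q_s$ together with the row $(a_{se})_e$; the remaining data still form a valid splitting of $(\uR\setminus s,S\setminus s)$ with the same target $(\uM,E)$. Contraction $\uR/s$ is more delicate: the realization of $\uR/s$ lives on the sublattice $U\cap\ker y_s\subset U$, so one must cut down $V$ correspondingly while preserving the cographic structure. The key observation is that the analogous operations on the target side---restricting to $V\cap\ker Q_s$ and deleting or contracting the edges $e$ with $a_{se}>0$---correspond to edge deletion and contraction on the graph $G$ underlying $(\uM,E)$; since graphs are closed under these operations, the resulting target is again cographic. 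Some care is needed to track $2$-torsion through the construction, but the level $d$ and the complement $U'$ can be transported in a controlled way.

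Having established minor-closedness, it suffices to verify that $M(K_5)$ and $M(K_{3,3})$ admit no quadratic $\Z_{(2)}$-splitting in any cographic matroid. The essential leverage is the hypothesis that $\Lambda=\Z_{(2)}$: reducing $Q_s|_{U_\Lambda}=d\cdot y_s^2$ modulo $2$ (assuming $d$ is a unit; the case when $2\mid d$ is ruled out by a positivity/rank argument) turns the data into an identity of symmetric $\F_2$-bilinear forms, while the orthogonality $U_\Lambda\perp U'$ modulo $2$ rigidifies the embedding $U_{\F_2}\hookrightarrow V_{\F_2}$. For a cographic target realized by $E\to H_1(G,\Z)^\ast$, the mod-$2$ forms $x_e^2$ encode the incidence/cocycle structure of $G$, and the plan is to extract a combinatorial mod-$2$ obstruction---for instance a parity invariant of $\sum_{s\in S}Q_s$ or of subsums indexed by circuits of $\uR$---that vanishes identically on cographic matroids but is nonzero on $M(K_5)$ and on $M(K_{3,3})$.

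The main obstacle will be precisely this last $\F_2$-obstruction on the two base matroids. Although $M(K_5)$ and $M(K_{3,3})$ are small and explicit, the space of possible splittings ranges over all graphs $G$, all odd levels $d$, all embeddings $U_{\F_2}\hookrightarrow V_{\F_2}$ and all complements $U'$, so one cannot conclude by brute force. The crux is therefore to isolate an intrinsic $\F_2$-invariant of the pair $(\uR,S)$ that is computable from the splitting data on one side and detects the excluded minors on the other; showing that this invariant is compatible with the minor reductions of the preceding step is where the bulk of the combinatorial work will lie.
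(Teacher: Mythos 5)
Your high-level strategy coincides with the paper's: reduce via Tutte's excluded-minor theorem to the two matroids $M(K_5)$ and $M(K_{3,3})$, after establishing closure under minors. But the proposal is missing the one idea that makes this strategy executable, and you flag the gap yourself: the ``intrinsic $\bF_2$-invariant'' that detects the excluded minors is never constructed. The paper's solution is to introduce an intermediate combinatorial object, the Albanese graph $\Alb_{\ell^r,\ell^j}(\uR)$ (Definition \ref{def:Alb}), together with the notion of an $\ell^{i}$-indivisible $\Lambda$-solution (Definition \ref{def:solution}). A quadratic $\Z_{(2)}$-splitting of level $d$ in a graph $G$ is first converted into characteristic $1$-chains $b_s$ on $G$ --- this is where the positivity of $\Z_{(2)}\subset\R$ enters, via Lemma \ref{lem:elementary} and Proposition \ref{prop:multiple-y-x} --- and the universal property of the Albanese graph (Proposition \ref{prop:universal-property-Alb}) then pushes these forward to the \emph{fixed finite} graph $\Alb_{\ell^r,\ell^j}(\uR)$; a further homotopy argument (Theorem \ref{thm:reduction-to-2-solution}) strips off the $2$-adic valuation of $d$. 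After this reduction the quantification over all graphs $G$, all levels $d$, all embeddings and all complements $U'$ disappears: one only has to check that every $\bF_2$-solution in the single graph $\Alb^{\red}_{2,1}(M(K_5))$, resp.\ $\Alb^{\red}_{2,1}(M(K_{3,3}))$, is $2$-divisible, which is a finite rank computation over $\bF_2$ (Proposition \ref{prop:K_5-K_3,3}). So the brute force you correctly say is unavailable over all $G$ does become available, but only after the universal reduction you have not supplied; without it the proof does not close.

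A second, smaller issue: you carry out minor-closedness at the level of quadratic splittings, and the contraction case does not obviously work as described. For $\uR/s$ the realization lives on $\ker(y_s)\subset U$, and you propose to cut $V$ down to $V\cap\ker Q_s$ and contract the edges $e$ with $a_{se}>0$. But orthogonality of $U_\Lambda\oplus U'$ with respect to $Q_s$ does not force $Q_s|_{U'}=0$, so $U'$ need not lie in the radical of $Q_s$, and it is unclear that the homology of the modified graph receives the required embedding of $(\ker y_s)_\Lambda$ together with an orthogonal complement for all remaining colors simultaneously. The paper sidesteps this by proving minor-closedness for solutions in Albanese graphs instead (Proposition \ref{prop:closed-under-minors}), where deletion and contraction of the matroid correspond to explicit projections and inclusions of Albanese tori; that is the form of the statement that is actually needed and actually provable.
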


In contrast to the above result, there are many non-cographic matroids, such as 
$M(K_5)$, $M(K_{3,3})$, and $\uR_{10}$, 
that admit quadratic $\Z_{(\ell)}$-splittings in cographic matroids for all odd primes $\ell$, see Remark \ref{rem:K5,K33,R10-ell-odd} below.
Moreover, we have the following general result.

\begin{theorem}\label{thm:splitting-in-cographic=cographic-intro-positive-result}
Let $(\underline R,S)$ be a regular matroid of rank $g$ and with integral realization $S\to U^\ast$. 
If $\ell\geq g$ is a prime, then $(\underline R,S)$ admits a  quadratic $\Z_{(\ell)}$-splitting of some level $d$ in a cographic matroid.
\end{theorem}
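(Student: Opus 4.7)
The plan is to proceed by structural induction on $\uR$ using Seymour's decomposition theorem: every regular matroid is an iterated $1$-, $2$-, or $3$-sum of graphic matroids, cographic matroids, and copies of the sporadic rank-$5$ matroid $\uR_{10}$. The first step is to verify that the property of admitting a quadratic $\Z_{(\ell)}$-splitting of some level in a cographic matroid is preserved under each of these three sum operations. Given splittings of $(\uR_1, S_1)$ and $(\uR_2, S_2)$ at levels $d_1$ and $d_2$ in cographic matroids $M^*(G_1)$ and $M^*(G_2)$, I would build a splitting of the $k$-sum in $M^*(G)$, where $G$ is obtained from $G_1, G_2$ by the graph operation dual to the matroid $k$-sum (vertex identification, edge identification, or triangle identification), and the weights are combined accordingly; the resulting level is a bounded multiple of $\mathrm{lcm}(d_1, d_2)$. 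This reduces the problem to the three base classes.

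The cographic case is trivial: take $V = U$ with the identity embedding, $U' = 0$, and level $d = 1$. For $\uR_{10}$, the hypothesis $\ell \geq g$ forces $\ell \geq 5$; I would exploit the transitive action of $\mathfrak S_5$ on $\uR_{10}$ to realize it inside a symmetric cographic matroid (for instance $M^*(K_6)$) by averaging a single asymmetric embedding over the group action. The resulting level divides $5!$, which is a unit in $\Z_{(\ell)}$ for $\ell \geq 7$, with an ad hoc verification at $\ell = 5$.

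The principal content is the graphic case: $\uR = \uM(G_0)$ with $g = |V(G_0)| - 1$, so that $U^*$ is the reduced vertex lattice $\Z^{V(G_0)} / \Z$ and $y_e = [v_+^e] - [v_-^e]$. The goal is to build a graph $G$ such that the dual realization $U \hookrightarrow H_1(G)_{\Z_{(\ell)}}$ exists, together with complementary $\Z_{(\ell)}$-module $U'$ satisfying Definition~\ref{def:d-Lambda-splitting-general}. I would construct $G$ from $G_0$ by a graph-theoretic enlargement — for instance, a two-vertex cone where each vertex $v \in V(G_0)$ is connected by multiple edges to two auxiliary vertices $p, q$, augmented by edges encoding the edges of $G_0$ — so that each edge functional $y_e \in U^*$ lifts, after choosing appropriate weights $a_{se}$ and signs $c_{se}$, to the required combination of coordinate functionals on $H_1(G)$.

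The main obstacle is verifying the existence of the complement $U' \subset H_1(G)_{\Z_{(\ell)}}$ that is $Q_s$-orthogonal to $U$ for every $s \in S$ simultaneously. As in the discussion of Section~\ref{subsec:obstruction}, this reduces to requiring that the induced linear forms $L_s \in H_1(G)^*$ satisfy the same $\Z_{(\ell)}$-linear dependencies as the $y_s \in U^*$; equivalently, to the invertibility modulo $\ell$ of a determinant that factors as a product of integers bounded in absolute value by $g$. The hypothesis $\ell \geq g$ ensures that every integer in $\{1, \ldots, g-1\}$ is a unit in $\Z_{(\ell)}$, which is precisely what closes the argument; the borderline case $\ell = g$ (with $g$ prime) is handled by choosing the cone construction so that the single potential factor of $g$ does not appear in the denominator.
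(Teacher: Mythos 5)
Your plan is a genuinely different route from the paper's. The paper's proof is geometric and short: via the Mumford construction one realizes $(\uR,S)$ as the matroid of a matroidal family of $g$-dimensional principally polarized abelian varieties; the cycle $\Theta^{g-1}$ represents $(g-1)!$ times the minimal class on the very general fibre, and since every prime factor of $(g-1)!$ is $<g\le \ell$, this is an $\ell$-prime multiple, so Theorem \ref{thm:algebraic->d-QE} produces a quadratic $\Z_{(\ell)}$-splitting of some level in a graph, hence (Remark \ref{rem:Lambda-splitting-graph-versus-general}) in a cographic matroid. You have correctly located where the hypothesis $\ell\ge g$ must enter (invertibility of all integers $<g$), but your proposed combinatorial substitute via Seymour's decomposition is a plan rather than a proof, and each of its three main steps has a genuine gap.

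Concretely: (i) the preservation of quadratic $\Z_{(\ell)}$-splittings under $2$- and $3$-sums is asserted, not proved. For a $3$-sum the ground set omits the common triangle, the realization lattice $U$ of the sum is not $U_1\oplus U_2$, and one must construct a single complement $U'$ that is $Q_s$-orthogonal for all $s$ simultaneously in the glued graph; this is exactly the kind of step where such inductions fail, and nothing in your sketch addresses it. (ii) The graphic case, which you yourself call the principal content, is reduced to ``the invertibility modulo $\ell$ of a determinant that factors as a product of integers bounded by $g$'' without exhibiting the graph $G$, the weights $a_{se}$, the lift of the $y_e$, or the determinant in question; this unverified claim is the entire difficulty, not a detail to be filled in. (iii) For $\uR_{10}$, averaging an embedding over the automorphism group need not yield an embedding, and the orthogonality conditions in Definition \ref{def:d-Lambda-splitting-general} are quadratic in the data, so they are not preserved by averaging; the claim that the resulting level divides $5!$ is unsupported. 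It is telling that the paper's own Remark \ref{rem:K5,K33,R10-ell-odd}, which treats $M(K_5)$, $M(K_{3,3})$ and $\uR_{10}$ for odd $\ell$, falls back on geometry (Prym varieties) rather than a direct combinatorial construction. As it stands the proposal does not constitute a proof.
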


\subsection{Outline} \label{subsec:outline} 
As aforementioned, our main results follow from Theorem \ref{thm:matroidal-intro}, which itself reduces to 
Theorems \ref{thm:reduction-to-combinatorics-intro} and \ref{thm:splitting-in-cographic=cographic-intro}:~To prove Theorem \ref{thm:matroidal-intro}, assume contrapositively
that $m$ is odd.  
Then Theorem 
\ref{thm:reduction-to-combinatorics-intro} 
implies that there is a $\Z_{(2)}$-splitting of $\uR$
of some level $d$ in a cographic matroid. Hence, by Theorem \ref{thm:splitting-in-cographic=cographic-intro}, $\underline R$ is cographic. 

\subsubsection{Outline of the proof of Theorem \ref{thm:reduction-to-combinatorics-intro}}
In the set-up of Theorem \ref{thm:reduction-to-combinatorics-intro}, we choose a smooth partial compactification $B^\star\subset B$ such that $H=B\setminus B^\star$ is an snc divisor on $B$, $(\Delta^\star)^S\subset B^\star$ extends to an embedding $\Delta^S\subset B$, and $H \cap \Delta^S$ is given by the vanishing of the coordinate functions.
We further perform the necessary 
base change $\tau\colon B'\to B$ 
over which the aforementioned curve $C_t$ spreads out.
Here we want $B'$ to be regular and the preimage of the 
coordinate axes to be snc 
(approaches with 
singular $B'$ and finite $\tau$ were unsuccessful). 
In particular, $\tau$ will not be finite in general.
In fact, $\tau^{-1}(0)$ can be assumed to be an snc 
divisor, which implies that we do not have a single 
limit curve $C_0$ at our disposal anymore.
This poses serious technical difficulties. 
For instance, using the universal family over $\overline{\mathcal M}_g$, we may after further blow-up assume that $C_t$ extends to a nodal family over $B'$. But this only allows one to deduce a ``patch'' version of the notion in Definition \ref{def:d-Lambda-splitting-general}---that is, a weaker version
corresponding to patching together the monodromy information we obtain at the deepest strata of $\tau^{-1}(0)$. 
However, there are examples that show that this weaker 
combinatorial property is not enough to conclude 
what we want and that, moreover, the regularity of $\underline R$ has to play an important role. 
The crucial idea to circumvent this issue is to first control the limits of the abelian varieties and then to produce a family of curves 
in such a way that the limits of the curves 
are closely related to 
the limits of the abelian varieties. 
This provides enough control of the 
combinatorics of the 
singular curves, allowing 
us to deduce 
Theorem \ref{thm:reduction-to-combinatorics-intro}. 

The key starting input, 
exploiting the regularity of $\underline R$,
is \cite[Theorem 7.1]{survey},
which produces a regular, flat 
extension $\pi\colon X\to B$ of the 
 matroidal family $\pi^\star\colon 
X^\star\to B^\star$, such that 
$\pi$ is {\it $H$-nodal}, see Definition
\ref{def:D-nodal}. 
We perform a detailed analysis of a 
specific resolution 
$X''\to X'=X\times_BB'$ which is 
semistable over $B'$
in Section \ref{sec:admissible-resolutions}.  
In Section \ref{sec:algebraicity->d-QS}, we replace $C_t$ by 
a particularly nice representative of some $\ell$-prime multiple of the minimal class which is a complete intersection in some projective bundle over $X''$.
We then use the resolution $X'' \to X'$ to construct 
a graph $G$ via a  careful gluing procedure of pieces of the dual graphs
of the limits of the curve $C_t$ over certain snc strata of $\tau^{-1}(0)$. 

The construction of $G$ begins by  
relating the dual graph of each
limiting nodal curve to the $1$-skeleton of the dual complex of the corresponding 
fiber of $X''\to B'$.
By the explicit resolution $X''\to X'$, 
we relate that $1$-skeleton to the $1$-skeleton of 
the dual complex of the respective fiber of $X'\to B'$; 
the latter is constant along $\tau^{-1}(0)$, because $X'=X\times_BB'$ is a fiber product.
This allows us to construct $G$ via 
``edge multiplication'' from the $1$-skeleton $\Gamma^1(X_0)$ of the dual complex of $X_0$. 
Our analysis shows that $\uR$ admits a quadratic $\Z_{(\ell)}$-splitting of some level $d$ into the cographic matroid $M^*(G)$. 

\subsubsection{Outline of the proof of Theorem \ref{thm:splitting-in-cographic=cographic-intro}} 
In Section \ref{sec:splitting->solution}, we show that a quadratic $\Z_{(\ell)}$-splitting of level $d$ of $(\uR,S)$ in the cographic matroid $M^*(G)$ 
yields {\it $\Z_{(\ell)}$-solutions} in 
$G$; 
these are certain linear combinations of edges of $G$
satisfying linear conditions dictated by $\uR$.
We then define  
Albanese graphs $\Alb_{\ell ^r,\ell ^j}\coloneq \Alb_{\ell ^r,\ell ^j}(\underline R)$  associated to $(\underline R,S)$. 
These Albanese graphs satisfy 
a universal property,  
yielding a combinatorial Albanese map 
$\alb \colon G\to \Alb_{\ell ^r,\ell ^j}$ 
along which we can push forward a given solution. 
In this way, we also obtain $\Z_{(\ell)}$-solutions in the graphs $\Alb_{\ell ^r,\ell ^j}$.
Applying $\otimes_{\Z_{(\ell)}}\Lambda$, we obtain such solutions for any $\Z_{(\ell)}$-algebra $\Lambda$.

In Section \ref{sec:l^j-indivisible->l-indivisible}, 
we study divisibility of solutions 
to go from 
$\ell^i$-indivisible $\Lambda$-solutions in 
$\Alb_{\ell ^r,\ell ^j}$ to $\ell ^{i-j}$-indivisible 
$\Lambda$-solutions in $\Alb_{\ell^{r-j},1}$. 
Applying this to the case $r=j+1$ 
and $\ell=2$, it then suffices to show that the existence of $\Lambda$-solutions in $\Alb_{2,1}$ implies that $\underline R$ is cographic.
In Section \ref{sec:2-dindivisible->cographic}, we show 
that the property of having an 
$\ell^{i}$-indivisible $\Lambda$-solution in 
$\Alb_{\ell ^r,\ell ^j}$ is closed 
under taking minors.
By a theorem of Tutte, the class of cographic matroids 
can be described as those regular matroids that 
do not have $M(K_5)$ and $M(K_{3,3})$ as minors.
We will use this to reduce Theorem \ref{thm:splitting-in-cographic=cographic-intro}
to showing that those two 
excluded minors do not have $2$-indivisible 
$\Z/2$-solutions in $\Alb_{2,1}$.
This in turn reduces to an explicit rank 
computation of certain matrices, see Proposition \ref{prop:K_5-K_3,3}.

\begin{remark}
Recall that the integers $a_{se}$ in Definition \ref{def:d-Lambda-splitting-general} are non-negative; Theorem \ref{thm:splitting-in-cographic=cographic-intro} fails if one drops that assumption.
This explains the choice of coefficients $\Z_{(\ell)}$ (opposed to $\Z_\ell$, say), because $\Z_{(\ell)}\subset  \R$ and we will exploit this in the proof, see Theorem \ref{thm:d-QE->solutions} and Lemma \ref{lem:elementary} below.  
\end{remark}

\subsection{Acknowledgements}
We thank Olivier Benoist, Yano Casalaina--Martin, and \linebreak
Evgeny Shinder for useful conversations and comments. 
PE was partially supported by NSF grant DMS-2401104.
OdGF and StS have received funding from the European Research Council (ERC) under the
European Union’s Horizon 2020 research and innovation programme under grant agreement
N\textsuperscript{\underline{o}} 948066 (ERC-StG RationAlgic). 
OdGF has also received funding from the ERC Consolidator Grant FourSurf N\textsuperscript{\underline{o}} 101087365. 
The research was partly conducted in the framework of the DFG-funded research training group RTG 2965: From Geometry to Numbers, Project number 512730679. 

\section{Preliminaries}

\subsection{Conventions}
\subsubsection{Schemes and analytic spaces}
We work, if not mentioned otherwise, 
over the field of complex numbers. 
We frequently identify a complex algebraic scheme or variety with the corresponding complex analytic space.
If $Y\to B$ and $B'\to B$ are morphisms of 
complex analytic spaces, 
we denote the corresponding base change
by $Y_{B'}\coloneq Y\times_BB'$.
We denote by 
$\Delta\subset \C$ the open unit disc.

An snc divisor $D$ on a smooth complex analytic space $X$ is a divisor which is locally isomorphic to the union of some coordinate hyperplanes in $\Delta^n$.
If $D_i$ with $i\in I$ denote the components of $D$, then the strata of $D$ are the intersections $D_J\coloneq \bigcap_{j\in J}D_j$ for $J\subset I$; the respective open stratum is given by $D_J^\circ=D_J\setminus \bigcup_{i\in I\setminus J}D_i$.

\subsubsection{Linear algebra}
If $U$ is a free $\Z$-module and $\Lambda$ is a ring, 
then $U_\Lambda\coloneqq U\otimes_{\Z} \Lambda$ denotes the tensor product.

Let $\ell$ be a prime and let $x$ 
be an element of an abelian group.
An {\it $\ell$-prime multiple} of $x$ is a 
multiple $m\cdot x$ such that $m\in \Z$ is 
coprime to $\ell$.
For instance, $2$-prime multiples are 
nothing but odd multiples of $x$.  

Let $V$ be a free $\Z$-module of finite rank and let $l_i\in V^\ast=\Hom(V,\Z)$ be linear forms. 
Then we will often
denote the bilinear form 
$(x,y)\mapsto \sum a_i l_i(x)l_i(y)$ on $V$ 
by $Q=\sum a_i l_i^2$ and identify it with 
the attached quadratic form. 

\subsubsection{Graphs} \label{subsec:convention:graphs}
All graphs are finite.
An {\it orientation} of a graph $G$ 
is an orientation of its edges.
If not mentioned otherwise, then the 
chain complex $C_\ast(G,\Lambda)$ and the homology
$H_\ast(G,\Lambda)$  of a graph $G$ with coefficients 
in some ring $\Lambda$ refers to the simplicial 
chain complex, resp.\ simplicial homology.

Let $S$ be a set.
A {\it partial $S$-coloring} of a graph $G$ is a partition 
$\sqcup_{s\in S}E_s\subset E$ of a subset of the edges of $E$.
Edges in $E_s$ are called edges of \emph{color} $s$; edges in 
$E\setminus \sqcup_{s\in S}E_S$ are \emph{colorless} edges.
An {\it $S$-coloring} of a graph $G$ is a 
partial $S$-coloring without colorless edges, 
i.e.\ with $\sqcup_{s\in S}E_S=E$.
A \emph{morphism} of (partially) $S$-colored graphs is a map 
of graphs that respects colors, i.e.\ it maps 
edges of color $s$ to edges of color $s$ and 
it maps colorless edges to colorless edges.  
An $(I,S)$-\emph{bicolored} graph is a graph $G$ which 
carries colorings with respect to two sets 
$I$ and $S$. A \emph{morphism} of $(I,S)$-bicolored 
graphs is a morphism which respects both colorings.

Let $G$ be an $S$-colored graph. 
We say that a $1$-chain $\alpha\in C_1(G,\Lambda)$ 
has \emph{color} $s$ if it is a linear combination 
of (oriented) edges of color $s$.
Any $1$-chain $\alpha\in C_1(G,\Lambda)$ 
can uniquely be written as a sum 
$\alpha=\sum_{s\in S}\alpha_s$ of $1$-chains 
$\alpha_s$ of color $s$.
The $1$-chain $\alpha_s$ will be 
referred to as the {\it $s$-colored part} of $\alpha$. 

\subsubsection{Cell complexes} \label{subsubsec:cell-complexes}
We frequently identify a (finite) cell or 
polyhedral complex
of dimension $1$ with the associated graph and 
vice versa. 
A {\it refinement} of a graph $G$ is a 
refinement of the associated cell complex, i.e.\ it 
is the graph obtained from $G$ by replacing certain 
edges by chains of edges. 

Let $X\coloneq Y_p$ be a fiber of $D$-semistable or a 
$D$-quasi-nodal morphism $Y\to B$, 
see Definitions \ref{def:D-nodal} and 
\ref{def:D-quasi-nodal} below.
In the former case, $X$ analytically-locally
has, by the local form \eqref{d-snc-form} in Definition \ref{def:D-nodal}, 
product-of-snc
singularities $\prod_{i\in I'} 
\{x_i^{(1)}\cdots x_i^{(m_i)}=0\}$.  
In the latter case, $X$ has singularities which
are a product of nodal singularities 
$\{x_sy_s=0\}$ 
with a smooth variety.

Let $X^\nu\coloneq X^{[0]}$ be the normalization,
and more generally, $X^{[k]}$ be the normalization
of all codimension $k$ Whitney strata of $X$.
 If $X$ is a fiber of a strict
 $D$-semistable morphism (cf.\ Remark \ref{rem:strict}), we may write
 $X=\bigcup_{j\in J} X_j$ as a union 
 of smooth components, with smooth 
 intersections $X_{J'}\coloneq \bigcap_{j\in J'} X_j$
 for $J'\subset J$. We refer to the irreducible
 components of these 
 intersections as the {\it strata} of $X$
 and they form the irreducible components
 of $X^{[k]}$, though note that in general,
 $k\neq |J|$. 

The {\it dual complex} $\Gamma(X)$
is a polyhedral complex
whose $k$-dimensional polyhedral cells
are in bijection with the components of $X^{[k]}$,
and which are glued according to the incidences
of strata. For example, for the snc variety
$x^{(1)}\cdots x^{(m)}=0$, the dual complex
is an $(m-1)$-simplex, and for the product of snc
varieties $\prod_{i\in I'} 
\{x_i^{(1)}\cdots x_i^{(m_i)}=0\}$, the dual
complex is a product of $(m_i-1)$-simplices, 
with its natural
product polyhedral structure.
Thus, in the case of a $D$-quasi-nodal morphism,
the dual complex is a gluing of {\it cubes}, 
i.e.~products of $1$-simplices, along various faces.

We denote the {\it $k$-skeleton} $\Gamma^{k}(X)\subset \Gamma(X)$ of $\Gamma(X)$ as the union of all polyhedral faces of dimension $\leq k$.
In particular, 
$\Gamma^{1}(X)\subset \Gamma(X)$ is a graph,
encoding the incidences of the irreducible
components and double loci of $X$. 
We observe that the above constructions work
more generally, for any toroidal morphism $Y\to B$;
but, in general, the polyhedral cells are 
more complicated (and we will not need this).

\subsubsection{Matroids} \label{subsubsection:matroids}
A matroid $\underline R$ is a pair consisting of a finite set $S$, called the ground set of $\underline R$, and a subset $\mathcal I\subset \mathcal P(S)$ of its power set, whose elements are called the independent sets of $\underline R$, satisfying $\emptyset\in \mathcal I$, the downward-closed property, and the independent set exchange property, see \cite{oxley-matroids}.
If we want to highlight the ground set, we will also write $(\underline R,S)$ instead of $\underline R$.  
An {\it integral realization} is a map $S\to V$ 
to a free $\Z$-module $V$, which induces a realization of  $\uR$ over any field $k$, i.e.~over $V_k$ 
for $k$ arbitrary.
Equivalently, any subset of $S$ generates 
a saturated sublattice of $V$.  
In particular,
a matroid with an integral realization is 
{\it regular}---it
admits a realization over any field.
Conversely, every regular matroid admits an integral
realization by a {\it totally unimodular matrix}, 
i.e.~a matrix all of whose minors have 
determinant $\{0, \pm 1\}$,
see \cite[Theorem 6.6.3]{oxley-matroids}.
 
We always assume that 
the rank of $\uR$ agrees with the rank of $V$. In other words,
the image of $S$ generates $V$ as a $\Z$-module; 
this implies that the dual map $V^\ast \to \Z^S$ is injective.  
In this paper, the free $\Z$-module $V$ 
frequently occurs as the dual of another free $\Z$-module.

\subsection{Integral realizations and loopless matroids}

\begin{lemma} \label{lem:matrix=TU}
Let $(\underline R,S)$ be a regular matroid of rank $g$ with integral realization $S\to V$.
Let $B\subset S$ be a basis of $\underline R$.
Then the matrix $(\mathds 1_g|D)$ that represents the linear map $\Z^S\to V$ in the basis of $V$ defined by $B$ is totally unimodular.
\end{lemma}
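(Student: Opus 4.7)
The plan is to reduce the problem to a linear-algebra fact about matrices of the form $(I_g\mid D)$, combined with the saturation property built into the definition of an integral realization.

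First, I would justify that the map $\Z^S\to V$ really has a matrix of the shape $(\mathds 1_g\mid D)$ in the bases under consideration. Because $B\subset S$ is a basis of $\uR$ and $|B|=g=\rk V$, the vectors $(y_b)_{b\in B}$ are linearly independent over $\Q$ and hence form a $\Q$-basis of $V_\Q$. By the integral realization hypothesis (Subsection~\ref{subsubsection:matroids}), the $\Z$-span of $(y_b)_{b\in B}$ is a saturated sublattice of $V$; being of full rank $g$, it must equal $V$. Thus $(y_b)_{b\in B}$ is a $\Z$-basis of $V$, and in this basis, together with the basis of $\Z^S$ induced by ordering $B$ first, the map $\Z^S\to V$ has the asserted form $(\mathds 1_g\mid D)$ with $D$ an integer matrix.

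Next, I would verify that every $g\times g$ minor of $(\mathds 1_g\mid D)$ lies in $\{0,\pm 1\}$. Such a minor is indexed by a subset $T\subset S$ with $|T|=g$, and its value is the determinant of the transition matrix expressing $(y_t)_{t\in T}$ in the basis $(y_b)_{b\in B}$. If $T$ is not a basis of $\uR$, the family $(y_t)_{t\in T}$ is $\Q$-linearly dependent, so this determinant vanishes. If $T$ \emph{is} a basis of $\uR$, then the same saturation argument as above shows that $(y_t)_{t\in T}$ is itself a $\Z$-basis of $V$, so the change-of-basis matrix is unimodular and its determinant is $\pm 1$.

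Finally, I would invoke the standard linear-algebra fact that for a matrix of the block form $A=(\mathds 1_g\mid D)$, total unimodularity is equivalent to all $g\times g$ minors lying in $\{0,\pm1\}$. Indeed, by Laplace expansion along the columns belonging to the $\mathds 1_g$-block, any $k\times k$ minor of $A$ equals $\pm$ a minor of $D$, and conversely every $k\times k$ minor of $D$ appears, up to sign, as a $g\times g$ minor of $A$ obtained by selecting the appropriate $g-k$ columns of $\mathds 1_g$ together with $k$ columns of $D$. Hence the two conditions coincide, and the verification of the preceding paragraph completes the proof.

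There is no real obstacle: the whole argument is a careful unpacking of the saturation condition in the definition of integral realization, together with the elementary block-matrix observation. The only conceptual point worth highlighting is that the entire strength of the statement—namely total unimodularity of $(\mathds 1_g\mid D)$, rather than merely integrality—comes from the fact that the saturation condition applies to \emph{every} subset of $S$, not only to the chosen basis $B$.
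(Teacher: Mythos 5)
Your proof is correct, and it shares the paper's key input (saturation of the sublattice generated by any subset of $S$ forces every maximal minor into $\{0,\pm1\}$) but handles the non-maximal minors by a genuinely different mechanism. The paper reduces a non-maximal square submatrix to the contraction $\uR/s$: such a submatrix misses some row $s\in B$, so it either contains the (then zero) $s$-th column, or it lives inside the matrix $(\mathds 1_{g-1}\mid D')$ realizing $\uR/s$, and one concludes by induction on the matroid. You instead invoke the standard block-matrix fact that for $A=(\mathds 1_g\mid D)$ every square minor of $A$ is, up to sign, either zero or a minor of $D$, and every minor of $D$ reappears up to sign as a maximal minor of $A$; this collapses total unimodularity to the statement about $g\times g$ minors, which you then verify directly from saturation (zero when $T$ is dependent, $\pm1$ when $T$ is a basis, since then $(y_t)_{t\in T}$ is a $\Z$-basis of $V$). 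The two reductions are essentially Laplace expansion in two different guises — the paper's induction packages it matroid-theoretically, yours keeps it in linear algebra — and your version has the small added merit of also justifying why the matrix has the shape $(\mathds 1_g\mid D)$ in the first place, i.e.\ that $(y_b)_{b\in B}$ is an integral basis of $V$. One cosmetic caveat: a $k\times k$ minor of $A$ involving an identity column that vanishes on the selected rows is $0$ rather than literally ``$\pm$ a minor of $D$,'' but this does not affect the equivalence you need.
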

\begin{proof}
Since $S\to V$ is an integral realization, any subset of $S$ generates a saturated sublattice of $V$.
It follows that all maximal minors are contained in $\{0,\pm 1\}$.
Let now $M$ be a non-maximal square 
submatrix of $(\mathds{1}_g|D)$.
We may assume that $M$ does not contain a zero column.
Then, we may extend $M$ to a maximal square
submatrix, by adding rows and columns 
corresponding to an identity submatrix
of the $\mathds{1}_g$ block.
This leaves the determinant of $M$ the same
up to sign, and so reduces to the earlier case.
\end{proof}

We discuss now uniqueness of integral realizations.

\begin{lemma} \label{lem:unique-integer-realization}
Let $(\underline R,S)$ be a regular matroid with  integral realizations $\varphi_1:S\to V_1$ and $\varphi_2:S\to V_2$.
Let $g=\rk V_i$ be the rank of $\underline R$.
Then there is a commutative diagram  
$$
\xymatrix{
\Z^S\ar[r] \ar[d]^\cong &V_1\ar[d]^{\cong}\\
\Z^S\ar[r] & V_2
}
$$
where the vertical maps are isomorphisms of $\Z$-modules, and the horizontal maps are the linear maps induced by $\varphi_1$ and $\varphi_2$, respectively.
More precisely, if we choose a basis $B\subset S$ of $\underline R$ and consider the associated matrix $(\mathds{1}_g|D_i) \in \Z^{g\times S }$ that represents $\Z^S\to V_i$ in the given basis, then $(\mathds{1}_g|D_2)$ can be obtained from $(\mathds{1}_g|D_1)$ by multiplying some rows and columns by $-1$.
\end{lemma}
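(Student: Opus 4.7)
The plan is to reduce the lemma to a classical uniqueness result for signings of totally unimodular matrices with prescribed support, a version of Camion's signing theorem. Fix a basis $B \subset S$ and, for $i \in \{1,2\}$, let $M_i = (\mathds{1}_g \mid D_i)$ be the matrix associated to $\varphi_i$ as in Lemma \ref{lem:matrix=TU}. By that lemma, $M_i$ is totally unimodular, so every entry of $D_i$ lies in $\{0, \pm 1\}$. The first step is to observe that the support of $D_i$ depends only on $\uR$: from $\varphi_i(e) = \sum_{b\in B}(D_i)_{b,e}\,\varphi_i(b)$ one sees that $\{\varphi_i(s) : s \in (B\setminus\{b\})\cup\{e\}\}$ is a $\Z$-basis of $V_i$ if and only if $(D_i)_{b,e} = \pm 1$. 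Hence $(D_i)_{b,e}\neq 0$ if and only if $(B\setminus\{b\})\cup\{e\}$ is a basis of $\uR$, so $D_1$ and $D_2$ have the same support.

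Next, I reduce the lemma to finding suitable sign changes. Simultaneously flipping row $b$ and column $b$ of $M_i$ (for $b\in B$) preserves the identity block while negating the $b$-th row of $D_i$, and flipping column $e$ of $M_i$ (for $e\notin B$) negates the $e$-th column of $D_i$. So it suffices to produce $\tau_b, \tau_e \in \{\pm 1\}$ with $(D_2)_{b,e} = \tau_b\,(D_1)_{b,e}\,\tau_e$ at every nonzero entry. Let $H$ be the bipartite support graph on $B\sqcup(S\setminus B)$, with an edge $(b,e)$ whenever $(D_i)_{b,e}\neq 0$, and define $\rho\colon E(H)\to \{\pm 1\}$ by $\rho(b,e) = (D_1)_{b,e}(D_2)_{b,e}$. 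Such $\tau$ exist if and only if $\rho$ is a coboundary on $H$, equivalently if and only if $\prod_{(b,e)\in C}\rho(b,e) = +1$ for every cycle $C$ in $H$.

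Finally, this cycle condition follows from total unimodularity. For a chordless cycle $C$ of length $2k$ in $H$, the $k\times k$ submatrix of $D_i$ on the vertices of $C$ has exactly two nonzero entries per row and per column. Its determinant is a signed sum of two permutation products, each lying in $\{\pm 1\}$; the TU bound $|\det| \leq 1$ forces these two products to cancel, which constrains $\prod_{(b,e)\in C}(D_i)_{b,e}$ to a specific value determined only by the support and the orientation of $C$. Since this value agrees for $i = 1$ and $i = 2$, we obtain $\prod_C \rho = +1$. For cycles with chords one inducts on the length by splitting the cycle into two shorter cycles along a chord. The hard part will be making this inductive analysis rigorous; alternatively, it is a direct instance of Camion's signing theorem for totally unimodular matrices and can be cited from the matroid literature (e.g.\ Oxley's \emph{Matroid Theory}).
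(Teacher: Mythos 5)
Your argument is correct and follows essentially the same route as the paper: fix a basis of $\uR$, invoke Lemma \ref{lem:matrix=TU} to obtain totally unimodular matrices $(\mathds{1}_g|D_i)$, show that $D_1$ and $D_2$ have the same support, and then conclude via the uniqueness of signings of a totally unimodular matrix (Camion's theorem). The paper simply cites \cite[Proposition 6.4.1]{oxley-matroids} for the equality of supports and \cite[Lemma 13.1.6]{oxley-matroids} for the signing step, whereas you sketch self-contained proofs of both; your sketches (basis exchange for the support, and the two-matchings/cycle-space argument for the signing) are sound.
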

\begin{proof} 
By Lemma \ref{lem:matrix=TU}, the choice of a basis $B\subset S$ of $\underline R$ allows us to represent the linear maps $\Z^S\to V_1$ and $\Z^S\to V_2$, induced by $\varphi_1$ and $\varphi_2$, by totally unimodular matrices $(\mathds{1}_g|D_1)$ and $(\mathds{1}_g|D_2)$ for $D_i\in \Z^{g\times (S\setminus B)}$, respectively. 
By \cite[Proposition 6.4.1]{oxley-matroids}, $D_1^{\sharp}=D_2^{\sharp}$, where $D_i^{\sharp}$ is the matrix whose entries are the absolute values of the respective entries of $D_i$.
By \cite[Lemma 13.1.6]{oxley-matroids}, $D_2$ can thus be obtained from $D_1$ by multiplying some rows and columns by $-1$.
This implies that we can obtain $(\mathds{1}_g|D_2)$ from $(\mathds{1}_g|D_1)$ by multiplying some rows and columns by $-1$.
\end{proof}

We say that $(\underline R,S)$ is
{\it loopless} if every 
singleton is an independent set.
If $(\underline R,S)$ is regular with integral 
realization $S\to V$, then the associated 
totally unimodular matrix $M$ has no 
zero column if and only if 
$(\underline R,S)$ is loopless. 

\begin{lemma} \label{lem:integral-realization}
If $S\to V$ is an integral realization of a loopless
regular matroid $(\underline R, S)$, then for each 
$s\in S$, the composition 
$V^\ast \to \Z^S\stackrel{\pr_s}\to \Z$ is surjective, 
where $\pr_s\colon \Z^S\to \Z$ denotes the projection 
to the $s$-th coordinate. 
\end{lemma}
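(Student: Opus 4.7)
The strategy is to exploit the looplessness of $(\underline R,S)$ to extend $\{s\}$ to a basis of $\underline R$, and then apply Lemma \ref{lem:matrix=TU} to obtain an explicit $\Z$-basis of $V$ containing $\varphi(s)$, where $\varphi\colon S\to V$ denotes the given realization.

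First, since $(\underline R,S)$ is loopless, the singleton $\{s\}$ is independent, and by the matroid basis exchange property it extends to a basis $B\subset S$ of $\underline R$ with $s\in B$. I would then apply Lemma \ref{lem:matrix=TU} to this basis $B$: in the basis of $\Z^S$ indexed by $B$ (and in a suitable basis of $V$), the linear map $\Z^S\to V$ is represented by a totally unimodular matrix of the form $(\mathds 1_g\mid D)$. In particular, the images $\varphi(b)$ for $b\in B$ form a $\Z$-basis of $V$.

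Next, I would define $\phi\in V^\ast$ as the element of the dual basis corresponding to $\varphi(s)$, i.e.\ the unique $\Z$-linear form on $V$ satisfying $\phi(\varphi(s))=1$ and $\phi(\varphi(b))=0$ for all $b\in B\setminus\{s\}$. Then the image of $\phi$ in $\Z^S$ under the dual map is the tuple $(\phi(\varphi(s')))_{s'\in S}$, whose $s$-th coordinate equals $1$. This shows that the composition $V^\ast\to \Z^S\xrightarrow{\pr_s}\Z$ hits $1$, hence is surjective.

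The proof should be essentially immediate once Lemma \ref{lem:matrix=TU} is in hand; there is no real obstacle beyond ensuring that the chosen basis of $\underline R$ contains $s$, which is precisely where looplessness is used. (If $\{s\}$ were dependent, then $\varphi(s)=0$, and any $\phi\in V^\ast$ would project to $0$ in the $s$-th coordinate, so the hypothesis is sharp.)
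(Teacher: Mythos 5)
Your proof is correct and follows essentially the same route as the paper: both arguments hinge on Lemma \ref{lem:matrix=TU} together with looplessness to produce an element of $V^\ast$ whose $s$-th coordinate in $\Z^S$ is a unit. The paper works with an arbitrary basis of $\underline R$ and uses that the $s$-th column of the totally unimodular matrix is nonzero (hence contains a $\pm 1$), whereas you normalize by extending $\{s\}$ to a basis so that this entry is exactly $1$; this is only a cosmetic difference.
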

\begin{proof}
By Lemma \ref{lem:matrix=TU}, $\Z^S\to V$ may be represented by a totally unimodular matrix $M\in \Z^{g\times S}$.
Since $(\underline R,S)$ is loopless, $M$ has no 
column which is identically zero.
Hence, the transpose $M^t$, which represents $V^\ast \to \Z^S$, has the property that each row contains an element $\pm 1$.
The lemma follows from this property.
\end{proof}

\subsection{$D$-nodal, nearly $D$-nodal, and $D$-semistable morphisms}
\label{sec:S-nodal}
We  use the following terminology, see also \cite[Definition 5.1]{survey}.

\begin{definition}\label{def:D-nodal} 
Let $(B,D)$ be a pair of a smooth complex analytic space $B$ 
and an snc divisor $D$ with components $D_i$, $i\in I$.
We say that a morphism $f\colon Y\to B$ of complex analytic spaces is 
\begin{enumerate}
\item  {\it $D$-nodal} if locally in the analytic topology, $f$  is 
of the form 
$$\textstyle 
\prod_{i\in I'} \{u_i=x_iy_i\}\times \Delta^{j+k}\to \prod_{i\in I'}\Delta_{u_i} \times \Delta^j,
$$ 
where $I'\subset I$, $u_i$ is a local equation for
$D_i$, and $\Delta^{j+k}\to \Delta^j$ 
is the projection to the first $j$ coordinates;
\item {\it nearly $D$-nodal} if we rather have a normal form
of shape 
$$\textstyle
\prod_{i\in I'} \{u_i=x_i^{(1)}y_i^{(1)}=\cdots 
= x_i^{(m_i)}y_i^{(m_i)}\}\times \Delta^{j+k}\to 
\prod_{i\in I'}\Delta_{u_i} \times \Delta^j;
$$
\item\label{d-snc-form}
{\it $D$-semistable} if we have
$$\textstyle
\prod_{i\in I'} \{u_i=x_i^{(1)}x_i^{(2)}\cdots x_i^{(m_i)} \}
\times  \Delta^{j+k}\to \prod_{i\in I'}\Delta_{u_i} 
\times \Delta^j.
$$ 
\end{enumerate} 
If moreover the generic fiber of $Y\times_B{D_i}\to D_i$ 
has smooth components for all $i\in I$, then $f$ is 
called \emph{strict} $D$-nodal, \emph{strict} 
nearly $D$-nodal, \emph{strict} $D$-semistable, 
respectively.
\end{definition}

\begin{remark} \label{rem:strict}
The strictness condition, together with the given 
local normal forms, ensure that the components of 
$Y\times_B D_i$ are  regular and have, in particular, 
no self-intersections.
\end{remark}

Definition \ref{def:D-nodal} works equally well in the algebraic category, by requiring the given local forms, in \'etale charts.
Next, we prove 
a Bertini-type theorem for 
hyperplane sections of strict $D$-semistable morphisms.
We work in the algebraic category, as it makes the existence of a certain Zariski open subset slightly more transparent.

\begin{lemma}\label{lem:Bertini} 
Let $B$ be a smooth quasi-projective variety over an infinite field 
with snc divisor $D\subset B$ and let $Y\to B$ be a projective 
strict $D$-semistable morphism of relative dimension $g\geq 1$.  
Let  $\mathcal E=L_1\oplus\dots \oplus L_{c}$ 
be a sum of very ample line bundles $L_i$ on $Y$,
$c\leq g$, and let $P\subset B$ be a finite set of points.
Let $Z\subset Y$ be the zero locus 
of a general section of $\mathcal E$.

Then there is a Zariski open subset 
$B^\circ\subset B$ which contains $P$, such that:
\begin{enumerate}
    \item \label{item:Bertini:1}
    The base change $Z_{B^\circ}\to B^\circ$ is strict 
    $D^\circ$-semistable, where $D^\circ\coloneqq B^\circ \cap D$.
\item \label{item:Bertini:3} 
The local normal form of $Z_{B^\circ}\to B^\circ$
at a point $z\in Z_{B^\circ}$ is the same
as that of $Y_{B^\circ}\to B^\circ$ at $z$, up to
reducing the value of $k$ in item \eqref{d-snc-form} of Definition \ref{def:D-nodal}. 
\item \label{item:Bertini:2}
For $p\in B^\circ$, the positive-dimensional strata of $Z_p$ are 
in bijective correspondence with the 
strata of dimension $\geq c+1$ of $Y_p$. 
\end{enumerate} 
\end{lemma}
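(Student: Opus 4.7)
The plan is to apply a Bertini--Kleiman argument to the finitely many smooth strata of $Y$ and to those of each fibre $Y_p$, $p\in P$, and then translate the resulting transversality into the claimed $D$-semistable local normal form for $Z\to B$. By strictness, each pullback $Y\times_B D_i$ is a union of smooth divisors on $Y$, locally cut out by $\{x_i^{(l)}=0\}$ in the normal form \eqref{d-snc-form}. The \emph{strata of $Y$}, i.e.~the irreducible components of intersections of these divisors, are smooth, locally closed, and finite in number; likewise each fibre $Y_p$ carries finitely many smooth strata. Since every $L_k$ is very ample on $Y$, its restriction to the projective closure of each such stratum is very ample. Applying Kleiman's form of Bertini to these finitely many targets simultaneously, a general section $s=(s_1,\dots,s_c)\in H^0(\mathcal{E})$ has the property that $Z=\{s=0\}$ meets every stratum $W$ of $Y$ and every fibre stratum of $Y_p$ for $p\in P$ transversally: $Z\cap W$ is smooth of codimension $c$ in $W$ when $\dim W\geq c$, and empty otherwise.

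Next, I would upgrade transversality to the $D$-semistable local form. Fix $z\in Z\cap Y_p$ and choose local coordinates $(x_i^{(l)},w_1,\dots,w_{j+k})$ on $Y$ around $z$, arranged so that $z$ is at the origin (every $x_i^{(l)}$ appearing in the local form vanishes at $z$) and $u_i=\prod_l x_i^{(l)}$. The deepest stratum of $Y$ through $z$ is $W=\bigcap_{(i,l)}\{x_i^{(l)}=0\}$, with tangent space $\mathrm{span}(\partial/\partial w_a)$. Transversality of $Z$ with $W$ then reads: $df_1,\dots,df_c$ are linearly independent in $T^*_z Y/\mathrm{span}(dx_i^{(l)})$, i.e.~the matrix $(\partial f_r/\partial w_a)_{r,a}$ has rank $c$. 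The implicit function theorem eliminates $c$ of the $w_a$-coordinates, producing a local isomorphism between $Z$ and the same $D$-semistable normal form with $k$ reduced by $c$. Crucially, this preserves the singular structure $u_i=\prod_l x_i^{(l)}$, establishing items (\ref{item:Bertini:1}) and (\ref{item:Bertini:3}). At a point $z$ not in the deepest stratum of $Y_p$, the local form of $Y$ at $z$ involves fewer $x_i^{(l)}$ and more $w_a$, and the reduction of $k$ may be by less than $c$, accommodated by the ``up to reducing $k$'' clause.

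To exhibit $B^\circ$: the locus in $B$ where transversality of $Z$ with some stratum of $Y_p$, $p\in P$, fails is Zariski closed; let $B^\circ$ be its complement. Then $B^\circ$ is Zariski open and, by our generic choice of $s$, contains $P$. For item (\ref{item:Bertini:2}), the correspondence is $W\mapsto W\cap Z$. A stratum $W$ of $Y_p$ of dimension $\geq c+1$ yields $W\cap Z$ of dimension $\geq 1$, smooth by Bertini and connected (hence irreducible) by a standard Fulton--Hansen-type connectedness argument for $c$ successive very ample cuts on the projective closure $\overline{W}$. Conversely, strata of $Y_p$ of dimension exactly $c$ cut to isolated points of $Z_p$, and those of smaller dimension do not meet $Z$.

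The main obstacle I anticipate is the careful verification that the stratification on $Z$ induced by the $D^\circ$-semistable structure globally matches the stratification inherited from $Y$ via $W\mapsto W\cap Z$, with no spurious strata arising. This amounts to checking that cutting a strict $D$-semistable family by a transversal codimension-$c$ smooth subvariety produces another strict $D^\circ$-semistable family, whose strata are in natural bijection with the old ones of dimension $\geq c$; while this is clear from the local analysis above, the global identification, especially in the presence of potentially reducible fibre strata or non-projective closures, requires delicate bookkeeping.
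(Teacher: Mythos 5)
Your proposal is correct and follows essentially the same route as the paper: Bertini for transversality of $Z$ with the strata of $Y$ and of the fibres $Y_p$, $p\in P$, followed by a local coordinate elimination (the paper phrases it as an \'etale projection rather than the implicit function theorem) to recover the $D$-semistable normal form with $k$ reduced by $c$, and Zariski-openness of the locus where this persists. The one point to watch is that in the local step you must invoke transversality with the deepest stratum of the \emph{fibre} $Y_p$ (dimension $k$), not merely with the corresponding stratum of $Y$ (dimension $j+k$), so that the $c$ eliminated coordinates lie among the $k$ fibre directions and the projection to $B$ is preserved — a condition you already imposed in your first step, and which is exactly how the paper arranges its \'etale projection onto $W\times\bA^{j}\times\bA^{k-c}$.
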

\begin{proof}
Let $Y_p$ be the fiber of $Y\to B$ at a point $p\in P$.
As the ground field is infinite, 
Bertini's theorem implies that we may ensure that
\begin{enumerate}
    \item $Z$ is smooth of codimension $c$, and
    \item\label{item:lem:Bertini:transversality} 
    for each $p\in P$, $Z$ intersects
    all strata of $Y_p$ transversely. 
\end{enumerate}

In particular, $Z$ is disjoint from all
 strata of $Y_p$ of dimension 
$\leq c-1$, intersects each stratum
of dimension $c$ at a finite set
of points, and intersects each stratum
of dimension $k \geq c+1$ in a smooth irreducible
$(k-c)$-dimensional subvariety of that stratum.

Let $z\in Z_p$ and suppose that the \'etale
local form of the morphism
$Y\to B$ at $z$ is given by 
$W\times \bA^{j+k}\to \prod_{i\in I'} \bA_{u_i}\times \bA^j$, where $W\coloneq \prod_{i\in I'} 
\{u_i=x_i^{(1)}\cdots x_i^{(m_i)}\}$, cf.\ item \eqref{d-snc-form} in Definition \ref{def:D-nodal}. 
We may assume that $z$ corresponds to the origin in this chart.
That is, there is a Zariski open subset $U\subset Y$ with $z\in U$ and an \'etale map $U\to W\times \bA^{j+k}$ which sends $z$ to $(0,0)$ and which is compatible with the respective projections to $B$.

The transversality of the intersection, see \eqref{item:lem:Bertini:transversality} above, then implies $k\geq c$.
We may thus consider the composition
\begin{align} \label{eq:Bertini-composition}
Z\cap U\longhookrightarrow U\longrightarrow W\times \bA^{j}\times \bA^k\longrightarrow W\times \bA^{j} \times \bA^{k-c},
\end{align}
where the second arrow is induced by the identity on the first two factors and by the projection $\bA^k\to \bA^{k-c}$ to the first $k-c$ variables on the last factor. 
The above map is compatible with the respective projections to $B$.
The lowest-dimensional strata of $U_p$ corresponds in the above chart to $\{0\}\times \{0\}\times \bA^k$.
The fact that $Z$ intersects this stratum transversely thus implies that, up to a linear change of coordinates on $\bA^k$, we may assume that the composition in \eqref{eq:Bertini-composition} induces an isomorphism on tangent spaces at $z$.
Hence, up to shrinking $U$, we may assume that  \eqref{eq:Bertini-composition} is \'etale.
We have thus produced a Zariski open subset $U\subset Y$ with $z\in U$ such that items \eqref{item:Bertini:1}--\eqref{item:Bertini:2} hold on $U$.

Since $P$ is finite, we are able to find a finite number of \'etale charts as above that cover $Y_p$ for all $p\in P$.
From this one easily derives the existence of a Zariski open subset $B^\circ\subset B$ that has the properties in the lemma.
This concludes the proof.
\end{proof}

\begin{remark}
In this paper, we will apply the above lemma in the case $c=g-1$, so that
$Z_{B^\circ}\to B^\circ$ is a family of nodal curves, because the only local normal form in (\ref{d-snc-form}) giving a codimension one singular stratum of a fiber is $u_i=x_iy_i$.
\end{remark}

\subsection{Curves whose cohomology class is a multiple of the minimal class}
In the following lemma, we reduce the task of proving that an $\ell$-prime multiple of the minimal class is not algebraic to showing that no $\ell$-prime multiple is represented by an effective curve, which may in fact be assumed to be smooth and projective. 

\begin{lemma} \label{lem:alg-min->l-prime-multiple-is-effective}
Let $(X,\Theta)$ be a principally polarized abelian variety of dimension $g\geq 3$.
Let $\ell$ be a prime and assume that an $\ell$-prime multiple of $[\Theta]^{g-1}/(g-1)!$ is represented by a $\Z$-linear combination of classes of algebraic curves.
Then some (possibly different) $\ell$-prime multiple of $[\Theta]^{g-1}/(g-1)!$ is represented by a smooth projective connected curve.
\end{lemma}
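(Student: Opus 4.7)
My plan is to extract a smooth projective curve directly from one of the irreducible components of the given $\Z$-linear combination. In the core case of very general $(X,\Theta)$---which is the setting of the main applications in Theorems \ref{thm:main:IHC:intro} and \ref{thm:main:cubics:intro}---this is essentially immediate; for a general ppav, a Bertini-type smoothing argument is needed in addition.

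First I would write $[Z] = m\theta$ (with $\theta = [\Theta]^{g-1}/(g-1)!$) where $\gcd(m,\ell) = 1$ and $Z = \sum_i n_i C_i$ is a $\Z$-linear combination of classes of irreducible projective curves $C_i \subset X$. Since the normalization $\nu_i \colon \widetilde{C}_i \to C_i \subset X$ from a smooth projective curve $\widetilde{C}_i$ satisfies $\nu_{i,*}[\widetilde{C}_i] = [C_i]$ in $H_2(X,\Z)$, I may view each $C_i$ as (the image of) a morphism from a smooth projective curve, so that without loss of generality each $C_i$ is smooth projective.

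In the very general case, the group of Hodge classes in $H^{2g-2}(X,\Z)$ is $\Z\theta$, and in particular every algebraic class---hence every $[C_i]$---is of the form $[C_i] = a_i\theta$ with $a_i \in \Z_{>0}$ (positivity from $[C_i]\cdot\Theta > 0$). The relation $\sum_i n_i a_i = m$ together with $\gcd(m,\ell) = 1$ forces some $a_{i_0}$ to be coprime to $\ell$, and then $C_{i_0}$ itself is a smooth projective curve whose pushforward class is the $\ell$-prime multiple $a_{i_0}\theta$ of the minimal class, as required.

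For a general ppav the $[C_i]$ need not be $\theta$-multiples, so the above argument must be supplemented. The plan here is to use a smooth irreducible complete intersection $C_0 \subset X$ of $g-1$ generic sections of the very ample system $|3\Theta|$ (smooth by Bertini, of class $[C_0] = N\theta$ with $N = 3^{g-1}(g-1)!$), and to choose $t \geq 0$ so that $m' := m + tN$ is coprime to $\ell$---which is always possible by an elementary case analysis on whether $\ell \mid N$---giving a $\Z$-combination $Z + tC_0$ of class $m'\theta$ with $m'$ coprime to $\ell$. The main obstacle is then to realize $m'\theta$ by a single smooth projective curve starting from this formal $\Z$-combination, which may have negative coefficients: passing from the difference of effective cycles to a single smooth irreducible representative will require a smoothing argument in a Hilbert scheme of curves on $X$, exploiting the homogeneity of the abelian variety to first represent $m'\theta$ effectively and then to smooth a reducible nodal model into an irreducible smooth one.
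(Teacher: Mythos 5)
Your Case 1 (very general $X$ in moduli, so that the Hodge classes in $H^{2g-2}(X,\Z)$ are $\Z\cdot[\Theta]^{g-1}/(g-1)!$) is correct and is a genuinely more elementary argument than the paper's in that special case: every irreducible curve then has class $a_i\theta$ with $a_i>0$, and $\gcd(m,\ell)=1$ forces some $a_{i_0}$ prime to $\ell$, so the normalization of $C_{i_0}$ does the job. But the lemma is stated, and is used in the paper, for principally polarized abelian varieties whose N\'eron--Severi group need not be $\Z$ (e.g.\ very general fibres of an arbitrary matroidal family, and the geometric generic fibre in Theorem \ref{thm:algebraic->d-QE}), so the general case cannot be dismissed as supplementary.

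In that general case your argument has a genuine gap exactly where you flag "the main obstacle." Adding $tC_0$ does nothing: since $m$ is already coprime to $\ell$ you could take $t=0$, and in any case $Z+tC_0$ still has negative coefficients, so you have not made the cycle effective. You then appeal to a Hilbert-scheme smoothing of a "reducible nodal model," but (i) you give no mechanism for first representing the class effectively starting from a difference of effective cycles --- translation by points of $X$ does not change signs of coefficients --- and (ii) smoothing a reducible nodal curve inside a $g$-dimensional variety is obstructed in general and would itself require a substantial argument. The paper's proof circumvents both problems with one move: after using Hironaka's smoothing theorem and generic translations (possible since $g\geq 3$) to make $C=\bigcup_i C_i$ a smooth embedded curve, it chooses a smooth complete intersection surface $Y\subset X$ of members of $|k\Theta|$ containing $C$, and observes that the line bundle $\sum a_iC_i+\ell^b\Theta|_Y$ on the surface $Y$ is globally generated for $b\gg 0$ by Serre, hence has a smooth member $C'$ by Bertini; the correction term $\ell^b[\Theta]\cdot[Y]$ is $\ell$-divisible, so $[C']$ remains an $\ell$-prime multiple of the minimal class. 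Working in $\Pic$ of a surface is what absorbs the negative coefficients, and this is the idea your proposal is missing.
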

\begin{proof}
By assumption, there is an integer $m$ that is coprime to $\ell$ such that 
$$
m[\Theta]^{g-1}/(g-1)!=\sum_i a_i[C_i]\in  H_2(X,\Z),
$$ 
for some curves $C_i\subset X$ and integers $a_i\in \Z$.
By a result of Hironaka \cite{hironaka-AJM}, we may assume that $C_i$ is smooth for all $i$.
Replacing $C_i$ by a generic translation, we may further assume that the union $C\coloneq \bigcup_i C_i$ is smooth (because $g\geq 3$).
A general complete intersection surface $Y\subset X$ of elements in $|k\Theta|$ for $k\gg 0$ which contain $C$ is then smooth by Bertini's theorem.
For a sufficiently large integer $b$, the line bundle
$$
\sum a_iC_i+\ell^b \Theta|_Y\in \Pic Y
$$
on $Y$ is globally generated by Serre's theorem, and hence represented by a smooth projective connected curve $C'\subset Y$.
When viewed as a curve on $X$, we then have
$$
[C']=\sum a_i[C_i]+\ell^b [\Theta]\cdot [Y]=m [\Theta]^{g-1}/(g-1)!+\ell ^b \cdot k^{g-2} [\Theta]^{g-1} \in H_2(X,\Z),
$$
which is an $\ell$-prime multiple of the minimal class, as we want.
\end{proof}

In the next lemma we work over an arbitrary field $k$ and consider the $\ell$-adic \'etale cohomology of varieties over the algebraic closure $\bar k$; this will later be applied to the generic fiber of a family of complex projective varieties.

\begin{lemma}\label{lem:minimal-class-algebraic}
    Let $X$ be an abelian variety of dimension $g$ over a field $k$, 
    principally polarized by 
    $\Theta \in \textnormal{NS}(X_{\bar k})$. 
    Let $\ell$ be a prime number different from the 
    characteristic of $k$. 
     Let $C$ be a smooth projective curve, 
    possibly disconnected, together with a 
    morphism $f \colon JC\to X$. Let $m$ be 
    a positive integer and consider the 
    following properties.  
    \begin{enumerate}
    \item \label{item:lem:minimal-class-algebraic:0} 
    We have $f_\ast[C]=m\cdot \Theta^{g-1}/(g-1)! 
    \in H^{2g-2}(X_{\bar k}, \Z_\ell(g-1))$. 
        \item \label{item:lem:minimal-class-algebraic:1} 
        The dual map $f^\vee \colon X\to JC$ has the property 
        that $(f^\vee)^\ast \Theta_C=m\cdot \Theta$.
        \item \label{item:lem:minimal-class-algebraic:2} 
        The composition $ 
X \xlongrightarrow{f^\vee} JC \xlongrightarrow{f}X
        $
        is multiplication by $m$.
        \item \label{item:lem:minimal-class-algebraic:3} 
        There is a vector bundle $\mathcal E$ on $X$ such that 
        $s_i(\ca E) = m^i \cdot \Theta^i/i!$ for each $i\geq 0$, 
        where $s_i(\ca E)$ denotes the $i$-th Segre class.
    \end{enumerate}
    Then \eqref{item:lem:minimal-class-algebraic:0} and 
    \eqref{item:lem:minimal-class-algebraic:1} are equivalent, 
    and imply \eqref{item:lem:minimal-class-algebraic:2} and 
    \eqref{item:lem:minimal-class-algebraic:3}. 
\end{lemma}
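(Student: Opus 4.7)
The plan is to prove $(1)\Leftrightarrow(2)$ first, read off $(3)$ from the same linear-algebra computation, and finally produce the vector bundle in $(4)$ as the pullback of a Schwarzenberger-type bundle on $JC$ along $g\coloneq f^\vee$. Throughout, I use the canonical identification $H_2(A,\Z_\ell)\cong \bigwedge^2 H_1(A,\Z_\ell)$ valid for any abelian variety $A$, together with the fact that a morphism of abelian varieties over $\bar k$ is determined by its action on the $\ell$-adic Tate module.

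For $(1)\Leftrightarrow(2)$, I translate both conditions into the same matrix identity. Under the identification above, Poincar\'e's formula on $JC$ says that $[C]$ corresponds to the bivector $E_C^{-1}\in\bigwedge^2 H_1(JC,\Z_\ell)$, where $E_C$ denotes the symplectic form on $H_1(JC,\Z_\ell)$ induced by $\Theta_C$; by the same Hodge-star / Pfaffian duality, the minimal class $[\Theta]^{g-1}/(g-1)!$ on $X$ corresponds to $E_X^{-1}\in\bigwedge^2 H_1(X,\Z_\ell)$. Since $f_*$ on $H_2$ is $\bigwedge^2 f_*|_{H_1}$, writing $F$ for the matrix of $f_*\colon H_1(JC)\to H_1(X)$ in chosen bases, condition $(1)$ becomes
\[
F\,E_C^{-1}\,F^T \;=\; m\,E_X^{-1}.
\]
On the other hand, by the definition of the dual morphism via the polarizations, $f^\vee_*\colon H_1(X)\to H_1(JC)$ is the $H_1$-adjoint of $f_*$ and has matrix $E_C^{-1}F^T E_X$; consequently
\[
(f^\vee)^*\Theta_C \;=\; (f^\vee_*)^T E_C\, f^\vee_* \;=\; E_X F\,E_C^{-1}F^T\,E_X,
\]
which equals $m E_X$ exactly when the previous identity holds (since $E_X$ is invertible). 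Hence $(1)\Leftrightarrow(2)$. The implication $(1)\Rightarrow(3)$ now drops out of the same identity: we have $f_*\circ f^\vee_* = F\cdot(E_C^{-1}F^T E_X) = m\cdot\mathrm{id}_{H_1(X,\Z_\ell)}$, and since $[m]_X$ also acts by $m$ on $H_1$, we conclude $f\circ f^\vee=[m]_X$.

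For $(1)\Rightarrow(4)$, it suffices to construct a vector bundle $\mathcal F$ on $JC$ with $s_i(\mathcal F)=\Theta_C^i/i!$ for all $i\geq 0$, since then $\mathcal E\coloneq g^*\mathcal F$ satisfies $s_i(\mathcal E)=g^*(\Theta_C^i/i!) = m^i\Theta^i/i!$ by $(2)$. Such an $\mathcal F$ is a classical Schwarzenberger-type bundle: for $d\gg 0$, set
\[
\mathcal F \;\coloneq\; R^0 p_{JC,*}\bigl((\iota\times\mathrm{id})^*\mathcal P \otimes p_C^*\mathcal O_C(d\cdot q)\bigr),
\]
where $\iota\colon C\hookrightarrow JC$ is an Abel--Jacobi embedding with basepoint $q\in C$ and $\mathcal P$ is the Poincar\'e bundle on $JC\times JC^\vee$ (identified with $JC\times JC$ via $\phi_C$). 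Cohomology and base change show that $\mathcal F$ is locally free. Grothendieck--Riemann--Roch applied to $p_{JC}$, combined with the fact that $c_1(\mathcal P)$ restricted to $C\times JC$ is a pure bidegree-$(1,1)$ K\"unneth class and $H^{\geq 3}(C)=0$, shows that $\ch(\mathcal F)$ is concentrated in degrees $0$ and $2$, with degree-$2$ part a nonzero integer multiple of $\Theta_C$. By Newton's identities, a Chern character supported in degrees $\leq 2$ forces all Chern roots of $\mathcal F$ to coincide, so $c(\mathcal F)=\exp(c_1(\mathcal F))$; after possibly replacing $\mathcal F$ by $\mathcal F^\vee$ and adjusting $d$ to fix the sign and coefficient of the degree-$2$ part, this gives $c(\mathcal F)=\exp(-\Theta_C)$, i.e.\ $s_i(\mathcal F)=\Theta_C^i/i!$.

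The main obstacle is the Grothendieck--Riemann--Roch calculation in the last step, where one must pin down the sign and integer coefficient of the degree-$2$ piece of $\ch(\mathcal F)$; the other parts of the proof reduce to linear algebra once the two Poincar\'e-type formulas relating the minimal class on an abelian variety to the inverse polarization form on $H_1$ have been invoked.
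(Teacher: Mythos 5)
Your proposal is correct and follows essentially the same route as the paper: (1)$\Leftrightarrow$(2) via the polarization-induced duality between the minimal class and the theta class (you write it as an explicit matrix identity on $H_1$, the paper as a commutative diagram of Poincar\'e-duality isomorphisms), (3) by computing the composition $f\circ f^\vee$, and (4) by pulling back the pushforward of a twisted Poincar\'e bundle on $C\times JC$ along $f^\vee$. The only substantive difference is that the paper sidesteps your flagged Grothendieck--Riemann--Roch computation by citing the known formula $c(F)=e^{-\Theta_C}$ from Arbarello--Cornalba--Griffiths--Harris; note also that your intermediate claim that $\ch_{\geq 2}=0$ forces the Chern roots to coincide is not literally true, though the conclusion $c(\mathcal F)=\exp(c_1(\mathcal F))$ you draw from it is correct by Newton's identities.
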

\begin{proof}
Let $\lambda_X \colon X \xrightarrow{\sim} X^\vee$ (resp.\ $\lambda_C \colon JC \xrightarrow{\sim} JC^\vee$) be the symmetric isomorphism attached to the principal polarization $\Theta$ (resp.\ $\Theta_C$).  
Let $f^\vee\colon X^\vee \to JC^\vee$ be the dual of $f$, and also denote by $f^\vee \colon X \to JC$ the composition $X \to X^\vee \to JC^\vee \to JC$ given by $\lambda_C^{-1} \circ f^\vee \circ \lambda_X$. 

The equivalence of \eqref{item:lem:minimal-class-algebraic:0} and \eqref{item:lem:minimal-class-algebraic:1} is well-known (sometimes referred to as Welters' criterion); it follows from the fact that the chain of isomorphisms
$
H^2(X_{\bar k},\Z_{\ell}(1))\cong H^{2g-2}(X^\vee_{\bar k},\Z_{\ell}(g-1))\cong H^{2g-2}(X_{\bar k},\Z_{\ell}(g-1))$ 
maps $\Theta$ to $\Theta^{g-1}/(g-1)!$. The first isomorphism is given by Poincar\'e duality and the second isomorphism is induced by $\lambda_X$.  

To prove \eqref{item:lem:minimal-class-algebraic:1}$\Rightarrow$\eqref{item:lem:minimal-class-algebraic:2}, 
recall that the pull back $(f^\vee)^\ast(\lambda_C)$ is 
by definition the symmetric isogeny $X \to X^\vee$ 
that makes the diagram 
\[
\xymatrix{
X \ar[d]_-{(f^\vee)^\ast(\lambda_C)} 
\ar[r]^-{f^\vee} & 
JC \ar[d]_-{\lambda_C} \\
X^\vee & JC^\vee \ar[l]_-f
}
\]
commute. 
As $(f^\vee)^\ast(\Theta_C) = m \cdot \Theta$ by \eqref{item:lem:minimal-class-algebraic:1}, we get $(f^\vee)^\ast(\lambda_C)
= m \cdot \lambda_X$. Hence the composition 
\[
X \xlongrightarrow{f^\vee} JC 
\xlongrightarrow{\lambda_C} JC^\vee 
\xlongrightarrow{f} X^\vee 
\xlongrightarrow{\lambda_X^{-1}} X
\]
is multiplication by $m$.
Hence, \eqref{item:lem:minimal-class-algebraic:1}$\Rightarrow$\eqref{item:lem:minimal-class-algebraic:2}, as claimed. 

It remains to prove 
\eqref{item:lem:minimal-class-algebraic:1}$\Rightarrow$\eqref{item:lem:minimal-class-algebraic:3}.
For an integer $n \geq 2g(C)-1$, where $g(C)$ denotes the genus of the curve $C$, let $P_n$ be a Poincar\'e 
bundle on $C \times J^nC$, where $J^nC$ is the $JC$-torsor 
of isomorphism classes of line bundles of degree $n$. 
Push $P_n$ forward to get a sheaf on $J^nC$, 
which is a vector bundle because $n \geq 2g(C)-1$
(which implies that the higher cohomology of 
$P_n$ on the fibers of $C \times J^nC \to J^nC$ vanishes). 
Pull this vector bundle back along the isomorphism 
$JC \cong J^nC$, defined by some degree $n$ divisor on $C$, 
to get a vector bundle $F$ on $JC$ that satisfies 
$\ch(F) = \rk(F) - \Theta$ and $c_i(F) = (-1)^i \Theta_C^i/i!$ 
for all $i\geq 0$, see \cite[p.~336]{arbarello-et-al-1985}.
 In other words, the total Chern class is given by 
 $c(F)=e^{-\Theta_C}$ and hence the total 
 Segre class is given by $s(F)=e^{\Theta_C}$.
In view of \eqref{item:lem:minimal-class-algebraic:1}, 
the pullback of $F$ along $f^\vee \colon X\to JC$ yields a 
vector bundle $\mathcal E \coloneqq (f^\vee)^\ast(F)$ on $X$ 
which satisfies \eqref{item:lem:minimal-class-algebraic:3} 
in the lemma. This concludes the proof. 
\end{proof}

\begin{lemma} \label{lem:orthogonal-complement}
Let $(X,\Theta)$ be a principally polarized abelian variety (over $\C$).
Let $f\colon JC\to X$ be a morphism from a Jacobian of a curve and let $f^\vee:X\to JC$ be the dual with respect to the given principal polarizations.
Assume that $f_\ast [C]=m\cdot [\Theta]^{g-1}/(g-1)!\in H_2(X,\Z)$ for some positive integer $m$.
Let $\Lambda$ be a ring in which $m$ is invertible.
Then,
$$
(f^\vee_\ast H_1(X,\Lambda))^{\perp_{\Theta_C}}=
\ker(f_\ast\colon H_1(JC,\Lambda)\to H_1(X,\Lambda)),
$$
where $\perp_{\Theta_C}$ denotes the orthogonal complement with respect to $\Theta_C$.
\end{lemma}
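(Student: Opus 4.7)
The plan is to exploit the identity $f\circ f^\vee = [m]_X$ from Lemma \ref{lem:minimal-class-algebraic} together with the standard adjunction between $f_\ast$ and $f_\ast^\vee$ relative to the two principal polarizations.

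First, passing to first homology, the identity $f\circ f^\vee=[m]_X$ gives $f_\ast\circ f^\vee_\ast = m\cdot\mathrm{id}$ on $H_1(X,\Lambda)$. Since $m$ is invertible in $\Lambda$, the operator $\pi\coloneqq \tfrac{1}{m}\,f^\vee_\ast\circ f_\ast$ is an idempotent on $H_1(JC,\Lambda)$ with image $f^\vee_\ast H_1(X,\Lambda)$ and kernel $\ker(f_\ast)$, yielding the direct sum decomposition
\[
H_1(JC,\Lambda) \;=\; f^\vee_\ast H_1(X,\Lambda)\,\oplus\,\ker(f_\ast).
\]
In particular, $f^\vee_\ast$ is injective on $H_1(X,\Lambda)$.

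Second, I would establish the adjunction
\[
\Theta_C\bigl(f^\vee_\ast u,\, v\bigr) \;=\; \Theta\bigl(u,\, f_\ast v\bigr), \qquad u\in H_1(X,\Lambda),\ v\in H_1(JC,\Lambda),
\]
by unwinding the definition $f^\vee=\lambda_C^{-1}\circ (f^\vee)^{\mathrm{naive}}\circ \lambda_X$ used in the proof of Lemma \ref{lem:minimal-class-algebraic}. The key ingredients are: (i) the map $\lambda_{X,\ast}\colon H_1(X,\Lambda)\to H_1(X^\vee,\Lambda)=H_1(X,\Lambda)^\vee$ is, up to sign, the contraction $u\mapsto \Theta(u,-)$, and similarly for $\lambda_C$; (ii) the naive dual $(f^\vee)^{\mathrm{naive}}_\ast\colon H_1(X^\vee,\Lambda)\to H_1(JC^\vee,\Lambda)$ is, under these identifications, the transpose of $f_\ast$.

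Given the adjunction, the containment $\ker(f_\ast)\subset (f^\vee_\ast H_1(X,\Lambda))^{\perp_{\Theta_C}}$ is immediate: for $v\in\ker(f_\ast)$ and $u\in H_1(X,\Lambda)$ one has $\Theta_C(f^\vee_\ast u,v)=\Theta(u,f_\ast v)=0$. For the reverse inclusion, I would take $v\in (f^\vee_\ast H_1(X,\Lambda))^{\perp_{\Theta_C}}$ and use the splitting above to write $v=f^\vee_\ast u_v + w$ with $u_v=\tfrac{1}{m}f_\ast v$ and $w\in\ker(f_\ast)$. Then for every $u\in H_1(X,\Lambda)$,
\[
0 \;=\; \Theta_C\bigl(f^\vee_\ast u,\,v\bigr) \;=\; \Theta_C\bigl(f^\vee_\ast u,\,f^\vee_\ast u_v\bigr) \;=\; \Theta\bigl(u,\,f_\ast f^\vee_\ast u_v\bigr) \;=\; m\cdot \Theta(u,u_v).
\]
Since $\Theta$ is a principal polarization it is unimodular on $H_1(X,\Z)$, hence nondegenerate on $H_1(X,\Lambda)$; combined with the invertibility of $m$ in $\Lambda$, this forces $u_v=0$ and therefore $v\in\ker(f_\ast)$, which gives the desired equality.

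The step requiring the most care is the adjunction formula: one has to trace the definition of the dual isogeny through the polarization identifications $\lambda_X$ and $\lambda_C$ and pin down the induced maps on $H_1$. Once that is set up, the argument is a short linear-algebra computation that uses only the factorization $f_\ast\circ f^\vee_\ast=m\cdot\mathrm{id}$ and nondegeneracy of $\Theta$.
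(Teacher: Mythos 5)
Your proof is correct and follows essentially the same route as the paper's: both obtain the splitting $H_1(JC,\Lambda)=f^\vee_\ast H_1(X,\Lambda)\oplus\ker(f_\ast)$ from Lemma \ref{lem:minimal-class-algebraic}, prove the inclusion $\supset$ via the adjunction $\Theta_C(f^\vee_\ast u,v)=\Theta(u,f_\ast v)$, and conclude equality from the non-degeneracy of $\Theta_C$ on $f^\vee_\ast H_1(X,\Lambda)$ (your explicit computation $\Theta_C(f^\vee_\ast u,f^\vee_\ast u_v)=m\,\Theta(u,u_v)$ is just the unwound form of the paper's appeal to $(f^\vee)^\ast\Theta_C=m\Theta$).
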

\begin{proof}
By Lemma \ref{lem:minimal-class-algebraic}, $\frac{1}{m}f_\ast^\vee$ splits the map $f_\ast\colon H_1(JC,\Lambda)\to H_1(X,\Lambda)$.
Hence,
\begin{align} \label{eq:orthogonal-complement:1}
H_1(JC,\Lambda)= f^\vee_\ast H_1(X,\Lambda)\oplus 
\ker(f_\ast\colon H_1(JC,\Lambda)\to H_1(X,\Lambda)).
\end{align}
We will show that this decomposition is orthogonal with respect to $\Theta_C$.
To this end, let $\beta\in H_1(X,\Z)$.  
The associated element of $H_1(X^\vee,\Z)=H_1(X,\Z)^\vee$ is given by $\Theta(\beta,-)$, and its image via $f^\vee_\ast$ is given by
$$
\Theta(\beta,f_\ast (-))\in 
H_1(JC^\vee,\Lambda)=H_1(JC,\Lambda)^\vee.
$$
It is clear that any class $\alpha\in \ker(f_\ast)$ lies in the kernel of the above linear form.
This proves the inclusion
$$
(f^\vee_\ast H_1(X,\Lambda))^{\perp_{\Theta_C}}\supset 
\ker(f_\ast\colon H_1(JC,\Lambda)\to H_1(X,\Lambda)).
$$
Equality follows from \eqref{eq:orthogonal-complement:1} together with the fact that the restriction of $\Theta_C$ to the subspace $f^\vee_\ast H_1(X,\Lambda)$ is non-degenerate and unimodular (because $m$ is invertible in $\Lambda$), see item \eqref{item:lem:minimal-class-algebraic:1} in Lemma \ref{lem:minimal-class-algebraic}.
\end{proof}
 
\begin{remark}
In the notation of Lemma \ref{lem:orthogonal-complement}, if $Y \subset JC$ denotes the identity component of the kernel of $f \colon JC \to X$ endowed with its induced polarization, then the natural morphism $X \times Y \to JC$ is an isogeny of polarized abelian varieties whose degree is invertible in $\Lambda$.
See \cite[Proposition 4.5]{beckmann-degaayfortman} and \cite{voisin2022cycle} for related results.
\end{remark}

\subsection{Monodromy of abelian varieties and algebraicity of the minimal class in families}
\label{subsec:degenerations-abelian-varieties}

We recall here some standard facts on the monodromy of families of
abelian varieties and deduce some consequences on algebraicity of the minimal class in families that will be crucial for our approach; for further references and details, 
see for instance \cite[Section 2.3]{survey}. 

\subsubsection{Monodromy bilinear forms} 
\label{subsec:mondoromy-bilinear-forms}
Let $S$ be a finite set and let 
$\pi\colon X^\star\to (\Delta^\star)^S$ be a smooth 
projective family of principally polarized abelian varieties.
We denote its fiber over $b\in (\Delta^\star)^S$ by $(X_b,\Theta)$.
Fix a base point $t\in (\Delta^\star)^S$ and assume that for all $s\in S$, the monodromy operator $T_s$ on $H_1(X_t,\Z)$ about the $s$-th coordinate hyperplane of $\Delta^S$ is unipotent with nilpotent logarithm $N_s\coloneq T_s-{\rm id}$.
We may then define a weight filtration
$$
0 \subset W_{-2}\subset W_{-1}\subset W_0=H_1(X_t,\Z) ,
$$
where $W_{-2}$ denotes the saturation of 
$\sum_{s\in S} {\rm im}\, N_s$ and 
$W_{-1}\coloneqq \bigcap_{s\in S} \ker N_s$.  
 
We will freely identify the theta divisor $\Theta$ on $X_t$ with the associated bilinear form on $H_1(X_t,\Z)$.
This form induces an isomorphism 
$$
W_{-2}H_1(X_t,\Z)\stackrel{\sim}\longrightarrow ({\rm gr}^W_0H_1(X_t,\Z))^\ast, \ \ \alpha \mapsto \Theta(\alpha,-) ,
$$
which allows us, as is well-known, to translate the nilpotent operator $N_s$ into a bilinear form, as follows, see e.g.\ \cite[Definition 2.6]{survey}:

\begin{definition} \label{def:monodromy-bilinear-form}
The $s$-th monodromy bilinear form $B_s$ associated to the family $\pi\colon X^\star\to (\Delta^\star)^S$ is the bilinear form on ${\rm gr}^W_0H_1(X_t,\Z)$, given by 
$$
x\otimes y\mapsto \Theta(N_s x, y) .
$$
\end{definition}

Let us now assume in addition that $\pi$ extends to a strict $H$-nodal morphism $X\to \Delta^S$, where $H\subset \Delta^S$ denotes the union of the coordinate hyperplanes. 
By a theorem of Clemens \cite[Theorem 7.36]{clemens}, the monodromy operator $T_s$ from above is then automatically unipotent and so the aforementioned assumption is satisfied. 
Moreover, the above weight filtration is the weight filtration of the associated limit mixed Hodge structure.
Let furthermore $\Gamma(X_0)$ be the dual complex of the central fiber $X_0$ of $\pi$. 
Then there is a canonical isomorphism
$$
{\rm gr}^W_0H_1(X_t,\Z)\cong H_1(\Gamma(X_0),\Z) .
$$ 
The result is well-known for rational coefficients, but holds in the case of $H_1$ also for integral coefficients, see e.g.\ \cite[Proposition 5.12]{survey}.
It follows that the monodromy bilinear forms $B_s$ from above may be viewed, in a canonical manner, as bilinear forms on $H_1(\Gamma(X_0),\Z)$, which we will denote with the same letter.

\subsubsection{Consequences of algebraicity of the minimal class in families} 
Let $C\to \Delta^S$ be a strict semistable degeneration of curves, smooth over  $(\Delta^\star)^S$.
As above, we get for each $s\in S$ 
a monodromy bilinear form $Q_s$ on
$$
{\rm gr}^W_0H_1(C_t,\Z)\cong H_1(\Gamma(C_0),\Z).
$$
From the hypothesis that $C$ is smooth,
each node of $C_0$ deforms to a node over exactly
one coordinate hyperplane of $\Delta^S$.
Thus, the
graph $G\coloneq \Gamma(C_0)$ is naturally $S$-colored, 
where we say that an edge has color $s$ if the 
corresponding node deforms to a node of the general 
fiber of $C\to \Delta^{S}$ over the $s$-th coordinate 
hyperplane. Since $C\to \Delta^S$ is semistable, 
the Picard--Lefschetz formula shows that the above 
monodromy bilinear form $Q_s$ is given by
$$
Q_s=\sum_{e\in E_s}x_e^2,
$$
where $E_s$ denotes the set of edges of color $s$, 
and where $x_e$ denotes the linear form on 
$H_1(\Gamma(C_0),\Z)$ which is, up to a 
sign, uniquely determined by the edge $e$.

Assume in the set-up of Section 
\ref{subsec:mondoromy-bilinear-forms} above, 
that we have a morphism $\iota \colon C\to X$ over $\Delta^S$, 
such that 
$$
\iota_\ast[C_t]=m\cdot [\Theta]^{g-1}/(g-1)! \in H_2(X_t,\Z).
$$
Note that $\iota$ induces a morphism of abelian schemes 
$f\colon JC|_{(\Delta^\star)^S}\to X|_{(\Delta^\star)^S}$.

\begin{lemma} \label{lem:orthogonal-Qs-preliminaries}
Let $\ell$ be a prime with $\ell \nmid m$. 
There is a canonical direct sum decomposition
\begin{align} \label{eq:orthogonal-complement:2}
{\rm gr}^W_0H_1(C_t ) \cong {\rm gr}^W_0 H_1(X_t )\oplus 
\ker(f_\ast \colon {\rm gr}^W_0H_1(C_t )\to {\rm gr}^W_0H_1(X_t)),
\end{align}
where homology is taken with $\Z_{(\ell)}$-coefficients.  
For each $s\in S$, the monodromy bilinear form $Q_s$ 
on ${\rm gr}^W_0H_1(C_t)$ satisfies:
\begin{enumerate}
    \item the restriction of $Q_s$ to 
    ${\rm gr}^W_0 H_1(X_t)$ agrees with  $m$ times the monodromy 
    bilinear form on ${\rm gr}^W_0 H_1(X_t)$ induced by 
    the family $X^\star\to (\Delta^\star)^S$;
    \item the decomposition \eqref{eq:orthogonal-complement:2} 
    is orthogonal with respect to $Q_s$.
\end{enumerate}
\end{lemma}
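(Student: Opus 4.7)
The plan is to derive this lemma from Lemma~\ref{lem:orthogonal-complement} by passing to the associated graded of the limit mixed Hodge structure. The starting observation is that, as morphisms of abelian schemes over the punctured polydisc, both $f$ and its dual $f^\vee$ induce morphisms of the underlying $\Z$-local systems; consequently $f_\ast$ and $f^\vee_\ast$ commute with every monodromy operator $T_s$ and its nilpotent logarithm $N_s$, and in particular respect the weight filtrations on the limit mixed Hodge structures of $C_t$ and of $X_t$.

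I would begin by applying Lemma~\ref{lem:orthogonal-complement} with $\Lambda = \Z_{(\ell)}$, which is legitimate because $m$ is a unit in $\Z_{(\ell)}$ thanks to $\ell \nmid m$. This supplies the $\Theta_C$-orthogonal direct sum decomposition
\[
H_1(C_t, \Z_{(\ell)}) = f^\vee_\ast H_1(X_t, \Z_{(\ell)}) \oplus \ker\bigl(f_\ast\colon H_1(C_t, \Z_{(\ell)})\to H_1(X_t, \Z_{(\ell)})\bigr),
\]
in which $f^\vee_\ast$ is split injective with left inverse $\tfrac{1}{m} f_\ast$. By the equivariance above, both summands are stable under every $N_s$ and hence underlie sub-limit-mixed-Hodge-structures; exactness of ${\rm gr}^W_0$ then yields the claimed decomposition on ${\rm gr}^W_0 H_1(C_t, \Z_{(\ell)})$, with the first summand canonically identified with ${\rm gr}^W_0 H_1(X_t, \Z_{(\ell)})$ via $f^\vee_\ast$.

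For the orthogonality statement (2), I would pick lifts $\tilde x \in f^\vee_\ast H_1(X_t, \Z_{(\ell)})$ and $\tilde y \in \ker(f_\ast)$ of classes in the two graded summands. The $N_s^C$-stability of the first summand gives $N_s^C \tilde x \in f^\vee_\ast H_1(X_t, \Z_{(\ell)})$, and the $\Theta_C$-orthogonality from Lemma~\ref{lem:orthogonal-complement} then forces $Q_s(\tilde x,\tilde y) = \Theta_C(N_s^C \tilde x,\tilde y) = 0$, which descends to ${\rm gr}^W_0$ and gives (2).

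Finally, for statement (1), I would combine the equivariance $N_s^C \circ f^\vee_\ast = f^\vee_\ast \circ N_s^X$ with the identity $(f^\vee)^\ast \Theta_C = m\,\Theta$ from Lemma~\ref{lem:minimal-class-algebraic}\eqref{item:lem:minimal-class-algebraic:1} to express the restriction of $Q_s$ to the first summand purely in terms of $\Theta$ and $N_s^X$, and hence in terms of $B_s$. The principal technical point here is bookkeeping the identifications and scalar factors on ${\rm gr}^W_0$ induced by the pullback of polarizations and the choice of splitting; I do not expect this to be a substantive obstacle, since every ingredient is either a morphism of LMHS or an immediate consequence of $(f^\vee)^\ast \Theta_C = m\,\Theta$ together with the defining formulas for $Q_s$ and $B_s$.
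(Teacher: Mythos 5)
Your proposal is correct and follows essentially the same route as the paper's proof: apply Lemma \ref{lem:orthogonal-complement} with $\Lambda=\Z_{(\ell)}$ to get the $\Theta_{C_t}$-orthogonal decomposition, observe that it is stable under the monodromy operators (hence under the weight filtrations), pass to ${\rm gr}^W_0$, and deduce $Q_s$-orthogonality from $N_s$-stability of the first summand together with $\Theta_{C_t}$-orthogonality. If anything, you are more explicit than the paper about part (1) — the paper leaves the identification $H_1(X_t,\Z_{(\ell)})\cong f^\vee_\ast H_1(X_t,\Z_{(\ell)})$ and the resulting unit factor of $m$ (harmless, since $m$ is invertible in $\Z_{(\ell)}$) just as implicit as you do.
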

\begin{proof} 
By Lemma \ref{lem:orthogonal-complement}, 
there is a direct sum decomposition
\begin{align} \label{eq:orthogonal-complement:3}
H_1(JC_t,\Z_{(\ell)})\cong H_1(X_t,\Z_{(\ell)}) \oplus \ker(f_\ast\colon H_1(JC_t,\Z_{(\ell)}) \to H_1(X_t,\Z_{(\ell)}))
\end{align}
which is orthogonal with respect to $\Theta_{C_t}$.
Here we used that $(f^\vee)^\ast \Theta_{C_t}=m\Theta$ 
(see Lemma \ref{lem:minimal-class-algebraic}), and 
$m$ is invertible in $\Z_{(\ell)}$, so that 
$H_1(X_t,\Z_{(\ell)})\cong f^\vee_\ast H_1(X_t,\Z_{(\ell)})$.

The above decomposition respects the respective 
monodromy operators about the coordinate axis of 
$\Delta^S$ and hence in particular the respective 
weight filtrations. Taking ${\rm gr}^W_0$, we thus 
get the decomposition \eqref{eq:orthogonal-complement:2}.
The fact that this decomposition is $Q_s$-orthogonal 
follows from the fact that the monodromy operator 
$T_s$, and hence the nilpotent operator $N_s$, 
respects the decomposition \eqref{eq:orthogonal-complement:3} together with the fact that the latter is 
orthogonal with respect to the principal 
polarization $\Theta_{C_t}$ 
(see Lemma \ref{lem:orthogonal-complement}).
\end{proof}

\section{Admissible resolutions of $S$-colored morphisms} \label{sec:admissible-resolutions}

\subsection{$S$-colored $D$-quasi-nodal morphisms}

We aim to study base changes of strict $D$-nodal degenerations along morphisms $\tau\colon B'\to B$ such that the reduction of $\tau^{-1}(D)$ is snc. If $S$ is the indexing set of the components of $D$, then this base change will turn out to satisfy the following auxiliary definitions, see Lemma \ref{lem:base-change-of-D-nodal-degenerations} below.
 
\begin{definition} \label{def:D-quasi-nodal}
    Let $(B,D)$ be a pair of a smooth variety $B$ and 
    an snc divisor $D$ with components $D_i$, $i\in I$.
    A morphism $f\colon Y\to B$ is {\it $D$-quasi-nodal} 
    if there is a set $\mathcal C$ of analytic charts that 
    cover $Y$, such that for each $c\in \mathcal C$, 
    $Y\to B$ is given by the normal form
    \begin{align} \label{eq:normal-form-S-nodal}
\left\{\textstyle \prod_{i\in I} u_i^{a_{si}}=x_sy_s \mid 
 s\in S'\right\} \subset U\times \Delta^{2|S'|} ,
    \end{align}
    where $u_i$ are regular functions on some 
    analytic open $U\subset B$ with 
    $D_i\cap U=\{u_i=0\}$, $a_{si}\geq 0$ are
    non-negative integers, and $S'$ is a finite set. 
\end{definition}
 
The integers $a_{si}$ may all vanish for a given $s\in S'$; this corresponds to a smooth factor 
in \eqref{eq:normal-form-S-nodal}.
Note also that $U$, $a_{si}$, and $S'$ depend on the chart $c\in \mathcal C$; we will write $U_c$, $a_{si}(c)$, and $S'_c$ if we want to emphasize the dependence on $c$. 
Let us finally mention that the local normal form \eqref{eq:normal-form-S-nodal} is flat, as it is the 
pullback of a nodal morphism (where miracle flatness applies). 

\begin{definition}  \label{def:S-coloring} 
Let $S$ be a finite set.
An \emph{$S$-coloring} of a $D$-quasi-nodal morphism $f\colon Y\to B$ consists of a finite collection of effective Weil divisors $E_\alpha\subset Y$, indexed by $\alpha\in \Omega$, an $(S\times I)$-partition $ \Omega=\bigsqcup_{(s,i)\in S\times I} \Omega_{s,i}$, an atlas $\mathcal C$ as in Definition~\ref{def:D-quasi-nodal}, and an inclusion $S'_c \hookrightarrow S$ for every $c \in \mathcal C$, such that:
    \begin{enumerate} 
    \item   
    If $\alpha\in \Omega_{s,i}$, then $E_\alpha\cap c$ is empty, or cut out by $(u_i)$,   $(u_i,x_s)$, or $(u_i,y_s)$.
    Moreover, the divisors $E_\alpha\cap c$ with $\alpha\in \Omega_{s,i}$ have pairwise distinct components.\label{item:S-coloring:1}
    \item 
    If $s\in S'$ and $a_{si}\neq 0$, then the (local) divisors cut out by $(u_i,x_s)$ and $(u_i,y_s)$ lift to (global) Weil divisors $E_{\alpha}$ and $E_{\alpha'}$ with $\alpha,\alpha'\in \Omega_{s,i}$.
    \label{item:S-coloring:2} 
    \end{enumerate}
\end{definition}
\begin{remark} \label{rem:2-distinct-E_alpha-on-c}
In item \eqref{item:S-coloring:1}, if the divisor $E_\alpha\cap c$ is cut out by $(u_i,x_s)$ or $(u_i,y_s)$, then the condition that $s\in S'$ is implicit, as otherwise $x_s,y_s$ are not defined on the chart $c$.
Moreover, $a_{si}$ must be nonzero in this case, as otherwise $(u_i,x_s)$ and $(u_i,y_s)$ do not cut out a divisor.
\end{remark}
It is sometimes useful to have the following associated index sets at our disposal:
\begin{align} \label{eq:def:Omega_s-Omega_i}
\Omega_i\coloneq \bigsqcup_{s\in S } \Omega_{s,i}\quad \text{and}\quad \Omega_s\coloneq \bigsqcup_{i\in I} \Omega_{s,i}.
\end{align}
We then also have the partitions $\Omega=\bigsqcup_{i\in I}\Omega_i$ and $\Omega=\bigsqcup_{s\in S}\Omega_s$. 

By item \eqref{item:S-coloring:1}, $\alpha\in \Omega_{s,i}$ implies $E_\alpha\subset f^{-1}(D_i)$.
Since $f$ is flat, 
the  $(S\times I)$-partition of $\Omega$ can therefore be deduced from the $S$-partition $\Omega=\bigsqcup_{s\in S}\Omega_s$.

\begin{remark}
The conditions in Definition \ref{def:S-coloring} are stable under localization. 
In particular, if they are satisfied for one atlas $\mathcal C$ as in Definition~\ref{def:D-quasi-nodal} then in fact for any.
\end{remark}

\begin{lemma} \label{lem:components-of-E_alpha-smooth} 
Let $\alpha\in \Omega_{s,i}$ and let $c \in \ca C$.
Then the (local) components of $E_\alpha\cap c$ extend to (global) components of $E_\alpha$.  
\end{lemma}

\begin{proof}
Since $\alpha\in \Omega_{s,i}$, $E_\alpha\subset f^{-1}(D_i)$. 
Since $Y\to B$ is flat,
we thus do not loose any component of  $E_{\alpha}\cap c$ when we localize at a general point of $u_i=0$.
We may therefore assume that $u_j\neq 0$ on $c$ for all $j\neq i$.
We may further assume that $E_\alpha\cap c\neq \emptyset$. By item \eqref{item:S-coloring:1}, $E_\alpha\cap c$ is cut out by $(u_i)$, $(u_i,x_s)$, or $(u_i,y_s)$.

We first treat the case where it is cut out by $(u_i,x_s)$. 
Then $E_{\alpha}\cap c$ is given by (a smooth base change of)
\begin{align} \label{eq:rem:components-of-E_alpha-smooth}
\{u_i=x_s=0,\ x_ty_t=0\mid t\in S'\setminus \{s\},\ a_{ti}\neq 0\} 
\end{align}
and the components of this divisor can be read off from this description. 
Since $(u_i,x_t)$ and $(u_i,y_t)$ correspond for all $t\in S'$ with $a_{ti}\neq 0$ to global divisors as well 
(see item \eqref{item:S-coloring:2}), we find that the components of \eqref{eq:rem:components-of-E_alpha-smooth} can be written as intersections of some $E_\beta \cap c$. In particular, the components of $E_{\alpha}\cap c$ extend to components of $E_\alpha$, as claimed. 
A similar argument applies if  $E_\alpha\cap c$ is cut out by $(u_i,y_s)$.

It remains to deal with the case where  $E_\alpha\cap c$ is cut out by $(u_i)$.
Then $E_{\alpha}\cap c$ is given by (a smooth base change of)
\begin{align} \label{eq:rem:components-of-E_alpha-smooth}
\{u_i=0,\ x_ty_t=0\mid t\in S',\ a_{ti}\neq 0\} 
\end{align}
and we argue similarly as above.
This concludes the proof of the lemma.
\end{proof} 
 
\begin{remark} \label{rem:strict}
The above argument shows that for $\alpha \in \Omega_{s,i}$ the components of the generic fiber of  $E_\alpha\to D_i$ are regular.
In particular, $D$-quasi-nodal morphisms are automatically \emph{strict} $D$-quasi-nodal, where strictness is defined analogously to Definition \ref{def:D-nodal}. 
\end{remark}

\begin{remark} \label{rem:components-c_p-globalize}   
Let $p\in B$. 
In a local chart $c$ as in \eqref{eq:normal-form-S-nodal}, the fiber $c_p$ is given by the product of a smooth variety with
\begin{align} \label{eq:c_p}
\left\{x_sy_s=0\mid s\in S'' \right\}\subset \Delta^{2|S''|} ,
\end{align}
where $S''=\{s\in S'\mid \prod_i u_i^{a_{si}}(p)=0\}.$ 
It follows that the components of $c_p$ are, by item \eqref{item:S-coloring:2} in Definition  \ref{def:S-coloring}, restrictions of intersections $\bigcap_{s\in S''}E_{\alpha_s}$ for suitable $\alpha_s\in \Omega_s$.
In particular, any component of $c_p$ globalizes to a component of $Y_p$, cf.\ Lemma \ref{lem:components-of-E_alpha-smooth}.  
Taking suitable intersections, we see that, more generally, any intersection of components of $c_p$ extends to a component of some closed codimension $d$ stratum $Y_p^{[d]}$ of $Y_p$.  
Since the components of $c_p$ as well as their intersections are smooth, we find that the components of the codimension $d$ stratum $Y_p^{[d]}$ of $Y_p$ are smooth for all $d$.
In particular, the components of $Y_p^{[d]}$ have no self-intersections and the natural morphism of polyhedral complexes $\Gamma(c_p) \to \Gamma(Y_p)$ is injective. 
\end{remark} 

The conditions in Definition \ref{def:S-coloring} yield canonical $S$-colorings on the $1$-skeleta $\Gamma^1(Y_p)$ of the fibers of $f$ as follows; this explains the phrase ``$S$-coloring'' in Definition \ref{def:S-coloring}.

\begin{definition}\label{def:def-degeneration->S-coloring} 
    In the notation of Definition \ref{def:S-coloring}, let $p\in B$.
    For $s \in S$, an edge $e$ of $\Gamma^1(Y_p)$ has \emph{color $s$} if, for some $i\in I$, there are distinct indices $\alpha_1,\alpha_2\in \Omega_{s,i}$  such that $e$ is given by the intersection of a component of $Y_p\cap E_{\alpha_1}$ with a component of $Y_p\cap E_{\alpha_2}$.  
\end{definition} 

Note that for each $p\in B$, the natural inclusion of graphs $\Gamma^1(c_p)\to \Gamma^1(Y_p)$ does not contract any edge.  
We then say that an edge of $\Gamma^1(c_p)$ has color $s$ if its image in $\Gamma^1(Y_p)$ has that property.

\begin{lemma}\label{lem:def-degeneration->S-coloring} 
Definition \ref{def:def-degeneration->S-coloring} yields a canonical $S$-coloring of $\Gamma^1(Y_p)$ for all $p\in B$.  
In a local chart $c$ as in \eqref{eq:normal-form-S-nodal} with a fiber $c_p$ as in \eqref{eq:c_p}, an edge $e$ of $\Gamma^1(c_p)$ has color $s$ if and only if $s\in S''$ (via the given  
embedding $S''\subset S'\hookrightarrow S$ from Definition \ref{def:S-coloring}) and $e$ corresponds to the intersection of a component of $x_s=0$ with a component of $y_s=0$.
\end{lemma}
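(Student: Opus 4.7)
The plan is to reduce everything to a local analysis in a chart $c$ of the form \eqref{eq:normal-form-S-nodal}, since the proposed coloring rule in Definition \ref{def:def-degeneration->S-coloring} is formulated in terms of the global divisors $E_\alpha$ but the geometry of $\Gamma^1(Y_p)$ is encoded locally.

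First I would compute the fiber $c_p$ explicitly. Setting $v_s(p)\coloneqq\prod_i u_i(p)^{a_{si}}$, the chart becomes $\{x_sy_s=v_s(p)\mid s\in S'\}\subset\Delta^{2|S'|}$. For $s\in S'\setminus S''$, the factor $x_sy_s=v_s(p)$ is smooth and contributes nothing to the dual complex; for $s\in S''$, we get the nodal factor $x_sy_s=0$. This establishes the product description of $c_p$ and shows that $\Gamma^1(c_p)$ is the $1$-skeleton of the product of $1$-simplices indexed by $S''$: its vertices are the components of $c_p$, labelled by a sign choice ($x_s=0$ vs.\ $y_s=0$) for each $s\in S''$, and its edges are the codimension-$1$ strata, each corresponding to a unique ``flipped'' index $s\in S''$ together with a choice of signs for the remaining $s'\in S''\setminus\{s\}$.

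Next I would identify how the divisors $E_\alpha$ cut out these local components. By Definition \ref{def:S-nodal-degeneration}\eqref{item:def:S-nodal:2}, for $\alpha\in\Omega_{s,i}$ (Remark \ref{rem:Omega_s}) the intersection $E_\alpha\cap c$ is cut out by $(u_i,x_s)$ or $(u_i,y_s)$ when $s\in S'$ and $a_{si}\neq 0$, and is empty otherwise. Restricting to $c_p$ and using $s\in S''\Leftrightarrow\exists\, i$ with $u_i(p)=0$ and $a_{si}>0$, we see that each component of $Y_p\cap E_\alpha$ (with $\alpha\in\Omega_s$) is, in the chart, a component of $\{x_s=0\}$ or of $\{y_s=0\}$. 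Since $Y\times_B D=\bigcup_\alpha E_\alpha$ by \eqref{item:def:S-nodal:1}, every such local ``$x_s=0$'' or ``$y_s=0$'' component of $c_p$ arises this way. An edge of $\Gamma^1(c_p)$ associated to the index $s\in S''$ is exactly the intersection of a $\{x_s=0\}$-component with a $\{y_s=0\}$-component, hence it is of the form required by Definition \ref{def:def-degeneration->S-coloring} with color $s$.

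For well-definedness of the coloring, I would argue that no edge can simultaneously be the intersection of components coming from $\Omega_{s'}$ with $s'\neq s$. Indeed, such an intersection would force the edge to lie in the locus $\{x_{s'}=y_{s'}=0\}$, but in the product structure on $c_p$, an edge with ``flipped index'' $s$ satisfies an \emph{exclusive} sign choice ($x_{s'}=0$ xor $y_{s'}=0$) in every factor $s'\neq s$. This rules out the possibility of a second color. The main subtle point here is making sure that the divisors $E_{\alpha_1},E_{\alpha_2}$ in Definition \ref{def:def-degeneration->S-coloring} are in fact \emph{distinct} components, hence genuinely coming from two opposite sides of the node, rather than from the same side; this is guaranteed by condition \eqref{item:def:S-nodal:1}, which forces the $E_\alpha$ to have pairwise distinct components.

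Finally, to upgrade the local coloring to a canonical global coloring of $\Gamma^1(Y_p)$, I would note that Definition \ref{def:def-degeneration->S-coloring} is intrinsically global (it references only the $E_\alpha$ and $\Omega_s$), and that every edge of $\Gamma^1(Y_p)$ meets some chart $c\in\mathcal{C}$; thus the local analysis above shows that each edge receives exactly one color, and by Remark \ref{rem:mathcal-C} the result is independent of the chosen atlas. The explicit local description in the final assertion of the lemma is then precisely the outcome of Step~1 and Step~2 above.
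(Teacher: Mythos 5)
Your proposal is correct and follows the only available route — unwinding Definition \ref{def:S-nodal-degeneration} in a local chart of the form \eqref{eq:normal-form-S-nodal} — which is exactly what the paper does; its entire proof is the one-line remark that the claim is clear from items \eqref{item:def:S-nodal:1} and \eqref{item:def:S-nodal:2} of that definition. Your write-up simply makes explicit the fiber computation, the identification of the local components of $c_p$ with restrictions of the $E_\alpha$, and the uniqueness of the color, all of which the paper leaves implicit.
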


\begin{proof}
In order to show that Definition \ref{def:def-degeneration->S-coloring} yields an $S$-coloring, we have to show that each edge has exactly one color.
This will follow once the local description claimed in the lemma is proven.
To this end, let $e$ be an edge of $\Gamma^1(c_p)$ of color $s$, corresponding to the intersection of two components of $c_p$ that globalize to components of $Y_p \cap E_{\alpha_1}$ and $Y_p \cap E_{\alpha_2}$ with distinct $\alpha_1,\alpha_2\in \Omega_{s,i}$.  
Since $\alpha_1\neq \alpha_2$, condition \eqref{item:S-coloring:1} in Definition \ref{def:S-coloring} implies that $E_{\alpha_1}\cap c$ and  $E_{\alpha_2}\cap c$ have no components in common and that they are cut out by the ideals $(u_i,x_s)$ and $(u_i,y_s)$. 
In particular, $s \in S''$ (see Remark \ref{rem:2-distinct-E_alpha-on-c}) and $e$ corresponds to the intersection of a component of $x_s=0$ with a component of $y_s=0$. 
Conversely, let $e$ be an edge of $\Gamma^1(c_p)$ corresponding to the intersection of a component of $x_s=0$ with a component of $y_s=0$ with $s\in S''$.
Since $s\in S''$, there is some index $i\in I$ with  $a_{si}\neq 0$ and $u_i(p)=0$.
To prove that $e$ has color $s$, we then use that the local Weil divisors cut out by $(u_i,x_s)$ and $(u_i,y_s)$ correspond to some $E_\alpha$ and $E_\beta$ with $\alpha,\beta\in \Omega_{s,i}$ by item \eqref{item:S-coloring:2} in Definition \ref{def:S-coloring}. This concludes the proof. 
\end{proof}

\subsection{Dual complexes of the fibers}
Let $Y\to B$ be $S$-colored $D$-quasi-nodal.
Consider a chart $c$ as in \eqref{eq:normal-form-S-nodal}. 
Let $p\in B$ and consider the chart $c_p$ of the fiber $Y_p$ above $p$.
This chart is empty if $c$ does not meet $Y_p$.
Otherwise, $c_p$ is the product of a smooth variety with  $  \left\{  0=x_sy_s \mid \ s\in S''\right\}  \subset \Delta^{2|S''|}$, where $S''=\{s\in S'\mid \prod_{i\in I} u_i^{a_{si}}(p)=0\}$, see \eqref{eq:c_p}. 

\begin{lemma} \label{lem:S-nodal-Gamma(Y_p)-charts}
Let $Y\to B$ be an $S$-colored $D$-quasi-nodal morphism. For each $c\in \mathcal C$, let $\Gamma(c_p)$ be the dual complex of the chart $c_p$ given by \eqref{eq:c_p}.
Then $\Gamma(c_p)$ is a cube of dimension $|S''|$ and the polyhedral cell structure of $\Gamma(Y_p)$ is induced by a quotient map
\begin{align} \label{eq:chart-Gamma(Y_p)}
\textstyle\bigsqcup_{c\in \mathcal C} 
\Gamma(c_p) \xtwoheadrightarrow{}\Gamma(Y_p),
\end{align}
which is injective on each cube $\Gamma(c_p)$ and identifies two 
cubes $ \Gamma(c_p)$ and $ \Gamma(c'_p)$ along $d$-dimensional
faces $F$ and $F'$ via an isomorphism of polyhedral complexes $F\stackrel{\sim} \to F'$ if and only if $F$ and $F'$ correspond to the same component of the codimension $d$ stratum $Y_p^{[d]}$ of $Y_p$.  
\end{lemma}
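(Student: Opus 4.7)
The plan is to deduce the lemma from the explicit local description \eqref{eq:normal-form-S-nodal-fibre} of $c_p$, in three steps.

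For the cuboid structure of $\Gamma(c_p)$: by Lemma \ref{lem:def-degeneration->S-coloring}, $c_p$ is the product of a smooth variety (not contributing to the stratification) with $\prod_{s \in S''}\{x_sy_s = 0\}$. The dual complex of such a product carries the product polyhedral structure (see Section \ref{subsubsec:cell-complexes}), so $\Gamma(c_p) \cong \prod_{s \in S''}\Gamma(\{x_sy_s=0\})$. Each factor is a $1$-simplex --- two vertices for the components $\{x_s=0\}$ and $\{y_s=0\}$, joined by the edge $\{x_s=y_s=0\}$ --- so $\Gamma(c_p)$ is a cube of dimension $|S''|$.

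To define the map \eqref{eq:chart-Gamma(Y_p)} and prove injectivity on each cube: by Remark \ref{rem:components-c_p-globalize} every stratum of $c_p$ is the restriction to $c$ of a unique stratum of $Y_p$, and both are smooth by Remark \ref{rem:components-of-E_alpha-smooth}. Sending each cell of $\Gamma(c_p)$ to the cell of $\Gamma(Y_p)$ labelling its globalisation yields the map in \eqref{eq:chart-Gamma(Y_p)}; it is surjective because every stratum of $Y_p$ meets some chart. For injectivity on a single cube, I would check that the $2^{|S''|}$ components of $c_p$ extend to pairwise distinct components of $Y_p$: this follows from item \eqref{item:def:S-nodal:1} of Definition \ref{def:S-nodal-degeneration}, since the components of the $E_\alpha$ are pairwise distinct and each component of $c_p$ is cut out in $c$ by a specific collection of these $E_\alpha$'s. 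Injectivity on higher-dimensional cells then follows because strata are determined by the components they are intersections of.

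Finally, for the gluing: given charts $c, c' \in \mathcal{C}$ with non-empty overlap, I will cover $c \cap c'$ by common localizations $c'' \subseteq c \cap c'$ admitting a normal form of type \eqref{eq:normal-form-S-nodal} compatible with both $c$ and $c'$. Passing to such a smaller chart eliminates from $S''_c$ precisely those indices $s$ for which $x_s$ or $y_s$ becomes invertible, so $\Gamma(c''_p)$ sits inside $\Gamma(c_p)$ as a face $F$; by symmetry it appears as a face $F'$ of $\Gamma(c'_p)$. The resulting identification $F \cong \Gamma(c''_p) \cong F'$ is an isomorphism of cell complexes which realises precisely the gluing described in the statement. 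I expect this last step to be the main (though minor) obstacle: it amounts to a bookkeeping verification that charts of the type \eqref{eq:normal-form-S-nodal} always admit enough common localizations compatible with the coloring data, so that the face identifications on dual complexes factor through a single intermediate cuboid.
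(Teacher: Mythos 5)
Your proposal is correct and follows essentially the same route as the paper's proof: the cuboid structure comes from the product description of the chart, the map \eqref{eq:chart-Gamma(Y_p)} is defined by globalizing local strata via Remark \ref{rem:components-c_p-globalize} (with injectivity on each cuboid for the same reason), and the face identifications are induced by common localizations of overlapping charts. The paper's argument is terser but contains no ingredient beyond what you describe.
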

\begin{proof}
Note first that any cell of the dual complex $\Gamma(Y_p)$ can be detected in some  local chart \eqref{eq:c_p}. 
Hence, there is a natural surjection as in \eqref{eq:chart-Gamma(Y_p)}.
The restriction of this map to each cube $\Gamma(c_p)$ is injective because of Remark \ref{rem:components-c_p-globalize}.
The description of the gluing follows from the definition of the dual complex. 
\end{proof}

\begin{remark}
A priori the topological space $\Gamma(Y_p)$ may 
not be homeomorphic to a manifold. 
This behavior will not arise in our applications, 
where the charts \eqref{eq:chart-Gamma(Y_p)} 
will define a cubical tiling of a real torus,  
cf.\ \cite[Figure 15]{survey}. 
\end{remark}

\subsection{Specialization maps} \label{subsec:specialization}

\begin{definition} \label{def:specialization}
Let $D\subset B$ be an snc divisor with components $D_i$, $i\in I$. Let $p\in D^\circ _J$ and $q\in D^\circ_{J'}$ be points of open strata of $D$ such that $J \subset J'$. We say that \emph{$p$ specializes to $q$} if we are given a path $\gamma \colon [0,1]\to D_{J}$ with $\gamma(0)=p$ and $\gamma(1)=q$, such that $\gamma([0,1))\subset D_J^{\circ}=D_J\setminus \bigcup_{i\in I\setminus J}D_i$ is contained in the open stratum $D_J^\circ\subset D_J$.  
\end{definition}

Let $D\subset B$ be an snc divisor and let $f\colon Y\to B$ be a $D$-quasi-nodal or  $D$-semistable morphism. 
By the Thom--Mather theorem \cite[Proposition 11.1 and Corollary 10.3]{mather}, $f|_{D_J^\circ } \colon Y|_{D_J^\circ } \to D_J^\circ $ admits locally on the target a topological trivialization which is compatible with the stratification of $Y|_{D_J^\circ }$ given by the singularity type of the fibers. As a consequence, components, (components of) double intersections as well as lower dimension strata of the fibres of $f|_{D_J^\circ }$ form local systems over $D_J^\circ $. 
Let $p\in D_J^\circ $ and $q\in D_{J'}^\circ $ be points such that $p$ specializes to $q$ via a continuous path $\gamma$, see Definition \ref{def:specialization}.
Pick a sufficiently small open ball $U\subset D_J$ centered at $q$ and let $U^\circ=U\cap D_J^\circ$.
The local normal forms imply that, after possibly shrinking $U$, the local systems formed by the strata of the fibers of $Y_{U^\circ}\to U^\circ$ are in fact trivial. 
We may thus extend a stratum of a fiber over a point of $U^\circ$ to a family of strata over $U^\circ$ whose closure gives rise to a union of strata of $Y_q$ of the same dimension. Since the path $\gamma$ lands eventually in $U$, this allows us to specialize components and double loci of $Y_p$ to unions of such in $Y_q$. 
In this situation, we may define a continuous specialization map on graphs
$$
{\rm sp}\colon \Gamma^1(Y_q)\longrightarrow \Gamma^1(Y_p)
$$
as follows:
\begin{enumerate}
    \item if $v$ is a component of $Y_q$, then ${\rm sp}(v)$ is the unique component of $Y_p$ whose specialization along $\gamma$ to $Y_q$ contains the component $v$.  
    \item if $e$ is an edge of $\Gamma^1(Y_q)$, corresponding to a double locus where components $v$ and $v'$ meet, then consider the components ${\rm sp}(v)$ and ${\rm sp}(v')$ of $Y_p$.
    If ${\rm sp}(v)={\rm sp}(v')$, then $e$ is contracted to the vertex ${\rm sp}(v)$. 
    Otherwise, ${\rm sp}(v)$ and ${\rm sp}(v')$ meet along a double locus and we let ${\rm sp}(e)$ be the corresponding edge of $Y_q$.
\end{enumerate}

\subsection{Admissible modifications}
Recall from Section \ref{subsubsec:cell-complexes} that
a refinement $\hat G$ of a graph 
$G$ is a refinement of the corresponding cell complex.
If $G$ is $S$-colored, then 
$\hat G$ carries a canonical $S$-coloring.

\begin{definition}\label{def:admissible}
    Let $B$ be a smooth variety with an snc divisor $D$ 
    with strata $D_J$, $J\subset I$.
    Let $Y\to B$ be an $S$-colored $D$-quasi-nodal morphism.
    We define an {\it admissible modification} $(g\colon Y'\to Y, \varphi)$ to be a pair, consisting of
    \begin{enumerate}
        \item[(a)] a projective modification 
        $g\colon Y'\to Y$ 
        for which $Y'\to B$ is $D$-quasi-nodal or $D$-semistable, and $g$ induces an isomorphism over the complement of $D$, and 
        \item[(b)] for each $p\in B$, 
        a refinement $\hat \Gamma^1(Y'_p)$ of the $1$-skeleton $\Gamma^1(Y'_p)$, together
        with a morphism of $S$-colored graphs
    $$
    \varphi_p\colon \hat \Gamma^1(Y'_p)\longrightarrow \Gamma^1(Y_p) ,
    $$
    \end{enumerate}
    such that the following conditions hold:  
    \begin{enumerate}  
            \item  
    \label{item:def-admissible:1}
            The dual complex  $\Gamma(Y'_p)$ is given by a refinement of the cell structure of $\Gamma(Y_p)$ and the induced  homeomorphism of topological spaces $\Gamma(Y_p)\stackrel{\approx}\to \Gamma(Y'_p)$ has the property that the following diagram is commutative up to homotopy:
        $$
        \xymatrix{
        \Gamma^1(Y'_p) \approx \hat\Gamma^1(Y'_{p}) \ar@{^{(}->}[d] \ar[r]^-{\varphi_p} & \Gamma^1(Y_{p})\ar@{^{(}->}[d]\\ 
        \Gamma(Y'_p)  & \ar[l]_-{\approx}\Gamma(Y_{p}),
        }
        $$
        where the vertical maps are the canonical inclusions, and ``$\approx$'' indicates the canonical homeomorphism induced by the given refinement.
        \item \label{item:def-admissible:2} 
        If $p\in D_J^\circ $ and $q\in D_{J'}^\circ $ are points of open strata of $D$ with $J\subset J'$, such that $p$ specializes to $q$ along a path $\gamma$ (see Definition \ref{def:specialization}), then the following diagram of topological spaces commutes: 
        $$
        \xymatrix{
\hat\Gamma^1(Y'_{p}) \approx \Gamma^1(Y'_p) \ar[d]^{\varphi_p} & \ar[l]_-{\operatorname{sp}}  \Gamma^1(Y'_q)\approx \hat\Gamma^1(Y'_q) \ar[d]^{\varphi_q} \\
\Gamma^1(Y_p)  & \ar[l]_-{\operatorname{sp}}  \Gamma^1(Y_q) ,
        }
        $$
        where  the horizontal maps are induced by the specialization maps from Section \ref{subsec:specialization}. 
    \end{enumerate} 
\end{definition}

An admissible modification $(g\colon Y'\to Y, \varphi)$ such that $Y'$ is $D$-semistable (see Definition \ref{def:D-nodal}) will also be called an {\it admissible resolution}.

\begin{remark}
Note that any $D$-quasi-nodal morphism is locally on the source a base change of a $D$-nodal morphism; by miracle flatness and stability of flatness under fiber products, $D$-quasi-nodal and $D$-semistable morphisms are therefore automatically flat 
and hence so is the morphism $Y' \to B$ in the above definition.  
\end{remark}

\begin{remark}
There is a natural map $\Gamma^1(Y_p)\hookrightarrow \hat \Gamma^1(Y'_p)$ of topological spaces, because the entire dual complex $\Gamma(Y'_p)$ is a refinement of $\Gamma(Y_p)$ and $ \hat \Gamma^1(Y'_p)$ is a refinement 
of the $1$-skeleton $\Gamma^1(Y'_p)$.
In this paper, the collection $\varphi = (\varphi_p)_{p\in B}$ will always be constructed in such a way that $\varphi_p$ is, up to homotopy, a retraction of 
this map, i.e.\ the composition $\Gamma^1(Y_p)\hookrightarrow \hat \Gamma^1(Y'_p)\stackrel{\varphi_p}\to \Gamma^1(Y_p)$ is homotopic to the identity, cf.\ item \eqref{item:def-admissible:1} in Definition \ref{def:admissible} and Figures \ref{fig:retract-cube} and \ref{fig:diagonal-subdivide} below. 
\end{remark}

The main result of this section is then the following theorem.

\begin{theorem} \label{thm:admissible-modification}
    Let $D\subset B$ be an snc divisor with components $D_i$, $i\in I$.
    Let $Y\to B$ be an $S$-colored $D$-quasi-nodal morphism. Fix a total ordering on the set $\Omega$ from Definition \ref{def:S-coloring}.  
    Then there is a canonical admissible modification $(g\colon Y'\to Y,\varphi)$ such that $Y'\to B$ is strict $D$-semistable; in particular, $Y'$ is regular, and flat over $B$.  
\end{theorem}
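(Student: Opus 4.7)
The plan is to construct $Y'$ as an iterated blow-up along (strict transforms of) the divisors $E_\alpha$, performed in the given total order $\alpha_1 < \alpha_2 < \cdots < \alpha_N$ where $N = |\Omega|$. The key local observation is that, in a chart as in \eqref{eq:normal-form-S-nodal}, each $E_\alpha$ with $\alpha \in \Omega_{s,i}$ is cut out by an ideal of the form $(u_i, x_s)$ or $(u_i, y_s)$, and the blow-up of such an ideal in the toric singularity $\{\prod_j u_j^{a_{sj}} = x_s y_s\}$ is a standard partial resolution which reduces the multiplicity $a_{si}$ by one while preserving the $D$-quasi-nodal form. Iterating $|\Omega|$ times brings us to the situation where all $a_{si} \in \{0, 1\}$, which after relabeling is the $D$-semistable normal form of Definition~\ref{def:D-nodal}\eqref{d-snc-form}.

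First I would establish the induction step in local coordinates. Blowing up $(u_i, x_s)$ produces two affine charts: in the chart $u_i = x_s t$, the $s$-equation becomes $x_s^{a_{si}-1} t^{a_{si}} \prod_{j \neq i} u_j^{a_{sj}} = y_s$, expressing $y_s$ as a regular function of the remaining variables; in the chart $x_s = u_i w$, the $s$-equation becomes $u_i^{a_{si}-1} \prod_{j \neq i} u_j^{a_{sj}} = w y_s$, of the same $D$-quasi-nodal form with $a_{si}$ reduced by one. The equations indexed by $s' \neq s$ pull back transversally and remain $D$-quasi-nodal. The strict transforms of the remaining $E_\alpha$, together with the exceptional divisor (absorbed into a smooth stratum in the first chart, and playing the role of the new $D_i$-component in the second), assemble into an $S$-coloring of the blow-up indexed by $\Omega \setminus \{\alpha_1\}$. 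Because the $E_\alpha$ are globally defined closed subschemes, this local construction globalizes uniquely, and projectivity of each blow-up yields projectivity of $g \colon Y' \to Y$ upon iteration. The canonicity is built into the data of the total ordering.

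Second, the dual complex refinement $\hat\Gamma^1(Y'_p) \to \Gamma^1(Y_p)$ is constructed simultaneously with the blow-ups. By Lemma~\ref{lem:S-nodal-Gamma(Y_p)-charts}, $\Gamma(Y_p)$ is covered by cubes, and blowing up $E_\alpha$ with $\alpha \in \Omega_{s,i}$ subdivides the $s$-direction of each cube meeting $E_\alpha$ by inserting one new intermediate vertex corresponding to the exceptional component. The refinement is the union of all these subdivisions, and the map $\varphi_p$ is the obvious retraction collapsing each subdivided edge back to its original edge. Since the exceptional divisor inherits the color $s$ of $E_\alpha$, the map $\varphi_p$ is a morphism of $S$-colored graphs; item \eqref{item:def-admissible:1} of Definition~\ref{def:admissible} follows directly, because $\Gamma(Y'_p)$ is by construction a cubical subdivision of $\Gamma(Y_p)$ and $\varphi_p$ is induced by the corresponding retraction on the $1$-skeleton.

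The main obstacle lies in verifying condition \eqref{item:def-admissible:2} of Definition~\ref{def:admissible}, namely the compatibility of $\varphi$ with specialization maps. Since the blow-ups are performed globally using the fixed ordering on $\Omega$, the functoriality of blow-ups under base change together with the continuity of the construction along paths in $D$ reduce the question to a local check in toric charts. However, when $p$ specializes to $q$ along a stratum of $D$, some of the multiplicities $a_{si}(p)$ may vanish identically while others grow, so the cube at $q$ can have strictly higher dimension than the cube at $p$, with the specialization map collapsing the new directions onto faces. A careful case analysis is required to match these collapses with the refinements and to confirm that the retractions $\varphi_p$ and $\varphi_q$ fit into a commutative square with the specialization maps on both $\Gamma^1(Y'_\bullet)$ and $\Gamma^1(Y_\bullet)$; this interplay between the global $\Omega$-ordering and the stratum-dependent local charts is the delicate point of the construction.
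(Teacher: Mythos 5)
There is a genuine gap: your construction stops one stage too early. Iterating the blow-ups of the $E_\alpha$ until every multiplicity $a_{si}$ lies in $\{0,1\}$ only brings $Y\to B$ into the \emph{nearly} $D$-nodal normal form $\prod_{i\in I'}\{u_i=x_s y_s\mid s\in S_i\}$ of Definition~\ref{def:D-nodal}(2); this is \emph{not} the $D$-semistable form of item~\eqref{d-snc-form}, and it is not regular. Already for $|S_i|=2$ the local model $\{x_1y_1=x_2y_2\}$ is the affine cone over $\mathbb P^1\times\mathbb P^1$, a genuine singularity of the total space. The paper therefore needs a second stage (Proposition~\ref{prop:small-blow-up-of-S-nodal-degenerations}): a small resolution obtained by blowing up, in a carefully chosen order, the \emph{common components} $E_{(\alpha_s)_{s\in A}}$ of several of the Weil divisors $E_{\alpha_s}$, which converts each factor $\{u_i=x_sy_s\mid s\in S_i\}$ into the semistable form $u_i=z_1\cdots z_m$. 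Your proposal contains no analogue of this step, so the conclusion that $Y'$ is strict $D$-semistable (and in particular regular) does not follow from what you construct.

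This omission also propagates to the combinatorial side. You describe $\Gamma(Y'_p)$ as a cubical subdivision of $\Gamma(Y_p)$ with $\varphi_p$ the ``obvious retraction''; that is accurate for the first stage (where indeed $\hat\Gamma^1(Y'_p)=\Gamma^1(Y'_p)$), but the small resolution subdivides each cube \emph{simplicially}, producing diagonal edges of the $1$-skeleton that carry no color. The whole reason Definition~\ref{def:admissible} allows a further refinement $\hat\Gamma^1(Y'_p)$ of $\Gamma^1(Y'_p)$ is that these colorless diagonal edges must be subdivided into geodesic chains of colored edges (using the taxicab metric and the total order on vertices coming from $\Omega$) before one can define a morphism of $S$-colored graphs $\varphi_p\colon\hat\Gamma^1(Y'_p)\to\Gamma^1(Y_p)$. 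Without this, the map $\varphi_p$ you propose is not defined on the edges created by the resolution of the cone singularities. Your treatment of the specialization compatibility (item~\eqref{item:def-admissible:2}) is correctly identified as the delicate point, but it has to be verified separately for both stages, and for the small-resolution stage the argument hinges on the product structure $\Gamma(c'_p)=\prod_i\Gamma(c'_{i,p})$ being respected by the geodesic subdivision — an ingredient absent from your sketch.
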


\begin{remark} Our resolution algorithm should be compared to the more general result in \cite[Theorem 2.7]{alt}.
For our applications though, the collection $\varphi=(\varphi_p)_{p\in B}$ of maps of $1$-skeleta is also crucial data.
\end{remark}

The remainder of this section is devoted to a proof of the above theorem.
We will first blow-up certain Weil divisors $E_{\alpha}$ until the $D$-quasi-nodal morphism turns into a nearly $D$-nodal morphism, see Proposition \ref{prop:blow-up-of-S-nodal-degenerations} below.
We will then choose certain common components of the Weil divisors $E_{\alpha}$ whose blow-ups in a carefully chosen order will produce an admissible resolution, see Proposition \ref{prop:small-blow-up-of-S-nodal-degenerations} below.
Roughly speaking, the strategy of proof is to construct admissible resolutions in local charts and to exploit the global Weil divisors $E_\alpha$ from Definition \ref{def:S-coloring} to show that our construction glues to give what we want globally.

Up to a slightly different ordering of the blow-ups, the same resolution (without the discussion of admissibility) is constructed in toric language in the survey \cite[\S 5.3]{survey}. 
(Strictly speaking, \cite[\S 5.3]{survey} only considers the case of a monomial base change of a strict $\bar D$-nodal morphism as in Lemma \ref{lem:base-change-of-D-nodal-degenerations} below, which is however sufficient for the applications in this paper.)  

\subsubsection{Reduction of the $\alpha$-order}\label{subsec:reduction-alpha-weight}

\begin{definition} \label{def:alpha-weight}
Let $Y\to B$ be an $S$-colored $D$-quasi-nodal morphism.  
For $\alpha\in \Omega_{s,i}$ and $p\in B$, we define the $\alpha$-order $o_{\alpha,p}(c)$ of a chart $c\in \mathcal C$ at $p\in B$ as follows.  
Assume that $c$ has normal form \eqref{eq:normal-form-S-nodal}.
If $E_\alpha \cap c_p=\emptyset$, then $o_{\alpha,p}(c)=0$. If $E_\alpha\cap c=\{u_i=0\}$ and $u_i(p)=0$, then $o_{\alpha,p}(c)=1$. 
Otherwise, by item \eqref{item:S-coloring:1} in Definition \ref{def:S-coloring}, $E_\alpha\cap c$ is cut out by $(u_i,x_s)$ or $(u_i,y_s)$ and we have $s\in S'$ and $a_{si}\geq 1$ (cf.~Remark \ref{rem:2-distinct-E_alpha-on-c}); in that case, we define $o_{\alpha,p}(c)$ as the vanishing order of the function $\prod_{j}u_j^{a_{sj}}$ at $p$:
$$
o_{\alpha,p}(c) \coloneqq 
\ord_p\left(\textstyle \prod_{j}u_j^{a_{sj}}\right) .
$$  
The $\alpha$-order of $Y$ is then defined as
$$ 
o_{\alpha}(Y)\coloneqq 
\max_{c\in \mathcal C,\, p\in B}o_{\alpha,p}(c) .
$$ 
\end{definition}

\begin{remark} \label{rem:alpha-weight} 
It follows that $E_\alpha$ is empty in a neighborhood of $c_p$ if and only if $o_{\alpha,p}(c)=0$, and it is non-empty and Cartier in such a neighborhood if and only if $o_{\alpha,p}(c)=1$. 
\end{remark}

Note that $Y\to B$ is nearly $D$-nodal (see Definition \ref{def:D-nodal}) if and only if $o_{\alpha}(Y) \leq  1$ for all $\alpha\in \Omega$.  

\begin{proposition} 
\label{prop:blow-up-of-S-nodal-degenerations}
Let $f \colon Y\to B$ be an $S$-colored $D$-quasi-nodal morphism.  
Assume there is some $\alpha\in \Omega$ with $o_{\alpha}(Y)\geq 2$.
Then the blow-up $g \colon Y'\to Y$ of the Weil divisor $E_\alpha$ is, for the trivial refinements $\hat \Gamma^1(Y'_p) = \Gamma^1(Y'_p)$ and a canonically defined collection of maps $\varphi_p\colon \hat \Gamma^1(Y'_p)\to \Gamma^1(Y_p)$, $p\in B$, an admissible modification.
Moreover, $Y'\to B$ is $S$-colored $D$-quasi-nodal, with indexing set $\Omega' =\Omega\sqcup \{\beta\}$ for some $\beta\in \Omega'$, such that 
$$
o_{\alpha}(Y')=o_{\beta}(Y')= o_{\alpha}(Y)-1 
$$ 
and $o_{\gamma}(Y')=o_{\gamma}(Y)$ for $\gamma\in \Omega\setminus \{\alpha\}$.
\end{proposition}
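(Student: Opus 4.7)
The strategy is local: the blow-up is an isomorphism away from $E_\alpha$, so only charts meeting $E_\alpha$ require analysis. Fix such a chart $c$ of the form \eqref{eq:normal-form-S-nodal}; by Definition~\ref{def:S-nodal-degeneration}\eqref{item:def:S-nodal:2} we may assume $E_\alpha$ is cut out by $(u_i, x_s)$ for some $i$ with $a_{si} \geq 1$. The blow-up of this ideal on $Y$ has two affine charts. In the chart with substitution $u_i = x_s v$ (new coordinate $v$), the strict transform of the $s$-equation $\prod_j u_j^{a_{sj}} = x_s y_s$ becomes $y_s = x_s^{a_{si}-1} v^{a_{si}} \prod_{j \neq i} u_j^{a_{sj}}$, eliminating $y_s$; the blow-up relation $u_i = x_s v$ is then itself a new $s$-equation in normal form, with pair $(\tilde x_s, \tilde y_s) = (x_s, v)$ and exponents $\tilde a_{si} = 1$, $\tilde a_{sj} = 0$ for $j \neq i$. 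In the chart with $x_s = u_i w$ (new coordinate $w$), the strict transform is $u_i^{a_{si}-1} \prod_{j\neq i} u_j^{a_{sj}} = w y_s$, in normal form with pair $(w, y_s)$ and exponents $\tilde a_{si} = a_{si} - 1$, $\tilde a_{sj} = a_{sj}$ for $j \neq i$. Equations for $s' \in S' \setminus \{s\}$ are preserved after substitution of $u_i$, so $Y' \to B$ is $D$-quasi-nodal.

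The new indexing $\Omega' = \Omega \sqcup \{\beta\}$ is defined as follows. The two components of the new $s$-nodal locus in the $D_i$-direction visible in the chart ``$x_s = u_i w$'' (namely $V(u_i, w)$ and $V(u_i, y_s)$, and their analogues in the ``$u_i = x_s v$'' chart) receive the labels $\alpha$ and $\beta$; the choice between them is made consistently across charts using the fixed total ordering on $\Omega$, and $\beta$ inherits the color $s$ from $\alpha$. For $\gamma \in \Omega \setminus \{\alpha\}$, $E_\gamma$ on $Y'$ is the strict transform of $E_\gamma$ on $Y$. The order formula follows directly from the new exponents: at the point $p$ where $o_\alpha(Y)$ is attained (with $\ord_p(u_i) = 1$ since $(B,D)$ is snc), the relevant exponent $a_{si}$ decreases by exactly one in each chart's new normal form, giving $o_\alpha(Y') = o_\beta(Y') = o_\alpha(Y) - 1$. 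For $\gamma \neq \alpha$, the $s'$-equation containing $E_\gamma$ is unchanged up to substitution of $u_i$, hence $o_\gamma(Y') = o_\gamma(Y)$.

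For the admissibility data, the refined $1$-skeleton $\hat\Gamma^1(Y'_p)$ is obtained by subdividing the local cubes $\Gamma(c_p)$ of $\Gamma(Y_p)$ according to the blow-up: a new cell corresponding to the exceptional divisor is inserted, splitting the old $\alpha$-edge into an $\alpha$-edge and a $\beta$-edge. The map $\varphi_p$ collapses the $\beta$-colored edge back to the corresponding $\alpha$-edge of $\Gamma^1(Y_p)$ and is the identity on the others, so that it preserves the $S$-coloring. Condition~\eqref{item:def-admissible:1} follows from the local blow-up geometry, which realizes $\Gamma(Y'_p)$ as a refinement of $\Gamma(Y_p)$. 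The main obstacle is condition~\eqref{item:def-admissible:2}: compatibility of $\varphi_p$ with the specialization maps of Section~\ref{subsec:specialization} as $p$ varies within strata of $D$. This rests on the canonicity of the whole construction—the blow-up is along the globally defined Weil divisor $E_\alpha$, so the local cube subdivisions and their collapses patch together functorially across charts and across specialization paths between strata.
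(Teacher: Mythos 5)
Your local computation of the two blow-up charts, the new exponents, and the resulting order bookkeeping matches the paper's Steps 1--2 and is correct (the paper's $(c,x_s)$- and $(c,u_i)$-charts are exactly your ``$u_i=x_sv$'' and ``$x_s=u_iw$'' charts). Two points are underdeveloped, and the second is a genuine gap. First, to verify item \eqref{item:def:S-nodal:2} of Definition \ref{def:S-nodal-degeneration} for $Y'$ you must check that \emph{both} branches of each new $s$-node are cut out by global divisors indexed by $\Omega'$; the nontrivial case is the branch $V(u_i,u_i')$ in the chart $u_i=x_su_i'$, which has to be identified with the proper transform of the divisor $E_{\alpha'}$ cut out by $(u_i,y_s)$ --- a short computation you omit. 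Your prescription for labelling the new branches ``consistently using the total ordering on $\Omega$'' is also not the right mechanism: the canonical global definition is that $E'_\gamma$ is the proper transform of $E_\gamma$ for $\gamma\in\Omega$, while $E'_\beta$ is the reduced exceptional divisor.

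The real gap is in the admissibility data. After the blow-up, each $s$-colored edge of $\Gamma^1(c_p)$ with $o_{\alpha,p}(c)\geq 2$ is subdivided by the exceptional vertex into two $s$-colored edges: one joins the exceptional vertex to the component of the proper transform $E'_\alpha$ (this edge lives in the $(c,u_i)$-chart), the other joins it to the component of $E'_{\alpha'}$ (living in the $(c,x_s)$-chart). The map $\varphi_p$ must contract exactly one of these and map the other onto the original edge, and you never say which; ``collapses the $\beta$-colored edge back to the $\alpha$-edge'' does not determine a choice, since both new edges have color $s$ and both are adjacent to the exceptional vertex. This choice is not a matter of taste: compatibility with specialization (item \eqref{item:def-admissible:2} of Definition \ref{def:admissible}) forces it. In the case $o_{\alpha,q}(c)\geq 2$ but $o_{\alpha,p}(c)=1$, the specialization map $\Gamma^1(Y'_q)\to\Gamma^1(Y'_p)$ sends both the exceptional component and the proper-transform component of $E'_\alpha$ to the single component of $E_\alpha\cap Y_p$, hence contracts precisely the half-edge adjacent to $E'_\alpha$; so $\varphi_q$ must contract that same half-edge (the $s$-colored edges of $\Gamma((c,u_i)_q)$), and the opposite choice makes the square fail to commute. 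Your appeal to ``canonicity of the whole construction'' hides exactly this point, which is where the paper does the actual work in Step 4 of its proof.
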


\begin{proof} 
In view of the $(S\times I)$-partition of $\Omega$ from Definition \ref{def:S-coloring}, we have $\alpha\in \Omega_{s,i}$ for some $(s,i) \in S\times I$. 
We proceed in several steps.

\begin{step} \label{step:blow-up-E_alpha-big:1}
Blow-up of local charts.
\end{step}

Let $c$ be a chart with normal form
\begin{align} \label{eq:S-nodal-blow-up-chart} 
\left\{\textstyle \prod_{j\in I} u_j^{a_{tj}}=x_ty_t 
\mid \ t\in S'\right\} \subset U\times \Delta^{2|S'|} .
\end{align}
We may assume that locally on $c$, $E_\alpha$ is non-empty and not Cartier on $c$. 
By Definition \ref{def:S-coloring}, we may regard $S'\subset S$ as a subset.
Moreover, since $\alpha\in \Omega_{s,i}$ and $E_\alpha\cap c$ is not Cartier, it is cut out by $(u_i,x_s)$ or by $(u_i,y_s)$.  
To describe the local equations of the blow-up, up to replacing $x_s$ by $y_s$, we can without loss of generality assume that it is cut out by $(u_i,x_s)$.
The blow-up $c'\to c$ of the ideal $(u_i,x_s)$ is covered by two charts, denoted by $(c,x_s)$ and $(c,u_i)$ respectively, as follows. The $(c,x_s)$-chart is explicitly given by
\begin{align} \label{eq:S-nodal-blow-up-x_s-chart} 
\left\{\textstyle u_i=x_su'_i,\ \ 
\prod_{j\in I}u_j^{a_{tj}}=
x_ty_t\mid t\in S' \setminus \{s\} \right\} ,
\end{align} 
while the $(c,u_i)$-chart is given by
\begin{align} \label{eq:S-nodal-blow-up-u_i-chart} 
\left\{\textstyle
u_i^{a_{si}-1}\prod_{j\in I\setminus 
\{i\}}u_j^{a_{sj}}=x'_sy_s,\ \  
\prod_{j \in I}u_j^{a_{tj}}=
x_ty_t\mid t\in S' \setminus \set{s} \right\}.
\end{align}   
Altogether this shows that $Y'\to B$ is a $D$-quasi-nodal morphism. 

\begin{step} \label{step:blow-up-E_alpha-big:1.5}
$Y'\to B$ is  canonically an $S$-colored $D$-quasi-nodal morphism. 
\end{step}

We define $\Omega'_{s,i}\coloneqq \Omega_{s,i}\sqcup\{\beta\}$ 
for a new element $\beta$, and $\Omega'_{t,j}\coloneqq \Omega_{t,j}$ for $(t,j)\in (S \times I) \setminus \{(s,i)\}$. 
We further let $\Omega'\coloneq \bigsqcup_{(t,j)\in S\times I} \Omega'_{t,j}$.
For $\gamma\in \Omega'$, we then define the divisors $E'_\gamma$ on $Y'$ as follows.
We let 
$$
E'_\gamma \coloneq \begin{cases}g^\ast E_\alpha - g^{-1}_\ast E_\alpha\quad &\text{if $\gamma=\beta$};\\
    g^{-1}_\ast E_\gamma\quad &\text{if $\gamma\in \Omega'_{s,i}\setminus\{\beta\}$};\\
    g^\ast E_\gamma \quad &\text{if $\gamma\notin \Omega'_{s,i}$}.
\end{cases} 
$$
Here, $g^\ast E_\gamma$ and $g^{-1}_\ast E_\gamma$ refer to the total and proper transform of $E_\gamma$, respectively.
In particular, $E'_\beta$ is the reduced divisor contracted by $g\colon Y'\to Y$; it is empty if and only if $g$ is a small map.
We have thus defined a collection of Weil divisors on $Y'$, indexed by the sets $\Omega'_{t,j}$, $(t,j) \in S \times I$. 
It remains to check that this satisfies item \eqref{item:S-coloring:1} and item \eqref{item:S-coloring:2} in Definition \ref{def:S-coloring}.  
 
Let $c$ be a chart with normal form \eqref{eq:S-nodal-blow-up-chart}. We may without loss of generality assume that $E_\alpha$ is not Cartier on $c$, hence $o_{\alpha,p}(c)\geq 2$ for some $p \in B$.
In particular, $s \in S'$ and $a_{si} \neq 0$, hence by item \eqref{item:S-coloring:2} in Definition \ref{def:S-coloring} there exists an index $\alpha'\in \Omega_{s,i}$ such that $E_{\alpha'}$ is cut out by $(u_i,y_s)$ in the chart \eqref{eq:S-nodal-blow-up-chart}. 
Note that $\alpha$ and $\alpha'$ are the only  two indices $\gamma \in \Omega_{s,i}$ such that $E_\gamma \cap c \neq \emptyset$, see item \eqref{item:S-coloring:1} in Definition \ref{def:S-coloring}.

First assume $a_{si} \geq 2$. 
Then in the $(c,x_s)$-chart \eqref{eq:S-nodal-blow-up-x_s-chart}, we have $y_s=u_i' u_i^{a_{si}-1} \prod_{j \neq i} u_j^{a_{sj}}$ and $u_i=x_su'_i$, and we deduce
\begin{align} \label{eq:E'_alpha-x_s-chart}
E'_\alpha=\emptyset, \quad \quad 
E'_{\alpha'}=\{u_i=u'_i=0\}
\quad \quad \text{and}\quad \quad E'_\beta=\{u_i=x_s=0\}.
\end{align}
In the $(c,u_i)$-chart \eqref{eq:S-nodal-blow-up-u_i-chart},  we have $x_s = u_ix_s'$ and so
\begin{align} \label{eq:E'_alpha-u_i-chart}
E'_\alpha=\{u_i=x'_s=0\}, \quad \quad E'_{\alpha'} = \emptyset
\quad \quad \text{and}\quad \quad E'_\beta=\{u_i=y_s=0\} .
\end{align}  
Next, assume $a_{si} = 1$. Then in the $(c,x_s)$-chart \eqref{eq:S-nodal-blow-up-x_s-chart}, we have $y_s=u_i' \prod_{j \neq i} u_j^{a_{sj}}$.
Hence
\begin{align} \label{eq:E'_alpha-x_s-chart:a_si=1}
E'_\alpha= \{u_i = x_s = 0\}, \quad \quad E'_{\alpha'}= \set{u_i = u_i' = 0} \quad \quad \text{and}\quad \quad E'_\beta= \emptyset.
\end{align}
In the $(c,u_i)$-chart \eqref{eq:S-nodal-blow-up-u_i-chart},  we have $x_s = u_ix_s'$ and $\prod_{j\neq i}u_j^{a_{sj}}=x'_sy_s$ and so
\begin{align} \label{eq:E'_alpha-u_i-chart:a_si=1}
E'_\alpha= \{u_i = 0\}, \quad \quad E'_{\alpha'} = \emptyset 
\quad \quad \text{and}\quad \quad E'_\beta=\emptyset .
\end{align} 
Moreover, for any $\gamma \in \Omega'$ with $\gamma \notin \Omega'_{s,i}$, we know by assumptions that $E_\gamma$ is either trivial on $c$ or it agrees with the vanishing locus of some ideal of the form $(u_j)$, $(u_j,x_t)$, resp.\ $(u_j,y_t)$ with $(j,t)\neq (s,i)$.
The same description holds for the total transform $E'_{\gamma}=g^\ast E_\gamma$ of $E_\gamma$.

The above descriptions show that the divisors $E'_{\gamma}$ with $\gamma\in \Omega'$,  
together with the atlas of $Y'$ induced by the given atlas of $Y$ via \eqref{eq:S-nodal-blow-up-x_s-chart} and \eqref{eq:S-nodal-blow-up-u_i-chart}, satisfy the conditions of Definition \ref{def:S-coloring}.

\begin{step} \label{step:blow-up-E_alpha-big:3}
    Cell structure of $\Gamma(Y'_p)$.
\end{step}

Let $p\in B$.  
If $p\notin D_i$, then $Y'_p\to Y_p$ is an isomorphism and hence $\Gamma(Y'_p)=\Gamma(Y_p)$ as cell complexes.
Let now $p\in D_i$.
We have a presentation 
$$
\bigsqcup_{c}\Gamma(c_p)\xtwoheadrightarrow{} \Gamma(Y_p)
$$
of $\Gamma(Y_p)$ by the cuboids $\Gamma(c_p)$, where $c$ runs through all charts that meet $Y_p$, see Lemma \ref{lem:S-nodal-Gamma(Y_p)-charts}.
If $c\in \mathcal C$ is such a chart, then we consider the blow-up $c'\to c$ induced by $Y'\to Y$. 
Since $c'\to c$ is an isomorphism in a neighborhood of $c_p$ if $o_{\alpha,p}(c)\leq 1$, to describe the cell structure of $\Gamma(Y'_p)$ we can assume that $o_{\alpha,p}(c)\geq 2$. 

The chart $c$ has normal form \eqref{eq:S-nodal-blow-up-chart}, and the charts \eqref{eq:S-nodal-blow-up-x_s-chart} and \eqref{eq:S-nodal-blow-up-u_i-chart} cover $c'$.
We thus obtain a surjection 
\begin{align} \label{eq:chart-Gamma(c'_p)}
\Gamma((c,x_s)_p)\sqcup \Gamma((c,u_i)_p) \xtwoheadrightarrow{} \Gamma(c'_p)
\end{align}
of cell complexes. 
Composing the disjoint union of the maps \eqref{eq:chart-Gamma(c'_p)} for $c \in \mathcal C$ with the natural surjection $\sqcup_c \Gamma(c'_p) \twoheadrightarrow \Gamma(Y'_p)$ yields the cell structure of $\Gamma(Y'_p)$.

The map \eqref{eq:chart-Gamma(c'_p)} identifies $\Gamma(c'_p)$ with the cell complex given by gluing the cube $\Gamma((c,x_s)_p)$ to $\Gamma((c,u_i)_p)$ along their common codimension one face $F$, and can be described as follows. The chart $c_p$ is isomorphic to a product of a smooth variety with
$$
\{x_ty_t=0\mid t\in S''\}\subset \Delta^{2|S''|}
$$
for some subset $S''\subset S'$.
(Here $s\in S''$ because $o_{\alpha,p}(c)\geq 2$ 
and $\alpha\in \Omega_s$.) 
By Remark \ref{rem:components-c_p-globalize}, each component of $c_p$ extends to a component of $Y_p$.
The proper transform of this component yields components of $Y'_p$ and $c'_p$, respectively.
The vertices of $\Gamma^0(c'_p)$ that are not contained in $\Gamma^0(c_p)$ are precisely those of the faces of $\Gamma((c,x_s)_p)$ and $\Gamma((c,u_i)_p)$ that are glued in \eqref{eq:chart-Gamma(c'_p)}.
They can intrinsically be defined as those components of $c'_p$ that are contracted via $c'_p\to c_p$.

In terms of the coloring of the $1$-skeleton of $\Gamma(c_p)$, the refinement in \eqref{eq:chart-Gamma(c'_p)} corresponds to adding a vertex at the middle of each edge of color $s$ and to subdivide the cuboid $\Gamma(c_p)\approx \Gamma(c'_p)$ along the wall that is generated by those vertices. 
See Figure \ref{fig:retract-cube}.

\begin{figure}[h]
    \centering
    \includegraphics[width=0.5\linewidth]{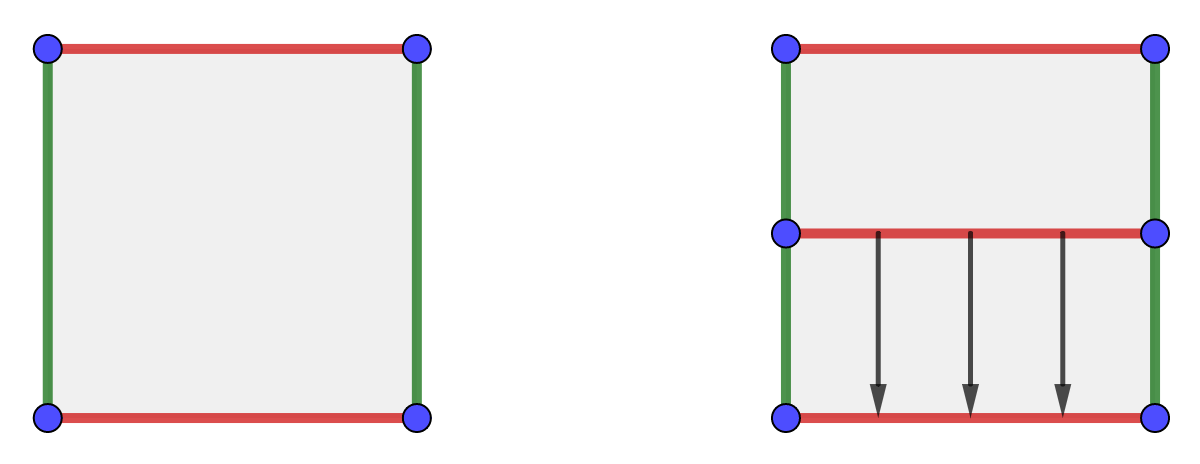}
    \caption{Left: Cube $\Gamma(c_p)$ of the dual complex $\Gamma(Y_p)$.
    Right: Cuboid refinement $\Gamma(c_p')$
    of this cube, corresponding to the fiber of the blowup $Y_p'\to Y_p$.
    Map $\varphi_p\colon \Gamma(c_p')\to \Gamma(c_p)$ is depicted by
    arrows.}
    \label{fig:retract-cube}
\end{figure}

\begin{step} \label{step:blow-up-E_alpha-big:4}
$g:Y'\to Y$ is admissible for a canonically defined
collection $\varphi=\{\varphi_p\}_{p\in B}$
of maps  $\varphi_p\colon
\Gamma^1(Y_p')\to \Gamma^1(Y_p)$. 
\end{step}

We have already seen that there is a canonical way in which we can, for any $p\in B$, identify $\Gamma(Y'_p)$ with a refinement of the cell structure of $\Gamma(Y_p)$.
This gives a homeomorphism $\Gamma(Y_p)\stackrel{\approx}\to \Gamma(Y'_p)$, which restricts to a continuous map on $1$-skeleta $\Gamma^1(Y_p)\to \Gamma^1(Y'_p)$.
We define $\hat \Gamma^1(Y'_p)\coloneqq \Gamma^1(Y'_p)$ with the $S$-coloring induced from the one of $\Gamma^1(Y'_p)$ from Step \ref{step:blow-up-E_alpha-big:1.5} and Definition \ref{def:def-degeneration->S-coloring}. We aim to construct a map of $S$-colored graphs $\varphi_p\colon \Gamma^1(Y'_p)\to \Gamma^1(Y_p)$ which is homotopic to a retraction of the continuous map  $\Gamma^1(Y_p)\to \Gamma^1(Y'_p) \subset \Gamma(Y'_p)$. 

We start with the local problem in the chart $c$ from Step \ref{step:blow-up-E_alpha-big:1}. We may without loss of generality assume that $o_{\alpha,p}(c)\geq 2$. We aim to define a canonical map
$$
\varphi_p\colon \Gamma^1(c'_p)\longrightarrow \Gamma^1(c_p)
$$
of colored graphs with the desired properties.
In the present case, it turns out that one can define $\varphi_p$ easily on the entire dual complexes to obtain maps $\Gamma(c'_p)\to \Gamma(c_p)$.
A priori, there are two obvious choices: Via the refinement \eqref{eq:chart-Gamma(c'_p)}, we could contract either  $\Gamma((c,x_s)_p)$ or $\Gamma((c,u_i)_p)$, by contracting its $s$-colored edges and identifying the result linearly with $\Gamma(c_p)$.  
The fact that the divisor $E_\alpha$ that is blown-up is part of the data in the proposition allows us to pick one of the choices in a canonical way, as follows. The components of $E_\alpha\cap c_p$ determine a codimension one face of $\Gamma(c_p)$; the opposite face is given by $E_{\alpha'}\cap c_p$ and the edges that join the two faces are precisely the edges of color $s$.
Since $\Gamma(c'_p)$ is a refinement of $\Gamma(c_p)$ given by introducing a wall, spanned by the midpoints of all edges of color $s$, we can thus define $\varphi_p$ by contracting the chamber of $\Gamma(c'_p)$ that contains those vertices in $\Gamma^0(c_p)\subset \Gamma^0(c'_p)$ which correspond to the components of $E_\alpha\cap c_p$. 
In other words, we contract $\Gamma((c,u_i)_p)$ and not $\Gamma((c,x_s)_p)$, and this choice is well-defined globally.
We thus get a canonical map of cell complexes
$$
\psi_p\colon \Gamma(c'_p)\longrightarrow \Gamma(c_p) .
$$ 
See Figure \ref{fig:retract-cube}.
The above map of cell complexes induces a canonical map on $1$-skeleta: $\varphi_p\colon \Gamma^1(c'_p)\to \Gamma^1(c_p)$.
Our construction is canonical and in particular compatible with localization.
Hence, it glues to give a unique map of $S$-colored graphs
$$
\varphi_p\colon \Gamma^1(Y'_p)\longrightarrow \Gamma^1(Y_p) ,
$$
that we denote by the same letter.

Each cuboid $\Gamma(c_p)$ is embedded into $\Gamma(Y_p)$ (see Lemma \ref{lem:S-nodal-Gamma(Y_p)-charts}), and $\varphi_p$ maps $\Gamma^1(c'_p)$ to $\Gamma^1(c_p)$.
Moreover, there exists a canonical homotopy between the continuous self-map of the cuboid $\Gamma(c_p)$ induced by $\psi_p$, and the identity on $\Gamma(c_p)$. These homotopies glue to define the homotopy required in the diagram in item \eqref{item:def-admissible:1} of Definition \ref{def:admissible}.

Let $p\in D_J^\circ $ and $q\in D_{J'}^\circ $ be points as in item \eqref{item:def-admissible:2} of Definition \ref{def:admissible}, so that $p$ specializes to $q$ via a continuous path $\gamma$.
To check the compatibility under specialization stated in item \eqref{item:def-admissible:2} of Definition \ref{def:admissible}, cover $Y_q$ by finitely many charts $c$. Up to replacing $p$ by a point sufficiently nearby $q$, we may assume that the path $\gamma$ lies in the image of the map $c \to U_c \subset B$ for each of these charts $c$.
It then suffices to prove that the following diagram commutes: 
\begin{align}\label{align:specialization-local-diagram}
\begin{split}
\xymatrix{
\Gamma^1(c'_{p}) \ar[d]^{\varphi_p} & \ar[l]_-{\operatorname{sp}} \Gamma^1(c'_q) \ar[d]^{\varphi_q} \\
\Gamma^1(c_p)  & \ar[l]_-{\operatorname{sp}}  \Gamma^1(c_q).
}
\end{split}
\end{align}  
Let $S'(p)\coloneq\{t\in S'\mid \prod_{j\in I} u_j^{a_{tj}}(p)=0\}$ and  $S'(q)\coloneq\{t\in S'\mid \prod_{j\in I} u_j^{a_{tj}}(q)=0\}$.
Since $p$ specializes to $q$, we have $S'(p)\subset S'(q)$.
The dual complex $\Gamma(c_q)$ (resp.\ $\Gamma(c_p)$) is a cuboid of dimension $|S'(q)|$ (resp.\ $|S'(p)|$), given by the product of intervals of color $ t\in S'(q)$ (resp.\ $ t\in S'(p)$). 
The dual complexes $\Gamma(c'_{p})$ and $\Gamma(c'_{q})$ are obtained from $\Gamma(c_{p})$ and $\Gamma(c_{q})$ by inserting a vertex in the middle of each edge of color $s$ and connecting these vertices to a wall that is parallel to the two codimension one faces that do not contain any edge of color $s$.  
The specialization map $\operatorname{sp}$ can be identified to the projection map that contracts all edges of color $t\in S'(q)\setminus S'(p)$.
Moreover, $\varphi_p$ and $\varphi_q$ contract for each chain of consecutive edges of color $s$ exactly that edge of color $s$ that is visible in the chart $(c,u_i)$; i.e.~they contract exactly one of the two chambers that have been introduced by the aforementioned wall.
From this description, the commutativity of the diagram is easily seen. 

Altogether, we have proven the compatibility stated in item \eqref{item:def-admissible:2} of Definition \ref{def:admissible}.
This concludes the proof of Proposition \ref{prop:blow-up-of-S-nodal-degenerations}.
\end{proof}

\subsubsection{Small resolution of $S$-colored nearly $D$-nodal degenerations}

\begin{proposition} \label{prop:small-blow-up-of-S-nodal-degenerations}
Let $Y\to B$ be an $S$-colored $D$-quasi-nodal morphism.
Assume $o_{\alpha}(Y)\leq 1$ for all $\alpha\in \Omega$, i.e.\ $Y\to B$ is nearly $D$-nodal.  
Then a total order on $\Omega$ induces in a canonical manner an admissible modification $(g\colon Y'\to Y,\varphi)$, 
such that $Y'\to B$ is strict $D$-semistable.
\end{proposition}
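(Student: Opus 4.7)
The plan is to construct $g\colon Y'\to Y$ as an iterated sequence of small blow-ups indexed by $\Omega$ in the given total order, resolving each nodal singularity in the nearly $D$-nodal local model
$$
\prod_{i\in I'}\{u_i=x_i^{(1)}y_i^{(1)}=\cdots=x_i^{(m_i)}y_i^{(m_i)}\}\times\Delta^{j+k}
$$
via a small resolution analogous to the classical blow-up of the non-Cartier Weil divisor $\{x=z=0\}$ in the 3-fold ordinary double point $\{xy=zw\}$. Since $o_\alpha(Y)\leq 1$ for all $\alpha$, each $E_\alpha$ is itself Cartier by Remark~\ref{rem:alpha-weight}, so the centers we blow up will not be the $E_\alpha$'s themselves (which would yield trivial blow-ups) but non-Cartier subvarieties of the form $E_\alpha\cap E_\beta$ for pairs of $E_\alpha, E_\beta$ associated with distinct nodes above the same component $D_i$.

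Concretely, enumerate $\Omega=\{\alpha_1<\cdots<\alpha_N\}$, set $Y^{(0)}\coloneqq Y$, and inductively define $Y^{(n)}\to Y^{(n-1)}$ as the blow-up along the closure of a globally defined Weil subscheme $Z_n\subset Y^{(n-1)}$, whose local structure in each chart of the nearly $D$-nodal form is a union of intersections of the strict transform of $E_{\alpha_n}$ with those strict transforms of $E_\beta$, $\beta>\alpha_n$, that still share a node with $\alpha_n$ after the previous steps. An explicit local computation, modelled on Step~\ref{step:blow-up-E_alpha-big:1} in the proof of Proposition~\ref{prop:blow-up-of-S-nodal-degenerations}, shows that each such blow-up is a small modification that separates exactly one pair of components at one node, reducing one $m_i$ by one while leaving the rest of the local form unchanged. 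Because the $E_\alpha$ are global Weil divisors and the selection of $Z_n$ depends only on $\alpha_n$ and the total order, the $Z_n$ are canonical; hence so is $Y'\coloneqq Y^{(N)}\to Y$. After all $N$ steps, each local form has become $\prod_{i\in I'}\{u_i=z_i^{(1)}\cdots z_i^{(n_i)}\}\times(\text{smooth})$, which is strict $D$-semistable.

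For the admissibility data, in each chart every small blow-up refines the dual complex $\Gamma(c_p)$ by triangulating one square face (each square becoming two triangles sharing a new diagonal edge, exactly reflecting the familiar two-triangles dual complex of a resolved $3$-fold ordinary double point). I take $\hat\Gamma^1(Y'_p)$ to be the refinement of $\Gamma^1(Y'_p)$ obtained by subdividing each such new diagonal at its midpoint, and define $\varphi_p$ chart-by-chart as the graph morphism which is the identity on the image of $\Gamma^1(Y_p)$ and collapses each half of a subdivided diagonal onto the edge of $\Gamma^1(Y_p)$ determined by which $E_{\alpha_n}$ served as the blow-up center. Compatibility with specialization (item~\eqref{item:def-admissible:2} of Definition~\ref{def:admissible}) then reduces to a local check at each step, entirely analogous to Step~\ref{step:blow-up-E_alpha-big:4} in the proof of Proposition~\ref{prop:blow-up-of-S-nodal-degenerations}. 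The main obstacle is precisely the canonicity question: small resolutions of $3$-fold nodes come in two flavors, and one must verify that the order-based recipe produces globally coherent choices across overlapping charts. The $S$-coloring data of Definition~\ref{def:S-nodal-degeneration} is essential here, as it equips each node with a canonical pair of global Weil divisors $E_\alpha, E_\beta$, and the total order on $\Omega$ then consistently picks one of the two possible small resolutions at every node simultaneously.
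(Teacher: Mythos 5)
Your overall strategy---an iterated sequence of small blow-ups of globally defined centers built from the $E_\alpha$'s, ordered by the total order on $\Omega$, with the local analysis done factor-by-factor in the nearly $D$-nodal charts---is essentially the paper's, and you correctly identify that the coloring data and the order on $\Omega$ are what make the two-flavoured choice of small resolution globally coherent. The genuine gap is in your admissibility data. In a chart $c=\prod_{i\in I'}c_i$ with $c_i=\{u_i=x_sy_s\mid s\in S_i\}$, the dual complex $\Gamma(c_{i,p})$ is a cube of dimension $|S_i|$, and the iterated small resolution triangulates this \emph{entire} cube into simplices spanned by cube vertices; the resulting new edges of $\Gamma^1(c'_{i,p})$ are not only face diagonals of squares but diagonals of taxicab length anywhere up to $|S_i|$ (the long diagonal appears already after the first blow-up of a component, which cones off the cube from the corresponding vertex). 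Your refinement $\hat\Gamma^1(Y'_p)$ subdivides each diagonal only at its midpoint, into two edges. A two-edge chain cannot be mapped edge-to-edge onto a path of length $l(e)\geq 3$ in $\Gamma^1(Y_p)$, so for $|S_i|\geq 3$ your $\varphi_p$ is not a morphism of ($S$-colored) graphs and item \eqref{item:def-admissible:1} of Definition \ref{def:admissible} cannot be verified. The paper instead subdivides each diagonal $e$ into $l(e)$ edges, where $l(e)$ is the taxicab norm, sends the chain to a canonically chosen \emph{geodesic} path $\gamma$ with $\partial\gamma=\partial e$ (shortest, and lexicographically minimal for a total order on vertices derived from the order on $\Omega$), and colors the subdivided edges by pulling back the colors of $\gamma$; this is also what makes the specialization compatibility in item \eqref{item:def-admissible:2} checkable via the product structure. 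Your proposal contains no mechanism for choosing such paths or for coloring the subdivided edges, and "collapsing each half onto the edge determined by the blow-up center" does not produce a well-defined colored-graph morphism once $|S_i|\geq 3$.

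A secondary inaccuracy: your centers are pairwise intersections $E_\alpha\cap E_\beta$ and you claim each blow-up "reduces one $m_i$ by one while leaving the rest of the local form unchanged". The intermediate stages are not nearly $D$-nodal (after one blow-up the local equation becomes $u_i=x^{(1)}t\,y^{(2)}=x^{(3)}y^{(3)}=\cdots$), and for $|S_i|\geq 3$ the centers needed are the common components of full tuples $(E_{\alpha_s})_{s\in S_i}$, i.e.\ single components of the central fibre of $c_i$, rather than pairwise intersections. This part is repairable, but the diagonal-subdivision issue above is essential and your argument does not close it.
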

\begin{proof}  
We proceed in several steps.

\begin{stepp} \label{step:small-blow-up:1}
Local resolution in a chart $c$.
\end{stepp}

Let $c\in \mathcal C$ be a chart with normal form \eqref{eq:normal-form-S-nodal}.
By assumption, $o_{\alpha}(Y)\leq 1$ for all $\alpha\in \Omega$. 
Hence, $\sum_{i\in I}a_{si}$ is either $0$ or $1$ and $c$ is (a smooth base change of):
\begin{align} \label{eq:chart-nearly-nodal-small-resolution}
\left\{ u_i=x_sy_s  \mid i\in I',\ s\in S_i \right\} \subset U\times \Delta^{2|S''|} ,
\end{align}
where $I'\subset I$ is the subset of $i\in I$ with $a_{si}=1$ for some $s$, $S_i=\{s\in S\mid a_{si}=1\}$
and $S''\subset S'$ is the subset of $s\in S'$ with $a_{si}=1$ for some $i \in I'$.
Note that the subsets $S_i$ are pairwise disjoint, hence $S''=\cup_{i\in I'}S_i$ is a partition.
Let $p\in U$. 
To describe the local resolution of the chart $c$, up to shrinking $U$, we can assume that $u_i$ with $i\in I'$ can be extended to analytic coordinates on $U$ and that $u_i(p)=0$ for all $i\in I'$.
Then $c$ is locally analytically isomorphic to the  product of a polydisc with the product  
$$
\prod_{i\in I'} \left\{ u_i=x_sy_s\mid s\in S_i \right\} \subset \prod_{i\in I'}\Delta_{u_i}\times \Delta^{2|S_i|} . 
$$ 
In other words, $Y\to B$ is nearly $D$-nodal, see Definition \ref{def:D-nodal}.

The above product is resolved by resolving each factor independently.
For $i\in I$, we denote by $c_i$ the chart
\begin{align} \label{eq:chart-c_i}
\left\{ u_i=x_sy_s\mid s\in S_i \right\}
\end{align}
of the $i$-th factor.
Note that this is a family over a curve whose central fiber is a union of $2^{|S_i|}$-many components; the dual complex $\Gamma(c_{i,p})$ of the central fiber is isomorphic to the cube $[0,1]^{|S_i|}$ of dimension $|S_i|$, given as the product of edges of color $s\in S_i$ which correspond to the node $x_s=y_s=0$.
The singularities of $c_i$ admit a small resolution $c_i'\to c_i$, given by blowing-up the non-Cartier components of the central fiber of $c_i$ repeatedly (unique up to the choice of an order of the components to be blown up).
A local computation shows that the result is semistable over $\Delta_{u_i}$, i.e.\ it will be given by local equations of the form $u_i=z_1z_2\cdots z_m$ for some $m$, cf.\ \cite[\S 5.3]{survey}.
Moreover, the small resolution $c'_i\to c_i$ induces  a bijection 
on the set of components of each fiber. 
See Figure \ref{fig:diagonal-subdivide}. 

\begin{stepp} \label{step:small-blow-up::1.5}
For each $p\in B$, there is a canonical partial ordering on the vertices $\Gamma^0(Y_p)$ which for each chart $c\in \mathcal C$ is a total order on $\Gamma^0(c_p)$.
\end{stepp} 

Up to shrinking $I$ and $S$ by removing those $i \in I$ and $s \in S$ for which $\Omega_i =\sqcup_{s\in S}\Omega_{s,i}$ and $\Omega_s= \sqcup_{i\in I}\Omega_{s,i}$ are empty, we can assume that $\Omega_i$ and $\Omega_s$ are non-empty for all $i\in I$ and all $s\in S$. 
The total order on $\Omega=\bigsqcup_{(s,i)\in S\times I} \Omega_{s,i}$ then defines a total ordering on $S$, by declaring $s<t$ if $\Omega_{s}$ contains an element that is smaller than any element of $\Omega_{t}$.
Similarly, we get a total ordering on $I$ by saying that $i<j$ if $\Omega_i$ contains an element that is smaller than any element of $\Omega_j$.

The total ordering on $S$ induces a total ordering on its power set, with the property that $A<B$ whenever $A,B\subset S$ are subsets with $|A|<|B|$; subsets of the same cardinality are ordered via the lexicographic ordering.
Moreover, the total ordering on $\Omega$ induces a total ordering on each of its subsets and, via the lexicographic order, on  products  of these sets over totally ordered index sets. Using this,  we get for each $i\in I$ a total ordering on the following set:
\begin{align*}
    \mathfrak S_i&\coloneq \coprod_{A \in \ca P(S)} \prod_{s \in A} \Omega_{s,i} \\
    &=\left \{ (\alpha_s)_{s\in A}\mid A\in \mathcal P(S),\ \ \alpha_s\in \Omega_{s,i}\ \text{for all $s\in A$} \right \}  .
\end{align*}
Note that this ordering has the property that $(\alpha_s)_{s\in A}< (\beta_s)_{s\in A'}$ whenever $|A|<|A'|$. 
For $(\alpha_s)_{s\in A}\in \mathfrak S_i$, we define the divisor 
\begin{align} \label{eq:E_alpha_s-common-component}
E_{(\alpha_s)_{s\in A}}\quad \text{as the common components of $E_{\alpha_s}$ with $s\in A$. }
\end{align}  
We then define
$$
\mathfrak S\coloneq \prod_{i\in I}  (\mathfrak S_i\cup \{\ast\} )
$$
for a formal element $\ast$ that is smaller then any element of $\mathfrak S_i$.
Since $I$ is totally ordered, the lexicographic order yields a total order on $\mathfrak S$.
For $(\alpha_{i})_{i\in I}\in \mathfrak S$, we define
\begin{align} \label{eq:E_alpha_i-intersection}
Z_{(\alpha_i)_{i\in I}}\coloneq \bigcap_{i\in I, \ \alpha_i\neq \ast} E_{\alpha_i}
\end{align}   
as the intersection of all divisors $E_{\alpha_i}$ with $i\in I$ (as defined in \eqref{eq:E_alpha_s-common-component}) such that $\alpha_i\neq \ast$. Moreover, $Z_{(\alpha_i)_{i\in I}}=Y$ if $\alpha_i=\ast$ for all $i\in I$. 

Let now $p\in D$ and consider the fiber $Y_p$.
We define 
$$
\nu\colon \Gamma^0(Y_p)\longrightarrow \mathfrak S
$$
by mapping a vertex $v\in \Gamma^0(Y_p)$ with corresponding component $V\subset Y_p$ to the element
 $$
 \nu(v)\coloneq\max\{(\alpha_i)_{i\in I}\in \mathfrak S \mid V\subset Z_{(\alpha_i)_{i\in I}}\} .
 $$
This is well-defined, because $\mathfrak S$ is totally ordered and any component is contained in  $Z_{(\ast)_{i\in I}}=Y$, where $(\ast)_{i\in I}\in \mathfrak S$ is the minimal element. The map $\nu$ defines a partial order on $\Gamma^0(Y_p)$ by declaring that $v,w\in \Gamma^0(Y_p)$ with $v\neq w$ are incomparable if $\nu(v)=\nu (w)$ and otherwise we have $v>w$ if and only if $\nu(v)>\nu(w)$.
Since the total ordering on $\mathfrak S$ depends only on the total ordering on $\Omega$, the same holds true for the induced partial ordering on $\Gamma^0(Y_p)$.

We aim to describe the partial order on vertices in the local chart $c$, given by a smooth base change of \eqref{eq:chart-nearly-nodal-small-resolution}. 
Let $i\in I'$, i.e.\ $c\to U_c\subset B$ is not smooth over general points of $u_i=0$. 
Then $u_i=0$ cuts out the components of $c\times_BD_i$. 
Each of these components can,  by item \eqref{item:S-coloring:2} in Definition \ref{def:S-coloring},  
be described as intersections of a collection of divisors
$$
\bigcap_{s\in S_i}E_{\alpha_s}\quad \quad \text{for some}\quad \quad  (\alpha_s)_{s\in S_i}\in \prod_{s\in S_i}\Omega_{s,i},
$$
cf.~Lemma \ref{lem:components-of-E_alpha-smooth}. The set $(\alpha_s)_{s\in S_i}$ is unique by the disjointness condition in item \eqref{item:S-coloring:1} of Definition \ref{def:S-coloring}.
(Note that the above intersection is not necessarily maximal, because there might be colors $s\in S\setminus S_i$ and indices 
$\alpha\in \Omega_{s,i}$ with $E_\alpha\cap c=\{u_i=0\}$; in fact, if such an index $\alpha$ exists for some $s\in S\setminus S_i$ then it is unique by the disjointness condition in item \eqref{item:S-coloring:1}.
To relate the above description with the one used in the definition of $\nu$, we thus have to add for each such color $s\in S\setminus S_i$ the element $\alpha\in \Omega_{s,i}$ with $E_{\alpha}\cap c=\{u_i=0\}$.) 
The total ordering on $\mathfrak S_i$ thus induces for all $p\in B$ a total ordering on the vertices $\Gamma^0(c_{i,p})$, where $c_i$ is the chart in \eqref{eq:chart-c_i}.
Without loss of generality, we can assume $u_i(p)=0$ for all $i\in I'$.
Then we have $\Gamma^0(c_p)=\prod_{i\in I'}\Gamma^0(c_{i,p})$, because $c=\prod_{i\in I'}c_i$ (up to the product with a smooth morphism). Therefore, the total ordering of $I$, together with the lexicographic order on tuples, defines a total ordering on $\Gamma^0(c_p)=\prod_{i\in I'}\Gamma^0(c_{i,p})$.  
This total ordering agrees with the restriction of the above defined partial order on $\Gamma^0(Y_p)$ to the vertices $\Gamma^0(c_p) \subset \Gamma^0(Y_p)$ of the cuboid $\Gamma(c_p)$.

\begin{stepp} \label{step:small-blow-up:2}
Global resolution $g:Y'\to Y$.
\end{stepp}

For $(\alpha_s)_{s\in A}\in \mathfrak S_i$, we consider the divisor $E_{(\alpha_s)_{s\in A}}$ from \eqref{eq:E_alpha_s-common-component}. We construct $g$ by repeatedly blowing-up divisors of this form. We determine the order as follows.
We use the total ordering of $I$ induced by the one of $\Omega$ (see Step \ref{step:small-blow-up::1.5}) and go from the largest to the smallest element.
For given $i\in I$, we consider the totally ordered set $\mathfrak S_i$ from Step \ref{step:small-blow-up::1.5} and go again from the largest to the smallest element.
For each $(\alpha_s)_{s\in A}\in \mathfrak S_i$ in the given ordering, we then blow up the divisor $E_{(\alpha_s)_{s\in A}}$.

In the local chart \eqref{eq:chart-nearly-nodal-small-resolution}, the blow-up of $E_{(\alpha_s)_{s\in A}}$ only affects the $i$-th factor of  $c=\prod_{j\in I'}c_j$.
To analyse how this blow-up affects the chart $c_i$, let $T_i\subset S\setminus S_i$ be the subset of colors $t\in S$ such that there is some $E_{\alpha}$ with $\alpha\in \Omega_{t,i}$ which restricts to $\{u_i=0\}$ on $c_i$.
If $|A|>|S_i|+ |T_i|$ or $|A|=|S_i|+ |T_i|$ but $A\neq S_i\sqcup T_i$, then $E_{(\alpha_s)_{s\in A}}$ is empty in $c$ and the chart is not affected.
If $A=S_i\cup T_i$ and $(\alpha_s)_{s\in S_i\cup T_i}$ runs through all respective tuples, then the chart $c_i$ will be resolved as explained in Step \ref{step:small-blow-up:1}. 
Afterwards, $c'_i$ is regular and further blow-ups of Weil divisors as in \eqref{eq:E_alpha_s-common-component} with $(\alpha_s)_{s\in A}\in \mathfrak S_i$ will not alter the chart anymore. 

Altogether we have thus seen that $g$ is a small resolution which restricts to the resolution described in Step \ref{step:small-blow-up:1} in charts.
In particular, for all $p\in B$, $Y'_p\to Y_p$ induces 
a bijection on irreducible components.
Moreover, the local analysis in charts shows that $Y'\to B$ is $D$-semistable and that the natural map $Y'\times_BD_i\to Y\times _BD_i$ 
also induces a bijection on the set of irreducible components.
Since $Y\to B$ is $S$-colored $D$-quasi-nodal,  
the components of the generic fiber of $Y\times _BD_i \to D_i$ 
are smooth, see Remark \ref{rem:strict}, and so the local analysis shows that the same holds for the components of the generic fiber of $Y'\times_BD_i \to D_i$.  
It follows that $Y'\to B$ is strict $D$-semistable.

\begin{stepp} \label{step:small-blow-up:3}
First properties needed in Definition \ref{def:admissible}. 
\end{stepp}

Note first that $Y'\to B$ is (strict) $D$-semistable by Step \ref{step:small-blow-up:2} and $Y'\to Y$ is an isomorphism over $B\setminus D$ because we have blown up  divisors of the form \eqref{eq:E_alpha_s-common-component} 
and these divisors are supported on $Y\times_BD$ by item \eqref{item:S-coloring:1} in Definition \ref{def:S-coloring}.

Next, we aim to describe $\Gamma^1(Y'_p)$ as a refinement of $\Gamma^1(Y_p)$. For $p\in B$, consider the presentation
$$
\bigsqcup_{c\in \mathcal C} \Gamma(c_p) \xtwoheadrightarrow{} \Gamma(Y_p) 
$$
of the cell structure of $\Gamma(Y_p)$, given by \eqref{eq:chart-Gamma(Y_p)}.
Then the dual complex $\Gamma(Y'_p)$ admits a presentation
\begin{align} \label{eq:chart-Gamma(Y'_p)-small}
\bigsqcup_{c\in \mathcal C} \Gamma(c'_p) \xtwoheadrightarrow{} \Gamma(Y'_p) ,
\end{align}
where $c'_p$ is the fiber at $p$ of the small resolution $c'$ of the chart $c$, induced by $g:Y'\to Y$.
We have seen in Step \ref{step:small-blow-up:1} that $\Gamma(c'_p)=\prod_{i\in I'} \Gamma(c'_{i,p})$ is a refinement of the cube $\Gamma(c_p)=\prod_{i\in I'} \Gamma(c_{i,p})$, given by the fact that each factor $\Gamma(c'_{i,p})$ is a simplicial refinement of the cuboid $\Gamma(c_{i,p})$, given by tiling the cuboid $\Gamma(c_{i,p})$ into standard simplices (determined by the order of the blow-up).
These refinements of the cubes $\Gamma(c_p)$ are compatible for different charts $c$, because the blow-ups performed are defined globally. 
This identifies the cell structure of $\Gamma(Y'_p)$ with a refinement of the cell structure on $\Gamma(Y_p)$.
We further note that this refinement is an isomorphism on the $0$-skeleta because $Y'_p\to Y_p$ induces a bijection on the sets of irreducible components, see Step \ref{step:small-blow-up:1}.

\begin{figure}[h]
    \centering
    \includegraphics[width=0.75\linewidth]{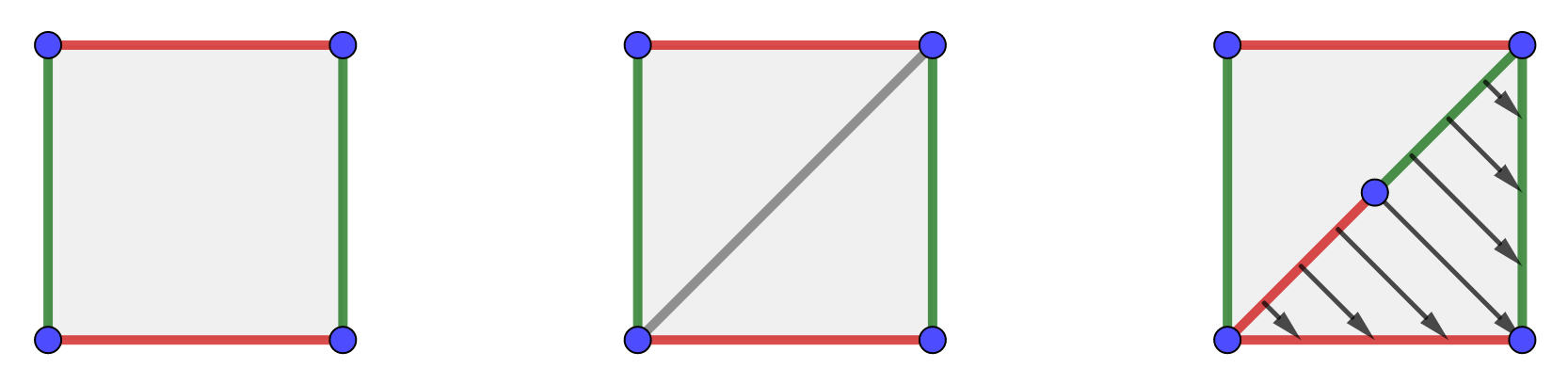}
    \caption{Left: Cube $\Gamma(c_p)$ of
    $\Gamma(Y_p)$. Middle: Dual complex
    $\Gamma(c_p')$ in the fiber $Y_p'\to Y_p$ of the resolution with colorless (grey)
    diagonal edge. Right: Subdivision
    $\hat{\Gamma}^1(c_p')$ of $\Gamma^1(c_p')$ and its coloring. 
    Map 
    $\varphi_p\colon \hat{\Gamma}^1(c_p')\to \Gamma^1(c_p)$
    of colored graphs is depicted by arrows.}
    \label{fig:diagonal-subdivide}
\end{figure}

\begin{stepp} \label{step:small-blow-up:4}
Definition of $\hat \Gamma^1(Y'_p)$ and $\varphi_p$.
\end{stepp}

By \eqref{eq:chart-Gamma(Y'_p)-small}, the $1$-skeleton $\Gamma^1(Y'_p)$ is covered by the $1$-skeleta $\Gamma^1(c'_p)$ with $c\in \mathcal C$.
Since $Y\to B$ is $S$-colored $D$-quasi-nodal, $\Gamma^1(c_p)$ is an $S$-colored graph.
Moreover, two edges of $\Gamma^1(c_p)$ have the same color if and only if they are parallel edges of the cuboid $\Gamma(c_p)$.  
In the above notation, 
$\Gamma(c_p)=\prod_{i\in I'}\Gamma(c_{i,p})$ and $\Gamma(c'_p)=\prod_{i\in I'}\Gamma(c'_{i,p})$. 
Here, $\Gamma(c'_{i,p})$ is a refinement of the cuboid $\Gamma(c_{i,p})$ given by standard simplices spanned by certain subsets of vertices of $\Gamma(c_{i,p})$.
The $1$-skeleton $\Gamma^1(c'_{i,p})$ has the same vertices as $\Gamma^1(c_{i,p})$ but contains additional edges, which we refer to as \emph{diagonal} edges. Any two such diagonal edges will meet only at vertices of $\Gamma^1(c_{i,p})$. See Figure \ref{fig:diagonal-subdivide}.

By the above description, $\Gamma^1(c'_p)$ identifies to the $1$-skeleton of the product $\prod_{i\in I'}\Gamma^1(c'_{i,p})$.
Hence, any edge $e$ of $\Gamma^1(c'_p)$ is given by a product of an edge $e_{i_0}$ of $\Gamma^1(c'_{{i_0},p})$ for some $i_0\in I'$ with vertices of $\Gamma^1(c'_{i,p})$ with $i\neq i_0$.
We will say that $e$ is a diagonal edge if $e_{i_0}$ is a diagonal edge of $\Gamma^1(c'_{{i_0},p})$.
Note moreover that any edge which is not diagonal is an edge that is contained in $\Gamma^1(c_p)\subset \Gamma^1(c'_p)$ (where we identify $\Gamma(c'_p)$ canonically with a refinement of $\Gamma(c_p)$) and hence it carries a unique color $s\in S$.
Hence,  $\Gamma^1(c'_p)$ is naturally a partially $S$-colored graph by declaring that diagonal edges are colorless.
These partial $S$-colorings are compatible for different charts (because the $S$-coloring of $\Gamma^1(c_p)$ comes from a global $S$-coloring of $\Gamma^1(Y_p)$) and hence induce the structure of a partially $S$-colored graph on $\Gamma^1(Y'_p)$.

Note that  $\Gamma^1(Y'_p)$ and $\Gamma^1(Y_p)$ have the same vertices and they agree on colored edges. 
Let now $e$ be a colorless edge of $\Gamma^1(Y'_p)$.
By \eqref{eq:chart-Gamma(Y'_p)-small}, there is a chart $c$ of $Y$ such that $e$ is contained in $\Gamma^1(c'_p)$. 
By Step \ref{step:small-blow-up::1.5}, we have a partial ordering on the vertices $\Gamma^0(Y'_p)$, which induces a total order on  $\Gamma^0(c'_p)=\Gamma^0(c_p)$.  
Using this, we get a canonical orientation of $e$, directing from the smaller to the larger vertex.  
Next, we consider $1$-chains $\gamma\in C_1(\Gamma^1(c_p),\Z)$ with $\partial \gamma=\partial e$.
The length of $\gamma$ is the minimal number of edges needed to write $\gamma$ as a linear combination of oriented edges.
We define the taxicab-norm $l(e)$ of $e$ as the smallest number $m\geq 1$ such that a $1$-chain $\gamma$ as above of length $m$ exists. 
The choice of such a $1$-chain $\gamma$ may not be unique, but there will be a unique one, that we call a {\it geodesic path}, with the property that the vertices that are covered by the path $\gamma$ are minimal in the lexicographic order induced by the total order on $\Gamma^0(c_p)$ from Step \ref{step:small-blow-up::1.5}.  
The pairwise compatibility, across charts $c \in \mathcal C$, of the tiling $\Gamma(c'_p)$ of $\Gamma(c_p)$ and of the ordering of its vertices shows that the above definition of $l(e)$ and the geodesic path are independent of the chart.

We then divide each diagonal edge $e$ of $\Gamma^1(Y'_p)$ (which comes with a natural orientation) into a chain $\hat e$ of $l(e)$ consecutive oriented edges. 
The geodesic path $\gamma$ with $\partial \gamma = \partial e$ has the same begin- and endpoint as the $1$-chain $\hat e$, and there is a unique isomorphism of oriented graphs between $\hat e$ and $\gamma$ that is the identity on these points. 
Since each edge of $\gamma$ is colored, this induces a canonical coloring of the edges of $\hat e$.
Replacing each diagonal edge $e$ of $\Gamma^1(Y'_p)$ by $\hat e$ then defines a refinement $\hat \Gamma^1(Y'_p)$ of $\Gamma^1(Y'_p)$ together with a canonical $S$-coloring.
Moreover, there is a natural morphism of $S$-colored graphs
$$
\varphi_p\colon \hat \Gamma^1(Y'_p)\longrightarrow  \Gamma^1(Y_p)
$$
which is the identity on the subgraph $\Gamma^1(Y'_p)\subset  \hat \Gamma^1(Y'_p)$ induced by the refinement \eqref{eq:chart-Gamma(Y'_p)-small}, and which maps for each diagonal edge $e$ of $\Gamma^1(Y_p')$ the chain of edges $\hat e$ to the unique geodesic path $\gamma$ with $\partial \gamma = \partial e$. See Figure \ref{fig:diagonal-subdivide}. This concludes Step \ref{step:small-blow-up:4}.

\begin{stepp} \label{step:small-blow-up:4.5}
$\varphi_p$ respects the ``product-structure''.
\end{stepp}

For further reference below, we note that if $c=\prod_{i\in I'}c_i$,  and $e$ is a diagonal edge of $\Gamma^1(c'_p)$, then $c'=\prod_{i\in I'}c'_i$ and $e$ is the product of a diagonal edge of $\Gamma^1(c'_{j})$ for some $j\in I'$ with vertices in the remaining factors.
The given product of vertices in the factors $\Gamma^1(c'_i)$ with $i\neq j$ define a face $\Gamma^1(c'_j)\hookrightarrow \prod_{i\in I'} \Gamma^1(c'_i)$ and any shortest length path $\gamma$ with $\partial \gamma=\partial e$ will be contained in that face.
We will refer to this property as to saying that
\begin{align} \label{eq:varphi_p-respects-product-structure}
\varphi_p\colon \hat \Gamma^1(c'_p)\longrightarrow \Gamma^1(c_p)
\end{align}
respects the natural ``product-structure'' induced by $\Gamma(c'_p)=\prod_{i\in I'} \Gamma(c'_{p,i})$ and $\Gamma(c_p)=\prod_{i\in I'} \Gamma(c_{p,i})$.

\begin{stepp} \label{step:small-blow-up:5}
    Items \eqref{item:def-admissible:1} and \eqref{item:def-admissible:2} in Definition \ref{def:admissible}.
\end{stepp}

Recall that the construction of $\varphi_p$ is local in the charts of $Y$.
In particular, $\varphi_p$ maps $\hat \Gamma^1(c'_p)$ to $\Gamma^1(c_p)$
and (in contrast to Proposition \ref{prop:blow-up-of-S-nodal-degenerations}) this map is a retract of the natural inclusion $\Gamma^1(c_p)\hookrightarrow \hat \Gamma^1(c'_p) $.
Hence, $\varphi_p\colon \hat \Gamma^1(Y'_p)\to \Gamma^1(Y_p)$ is a retract and so the diagram in item \eqref{item:def-admissible:1} of Definition \ref{def:admissible} commutes in the present case on the nose (not only up to homotopy).
 
Let $p\in D_J^\circ$ and $q\in D_{J'}^\circ$ be points of open strata with $J\subset J'$, such that $p$ specializes to $q$ via a continuous path $\gamma$, see Definition \ref{def:specialization}.
To check item \eqref{item:def-admissible:2} of Definition \ref{def:admissible}, cover $Y_q$ by finitely many charts $c$ and replace $p$ by a point sufficiently nearby $q$ so that $\gamma$ lies in the image of $c \to U = U_c$ for each of these charts $c$. 
Recall from  Step \ref{step:small-blow-up:1} that $\Gamma(c_q)=\prod_{i\in I'} \Gamma(c_{i,q})$, where $c_i$ is as in \eqref{eq:chart-c_i} and where $I' \subset I$ is the subset of $i \in I$ with $a_{si} = 1$ for some $s$; up to shrinking $U$ we have $u_i(q) = 0$ for all $i \in I'$.
Moreover, $\Gamma(c_p)=\prod_{i\in I''} \Gamma(c_{i,p})$, where $I''\subset I'$ is the subset of indices $i$ with $u_i(p)=0$ (equivalently, with $p\in D_i$). 
The specialization maps
$$
\operatorname{sp}\colon \Gamma^1(c_q)\longrightarrow \Gamma^1(c_p)\quad \quad \text{and}\quad \quad \operatorname{sp}\colon \hat \Gamma^1(c'_q)\longrightarrow \hat \Gamma^1(c'_p)
$$
are induced by the natural projection maps 
\begin{align*} 
&\pr_{I''}\colon \prod_{i\in I'} \Gamma^1(c_{i,q})\longrightarrow  \prod_{i\in I''} \Gamma^1(c_{i,q})\cong \prod_{i\in I''} \Gamma^1(c_{i,p})\quad{\rm and} \\
&\pr_{I''}\colon \prod_{i\in I'} \hat \Gamma^1(c'_{i,q})\longrightarrow \prod_{i\in I''} \hat \Gamma^1(c'_{i,q})\cong \prod_{i\in I''} \hat \Gamma^1(c'_{i,p}).
\end{align*}
Recall that an edge of $\hat \Gamma^1(c'_p)$ is given by the product of an edge $e_j$ of some $\hat\Gamma(c'_{j,p})$, $j \in I''$, 
with a product of vertices $v_i$ in the remaining factors. 
By Step \ref{step:small-blow-up:4.5}, $\varphi_p$ restricts to a map of the form 
$$
\varphi_p\colon \hat \Gamma^1(c_{j,p}')\times \prod_{i\in  I'' \setminus \{j\}} v_i  \longrightarrow  \Gamma^1(c_{j,p})\times \prod_{i\in I'' \setminus \{j\}} v_i .
$$
The analogous result holds for $\varphi_q$, which shows that these maps are compatible with the projection map $\pr_{I''}$ and hence with the specialization map.
This proves that the diagram in item \eqref{item:def-admissible:2} of Definition \ref{def:admissible} commutes, as we want.  

The proposition follows from Steps \ref{step:small-blow-up:1}--\ref{step:small-blow-up:5} above.
\end{proof}

\subsubsection{Proof of Theorem \ref{thm:admissible-modification}} \label{subsec:proof-Thm-admissible}

\begin{proof}[Proof of Theorem \ref{thm:admissible-modification}]
Let $Y\to S$ be an $S$-colored $D$-quasi-nodal morphism.
Fix a total ordering on the set $\Omega$ from Definition \ref{def:S-coloring}.  
We construct $g\colon Y'\to Y$ as a sequence of blow-ups as follows.  
We start with the largest element $\alpha\in \Omega$ and we blow-up (the proper transform of) $E_\alpha$ repeatedly as long as $o_{\alpha}(Y)\geq 2$.   
After each such blow-up, we replace $\Omega$ by $\Omega'=\Omega\sqcup \{\beta\}$, where $\beta$ is a new element, contained in $\Omega'_{s,i}$ if $\alpha\in \Omega_{s,i}$, that we define to be smaller than all elements of $\Omega$.
By Proposition \ref{prop:blow-up-of-S-nodal-degenerations}, 
$o_\alpha(Y')=o_\beta(Y')=o_\alpha(Y)-1$, while $o_\gamma(Y')=o_{\gamma}(Y)$ for all $\gamma\in \Omega\setminus \{\alpha\}$.
We repeat this process until $o_{\alpha}(Y)\leq 1$, at which point we replace $\alpha$ with the next element in $\Omega$.
Note that the maximum of $o_\alpha(Y')$ with $\alpha\in \Omega'$ drops by 1 via the above process after we have run through all elements in the original set $\Omega\subset \Omega'$.
Hence, the algorithm terminates and produces an $S$-colored $D$-quasi-nodal morphism $Y'\to S$ with $o_\alpha(Y')\leq 1$ for all $\alpha\in \Omega'$. 
So $Y'\to S$ is nearly $D$-nodal, see Definition \ref{def:D-nodal}. 

At each step of the algorithm, we have
constructed a collection of maps 
$\varphi =\{\varphi_p\}_{p\in B}$ of $1$-skeleta
satisfying hypotheses \eqref{item:def-admissible:1} and 
\eqref{item:def-admissible:2} of Definition 
\ref{def:admissible}.
The composition of those
admissible modifications that appear in Proposition \ref{prop:blow-up-of-S-nodal-degenerations} are again admissible (because $Y'\to B$ will be $S$-colored and $D$-quasi-nodal, and $\hat \Gamma^1(Y'_p)=\Gamma^1(Y'_p)$ for all $p\in B$).
Proposition \ref{prop:blow-up-of-S-nodal-degenerations} thus reduces the theorem to the case treated in Proposition \ref{prop:small-blow-up-of-S-nodal-degenerations}. 
\end{proof}

\subsubsection{Base change of $D$-nodal morphisms}

The following lemma is our original motivation for Definitions \ref{def:D-quasi-nodal} and \ref{def:S-coloring}.

\begin{lemma} \label{lem:base-change-of-D-nodal-degenerations}
Let $(\bar B,\bar D)$ be a pair of a regular variety $\bar B$ and an snc divisor $\bar D$ with components $\bar D_s$, $s\in S$.
Let  $\bar f \colon \bar Y\to \bar B$ be a strict $\bar D$-nodal morphism.
Let $B$ be a regular variety and let $\tau\colon B\to \bar B$ be a morphism such that $D=\tau^{-1}(\bar D)_{\rm{red}}$ is an snc divisor on $B$. Then $Y \coloneqq \bar Y\times_{\bar B} B$ is canonically $S$-colored $D$-quasi-nodal over $(B,D)$.
\end{lemma}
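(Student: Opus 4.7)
The proof strategy is local: work analytically on $B$, compute the base change in charts, and assemble the resulting data into a global $S$-coloring. Fix a point $p \in B$ and choose an analytic chart $\bar U \subset \bar B$ around $\tau(p)$ in which $\bar f$ takes its strict $\bar D$-nodal normal form $\{\bar u_s = x_s y_s \mid s \in S'\} \times \Delta^{j+k} \to \bar U \times \Delta^{j}$, together with an analytic chart $U \subset B$ around $p$ with regular functions $u_i$ cutting out $D_i \cap U$. Since $D = \tau^{-1}(\bar D)_{\rm red}$ is supported on $\bigcup_i D_i$, each $\tau^\ast \bar u_s$ vanishes only along that union, giving a factorization $\tau^\ast \bar u_s = v_s \prod_{i \in I} u_i^{a_{si}}$ with $a_{si} \in \Z_{\geq 0}$ and $v_s$ an analytic unit. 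After the coordinate change $x_s \mapsto v_s^{-1} x_s$, the base change $Y = \bar Y \times_{\bar B} B$ acquires the local form $\{\prod_{i} u_i^{a_{si}} = x_s y_s \mid s \in S'\} \times \Delta^{j+k}$, matching Definition \ref{def:D-quasi-nodal}. Hence $Y \to B$ is $D$-quasi-nodal.

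Next, I would construct the $S$-coloring. Let $\bar\Omega_s$ be the set of irreducible components of $\bar Y \times_{\bar B} \bar D_s$, which are smooth by strictness (Remark \ref{rem:strict}). Set $\Omega_{s,i} \coloneqq \bar\Omega_s \times \{i\}$ and $\Omega \coloneqq \bigsqcup_{s,i} \Omega_{s,i}$, with the $S$-partition $\Omega_s = \bigsqcup_i \Omega_{s,i}$. For $\alpha = (\bar E, i) \in \Omega_{s,i}$, define $E_\alpha$ as the reduced union of the codimension-one-in-$Y$ components of the set-theoretic intersection $(\bar E \times_{\bar B} B) \cap f^{-1}(D_i)$, discarding the indices $\alpha$ for which this set is empty. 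Chart-level verification then proceeds as follows: locally, any $\bar E \in \bar\Omega_s$ meeting $\bar U$ is cut out by $(\bar u_s, x_s)$ or $(\bar u_s, y_s)$, so its pullback is $(\prod_i u_i^{a_{si}}, x_s)$ or $(\prod_i u_i^{a_{si}}, y_s)$; intersected with $\{u_i = 0\}$, this yields a codimension-one divisor of $Y$ exactly when $a_{si} > 0$, cut out there by $(u_i, x_s)$ or $(u_i, y_s)$. This verifies item (2) of Definition \ref{def:S-nodal-degeneration}. Item (1) then follows: pairwise distinctness of local components comes from the distinctness of $\{u_i = x_s = 0\}$ and $\{u_i = y_s = 0\}$ for different $(i, s, \pm)$, together with the global distinctness of components $\bar E$ guaranteed by strictness, and the equality $Y \times_B D = \bigcup_\alpha E_\alpha$ reads off directly from the local decomposition of $Y \cap \{u_i = 0\}$.

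The main potential obstacle is the globalization step: while the construction of each $E_\alpha$ is phrased locally, they must assemble into well-defined effective Weil divisors on $Y$. This is automatic because strictness provides a canonical global set $\bar\Omega_s$ of smooth components of $\bar Y \times_{\bar B} \bar D_s$; pullback along $B \to \bar B$ and restriction to $f^{-1}(D_i)$ are both global functorial operations, so the $E_\alpha$ are globally defined a priori, and the chart computation above merely confirms they satisfy the axioms of an $S$-colored $D$-quasi-nodal structure.
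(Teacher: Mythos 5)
Your proposal is correct and follows essentially the same route as the paper: pull back the local normal form, observe that $\tau^\ast \bar u_s$ becomes a monomial $\prod_i u_i^{a_{si}}$ up to a unit (which you absorb explicitly, a detail the paper elides), and define the divisors $E_{(i,\alpha)}$ as the reduced intersections $(\bar E_\alpha\times_{\bar B}B)\cap f^{-1}(D_i)$ indexed over pairs with $a_{si}>0$. Your chart-level verification of item (2) of Definition \ref{def:S-nodal-degeneration} fills in exactly the check the paper leaves to the reader.
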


\begin{proof}
Let $\bar \Omega_s$ be the indexing set such that $\bar E_\alpha$ with $\alpha\in \bar \Omega_s$ are the irreducible components of $\bar Y\times_{\bar B}\bar D_s$.
Since $\bar f\colon \bar Y\to \bar B$ is $\bar D$-nodal, the local charts $c\in \mathcal C$ of $\bar Y$ are of the form $\{\ts =x_sy_s\mid s\in S'\}$ for some subset $S'\subset S$, where $\ts =0$ is a local equation of $\bar D_s$.
Since $\bar f$ is strict, there are no self-intersections of the divisors $\bar E_\alpha$ in these charts.
Hence, for $\alpha\in \bar \Omega_s$, $\bar E_\alpha \cap c$ is either empty or cut out by $(h_s)$ if $s\notin S'$, and it is cut out by $(\ts ,x_s)$ or $(\ts ,y_s)$ if $s\in S'$. 

Let $D_i\subset D$, $i\in I$, be the components of $D$.
We have
$\tau^\ast \bar D_s=\sum_{i\in I} a_{si}D_i$ 
for non-negative integers $a_{si}$.
In local charts, the function $\ts $ then pulls back to the monomial
$\tau^\ast \ts =\prod_i u_i^{a_{si}}$, where locally $D_i=\{u_i=0\}$. 
This shows that $Y \to B$ is $D$-quasi-nodal.

For $(s,i) \in S \times I$, we define $\Omega_{s,i}$ as follows. If $a_{si} = 0$ we let $\Omega_{s,i} = \emptyset$. 
If $a_{si}>0$,  we put
$$
\Omega_{s,i} \coloneqq \bar \Omega_s\times\{i\}. 
$$
For $(\alpha,i) \in \Omega_{s,i}$, we define the effective Weil divisor  $E_{\alpha,i}$  as the reduction of the intersection 
$$
(Y\times _{\bar Y} \bar E_\alpha )\cap (Y\times_{B} D_i) .
$$ 
From this it is clear that the divisors indexed by $\Omega\coloneqq \bigsqcup_{(s,i)\in S \times I}\Omega_{s,i}$ have the properties stated in item \eqref{item:S-coloring:1} in Definition \ref{def:S-coloring}.
Similarly, one checks that item \eqref{item:S-coloring:2} holds true, which 
concludes the proof of the lemma.
\end{proof}

\subsubsection{Bicoloring} \label{subsec:bicolor}

\begin{definition} \label{def:I-coloring}
Let $B$ be a smooth variety and let $D\subset B$ be an snc divisor with components $D_i$, $i\in I$. 
Let $Y\to B$ be a morphism which is $D$-nodal, nearly $D$-nodal or $D$-semistable.
Then for each $p\in B$, the $1$-skeleton $\Gamma^1(Y_p)$ carries an $I$-coloring given by the condition that an edge $e$ has color $i$ if the corresponding node is a specialization of a node over the generic point of $D_i$. 
\end{definition}

Let $Y\to B$ be an $S$-colored $D$-quasi-nodal morphism.
By Theorem \ref{thm:admissible-modification}, there is an admissible modification $Y'\to Y$ such that $Y'$ is strict $D$-semistable.
It follows that the graphs $\Gamma^1(Y'_p)$ carry an $I$-coloring.
This induces a canonical $I$-coloring on the refinement $\hat \Gamma^1(Y'_p)$.  
Hence, $\hat \Gamma^1(Y'_p)$ carries both an $I$-coloring and an $S$-coloring.
We will say that an edge $e$ has \emph{bicolor $(i,s)$} if it has color $i$ in the given $I$-coloring and color $s$ in the given $S$-coloring. 

We are now able to formulate the following consequence of the proof of Theorem \ref{thm:admissible-modification}, that we will need.

\begin{corollary} \label{cor:item:3-admissible}
Let $(\bar B,\bar D)$ be a pair of a smooth variety $\bar B$ and an snc divisor $\bar D$, $\bar D=\bigcup_{s\in S}\bar D_s$.
Let $\bar Y\to \bar B$ be a strict $\bar D$-nodal morphism and let $\tau\colon B\to \bar B$ be an alteration with $B$ regular such that $D\coloneqq \tau^{-1}(\bar D)_{\rm red}$ is an snc divisor with components $D_i$, $i\in I$. 
Consider the base change $Y \coloneqq \bar Y\times_BB$, which is naturally $S$-colored $D$-quasi-nodal over $B$ by Lemma \ref{lem:base-change-of-D-nodal-degenerations}.
Assume that for each $s\in S$ there is a component $D_{i_s}$ of $D$ with $\tau(D_{i_s})=\bar D_s$ and such that $\tau\colon B\to \bar B$ is \'etale at the generic point of $D_{i_s}$. 
Choose a total ordering on $\Omega$ such that any element of $\bigsqcup_{s\in S}\Omega_{s,i_s}$ is smaller than any element outside this subset. 
Let $(g\colon Y'\to Y,\varphi)$ be the admissible resolution associated to this ordering as in Theorem \ref{thm:admissible-modification}.

Then, for $s \in S$ and $p\in D_{i_s}$, the following holds:
\begin{enumerate}
    \item \label{item:cor:item:3-admissible:1} Any edge of bicolor $(i_s,s)$ of the refinement $\hat \Gamma^1(Y'_p)$ of $\Gamma^1(Y'_p)$ is an edge of $\Gamma^1(Y'_p)$.
    \item \label{item:cor:item:3-admissible:1.5} The graph $\hat \Gamma^1(Y'_p)$ does not have any edge of bicolor $(i_s,t)$ with $t\neq s$. 
    \item \label{item:cor:item:3-admissible:2} The map of $S$-colored graphs $\varphi_p\colon \hat \Gamma^1(Y'_p)\to \Gamma^1(Y_p)$ does not contract any edge of bicolor $(i_s,s)$.
    \item \label{item:cor:item:3-admissible:3} Any $s$-colored edge of $\Gamma^1(Y_p)$ is the image of some edge of bicolor $(i_s,s)$. 
\end{enumerate}
\end{corollary}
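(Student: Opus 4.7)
I would trace through the construction of the admissible resolution in the proof of Theorem \ref{thm:admissible-modification} and use the étale hypothesis on $\tau$ to show that along $D_{i_s}$ the $c_{i_s}$-factor of the nearly $D$-nodal form has a uniquely simple structure (a single $1$-simplex of color $s$), from which all four items fall out.

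The first step is local bookkeeping. Since $\tau(D_{i_s}) = \bar D_s$, the decomposition $\tau^\ast \bar D_{s'} = \sum_i a_{s',i} D_i$ satisfies $a_{s', i_s} = 0$ for $s' \neq s$; since $\tau$ is \'etale at the generic point of $D_{i_s}$, $a_{s, i_s} = 1$. Moreover, any $j \neq i_s$ with $a_{s,j} > 0$ satisfies $\tau(D_j) = \bar D_s$, hence $j \neq i_{s''}$ for every $s'' \in S$ (otherwise $\tau(D_j) = \bar D_{s''} \neq \bar D_s$), so $\Omega_{s, j}$ lies in the ``smaller'' part of the ordering. Consequently, in any chart of $Y$ covering $p \in D_{i_s}$, the $s$-equation reads $u_{i_s} \prod_{j \neq i_s} u_j^{a_{s,j}} = x_s y_s$, and no $s'$-equation with $s' \neq s$ involves $u_{i_s}$.

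By the ordering hypothesis, the algorithm of Theorem \ref{thm:admissible-modification} processes all $\alpha \in \Omega \setminus \bigsqcup_{s'} \Omega_{s', i_{s'}}$, together with the exceptional successors they generate (which remain in the same $\Omega_{s, j}$ by Proposition \ref{prop:blow-up-of-S-nodal-degenerations}), strictly before touching $\bigsqcup_{s'} \Omega_{s', i_{s'}}$. A direct induction on $\sum_{j \neq i_s} a_{s,j}$, using the explicit local forms \eqref{eq:S-nodal-blow-up-x_s-chart}-\eqref{eq:S-nodal-blow-up-u_i-chart}, shows that after these prior blow-ups, the $s$-equation in every chart still covering a point of $D_{i_s}$ is reduced to the clean nodal form $u_{i_s} = x_s y_s$ (for fresh $x_s, y_s$), while no $s'$-equation with $s' \neq s$ involves $u_{i_s}$. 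In particular $o_\alpha \leq 1$ for every $\alpha \in \bigsqcup_{s'} \Omega_{s', i_{s'}}$, so Proposition \ref{prop:blow-up-of-S-nodal-degenerations} performs no further blow-ups on these $\alpha$'s, and the algorithm moves directly to Proposition \ref{prop:small-blow-up-of-S-nodal-degenerations}. Hence, at every $p \in D_{i_s}$, the nearly $D$-nodal form factorizes as $\prod_{i \in I'} c_{i, p}$ with $c_{i_s, p} = \{u_{i_s} = x_s y_s\}$; in particular $|S_{i_s}| = 1$ and $\Gamma(c_{i_s, p})$ is a single $1$-simplex of $S$-color $s$.

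The four items then follow from the product description in Proposition \ref{prop:small-blow-up-of-S-nodal-degenerations}. An edge of $\hat\Gamma^1(Y'_p)$ of $I$-color $i_s$ arises from the $1$-dimensional $c_{i_s}$-factor of $\Gamma(c'_p) = \prod_{i \in I'} \Gamma(c'_{i, p})$, hence is non-diagonal (giving \eqref{item:cor:item:3-admissible:1}) and carries $S$-color $s$ (giving \eqref{item:cor:item:3-admissible:1.5}). For \eqref{item:cor:item:3-admissible:2}, such an edge is acted on by $\varphi_p$ as the identity in the small-resolution step (since $\varphi_p$ is a retract that only subdivides diagonal edges), and no blow-up in Proposition \ref{prop:blow-up-of-S-nodal-degenerations} contracts an $I$-color $i_s$ edge since no blow-up is performed on $E_\alpha$ with $\alpha \in \Omega_{s, i_s}$. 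For \eqref{item:cor:item:3-admissible:3}, any $s$-colored edge $e$ of $\Gamma^1(Y_p)$ corresponds to a node of the $s$-equation $u_{i_s} \prod_{j \neq i_s} u_j^{a_{s,j}} = x_s y_s$; using the compatibility of $\varphi_p$ with the specialization maps (Definition \ref{def:admissible}\eqref{item:def-admissible:2}), one may specialize from a generic $p' \in D_{i_s}^\circ$, where $c_{i_s, p'}$ contributes a unique bicolor $(i_s, s)$ edge mapping to the corresponding specialized edge of $e$, and then transport back to $p$.

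The main obstacle is the inductive local computation in the third paragraph: one must carefully track the exponents $a_{s, j}$ through each blow-up of Proposition \ref{prop:blow-up-of-S-nodal-degenerations} and verify that every newly introduced exceptional divisor $\beta$ remains in some $\Omega_{s, j}$ with $j \neq i_s$, until the $s$-equation along $D_{i_s}$ has collapsed to the clean nodal form $u_{i_s} = x_s y_s$.
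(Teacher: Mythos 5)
Your proposal is correct and follows essentially the same route as the paper's proof: reduce to a local chart, use the ordering hypothesis (together with $a_{s,i_s}=1$ from étaleness) to perform all blow-ups over $D_j$ with $j\neq i_s$ first, thereby reducing the $s$-equation to the clean product form $W\times Z$ with $W=\{u_{i_s}=x_sy_s\}$, and then read off the four items from the product structure of the small resolution and of $\varphi_p$. The only cosmetic difference is that for item \eqref{item:cor:item:3-admissible:3} the paper argues directly from $\varphi_p(e\times v)=e\times\varphi_p(v)$ on the product $W\times Z'$, whereas you route through the specialization compatibility of Definition \ref{def:admissible}; both work.
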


\begin{proof}
By our construction, $\varphi_p\colon \hat \Gamma^1(Y'_p)\to \Gamma^1(Y_p)$ restricts to a map $\hat \Gamma^1(c'_p)\to \Gamma^1(c_p)$ for each chart $c\in \mathcal C$ as in Definition \ref{def:D-quasi-nodal}.
It thus suffices to prove the corollary in the case where $Y=c$ is a single chart 
such that $u_{i_s}$ appears nontrivially for some $t\in S$, i.e.~$a_{ti_s}\neq 0$ for some $t$. Then $a_{si_s} = 1$ and $a_{ti_s} = 0$ for $t \neq s$ because $\tau \colon B \to \bar B$ is étale at the generic point of $D_{i_s}$. 
The normal form \eqref{eq:normal-form-S-nodal} thus decomposes  into a fiber product $W \times_B Z$, where
         $$ 
         W\coloneqq \left\{
         \textstyle u_{i_s}\prod_{i\in I\setminus \{i_s\}}u_{i}^{a_{si}}=x_{s}y_{s}\right\}  \quad \quad \text{and}\quad \quad Z\coloneqq \left\{
         \textstyle
         \prod_{
         i\in I\setminus \{i_s\}}u_i^{a_{ti}}=x_ty_t \mid t\in S'\setminus \{s\} \right\} ,
         $$    
for some subset $S'\subset S$. 
Moreover, the edges of $\Gamma^1(Z_p)$ have colors contained in $S'\setminus \{s\}$, while the unique edge of $\Gamma^1(W_p)$ has color $s$. 

Recall that in the resolution algorithm from Theorem \ref{thm:admissible-modification} we are first blowing up $E_\alpha$ with $\alpha\in \Omega$ and $o_\alpha(Y) \geq 2$ according to the order of $\Omega$, starting with the largest element.
By our choice of order, this means that in the above chart $c$, we first blow up  $(u_i,x_s)$ or $(u_i,y_s)$ with $i\neq i_s$, and also possibly some $(u_i,x_t)$ or $(u_i, y_t)$ with $t\neq s$, before we blow up $(u_{i_s},x_s)$ or $(u_{i_s},y_s)$. 
If we blow-up $(u_i,x_s)$, then the respective morphism $\varphi_p$ has the property that $s$-colored edges contained in the blow-up chart \eqref{eq:S-nodal-blow-up-u_i-chart} are contracted, while the $s$-colored edges in \eqref{eq:S-nodal-blow-up-x_s-chart} are not.

To prove items \eqref{item:cor:item:3-admissible:1}--\eqref{item:cor:item:3-admissible:2}, since we are only interested in edges of bicolor $(i_s,t)$, we deduce from the local charts in \eqref{eq:S-nodal-blow-up-x_s-chart} and \eqref{eq:S-nodal-blow-up-u_i-chart} that after $a_{si}$-many blow-ups of $(u_i,x_s)$ or $(u_i,y_s)$ for each $i$ with $i\neq i_s$ (and also several blow-ups of $(u_i, x_t)$ or $(u_i, y_t)$ for $t \neq s$), we may without loss of generality reduce to the case of a product $W\times Z$ of the form
         \begin{align}\label{align:WtimesZ}
         W\coloneqq \left\{u_{i_s}=x_{s}y_{s}\right\}  \quad \quad \text{and}\quad \quad Z\coloneqq \left\{ \textstyle
         \prod_{
         i\in I\setminus \{i_s\}
         }u_i^{a_{ti}}=x_ty_t \mid t\in S'\setminus \{s\} \right\} .
         \end{align}
Note that also item \eqref{item:cor:item:3-admissible:3} can be reduced to the case of a product $W \times Z$ with $W$ and $Z$ as in \eqref{align:WtimesZ}, because for each blow-up, $\varphi_p$ is a morphism of $S$-colored graphs which is surjective on edges. 
Our resolution algorithm from Theorem \ref{thm:admissible-modification}, applied to this product, will turn $W\times Z$ into the product $W\times Z'$ for some resolution $Z'\to Z$ such that the $s$-colored edges of $\hat \Gamma^1(W_p\times Z'_p)$ are given by $\Gamma^1(W_p)\times \Gamma^0(Z'_p)$. 
This implies items \eqref{item:cor:item:3-admissible:1} and \eqref{item:cor:item:3-admissible:1.5} in the corollary. 
Moreover, the map $\varphi_p$ respects this product structure (cf.\ Step \ref{step:blow-up-E_alpha-big:4} in Proposition \ref{prop:blow-up-of-S-nodal-degenerations} and Step \ref{step:small-blow-up:4.5} in Proposition \ref{prop:small-blow-up-of-S-nodal-degenerations}).
In other words, each element of $\Gamma^1(W_p)\times  \Gamma^0(Z'_p)$
is of the form $e\times v$ for some vertex $v\in \Gamma^0(Z'_p)$ and $\varphi_p(e\times v)=e\times \varphi_p(v)$, where $\varphi_p$ also denotes the map $\hat \Gamma^1(Z'_p)\to \Gamma^1(Z_p)$. 
This proves items \eqref{item:cor:item:3-admissible:2} and \eqref{item:cor:item:3-admissible:3} and concludes the proof. 
\end{proof}

\section{From algebraicity to quadratic splittings of matroids} \label{sec:algebraicity->d-QS}

\subsection{Setup} \label{subsec:set-up} 
Let $(\underline R,S)$ be a regular matroid with integral realization $S\to U^\ast$, $s\mapsto y_s$, for some free $\Z$-module $U$ of rank $g={\rm rank} (\underline R)$.
Let $B$ be a smooth affine variety with distinguished point $0\in B$.
Let $\ts $, $s\in S$, be regular functions on $B$ such that $(\ts )_{s\in S}\colon B\to \mathbb A^S$ is \'etale and such that $\ts $ restricts to the $s$-th coordinate function on a disc $\Delta^S\subset B$ centered around $0$.
Up to shrinking $B$ if necessary, we may assume that the divisors $H_s\coloneq\{\ts =0\}\subset B$ are irreducible.
We refer to $H_s$ as the \emph{$s$-th coordinate hyperplane} on $B$ and denote by $H\coloneq \{\prod_{s\in S}\ts =0\}$ the union of the coordinate hyperplanes $H_s$ and by $B^\star\coloneq B\setminus H$ the complement of $H$ in $B$.

\begin{definition} \label{def:matroidal-family} 
Let $\pi^\star \colon X^\star\to B^\star$ be a smooth projective family of $g$-dimensional principally polarized abelian varieties.
We say that $\pi^\star$ is a {\it matroidal family} associated to the regular 
matroid $(\underline R,S)$ with integral realization $S\to U^\ast$, $s\mapsto y_s$, if the following holds, where $\bzero \in (\Delta^\star)^S$ is a base point:
\begin{enumerate}  
    \item \label{item:def:matroidal-family:1}
    The family $X^\star_{(\Delta^\star)^S}\to (\Delta^\star)^S$ over the punctured polydisc $(\Delta^\star)^S\subset B^\star$ has unipotent monodromies about the coordinate hyperplanes.
    \item \label{item:def:matroidal-family:2}
    There exists an isomorphism
    $$U\xlongrightarrow{\cong} {\rm gr}^W_0 H_1(X_{\bzero},\Z),$$
    where $W_\bullet$ denotes the weight filtration of the limit mixed Hodge structure associated to the family $X_{(\Delta^\star)^S}\to (\Delta^\star)^S$, under which 
\item \label{item:def:matroidal-family:3} for all $s\in S$ 
    there is a positive integer $d_s$ such that the bilinear form $d_s y^2_s$ on $U$
    is identified with the monodromy bilinear form (cf.~Definition   \ref{def:monodromy-bilinear-form})
    on ${\rm gr}^W_0 H_1(X_{\bzero},\Z)$ induced by the monodromy about $\ts =0$. 
\end{enumerate}
\end{definition}

Moreover, by Lemma \ref{lem:unique-integer-realization}, the above notion depends only on the matroid $\underline R$ and not on the chosen realization.

\begin{remark}\label{rem:matroidal-family:maximal-degeneration}
Since $B\to \A^S$ is \'etale, the cardinality of $S$ coincides with the dimension of $B^\star$.
Note moreover that we ask in Definition \ref{def:matroidal-family} that the dimension $g$ of the fiber $X_{t_0}$ agrees with the rank of the matroid $\underline R$.
In other words, the corresponding degeneration of abelian varieties is maximal.
In the proof of our main results, such as Theorems \ref{thm:matroidal-intro} and \ref{thm:reduction-to-combinatorics-intro}, we use this condition only when invoking \cite[Theorem 7.1]{survey} in Proposition \ref{prop:filling-of-matroidal-family} below; we are confident that the latter results also hold in the general case where the fiber dimension may be larger than the dimension of ${\rm gr}^W_0 H_1(X_{\bzero},\Z)$, but we did not try to prove this and concentrated on the case of maximal degenerations for simplicity.
\end{remark}

\begin{remark} \label{rem:matroidal-family:existence}
By \cite[Proposition 4.10]{survey}, for any regular matroid $(\underline R, S)$ with integral realization $S \to U^\ast$, there exists a family of principally polarized abelian varieties $\pi^\star \colon X^\star\to B^\star$ which is a matroidal family associated to $\uR$.
Furthermore, $X^\star_{(\Delta^\star)^S}\to (\Delta^\star)^S$ 
is uniquely determined by $\uR$ and
$(d_s)_{s\in S}$ up to an analytic
deformation, see \cite[Remark 2.31]{survey}.
\end{remark}

\begin{remark} \label{rem:d_s=d-for-all-s}
Let $d$ be a positive integer with $d_s\mid d$ for all $s$, where $d_s$ are the positive integers from Definition \ref{def:matroidal-family} above.
Up to possibly shrinking $B$ and performing a base change that restricts to $(\ts )_{s\in S}\mapsto (\ts^{d/d_s})_{s\in S}$ on the polydisc $\Delta^S$, we may assume that $d_s=d$ for all $s$.
\end{remark}

\begin{proposition} \label{prop:filling-of-matroidal-family}
Let $\pi^\star \colon X^\star\to B^\star$ be a matroidal family of principally polarized abelian varieties, associated to $(\underline R,S)$.  
Then, up to replacing $B$ by an \'etale local neighborhood of $0$, there is a flat projective morphism $\pi\colon X\to B$ of quasi-projective varieties, such that:
\begin{enumerate}
    \item $X\times_BB^\star=X^\star$;
    \item $\pi\colon X\to B$ is an $H$-nodal morphism, 
    where $H=\left\{\prod_{s\in S}\ts =0\right\}$; if $d_s\geq 2$ for all $s$ in item \eqref{item:def:matroidal-family:3}, then $\pi$ is in fact strict $H$-nodal.
\end{enumerate}
\end{proposition}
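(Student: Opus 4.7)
The plan is to apply the multivariable Mumford construction, using the matroidal data as input, to build a formal polarized degeneration over the completion of $B$ at $0$, then to algebraize via Artin approximation, and finally to glue with $\pi^\star$ over $B^\star$. From Definition \ref{def:matroidal-family}, the limit mixed Hodge structure of $\pi^\star$ at $\bzero$ has weight filtration $W_{-2}\subset W_{-1}=W_0$ with $W_{-2}$ identified via the principal polarization with $U^\ast$ and ${\rm gr}^W_0\cong U$; the monodromy logarithms $N_s$ act, modulo the polarization, as $d_sy_s^2\colon U\to U^\ast$. This is precisely the Faltings--Chai degeneration data: a split torus with character lattice $U$, period lattice $U^\ast\subset U_\R$, and trivialization $\tau(u_1,u_2)=\prod_s\ts^{d_sy_s(u_1)y_s(u_2)}$ over the formal neighborhood of $0$.

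Next, feed this into the multivariable Mumford construction (cf.\ \cite[\S 2.3]{survey}) to produce a polarized semiabelian formal scheme $\widehat X$ over $\widehat{\mathbb A^S}_0$, together with a choice of $U^\ast$-equivariant polyhedral decomposition of $U_\R$ compatible with the quadratic forms $d_sy_s^2$. In an analytic chart centered at a point of the deepest stratum of $H$, this local model decomposes as a product over $s\in S$ of factors isomorphic to $\{\ts=x_sy_s\}$, possibly times an auxiliary smooth factor; the rank-one nature of $d_sy_s^2$ ensures each factor is a single nodal direction rather than a product of them, and the polyhedral subdivision (refining the decomposition by lattice dilation $d_s$) guarantees the total space is smooth in each chart. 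This is precisely the $H$-nodal normal form of Definition \ref{def:D-nodal}. When $d_s\geq 2$, the semistable model contributes $d_s$ distinct smooth irreducible components over the generic point of $H_s$ in place of a single self-intersecting component of Tate type, yielding the strict property.

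Finally, algebraize $\widehat X$ by Artin approximation---or directly, since Mumford's output is the quotient of an algebraic scheme by the discrete action of $U^\ast$---to obtain a flat projective family over an étale neighborhood of $0\in B$. Over $(\Delta^\star)^S$ this family shares degeneration data with $\pi^\star$ by items \eqref{item:def:matroidal-family:1}--\eqref{item:def:matroidal-family:3} of Definition \ref{def:matroidal-family}, so the two are isomorphic as principally polarized abelian schemes by the uniqueness direction of the Faltings--Chai correspondence; gluing along this isomorphism, possibly after further étale shrinking, yields the desired $\pi\colon X\to B$.

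The main obstacle is the identification step in the last paragraph: the matroidal hypothesis only pins down the diagonal quadratic forms $d_sy_s^2$, whereas the full Faltings--Chai data also involves off-diagonal monodromy terms (mixed pairings attached to pairs $s\neq t$). One must verify that, for a principally polarized abelian scheme, these off-diagonal terms are automatically determined by the diagonal ones via the polarization, so that the Mumford filling built from the matroid genuinely matches $\pi^\star$ on $(\Delta^\star)^S$. A secondary technical point is to choose the polyhedral decomposition used by Mumford so that the resulting model is globally $H$-nodal (and strict when $d_s\geq 2$) rather than merely toric: the matroid-determined subdivision by the quadratic form does this automatically, but this requires some bookkeeping with the cone structure in the multivariable setting.
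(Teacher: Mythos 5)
Your overall strategy (multivariable Mumford construction plus algebraization) is the same tool the paper's proof invokes via \cite[Corollary 6.14]{survey}, but you run it in the wrong direction, and the gap you yourself flag in the final paragraph is a real one that your argument does not close. You propose to \emph{build} a polarized degeneration from the combinatorial data $(U,\, d_s y_s^2)$ and then identify it with $\pi^\star$ over $(\Delta^\star)^S$ ``by the uniqueness direction of the Faltings--Chai correspondence''. This identification fails in general: the monodromy logarithms, equivalently the forms $d_s y_s^2$, determine only the \emph{valuations} (the monomial part $\prod_s \ts^{\,d_s y_s(u_1)y_s(u_2)}$) of the trivialization $\tau$ in the degeneration data, not its unit part. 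Two families of principally polarized abelian varieties over $(\Delta^\star)^S$ with identical unipotent monodromies can have non-isomorphic degeneration data --- the unit part of $\tau$ is genuine moduli, recording where the limit lands inside the boundary stratum of a toroidal compactification --- so the family produced by your construction need not satisfy $X\times_B B^\star = X^\star$, which is condition (1) of the proposition. Your suggested fix, that the ``off-diagonal terms are automatically determined by the diagonal ones via the polarization'', does not repair this: the missing data is not an off-diagonal pairing between distinct $s\neq t$ but the non-topological part of $\tau$, which no amount of monodromy bookkeeping recovers.

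The correct direction, which is what the cited result packages, is to start from the \emph{given} family $\pi^\star$: unipotence of the monodromies (item (1) of Definition \ref{def:matroidal-family}) guarantees that, after replacing $B$ by an \'etale neighbourhood of $0$, the family extends to a semiabelian scheme whose Faltings--Chai degeneration data has monomial part exactly $\prod_s \ts^{\,d_s y_s(u_1)y_s(u_2)}$ by item (3); Mumford's relatively complete model for a suitable $U^\ast$-periodic polyhedral decomposition then yields a flat projective filling of \emph{that} family, so condition (1) holds by construction rather than by a matching argument. The matroidal hypothesis (rank-one forms $y_s^2$, integrality of the realization) enters only afterwards, to verify the $H$-nodal local normal form of Definition \ref{def:D-nodal} and, when $d_s\geq 2$, strictness (the fibre over the generic point of $H_s$ being a chain of $d_s\geq 2$ smooth components rather than a single self-glued one). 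Your discussion of these local charts is essentially right; it is the identification step that must be replaced by extending $\pi^\star$ itself.
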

\begin{proof}
Note that flatness is a formal consequence of 
being $H$-nodal.
The result follows 
by applying a 
suitable Mumford construction, 
see \cite[Theorem 7.1]{survey}.
\end{proof}

\subsection{Quadratic splittings of matroids}

Let $G$ be an $S$-colored graph with edge set $E=\bigsqcup_{s\in S}E_s$. 
The choice of an orientation on each edge of $E$ induces an inclusion $H_1(G,\Lambda)\subset \Lambda^E$.
For $e\in E$, we denote by $x_e\colon \Lambda^E\to \Lambda$ the $e$-th coordinate function.
According to our conventions, we denote the associated bilinear form by $x_e^2$ and we denote by 
the same symbol its restriction
to $ H_1(G,\Lambda)$.
These bilinear forms do not depend on the choice 
of orientation, because changing the orientation 
of $e$ changes $x_e$ by a sign.
 
We have the following variant of Definition \ref{def:d-Lambda-splitting-general} from the introduction.

\begin{definition} \label{def:d-Lambda-splitting-graph}
Let $(\underline R,S)$ be a regular matroid with integral realization $S\to U^\ast$, $s\mapsto y_s$. 
Let $\Lambda$ be a ring and let $d$ be a positive integer.
    A {\it quadratic $\Lambda$-splitting of level $d$} of $(\underline R,S)$ in a graph $G$ is an $S$-coloring $E=\sqcup_{s\in S} E_s$ of the edges of $G$ together with an embedding $U_{\Lambda}\hookrightarrow H_1(G,\Lambda)$ which induces a decomposition
        \begin{align} \label{eq:def-H_1(G)=U+U'}
        H_1(G,\Lambda)=U_\Lambda\oplus U' , 
        \end{align} 
        for some $U'\subset H_1(G,\Lambda)$,
        such that for all $s\in S$, the following holds for the bilinear form $Q_s\coloneq \sum_{e\in E_s}x_e^2$ on $H_1(G,\Lambda)$:
        \begin{enumerate}
            \item the decomposition \eqref{eq:def-H_1(G)=U+U'} is orthogonal with respect to $Q_s$;
            \item the restriction of $Q_s$ to $U_\Lambda$ agrees with $d\cdot y_s^2$.
        \end{enumerate} 
\end{definition}

\begin{remark} \label{rem:Lambda-solutions-well-defined}
It follows from Lemma \ref{lem:unique-integer-realization} that the existence of a quadratic $\Lambda$-splitting of $(\underline R,S)$ in a graph $G$ does not depend on the choice of integral realization.
\end{remark}

\begin{remark} \label{rem:Lambda-splitting-graph-versus-general}
Definition \ref{def:d-Lambda-splitting-graph} is a special case of a quadratic $\Lambda$-splitting of level $d$ of $(\underline R,S)$ in the cographic matroid associated to $G$ (see Definition \ref{def:d-Lambda-splitting-general}); it corresponds to the special case where the $\sum_{s}a_{se}=1$ for all edges $e\in E$.
The following lemma shows that in fact both notions are equivalent.
This implies for instance that the notion passes to deletions, as this is obvious for Definition \ref{def:d-Lambda-splitting-general}. 
\end{remark}

\begin{lemma}\label{lem:Lambda-splitting-graph-versus-general}
Let $(\underline R,S)$ be a regular matroid with integral realization $S\to U^\ast$, $s\mapsto y_s$. 
Then $(\underline R,S)$ admits a quadratic $\Lambda$-splitting of level $d$ in a cographic matroid $M^\ast(G)$ associated to a graph $G$ (see Definition \ref{def:d-Lambda-splitting-general}) if and only if it admits a quadratic $\Lambda$-splitting of level $d$ in some graph (see Definition \ref{def:d-Lambda-splitting-graph}). 
\end{lemma}
\begin{proof}
One direction is obvious, see Remark \ref{rem:Lambda-splitting-graph-versus-general}.
For the converse, consider the non-negative integers $a_{se}$ from Definition \ref{def:d-Lambda-splitting-general}.
We may first divide each edge $e\in E$ with $\sum_s a_{se}\geq 1$ into a chain of $\sum_s a_{se}$ colored edges, such that precisely $a_{se}$-many of them have color $s$.
This reduces us to the situation where $\sum_{s\in S}a_{se}\in \{0,1\}$ for all $e\in E$.
We may then define a partial $S$-coloring of $G$ by declaring an edge $e\in E$ to be of color $s$ if and only if $a_{se}=1$.
We thus obtain all the conditions from Definition \ref{def:d-Lambda-splitting-graph} apart from the fact that the $S$-coloring of $G$ is only a partial $S$-coloring.
We refer to this condition as a {\it weak quadratic $\Lambda$-splitting of level $d$ into the graph $G$}.

To prove the lemma, it suffices to reduce to the case where the aforementioned partial $S$-coloring is in fact an $S$-coloring, i.e.\ to the case where $\sum_{s\in S}a_{se}=1$ for all $e\in E$.
We do so by induction on the number of colorless edges of $G$.
To this end, let $e\in E$ be a colorless edge and let $G\to G'$ be the contraction of $e$.
If $e$ is not a loop, then it can be extended to a spanning forest of $G$.
It follows that $H_1(G,\Lambda)\cong H_1(G',\Lambda)$ and we obtain an induced weak $\Lambda$-splitting of $\underline R$ in $G'$ which has fewer colorless edges than the one into $G$.
If $e$ is a loop, then $e\in U'$ because the linear forms $y_s$, $s\in S$, form a basis of $U^\ast$.
It is then again easy to see that we obtain an induced weak $\Lambda$-splitting of $\underline R$ in $G'$ which has fewer colorless edges than the one into $G$.
This concludes the proof. 
\end{proof}

The main result of this section is the following, where $B$ and $B^\star=B\setminus H$ are as in Section \ref{subsec:set-up}.

\begin{theorem} \label{thm:algebraic->d-QE} 
Let $\pi^\star\colon X^\star\to B^\star$ be a matroidal family of principally polarized abelian varieties associated to a regular matroid $(\underline R,S)$ with integral realization $S\to U^\ast$, $s\mapsto y_s$, see Definition \ref{def:matroidal-family}. 
Let $\ell$ be a prime and assume that an $\ell$-prime multiple of the minimal curve class of the geometric generic fiber of $\pi$ is represented by an algebraic curve.
Let $\Lambda$ be a $\Z_{(\ell)}$-algebra.
Then there is a positive integer $d$ and an $S$-colored graph $G$, such that the matroid $(\underline R,S)$ admits a quadratic $\Lambda$-splitting of level $d$ in $G$.
\end{theorem}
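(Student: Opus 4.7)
The plan is to extract $G$ from the dual graph of a nodal limit of a curve representing the algebraic multiple of the minimal class, using the admissible resolution of Theorem \ref{thm:admissible-modification} combined with Lemma \ref{lem:Bertini} to obtain a semistable curve family over a suitable base change, and Lemma \ref{lem:orthogonal-Qs-preliminaries} to produce the orthogonality condition.

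\textbf{Setup.} By Lemma \ref{lem:alg-min->l-prime-multiple-is-effective}, I may assume the $\ell$-prime multiple of the minimal class is realized on a very general fibre $X_t$ by a smooth projective curve $C_t$, so that $f_\ast[C_t]=m\cdot[\Theta]^{g-1}/(g-1)!$ with $\gcd(m,\ell)=1$. Using Remark \ref{rem:d_s=d-for-all-s} I pass, by a monomial base change on $B$, to the case where $d_s=d_0$ is uniform in $s$, and apply Proposition \ref{prop:filling-of-matroidal-family} to extend the matroidal family to a strict $H$-nodal flat projective morphism $\pi\colon X\to B$. I spread out $C_t$ over a generically finite cover of $B$ and perform a toroidal resolution of singularities to obtain $\tau\colon B'\to B$ with $B'$ smooth, $D\coloneq \tau^{-1}(H)_{\rm red}$ snc with components $D_i$, $i\in I$, and so that for each $s\in S$ there is a distinguished component $D_{i_s}$ with $\tau(D_{i_s})=H_s$, with $\tau$ étale at the generic point of $D_{i_s}$ (so $a_{s,i_s}=1$), and so that all $D_{i_s}$ meet at a common point $p_0\in B'$. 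The curve $C_t$ extends to a flat family $C\to B'^{\circ}$ on a Zariski open $B'^{\circ}\subset B'$ containing $p_0$.

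\textbf{Admissible resolution and Bertini.} By Lemma \ref{lem:base-change-of-D-nodal-degenerations}, $Y\coloneq X\times_BB'$ is canonically $S$-colored $D$-quasi-nodal. I fix a total ordering on $\Omega$ placing the elements of $\bigsqcup_{s\in S}\Omega_{s,i_s}$ at the top so that Corollary \ref{cor:item:3-admissible} applies, and invoke Theorem \ref{thm:admissible-modification} to obtain an admissible modification $(g\colon Y'\to Y,\varphi)$ with $Y'$ strict $D$-semistable and maps $\varphi_p\colon\hat\Gamma^1(Y'_p)\to\Gamma^1(Y_p)$ of $S$-colored graphs. Applying Lemma \ref{lem:Bertini} with $c=g-1$ and $\mathcal E$ a sufficiently positive direct sum of very ample line bundles on $Y'$, chosen so that the proper transform of $C$ is the zero locus of a general section, I obtain a family $\tilde C\to B'^{\circ}$ of nodal curves whose fibre at $p_0$ has nodes exactly at the transverse intersections of $\tilde C$ with the double locus of $Y'_{p_0}$.

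\textbf{Constructing the splitting.} Let $G_0\coloneq \Gamma(\tilde C_{p_0})$. Each node of $\tilde C_{p_0}$ lies on a unique edge of $\Gamma^1(Y'_{p_0})$, and hence inherits an $I$-color $j$ recording which $D_j$ the node smooths over; by Lemma \ref{lem:base-change-of-D-nodal-degenerations} together with Corollary \ref{cor:item:3-admissible}, this edge also carries a well-defined $S$-color $s(j)$, equal to the unique $s\in S$ with $\tau(D_j)\subset H_s$. Picard--Lefschetz on the semistable family $\tilde C\to B'^{\circ}$ yields $Q_{D_j}=\sum_{e\in E(j)}x_e^2$ on $H_1(G_0,\Lambda)$, where $E(j)$ is the set of edges of $I$-color $j$. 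Lemma \ref{lem:orthogonal-Qs-preliminaries}, applied in an étale polydisc chart at $p_0$, produces a canonical $\Theta_{C_t}$-orthogonal decomposition $H_1(G_0,\Lambda)\cong U_\Lambda\oplus U'$ which is $Q_{D_j}$-orthogonal for every $j$, with the restriction of $Q_{D_j}$ to $U_\Lambda$ equal to $a_{s(j),j}\,d_0\,y_{s(j)}^2$. To achieve a uniform level, I refine $G_0$ by subdivision: choose $\sigma\in\Z_{>0}$ large enough so that $k_{i_s}\coloneq \sigma-\sum_{j\neq i_s,\,s(j)=s}a_{s,j}\geq 1$ for every $s$, set $k_j\coloneq 1$ for every other $j$, and replace each edge $e\in E(j)$ by a chain of $k_j$ edges colored $s(j)$, producing $G$. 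Since $a_{s,i_s}=1$, one computes $\sum_{j\colon s(j)=s}k_j a_{s,j}=\sigma$ uniformly in $s$. Subdivision preserves $H_1$, so $H_1(G,\Lambda)\cong H_1(G_0,\Lambda)$ and $U_\Lambda$ embeds; with $E_s$ denoting the $s$-colored chains in $G$, the form $Q_s\coloneq \sum_{e\in E_s}x_e^2=\sum_{j\colon s(j)=s}k_j Q_{D_j}$ is $U_\Lambda\oplus U'$-orthogonal (as a sum of orthogonal forms) and restricts to $U_\Lambda$ as $\sigma d_0 y_s^2$. Taking $d\coloneq \sigma d_0$ gives the desired quadratic $\Lambda$-splitting of level $d$ of $(\uR,S)$ in $G$.

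\textbf{Main obstacle.} The chief delicate point is the compatibility between the $I$-coloring on $G_0$ (inherited from the snc structure of $D\subset B'$) and the $S$-coloring on $\Gamma^1(Y_p)$ (inherited from the coordinates on $B$): without the admissibility constraints enforced by Corollary \ref{cor:item:3-admissible}, the Picard--Lefschetz forms $Q_{D_j}$ for exceptional $j$ could a priori mix contributions from several $y_s^2$'s, and the final subdivision argument would collapse. This is precisely why the careful gluing construction of Section \ref{sec:admissible-resolutions} is indispensable; the bicoloring of $\hat\Gamma^1(Y'_{p_0})$ encoded in the maps $\varphi_p$ is what permits the $S$-colors to be extracted consistently from the resolution. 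A secondary technicality is the potential appearance of diagonal edges in $\hat\Gamma^1(Y'_{p_0})$ arising from the small resolution, which should be handled either by arranging $\tau$ so that no exceptional divisor of $D$ passes through $p_0$, or by refining the Bertini construction so that every node of $\tilde C_{p_0}$ lying on a diagonal double locus of $Y'_{p_0}$ can be coherently assigned an $S$-color.
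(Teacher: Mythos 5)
Your proposal hinges on the assumption, made in the Setup, that the distinguished components $D_{i_s}$, $s\in S$, can be arranged to ``meet at a common point $p_0\in B'$'', so that a single limit curve $\tilde C_{p_0}$ with a single dual graph $G_0$ carries all the monodromy forms at once. This cannot be arranged in general and is precisely the central difficulty of the argument. The alteration $\tau\colon B'\to B$ needed to spread out the curve must be resolved so that $\tau^{-1}(H)_{\mathrm{red}}$ and $\tau^{-1}(0)_{\mathrm{red}}$ are snc, and this resolution typically separates the strict transforms $D_{i_s}$ (already blowing up the origin of $\Delta^2$ disconnects the strict transforms of the two axes). Consequently there is no single point of $\tau^{-1}(0)$ lying on all $D_{i_s}$ and no single nodal limit $C_0$: one only has, for each $s$, a point $p_s\in D_{i_s}\cap\tau^{-1}(0)$ and a limit curve there. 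The paper's Sections 4.4--4.7 exist to repair exactly this: a tree of compatible specialization paths connecting the points $p_s$ inside $\tau^{-1}(0)$ (Section \ref{subsec:tree}), a $1$-skeletal splitting $\phi\colon U_\Lambda\to H_1(\Gamma^1(X_0),\Lambda)$ shown to be independent of $p\in P$ (Proposition \ref{prop:1-skeletal-splitting}), the edge-multiplied graphs $\Gamma^1_s(X_0)$ built from the curve's dual graph at $p_s$ only, and finally $G$ defined as the fibre product of the $\Gamma^1_s(X_0)$ over $\Gamma^1(X_0)$ with the splitting $\phi_G$ assembled color by color (Propositions \ref{prop:splitting-phi_G} and \ref{prop:orthogonal-Q_s}). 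Without this gluing mechanism your construction of $G$, and hence of the splitting, does not get off the ground. (If the common point did exist, your weighted-subdivision trick for uniformizing the level would be a reasonable variant of Remark \ref{rem:Lambda-splitting-graph-versus-general}; but the hypothesis fails.)

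A secondary gap: you cannot choose $\mathcal E$ ``so that the proper transform of $C$ is the zero locus of a general section'' of a sum of very ample line bundles --- a prescribed curve is not a general complete intersection, and the spread-out curve only exists over an open $U'\subset B'$ in the first place. The paper circumvents this with Proposition \ref{prop:mathcal-E}: it extends a Poincar\'e-type bundle to a coherent sheaf on all of $X''$, takes a locally free resolution by $\ell$-divisible twists, and extracts a vector bundle $\mathcal E$ whose $(g-1)$-st Segre class is an $\ell$-prime multiple of the minimal class on general fibres; the curve used is then a genuinely general complete intersection in $\mathbb P(\mathcal E)$, which is what makes Lemma \ref{lem:Bertini} applicable and puts the curve in good position relative to every stratum of every fibre over $\tau^{-1}(0)$, not just over one point.
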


\begin{remark} \label{rem:algebraic->d-QE-dependence-on-d}
The proof will show that the integer $d$ in the above theorem does not depend on $\Lambda$.
Another way of seeing this fact is to note that the theorem follows formally from the case $\Lambda=\Z_{(\ell)}$ after applying $\otimes_{\Z_{(\ell)}} \Lambda$, and so the value of $d$ that works for $\Z_{(\ell)}$ works in fact for any $\Z_{(\ell)}$-algebra $\Lambda$.
\end{remark}

The remainder of this section is devoted to a proof of the above theorem.
To this end, we will in this section denote by $\Lambda$ an algebra over the localization $\Z_{(\ell)}$ of $\Z$.
We further use Remark \ref{rem:d_s=d-for-all-s} to assume without loss of generality that  
\begin{align} \label{eq:d=d_s}
\text{there is an integer $d\geq 2$ with $ d_s=d$ for all $s\in S$ in Definition \ref{def:matroidal-family}.} 
\end{align}
That is, the $s$-th monodromy bilinear form associated to $\pi^\star\colon X^\star\to B^\star$ identifies to $dy_s^2$ for all $s\in S$.
Since $d\geq 2$, we may by Proposition \ref{prop:filling-of-matroidal-family} choose a strict $H$-nodal extension $\pi\colon X\to B$ of $\pi^\star$.

\subsection{Base change and resolution}

By the assumption in Theorem \ref{thm:algebraic->d-QE}, the geometric generic fiber of $\pi$ contains a curve whose cohomology class is $m$ times the minimal class for some integer $m$ that is coprime to $\ell$.
While Proposition \ref{prop:filling-of-matroidal-family} allows us to freely perform base changes of $B$ that are only ramified along $H$, 
such base changes will in general not suffice to descend this curve to the generic fiber of $X\to B$.
Nonetheless, we are able to make the following reductions.

\begin{lemma}\label{lem:reductions/set-up}
In order to prove Theorem \ref{thm:algebraic->d-QE}, we may assume that there is an alteration 
$$
\tau\colon B'\longrightarrow B
$$
with base change $X'=X\times_BB'\to B'$ and the following properties:
\begin{enumerate}
\item \label{item:lem:reduction:1} The generic fiber $X'_\eta$ of $X'\to B'$ contains a finite number of integral curves $C'_j\subset X'_\eta$,  each with a rational point in its smooth locus, such that $\sum_j [C'_j]\in H^{2g-2}(X'_{\bar \eta},\Z_{\ell})$ 
is an $\ell$-prime multiple of 
the minimal class.
\item \label{item:lem:reduction:3} $B'$ is a regular quasi-projective variety and the reduction $D\coloneq \tau^{-1}(H)_{{\rm red}}$, where $H=\{\prod_{s\in S}\ts =0\}$, is an snc divisor with components $D_i$, $i\in I$;
the reduced preimage $\tau^{-1}(0)_{\rm red}$ 
is an snc divisor as well.
\item \label{item:lem:reduction:2} For each $s\in S$, there is a component $D_{i_s}$ of $D$ such that $\tau(D_{i_s})=H_s$ and $\tau$ is \'etale at the generic point of $D_{i_s}$.
\item \label{item:lem:reduction:5} The morphism $X' \to B'$ is $D$-quasi-nodal and admits a natural $S$-coloring via Lemma \ref{lem:base-change-of-D-nodal-degenerations}.
There is an admissible resolution $(g\colon X''\to X',\varphi)$ as in Theorem \ref{thm:admissible-modification} 
such that the additional properties from Corollary \ref{cor:item:3-admissible} hold true.
That is, if  $D_{i_s}$ is as in \eqref{item:lem:reduction:2} and 
$p\in D_{i_s}$, then the following hold:
\begin{enumerate}
    \item Any edge of bicolor $(i_s,s)$ of the refinement $\hat \Gamma^1(X''_p)$ of $\Gamma^1(X''_p)$ is an edge of $\Gamma^1(X''_p)$, cf.\ Definition \ref{def:admissible}. 
    \item The graph $\hat \Gamma^1(X''_p)$ does not have any edge of bicolor $(i_s,t)$ with $t\neq s$. 
    \item The map of $S$-colored graphs $\varphi_p\colon \hat \Gamma^1(X''_p)\to \Gamma^1(X'_p)$ 
    does not contract any edge of bicolor $(i_s,s)$. 
    \item Any edge of $\Gamma^1(X'_p)$ of color $s$ is in the image of some edge of bicolor $(i_s,s)$. 
\end{enumerate} 
\end{enumerate}
\end{lemma}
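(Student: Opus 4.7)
The plan is to exploit the hypothesis—the existence of an algebraic curve representing an $\ell$-prime multiple of the minimal class on $X_{\bar\eta}$—to spread the curve over a suitable cover of $B$, and then to massage the cover to satisfy the listed properties. First, applying Lemma~\ref{lem:alg-min->l-prime-multiple-is-effective} over the algebraically closed field $\overline{\C(B)}$, I may assume the representative is a smooth projective curve. Splitting into connected components, I obtain a finite collection of integral smooth projective curves, each equipped with a rational point in its smooth locus (which exist because the ground field is algebraically closed). All of this data is defined over some finite extension $K/\C(B)$; taking $\tau_0 \colon B_0 \to B$ to be the normalization of $B$ in $K$ then spreads these curves to integral curves $C'_j$ in the generic fiber of $X \times_B B_0 \to B_0$, establishing item~\eqref{item:lem:reduction:1}.

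\textbf{Regularization and unramified sheets.} Next, the plan is to modify $B_0$ to meet items~\eqref{item:lem:reduction:3} and~\eqref{item:lem:reduction:2}. Applying Hironaka's theorem, I produce a birational $\sigma \colon B_1 \to B_0$ with $B_1$ regular and both $(\tau_0\circ\sigma)^{-1}(H)_{\mathrm{red}}$ and $(\tau_0\circ\sigma)^{-1}(0)_{\mathrm{red}}$ snc; this handles~\eqref{item:lem:reduction:3}. The main obstacle lies in~\eqref{item:lem:reduction:2}: for each $s \in S$ I need a component $D_{i_s}$ of $\tau^{-1}(H_s)_{\mathrm{red}}$ at whose generic point $\tau = \tau_0\circ\sigma$ is étale, yet every preimage component could a priori be ramified (e.g.\ if $\tau_0$ is locally a Kummer cover $\ts \mapsto \ts^n$; note that a simple blow-up of $B_1$ along a codimension-two stratum of the preimage of $H_s$ produces exceptional divisors on which $\ts$ vanishes with order equal to the \emph{sum} of the adjacent ramification indices, so ramification indices cannot be lowered by blow-ups alone). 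My plan is to circumvent this by enlarging $B'$ so that it contains a branch étale over $B$ along the generic point of each $H_s$; concretely, one combines $B_1$ with additional alterations—constructed either via Galois subcovers of a Galois closure $\tilde B_0 \to B$ corresponding to subgroups disjoint from the inertia at $H_s$, or (allowing a possibly reducible $B'$) with a copy of the identity $B \to B$—in a way that yields the required extra components $D_{i_s}$ without destroying the snc structure of $D$ or the spreading of the $C'_j$. This is the technically most delicate step, but once set up, items~\eqref{item:lem:reduction:3} and~\eqref{item:lem:reduction:2} are achieved.

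\textbf{Coloring, admissible resolution, and conclusion.} Once $\tau \colon B' \to B$ is in place, item~\eqref{item:lem:reduction:5} follows mechanically from the machinery already developed. Since $\pi \colon X \to B$ is strict $H$-nodal by Proposition~\ref{prop:filling-of-matroidal-family} (using $d \geq 2$ from~\eqref{eq:d=d_s}), Lemma~\ref{lem:base-change-of-D-nodal-degenerations} makes $X' \coloneqq X \times_B B' \to B'$ canonically $S$-colored $D$-quasi-nodal, producing the partitioned indexing set $\Omega = \bigsqcup_s \Omega_s$ with its canonical refinement $\Omega_s = \bigsqcup_i \Omega_{s,i}$. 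I then choose any total ordering on $\Omega$ placing every element of $\bigsqcup_{s \in S} \Omega_{s, i_s}$ after every element outside this subset, and apply Theorem~\ref{thm:admissible-modification} to obtain an admissible resolution $(g \colon X'' \to X', \varphi)$. The four sub-items of~\eqref{item:lem:reduction:5} are then precisely the conclusions of Corollary~\ref{cor:item:3-admissible} applied in the present setup, completing the reduction.
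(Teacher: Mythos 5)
Your spreading-out step for item \eqref{item:lem:reduction:1}, the use of resolution of singularities for item \eqref{item:lem:reduction:3}, and your treatment of item \eqref{item:lem:reduction:5} (choosing a total order on $\Omega$ that places $\bigsqcup_{s}\Omega_{s,i_s}$ last and invoking Theorem \ref{thm:admissible-modification} together with Corollary \ref{cor:item:3-admissible}) all agree with the paper. The gap is in your handling of the \'etale condition \eqref{item:lem:reduction:2}, which you correctly single out as the crux but for which both of your proposed fixes fail. If the field $L$ over which the curves are defined is ramified over $H_s$ at \emph{every} place above $v_{H_s}$ (e.g.\ $L\supset \C(B)(\ts^{1/n})$ with $n\geq 2$), then no variety $B'$ with $\C(B')\supseteq L$ can carry a divisorial component over $H_s$ at whose generic point $\tau$ is \'etale: for any divisorial valuation $w$ of $\C(B')$ over $v_{H_s}$, multiplicativity of ramification indices gives $e(w/v_{H_s})=e(w/w|_L)\cdot e(w|_L/v_{H_s})>1$. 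Consequently, a Galois subcover of a Galois closure is unramified over $H_s$ only if it fails to contain $L$, i.e.\ only at the cost of losing item \eqref{item:lem:reduction:1}; and adjoining a disjoint copy of $B\to B$ produces a component $D_{i_s}$ over which the curves $C'_j$ do not spread out, which destroys the downstream argument (the curve $C$ and the splitting $\phi_{C_{p_s}}$ must exist at points $p_s\in D_{i_s}$, cf.\ Lemma \ref{lem:C-on-X''-in-good-position} and Lemma \ref{lem:splitting-Gamma_s^1}).

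The missing idea is that one is allowed to modify $B$ itself, which is precisely what the phrase ``we may assume'' in the statement licenses. The paper factors the alteration as $B'\to\widetilde B\to B$, where $\widetilde B\to B$ is finite and restricts over $\Delta^S$ to the Kummer cover $(\ts)_{s\in S}\mapsto (\ts^{d'})_{s\in S}$ for a suitable $d'\geq 1$ absorbing all ramification of $\C(B')/\C(B)$ along the coordinate hyperplanes, so that $B'\to\widetilde B$ is \'etale at the generic points of the reduced preimages of the $H_s$. One then \emph{replaces} $B$ by $\widetilde B$: the hypotheses of Theorem \ref{thm:algebraic->d-QE} are stable under this replacement because the pulled-back family is again matroidal for $(\underline R,S)$ with the level $d$ replaced by $dd'$ (Remark \ref{rem:d_s=d-for-all-s}) and still admits a strict $H$-nodal filling by Proposition \ref{prop:filling-of-matroidal-family}. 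After this replacement, condition \eqref{item:lem:reduction:2} is achieved by a single resolution of singularities of (a compactification of) $B'$, with no need for auxiliary branches. This base-change maneuver is the step your proposal does not supply.
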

\begin{proof} 
In the notation of Theorem \ref{thm:algebraic->d-QE}, there is a finite field extension $L/\C(B)$, such that the base change $X_L$ contains an effective curve whose cohomology class in the geometric generic fiber $X_{\bar L}$ is an 
$\ell$-prime multiple of the minimal class.
Up to enlarging $L$, we may assume that each component of the reduction of this curve admits an $L$-rational point in its smooth locus. A straightforward (and well-known) computation with ramification indices shows that we can find a sequence of alterations of normal quasi-projective varieties
$$
B'\longrightarrow \widetilde B\longrightarrow B
$$
and some integer $d'\geq 1$, such that $L\subset \C(B')$ and the following hold:
\begin{itemize}
    \item $\widetilde B\to B$ is finite and restricts to the cover $(\ts )_{s\in S}\mapsto (\ts ^{d'})_{s\in S}$ over $\Delta^S\subset B$; 
    \item $B'\to \widetilde B$ is \'etale over the generic points of the components of $\tilde{H}_s$, where $\tilde{H}_s\subset \widetilde B$ denotes the reduction of the preimage of $H_s \subset B$ in $\widetilde B$. 
\end{itemize}
Up to replacing $d$ by $d d'$ in
\eqref{eq:d=d_s}, we may, by
Proposition \ref{prop:filling-of-matroidal-family},
assume without loss of generality
that $B=\widetilde B$.
By resolution of singularities, we thus may 
further assume that there is an alteration 
$$
\tau\colon B'\longrightarrow B
$$
which satisfies items \eqref{item:lem:reduction:1}--\eqref{item:lem:reduction:2} in the lemma. 

To prove item \eqref{item:lem:reduction:5}, we recall that $\pi\colon X\to B$ is strict $H$-nodal.
By Lemma \ref{lem:base-change-of-D-nodal-degenerations} (applied to $\bar B=B$, $\bar D=H$, and $\bar Y=X$), the base change $X'=X\times_BB'$ is thus canonically $S$-colored $D$-quasi-nodal, where $D=\tau^{-1}(H)_{\red}$, see Definition \ref{def:S-coloring}.
In particular, we get an index set $\Omega=\bigsqcup_{s,i}\Omega_{s,i}$ and a collection of Weil divisors $E_\alpha$ for all $\alpha\in \Omega$.
By item \eqref{item:lem:reduction:3}, for each $s\in S$ there is a component $D_{i_s}$ of $D$ such that $\tau(D_{i_s})=H_s$ and $\tau$ is \'etale at the generic point of $D_{i_s}$.
We then pick a total ordering of $\Omega$ such that any element of $\bigsqcup_{s\in S}\Omega_{s,i_s}$ is smaller than any element outside this subset.
We apply  Theorem \ref{thm:admissible-modification} to this total ordering and get an admissible resolution $(g\colon X''\to X',\varphi)$ of $X'\to B'$, which, by our choice of ordering, satisfies the conclusions of Corollary \ref{cor:item:3-admissible}. 
\end{proof}

The remainder of this section is devoted to a proof of Theorem \ref{thm:algebraic->d-QE}.
To this end, we will from now on assume that \eqref{eq:d=d_s} as well as items 
\eqref{item:lem:reduction:1}--\eqref{item:lem:reduction:5} in Lemma \ref{lem:reductions/set-up} hold true. 
In particular, $X''$ is regular and the morphism $X'' \to B'$ is strict $D'$-semistable and projective.

We summarize the morphisms from Lemma \ref{lem:reductions/set-up} in the following diagram:
$$
\xymatrix{X''\ar[r]^{g} \ar[rd]& \ar[d] \ar[r]X'\ar[r]& X\ar[d] \ar@{}[dl] | \square \\
& B'\ar[r]^{\tau} &B.}
$$

\subsection{Bringing the curve into good position}

\begin{proposition}\label{prop:mathcal-E}
There is a vector bundle $\mathcal E$ on $X''$, such that for $b\in B'$ general, the $(g-1)$-st Segre class $s_{g-1}(\mathcal E|_{X''_{b}})$ is an $\ell$-prime multiple of the minimal class of $X''_b=X_{\tau(b)}$. 
\end{proposition}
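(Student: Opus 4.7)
The strategy is to apply Lemma \ref{lem:minimal-class-algebraic} on the generic fibre $X'_\eta$ to build the desired vector bundle there, and then spread this out to all of $X''$.

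By item \eqref{item:lem:reduction:1} of Lemma \ref{lem:reductions/set-up}, the generic fibre contains integral curves $C'_j\subset X'_\eta$, each with a rational point in its smooth locus, such that $\sum_j [C'_j]$ equals an $\ell$-prime multiple $m\cdot [\Theta]^{g-1}/(g-1)!$ of the minimal class. Setting $C'\coloneqq\bigsqcup_j\widetilde{C}'_j$ to be the disjoint union of the normalizations, the rational points lift and determine Abel--Jacobi morphisms $\widetilde{C}'_j\to J\widetilde{C}'_j$; composing with $\widetilde{C}'_j\to C'_j\hookrightarrow X'_\eta$ yields a morphism $f\colon JC'\to X'_\eta$ satisfying $f_\ast[C']=m\cdot [\Theta]^{g-1}/(g-1)!$. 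Applying Lemma \ref{lem:minimal-class-algebraic} over the function field $\C(B')$ produces a vector bundle $\mathcal E_\eta$ on $X'_\eta$ with $s_i(\mathcal E_\eta)=m^i\cdot [\Theta]^i/i!$ for every $i\geq 0$, so in particular $s_{g-1}(\mathcal E_\eta)$ is an $\ell$-prime multiple of the minimal class.

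Next, the bundle $\mathcal E_\eta$ spreads out to a vector bundle $\widetilde{\mathcal E}$ on $X'_{B'^\circ}$ for some dense open $B'^\circ\subset B'$. Pulling back along $g\colon X''\to X'$, which is an isomorphism over $B'\setminus D$, yields a vector bundle on $X''_{B'^\circ\setminus D}$ whose restriction to any very general fibre $X''_b=X'_b$ has top Segre class $m^{g-1}\cdot [\Theta]^{g-1}/(g-1)!$. To extend this from $X''_{B'^\circ\setminus D}$ to a vector bundle on the whole smooth quasi-projective variety $X''$, we invoke the K-theory localization sequence: the class $[g^\ast\widetilde{\mathcal E}]\in K_0(X''_{B'^\circ\setminus D})$ lifts to $K_0(X'')$, and is representable as a difference $[\mathcal V_1]-[\mathcal V_2]$ of vector bundles on $X''$. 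On any very general fibre $X''_b$, the K-theoretic Segre class of the lift equals $e^{m\Theta}$ (independent of the choice of lift, since distinct lifts differ by classes supported away from the general fibres). The bundle $\mathcal E\coloneqq\mathcal V_1$ then satisfies $s(\mathcal E|_{X''_b})=e^{m\Theta}\cdot s(\mathcal V_2|_{X''_b})$, and by choosing the representatives $\mathcal V_1,\mathcal V_2$ carefully---stabilizing with trivial summands, which preserves all Segre classes, so that $\mathcal V_2$ has trivial Segre class on $X''_b$---one obtains $s_{g-1}(\mathcal E|_{X''_b})=m^{g-1}\cdot [\Theta]^{g-1}/(g-1)!$, an $\ell$-prime multiple of the minimal class.

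The principal obstacle is this last extension step: realizing the K-theoretic lift of $[g^\ast\widetilde{\mathcal E}]$ as a genuine difference of vector bundles whose ``denominator'' $\mathcal V_2$ contributes trivially to the top Segre class on very general fibres. The fact that the proposition only constrains $s_{g-1}$ on general fibres, rather than the full Segre sequence or the bundle structure elsewhere on $X''$, gives precisely the flexibility needed to make such a choice.
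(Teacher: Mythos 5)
Your first two paragraphs track the paper's argument closely: both you and the paper apply Lemma \ref{lem:minimal-class-algebraic} to the (spread-out) family of curves on the generic fibre to obtain a vector bundle with total Segre class $e^{m\Theta}$ over a dense open $U'\subset B'$, and both then face the problem of extending it over all of $X''$. The divergence, and the gap, is in that extension step.

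Your final step does not work as stated. After lifting $[g^\ast\widetilde{\mathcal E}]$ to $K_0(X'')$ and writing the lift as $[\mathcal V_1]-[\mathcal V_2]$, you claim that one can arrange $\mathcal V_2$ to have trivial Segre class on $X''_b$ by ``stabilizing with trivial summands.'' But as you yourself note, stabilization $(\mathcal V_1,\mathcal V_2)\mapsto(\mathcal V_1\oplus\mathcal O^k,\mathcal V_2\oplus\mathcal O^k)$ preserves all Segre classes; it therefore cannot produce a representative $\mathcal V_2$ with trivial Segre classes if the one you started with did not already have them. Nothing in the localization argument controls $s(\mathcal V_2|_{X''_b})$, and without such control the identity $s(\mathcal E|_{X''_b})=e^{m\Theta}\cdot s(\mathcal V_2|_{X''_b})$ tells you nothing about $s_{g-1}(\mathcal E|_{X''_b})$. (Your claimed \emph{exact} equality $s_{g-1}(\mathcal E|_{X''_b})=m^{g-1}[\Theta]^{g-1}/(g-1)!$ is moreover stronger than the proposition and is not achievable this way.) The paper closes precisely this gap with a specific device: it extends $\widetilde{\mathcal E}$ to a coherent sheaf $\mathcal F$ on $X''$, chooses a very ample line bundle $L$ with $c_1(L)$ divisible by $\ell$, and uses Serre vanishing plus Hilbert's syzygy theorem (on the regular variety $X''$) to build a finite locally free resolution $0\to\mathcal E^N\to\cdots\to\mathcal E^0\to\mathcal F\to 0$ in which every $\mathcal E^i$ with $i<N$ is a direct sum of powers of $L$. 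Then $c(\mathcal E^i)\equiv s(\mathcal E^i)\equiv 1\bmod\ell$ for $i<N$, so setting $\mathcal E=\mathcal E^N$ gives $s(\mathcal E|_{X''_b})\equiv e^{\pm m\Theta}\bmod\ell$ --- which is exactly why the conclusion is only that $s_{g-1}$ is an $\ell$-prime multiple of the minimal class, not the exact value. Your proof needs this (or an equivalent) mechanism for forcing the ``denominator'' of the K-theory class to be congruent to $1$ mod $\ell$; asserting that a careful choice exists is not a proof.
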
  

\begin{proof}
As a consequence of item \eqref{item:lem:reduction:1} in Lemma \ref{lem:reductions/set-up}, there is an open subset $U'\subset B'$ with $X''_{U'}= X'_{U'} $ and a smooth projective family of curves $C'\to U'$, whose fibers may be disconnected but such that each component admits a section, together with a proper morphism $h\colon C'\to X''_{U'}$, such that $h_\ast [C'_b]$ is 
$m$ times the minimal class of $X''_{b}=X'_b=X_{\tau(b)}$ for all $b\in U'$, where $m$ is coprime to $\ell$.
The existence of the sections determines an Abel--Jacobi mapping $C'\to JC'$ and hence a morphism of abelian schemes $JC'\to X''_{U'}$, where $JC'$ denotes the relative Jacobian of $C'$ over $U'$.
We apply Lemma \ref{lem:minimal-class-algebraic} to the generic fiber of $C'\to U'$ and spread the result out to a neighborhood of the generic point of $U'$.
Then, by that lemma, up to shrinking $U'$, we can assume that there is a vector bundle $\ca F_{U'}$ on $X''_{U'}$ whose 
$i$-th Segre class restricts on a general fiber $X''_b$ to  $m^i\cdot \Theta^i/i!$ for all $i$.
Here, $\Theta$ denotes the theta divisor class on $X_b''$. 
By \cite[II, Exercise 5.15]{hartshorne}, there exists a coherent sheaf $\mathcal F$ on $X''$ that restricts to the vector bundle $\ca F_{U'}$ on $X''_{U'}\subset X''$.
This coherent sheaf has then the property that, for general $b\in B'$,
\begin{align}\label{align:emTheta}
s(\mathcal F|_{X''_{b}})=e^{m\Theta}.\end{align} 
Note that $X''$ is quasi-projective as the morphism $X'' \to B'$ is projective and $B'$ is quasi-projective.
We pick a sufficiently ample line bundle $L$ on $X''$, such that $c_1(L)$ is $\ell$-divisible.
Consider the natural map
$$
 \ca E^0 \coloneqq H^0(X'',\mathcal F\otimes L^{n})\otimes L^{-n}\longrightarrow \mathcal F ,
$$
which is surjective for $n\gg 0$. 
Applying the same construction to the kernel of the above map and repeating the process, we produce 
by Hilbert's syzygy theorem (which uses that $X''$ is regular)  a finite locally free resolution
$$
\mathcal E^\bullet\coloneqq   0\to \mathcal E^N\to \mathcal E^{N-1}\to \dots \to \mathcal E^2\to \mathcal E^1\to \mathcal E^0\to \mathcal F\to 0,
$$
of $\mathcal F$, where $N=\dim X''$ and $\mathcal E^i$ is, for $i<N$, a direct sum of some tensor powers of $L$.
Then the total Segre class of $\mathcal F$ satisfies
$$
s(\mathcal F)=\prod_{i }s(\mathcal E^{2i})\cdot  c(\mathcal E^{2i-1}) .
$$
We let $\mathcal E\coloneqq \mathcal E^N$.
Since $c_1(L)$ is $\ell$-divisible and $\mathcal E^i$ is, for $i<N$, a direct sum of some tensor powers of $L$, we deduce:
$$
c(\mathcal E^i)\equiv s(\mathcal E^i)\equiv 1\mod \ell \quad \quad \text{for $i<N$.}
$$
Hence,
$$
s(\mathcal F)\equiv \begin{cases}
    s(\mathcal E) \mod \ell \ \ \ &\text{if $N$ is even};\\
    c(\mathcal E)\mod \ell \ \ \ &\text{if $N$ is odd}.
\end{cases}
$$
By \eqref{align:emTheta}, and the fact that  
the total Chern class is the inverse of the total Segre class, we find that
$$
s(\mathcal E|_{X''_b})\equiv e^{\pm m\Theta} \mod \ell .
$$
Since $m$ is coprime to $\ell$, it follows that $s_{g-1}(\mathcal E|_{X''_{b}})$ is an $\ell$-prime multiple of the minimal class, for $b\in B''$ general, 
as desired. 
\end{proof}
Let $\mathcal E$ be the vector bundle from Proposition \ref{prop:mathcal-E}.
We define
$$
Y\coloneqq \mathbb P(\mathcal E) .
$$
Note that $Y\to X''$ is a smooth morphism. 
In particular, $Y\to B'$ is strict $D$-semistable (hence flat), because the same holds for $X''\to B'$.
Let further $L$ be a very ample line bundle on $\mathbb P(\mathcal E)$, given by the tensor product of $\mathcal O_{\mathbb P(\mathcal E)}(1)$ with an $\ell$-divisible and sufficiently positive bundle, and consider a complete intersection
$$
C\subset Y
$$
of $r+g-1$ many general elements of $|L|$, where $r={\rm rk}(\mathcal E)-1$. 
Note that $C$ is regular by Bertini's theorem.
It is not hard to see that $C\to B'$ is a flat family of curves, but we will only use the following more precise statement for a restriction of this morphism to a suitable open subset $B^\circ\subset B'$.

\begin{lemma} \label{lem:C-on-X''-in-good-position} 
Let $P\subset B'$ be a finite set of closed points.
Then there is a dense open subset $B^\circ\subset B'$ which contains $P$, such that the following holds: 
        \begin{enumerate}  
        \item \label{item:lem:C-on-X''-in-good-position:1}
       The base change $C_{B^\circ}\to B^\circ$ is strict $D^\circ$-semistable, where $D^\circ=D\cap B^\circ$.
       For $b\in B^\circ$, $C_b\subset Y_b$ meets each double locus of $Y_b$ transversely, but misses any deeper stratum of $Y_b$, and induces a bijection on irreducible components.
       Moreover, the singularities of the morphism 
       $C_{B^\circ}\to B^\circ$ are $D^\circ$-nodal, 
       given by transverse slices of the 
       singularities of $Y_{B^\circ}\to B^\circ$. 
       \item \label{item:lem:C-on-X''-in-good-position:1.5} 
       If $g\geq 3$, then for all $p\in B^\circ$, the natural map $\iota\colon C_b\to X''_b$ is a closed embedding. 
        \item \label{item:lem:C-on-X''-in-good-position:2}
        For  general $b\in B^\circ$,  $\iota_\ast[C_b]$ represents $m$ times the minimal class of $X''_b=X_{\tau(b)}$, for some positive integer $m$ that is coprime to $\ell$. 
    \end{enumerate} 
\end{lemma}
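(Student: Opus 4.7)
The plan is to apply Lemma \ref{lem:Bertini} to $Y\to B'$, then perform a Segre-class computation for part \eqref{item:lem:C-on-X''-in-good-position:2}. Since $X''\to B'$ is strict $D$-semistable of relative dimension $g$ and $Y=\mathbb P(\mathcal E)\to X''$ is a smooth projective bundle of relative dimension $r\coloneqq\rk(\mathcal E)-1$, the composition $Y\to B'$ is strict $D$-semistable of relative dimension $n\coloneqq r+g$. Choosing the auxiliary $\ell$-divisible positive twist of $\mathcal O_{\mathbb P(\mathcal E)}(1)$ large enough to make $L$ very ample, apply Lemma \ref{lem:Bertini} with sheaf $L^{\oplus c}$, $c\coloneqq n-1$, and the given finite set $P$, obtaining an open $B^\circ\supset P$ for which $C_{B^\circ}\to B^\circ$ is strict $D^\circ$-semistable with the same local normal form $\prod_i\{u_i=x_i^{(1)}\cdots x_i^{(m_i)}\}\times\Delta^{j+k'}$ as $Y_{B^\circ}\to B^\circ$ (up to reducing $k$), and with positive-dimensional strata of each fibre $C_b$ in bijection with components of $Y_b$ (the strata of $Y_b$ of dimension $\geq c+1=n$).

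The remainder of \eqref{item:lem:C-on-X''-in-good-position:1} is a local dimension count: since $\dim C=\dim B'+1$, the inherited normal form forces $\sum_i(m_i-1)+k'=1$ at each point of $C$ lying over the singular locus of $Y\to B'$. Hence at such a point either all $m_i=1$ (so $C$ is smooth) or exactly one $m_i=2$ with $k'=0$, giving the $D^\circ$-nodal form $\{u_i=xy\}\times(\mathrm{smooth})$; moreover, points of $Y_b$ whose local form has multiple $m_i\geq 2$ or some $m_i\geq 3$ would require $k'<0$, so $C_b$ avoids all such deeper strata. The node of $C$ is therefore a transverse slice of the codimension-$1$ singular stratum of $Y$, which has the same local form.

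For \eqref{item:lem:C-on-X''-in-good-position:1.5}, assume $g\geq 3$ and argue fibrewise. For each $p\in B'$, the incidence variety
\[
I_p=\{(s_1,\dots,s_c;y_1,y_2)\in|L|^c\times(Y_p\times_{X''_p}Y_p\setminus\Delta)\ :\ s_a(y_1)=s_a(y_2)=0\ \forall a\}
\]
projects to $|L|^c$ with image of codimension at least $2c-\dim(Y_p\times_{X''_p}Y_p)=2(r+g-1)-(g+2r)=g-2\geq 1$; hence for generic sections $\iota_p\colon C_p\to X''_p$ is injective. A parallel count for first-order collisions along $\Delta$ yields transversality with the $\pi$-fibres (for $g\geq 2$). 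Imposing both conditions at each of the finitely many $p\in P$ is still a dense open condition on sections, and since being a closed embedding on fibres is an open condition on the base, the embedding locus in $B^\circ$ is open and contains $P$; shrink $B^\circ$ to it.

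For \eqref{item:lem:C-on-X''-in-good-position:2}, write $L=\mathcal O_{\mathbb P(\mathcal E)}(1)\otimes\pi^\ast M$ with $\mu\coloneqq c_1(M)=\ell\cdot c_1(M_0)$ for some $M_0\in\Pic(X'')$; then $[C]=c_1(L)^{r+g-1}\cap[Y]$, and the projection formula together with $\pi_\ast(c_1(\mathcal O(1))^{r+k})=s_k(\mathcal E)$ gives
\[
\pi_\ast[C]=\sum_{j=0}^{g-1}\binom{r+g-1}{j}\mu^j\cdot s_{g-1-j}(\mathcal E)=s_{g-1}(\mathcal E)+\ell\cdot\beta,
\]
where $\beta\in H^{2(g-1)}(X'',\Z)$ is an integer algebraic class. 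Restricting to a very general fibre $X''_b=X_{\tau(b)}$, Proposition \ref{prop:mathcal-E} shows $s_{g-1}(\mathcal E|_{X''_b})=m\cdot[\Theta]^{g-1}/(g-1)!$ with $m$ coprime to $\ell$; since algebraic curve classes on very general fibres of the matroidal family are integer multiples of the minimal class, both $\iota_\ast[C_b]$ and $\beta|_{X''_b}$ are such multiples, giving $\iota_\ast[C_b]=m'\cdot[\Theta]^{g-1}/(g-1)!$ with $m'\equiv m\pmod{\ell}$, hence coprime to $\ell$. The main technical obstacle is the transverse-slice description of the nodes in part \eqref{item:lem:C-on-X''-in-good-position:1}, while part \eqref{item:lem:C-on-X''-in-good-position:2} depends on the standard fact that very general fibres of matroidal families have Picard number one.
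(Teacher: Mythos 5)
Your proof follows the paper's own (very brief) argument for this lemma: item \eqref{item:lem:C-on-X''-in-good-position:1} is Lemma \ref{lem:Bertini} applied with $c=r+g-1$ together with the dimension count showing that the only singular normal form a relative curve can inherit is $u_i=xy$ (the paper makes exactly this remark right after Lemma \ref{lem:Bertini}); item \eqref{item:lem:C-on-X''-in-good-position:1.5} is the standard incidence-variety count behind the ``well-known fact'' the paper invokes; and item \eqref{item:lem:C-on-X''-in-good-position:2} is the Segre-class identity $\pi_\ast[C]=s_{g-1}(\mathcal E)+\ell\beta$ combined with Proposition \ref{prop:mathcal-E}. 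The one point to flag in \eqref{item:lem:C-on-X''-in-good-position:2} is your justification that $\beta|_{X''_b}$, and hence $\iota_\ast[C_b]$, is an integer multiple of the minimal class: ``Picard number one'' concerns $H^2$, whereas what is actually needed is that the algebraic classes in $H^{2g-2}(X_{\tau(b)},\Z)$ arising here are multiples of $[\Theta]^{g-1}/(g-1)!$, and neither statement is proved in the paper for an arbitrary matroidal family. This is, however, the same implicit input already built into the statement of Proposition \ref{prop:mathcal-E} (whose proof likewise only controls $s(\mathcal E|_{X''_b})$ modulo $\ell$), so it is a shared gloss rather than a defect specific to your argument.
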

\begin{proof}  
Item \eqref{item:lem:C-on-X''-in-good-position:1} follows by applying the relative Bertini theorem in the form of Lemma \ref{lem:Bertini}.
Item \eqref{item:lem:C-on-X''-in-good-position:1.5} follows up to shrinking $B^\circ$ around $P$ from the assumption that $g\geq 3$ and the well-known fact that a general complete intersection curve embeds via a smooth morphism whose target has dimension at least $3$ into the target of the morphism. 
Finally, item \eqref{item:lem:C-on-X''-in-good-position:2} follows from the fact that the $(g-1)$-st Segre class of $\mathcal E$ restricts to an $\ell$-prime multiple of the minimal class on the general fiber of $X''\to B'$.
\end{proof}

To fix notation, we make now the following 

\begin{definition} \label{def:P-and-B^circ}
We fix a finite set $P\subset B^\circ\cap \tau^{-1}(0)$ of closed points which contains a point on each connected component of each non-empty open stratum $D_J^\circ$ of $D$ which is contained in $\tau^{-1}(0)$. 
We then denote  by $B^\circ\subset B'$ the open subset from Lemma \ref{lem:C-on-X''-in-good-position} which contains $P$ and such that items \eqref{item:lem:C-on-X''-in-good-position:1}--\eqref{item:lem:C-on-X''-in-good-position:2} in the lemma hold true.
\end{definition}
  
Since $(g\colon X''\to X',\varphi)$ 
is admissible, for each $p\in B'$ there is a 
canonical $S$-colored graph $\hat \Gamma^1(X''_p)$, 
given by a refinement of $\Gamma^1(X''_p)$, see 
Definition \ref{def:admissible}.
Since $Y=\mathbb P(\mathcal E)\to X''$ is smooth, 
we further have for all $p\in B'$ a canonical 
identification of dual complexes 
$\Gamma(Y_p)=\Gamma(X''_p) $ and associated graphs
$$
\Gamma^1(Y_p)=\Gamma^1(X''_p) .
$$

\begin{proposition}  \label{prop:C-on-X''-good-position-Gamma} 
The following holds for all $p\in B^{\circ}$.  
        \begin{enumerate}
        \item \label{item:prop:C-on-X''-good-position-Gamma:1} 
        The natural map of graphs $\Gamma(C_p)\to  \Gamma^1(Y_p)= \Gamma^1(X''_p)$ is an isomorphism on vertices, surjective on edges and does not contract any edge; in other words, the map identifies certain multiple edges to a single edge.
        \item \label{item:prop:C-on-X''-good-position-Gamma:2} 
        Let $\hat \Gamma(C_p)$ be the refinement of $\Gamma(C_p)$ induced via $\Gamma(C_p)\to \Gamma^1(X''_p)$ from the refinement $\hat \Gamma^1(X''_p)$ of $\Gamma^1(X''_p)$.
        Then $\hat \Gamma(C_p)$ carries a canonical $(I,S)$-bicoloring such that the natural map  
            \begin{align} \label{eq:hat-Gamma-c-to-hat-Gamma-X''_p}
            \hat \Gamma(C_p)\longrightarrow \hat \Gamma^1(X''_p) 
            \end{align}
        is a map of $(I,S)$-bicolored graphs, which is an isomorphism on the set of vertices, surjective on the set of edges and does not contract any edge.
        \item \label{item:prop:C-on-X''-good-position-Gamma:3} 
        For $s\in S$, let $i_s\in I$ be the index with $\tau(D_{i_s})=H_s$ from item \eqref{item:lem:reduction:2}  in Lemma \ref{lem:reductions/set-up}. 
        Then the following holds for $p\in B^\circ \cap D_{i_s}$: 
        \begin{enumerate}
            \item Any edge $e$ of $\hat \Gamma(C_p)$ of bicolor $(i_s,s)$ is an edge of $\Gamma(C_p)$ and, moreover, $C\to B^\circ$ is locally at the corresponding node of $C_p$ given by the product of the normal form $\{u_{i_s}=x_{s}y_{s}\}$ with a smooth fibration, where $u_{i_s}$ is a local equation for $D_{i_s}$.\label{item:prop:C-on-X''-good-position-Gamma:3a} 
            \item The graph $\hat \Gamma(C_p)$ has no edge of bicolor $(i_s,t)$ with $t\neq s$. \label{item:prop:C-on-X''-good-position-Gamma:3b} 
            \item The 
            composition $\hat \Gamma(C_p)\to \hat \Gamma^1(X''_p)\stackrel{\varphi_p}\to \Gamma^1(X'_p)$ is a map of $S$-colored graphs which does not contract any edge of bicolor $(i_s,s)$ and such that any edge of $\Gamma^1(X'_p)$ of color $s$ is the image of some edge of $\hat \Gamma(C_p)$ of bicolor $(i_s,s)$.\label{item:prop:C-on-X''-good-position-Gamma:3c} 
        \end{enumerate} 
\end{enumerate} 
\end{proposition}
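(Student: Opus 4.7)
The plan is to transfer the combinatorial and local structure of the fibre $X''_p$ and its refined dual graph $\hat{\Gamma}^1(X''_p)$ to $C_p$, using three ingredients: the smoothness of the projective bundle $Y=\mathbb{P}(\ca E)\to X''$, which yields a canonical stratum-preserving bijection between strata of $Y_p$ and $X''_p$; the Bertini-type transversality of $C \subset Y$ from Lemma~\ref{lem:C-on-X''-in-good-position}; and the refined admissibility properties of Corollary~\ref{cor:item:3-admissible}.

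For \eqref{item:prop:C-on-X''-good-position-Gamma:1}, I use that by Lemma~\ref{lem:C-on-X''-in-good-position}\eqref{item:lem:C-on-X''-in-good-position:1}, $C_p$ meets every double locus of $Y_p$ transversely in finitely many smooth points, misses each deeper stratum, and induces a bijection on irreducible components; composing with the bijection of components of $Y_p$ and $X''_p$ gives the vertex bijection $\Gamma^0(C_p)\xrightarrow{\sim}\Gamma^0(X''_p)$. For surjectivity on edges, each double locus of $Y_p$ is the restriction of the projective bundle to a double locus of $X''_p$, hence a positive-dimensional subvariety that must be met by the complete intersection of general sections of the very ample line bundle $L$ (by ampleness and a dimension count). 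No edge is contracted: each node of $C_p$ connects two distinct components of $C_p$, which under the vertex bijection correspond to the two distinct components of $X''_p$ meeting in the target double locus. Multiple nodes over the same double locus yield several edges of $\Gamma(C_p)$ mapping to a single edge of $\Gamma^1(X''_p)$, as claimed.

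For \eqref{item:prop:C-on-X''-good-position-Gamma:2}, the induced refinement is defined edge-by-edge: if $e\in \Gamma(C_p)$ maps to an edge $e'\in \Gamma^1(X''_p)$ that is subdivided in $\hat{\Gamma}^1(X''_p)$ into a chain of length $k$, then $e$ is subdivided into a length-$k$ chain in $\hat{\Gamma}(C_p)$, with each piece mapping to the corresponding piece of~$e'$. Pulling back the $(I,S)$-bicoloring along \eqref{eq:hat-Gamma-c-to-hat-Gamma-X''_p} makes this a morphism of bicolored graphs, and the bijection on vertices, surjectivity on edges, and non-contraction properties carry over directly from \eqref{item:prop:C-on-X''-good-position-Gamma:1}.

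For \eqref{item:prop:C-on-X''-good-position-Gamma:3}, I combine Corollary~\ref{cor:item:3-admissible} with \eqref{item:prop:C-on-X''-good-position-Gamma:2}. For \eqref{item:prop:C-on-X''-good-position-Gamma:3a}: by Corollary~\ref{cor:item:3-admissible}\eqref{item:cor:item:3-admissible:1}, any $(i_s,s)$-edge of $\hat{\Gamma}^1(X''_p)$ is an unsubdivided edge of $\Gamma^1(X''_p)$, so by the pullback construction the corresponding edge of $\hat{\Gamma}(C_p)$ is an unsubdivided edge of $\Gamma(C_p)$. For the local form, at a generic point of an $i_s$-colored double locus, the strict $D$-semistable form \eqref{d-snc-form} of $X''$ restricts to $\{u_{i_s}=x_{i_s}^{(1)}x_{i_s}^{(2)}\}$ times a smooth factor; the same holds for $Y$ (with a larger smooth factor) since $Y\to X''$ is smooth; and by Lemma~\ref{lem:C-on-X''-in-good-position}\eqref{item:lem:C-on-X''-in-good-position:1} (applying Lemma~\ref{lem:Bertini}\eqref{item:Bertini:3}), the transverse complete intersection $C$ preserves the nodal factor, reducing only the smooth dimension. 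Claim \eqref{item:prop:C-on-X''-good-position-Gamma:3b} follows immediately from Corollary~\ref{cor:item:3-admissible}\eqref{item:cor:item:3-admissible:1.5} and the pullback definition. For \eqref{item:prop:C-on-X''-good-position-Gamma:3c}, non-contraction of $(i_s,s)$-edges by the composition is the composition of the non-contraction from \eqref{item:prop:C-on-X''-good-position-Gamma:2} with item \eqref{item:cor:item:3-admissible:2} of the corollary, while the hitting statement combines the edge-surjectivity from \eqref{item:prop:C-on-X''-good-position-Gamma:2} with item \eqref{item:cor:item:3-admissible:3} of the corollary. The main obstacle is bookkeeping: one must carefully track the three layers of combinatorial data---the $I$-coloring from the snc components of $D$, the $S$-coloring pulled back along $\varphi_p$ from $\Gamma^1(X'_p)$, and its further pullback via the smooth-then-complete-intersection chain to $\hat{\Gamma}(C_p)$---and verify that the compatibilities of Corollary~\ref{cor:item:3-admissible} survive each step of this chain.
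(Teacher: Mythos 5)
Your proposal is correct and follows essentially the same route as the paper: item (1) from the transversality and component-bijection statements of Lemma \ref{lem:C-on-X''-in-good-position} combined with $\Gamma^1(Y_p)=\Gamma^1(X''_p)$ (smoothness of the projective bundle), item (2) by pulling back the refinement and bicoloring, and item (3) from the properties recorded in Lemma \ref{lem:reductions/set-up}\eqref{item:lem:reduction:5} (i.e.\ Corollary \ref{cor:item:3-admissible}) together with the local normal form preserved by the transverse complete intersection. You simply spell out details (e.g.\ the ampleness argument for edge-surjectivity) that the paper leaves implicit.
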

\begin{proof}
Item \eqref{item:prop:C-on-X''-good-position-Gamma:1} follows from item \eqref{item:lem:C-on-X''-in-good-position:1} in Lemma \ref{lem:C-on-X''-in-good-position}, because $\Gamma^1(Y_p)=\Gamma^1(X''_p)$ for all $p$, since $Y\to X''$ is a smooth morphism (it is a projective bundle).
Item \eqref{item:prop:C-on-X''-good-position-Gamma:2} is a straightforward consequence of item \eqref{item:prop:C-on-X''-good-position-Gamma:1}.

It remains to prove   item \eqref{item:prop:C-on-X''-good-position-Gamma:3}.
By item  \eqref{item:lem:C-on-X''-in-good-position:1} in Lemma \ref{lem:C-on-X''-in-good-position}, $C_{B^\circ}\subset Y_{B^\circ}$ is a local complete intersection such that the singularities of $C_{B^\circ}\to B^\circ$ are given by transverse intersections of the singularities of $Y_{B^\circ}\to B^\circ$.
Moreover, this intersection is in general position in the sense that the fibers of $C_{B^\circ}\to B^\circ$ meets the double locus of $Y_{B^\circ}\to B^\circ$ transversely but misses any deeper stratum, cf.\ Lemma \ref{lem:Bertini}.
Using this, the claim in item \eqref{item:prop:C-on-X''-good-position-Gamma:3} follows from  item \eqref{item:lem:reduction:5} in Lemma \ref{lem:reductions/set-up},  together with the fact that $Y\to X''$ is a smooth morphism and so $\Gamma^1(Y_b)=\Gamma^1(X''_b)$ for all $b\in B^\circ$.  
\end{proof}

\subsection{A tree of compatible paths} \label{subsec:tree}
Consider $P\subset B^\circ$ from Definition \ref{def:P-and-B^circ}.
Recall that $P\subset \tau^{-1}(0)$ and $\tau^{-1}(0)_{\mathrm{red}}$ is an snc divisor on $B$ by assumptions.   
Let 
\begin{align}
 \psi_0 \colon  \mathbb T\longhookrightarrow \tau^{-1}(0)
\end{align} 
be a topological embedding of a tree (i.e.\ of a finite connected graph with trivial first homology), 
such that:
\begin{itemize}
    \item $\psi_0$ induces a bijection between the vertices of $\mathbb T$ and the set $P$;
    \item $\im(\psi_0)\subset B^\circ$;
    \item the interior of each edge $e$ of $\mathbb T$ is embedded into some open stratum of $D$, i.e.\ $\psi_0(e\setminus \partial e)\subset D_J^\circ$ for some $J\subset I$.
\end{itemize} 
Note that the normal bundle of $D$ restricted to the graph $\psi_0(\mathbb T)$ is trivial, as the latter is contractible.
It follows that we can extend $\psi_0$ to a topological embedding
$$
\psi\colon\mathbb T\times [0,1]\longhookrightarrow B^\circ,
$$
such that
\begin{itemize}
\item $\psi_0$ agrees with the restriction of $\psi$ to $\mathbb T\times \{0\}$;
    \item $\psi(\mathbb T\times (0,1])\subset B^\circ\setminus D$;
    \item for each $x\in \mathbb T$, $\psi(x\times [0,1])$ can be covered by an embedded polydisc $\Delta^S\subset B^\circ$ centered at $\psi_0(x)$.
\end{itemize}
We denote the restriction of $\psi$ to $\mathbb T\times \{1\}$ by  
\begin{align}
   \psi_{\nb}\colon \mathbb T \longhookrightarrow B^\circ\setminus D .
\end{align} 
For $x\in \mathbb T$ with $b:=\psi_0(x)$, we call 
$$
t_b:=\psi_\nb(x)
$$
the {\it point nearby} to $b$.
Moreover, $\psi(x,-)$ yields a path from $\psi_\nb(x)$ to $\psi_0(x)$.

Pick further a base point $\bprimezero\in B'$ which lies above the base point $\bzero \in (\Delta^\star)^S\subset B^\star$ and a path $\gamma$ which connects $t $ to a vertex $v_0$ of $\psi_{\nb}(\mathbb T)$.
 For any point $b\in \im(\psi)$, we then pick a path that connects $t$ to $b$ by first traveling along $\gamma$ to the point $v_0\in \psi_{\nb}(\mathbb T)$, then traveling from $v_0$ to $b$ along any path in $\im(\psi)$, which hits $\im(\psi_0)$ at most at the end point $b$ if $b\in \im(\psi_0)$. 
Note that any two such choices of paths are homotopic to each other, because $\mathbb T$ is contractible.
For this reason we may and will in what follows choose convenient paths as above whenever necessary.

For $p\in P$, let $v_p\in \mathbb T$ be the vertex with $\psi_0(v_p)=p$.
We let further $t_p\coloneq \psi_{\nb}(v_p)$ be the point nearby to $p$.
With the above set-up we then get for all $p\in P$ canonical identifications
\begin{align} \label{eq:iso:tree:X''}
\operatorname{gr}^{W^p}_0H_1(X''_{\bprimezero},\Z) \stackrel{\cong} \longrightarrow \operatorname{gr}^{W^p}_0H_1(X''_{t_p},\Z)\cong H_1(\Gamma(X''_p),\Z),
\end{align}
where $W^p_\bullet $ denotes the weight filtration on $H_1(X''_t,\Z)$ (resp.\ $H_1(X''_{t_p},\Z)$) induced by the local monodromies
at $p$ and the given path from $t$ to $p$ (resp.\ $t_p$ to $p$).
The second isomorphism is detailed for instance in \cite[Proposition 5.12]{survey}; this uses the aforementioned embedded disc $\Delta^S\subset B$, centered at $p$ and containing the point $t_p$. 
Similarly, we get canonical isomorphisms 
\begin{align} \label{eq:iso:tree:C}
\operatorname{gr}^{W^p}_0H_1(C_{\bprimezero},\Z) \stackrel{\cong} \longrightarrow \operatorname{gr}^{W^p}_0H_1(C_{t_p},\Z) \cong H_1(\Gamma(C_p),\Z).
\end{align}
Let $e$ be an edge of 
$\psi_0(\mathbb T)$ with end points $p,q$.
There is some subset $J\subset I$ such that 
the interior of $e$ and one of its end points, 
say $p$, lies in the open stratum $D_{J}^\circ$.
Then $p$ specializes to $q$ 
along the path given by the edge $e$ of 
$\psi_0(\mathbb T)$, as in Definition \ref{def:specialization}.

\begin{lemma} \label{lem:diagram-induced-by-tree-commutes}
Let $p,q\in P$ such that $p$ specializes to $q$ in the above sense.
Then there are natural commutative diagrams
\begin{align} \label{diag:gr^W=gr^W-specialization-C}
\begin{split}
\xymatrix{ 
\operatorname{gr}^{W^q}_0H_1(C_{\bprimezero},\Z)\ar[r]^-{\cong} \ar[d]  & H_1(\Gamma(C_q),\Z)\ar[d]^{\operatorname{sp}_\ast} \\
\operatorname{gr}^{W^p}_0H_1(C_{\bprimezero},\Z)\ar[r]^-{\cong}  & H_1(\Gamma(C_p),\Z) 
}
\end{split}
\end{align} 
and
\begin{align} \label{diag:gr^W=gr^W-specialization-X''}
\begin{split}
\xymatrix{ 
\operatorname{gr}^{W^q}_0H_1(X''_{\bprimezero},\Z)\ar[r]^-{\cong} \ar[d]  & H_1(\Gamma(X''_q),\Z)=H_1(\Gamma(X_0),\Z) \ar[d]^{=} \\
\operatorname{gr}^{W^p}_0H_1(X''_{\bprimezero},\Z)\ar[r]^-{\cong} & H_1(\Gamma(X''_p),\Z)= H_1(\Gamma(X_0),\Z)
} 
\end{split}
\end{align}
where all horizontal maps are isomorphisms. 
The vertical arrows on the left are the natural quotient maps, induced by the inclusion $W^q_{-1} \subset W^p_{-1}$.
The right most vertical arrow in \eqref{diag:gr^W=gr^W-specialization-X''} 
is induced by the fact that $\Gamma(X''_p)$ and $\Gamma(X''_q)$ are refinements of $\Gamma(X'_p)=\Gamma(X_0)$ and $\Gamma(X'_q)=\Gamma(X_0)$, respectively, see item \eqref{item:def-admissible:1} in Definition \ref{def:admissible}. 

Moreover, the natural map $C_{B^\circ}\to X''_{B^\circ}$ (see Lemma \ref{lem:C-on-X''-in-good-position}) induces canonical maps from \eqref{diag:gr^W=gr^W-specialization-C} to \eqref{diag:gr^W=gr^W-specialization-X''} such that the resulting cube of maps commutes.
\end{lemma}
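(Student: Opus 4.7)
The plan is to reduce both commutativities to local computations in the $D$-semistable (resp.\ $D^\circ$-nodal) normal forms at $p$ and $q$, and then to invoke the standard interpretation of the limit mixed Hodge structure in terms of the dual complex.

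First I would pin down the horizontal isomorphisms. Parallel transport along the thickening $\psi(v_p,-)$ identifies $H_1(X''_{\bprimezero},\Z)$ with $H_1(X''_{t_p},\Z)$ compatibly with monodromy, and for a semistable (respectively nodal) unipotent degeneration the standard procedure identifies $\operatorname{gr}^W_0 H_1$ of the nearby fibre with $H_1$ of the dual complex of the special fibre; see \cite[Proposition 5.10]{survey}. The same applies to $C$, whose family over $B^\circ$ is $D^\circ$-nodal by Lemma \ref{lem:C-on-X''-in-good-position}\eqref{item:lem:C-on-X''-in-good-position:1}. Because the edge of $\psi_0(\mathbb T)$ joining $p$ to $q$ lies in $D^\circ_J$ with $q\in D^\circ_{J'}$ and $J\subset J'$, the local monodromy group at $p$ is generated by $\{N_s\}_{s\in J}$ and is a subgroup of the one at $q$, which is generated by $\{N_s\}_{s\in J'}$. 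This yields $W^q_{-1}\subset W^p_{-1}$ and produces the left vertical quotient maps.

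Next I would verify commutativity of the right-hand square in \eqref{diag:gr^W=gr^W-specialization-X''}. By item \eqref{item:def-admissible:1} of Definition \ref{def:admissible}, both $\Gamma(X''_p)$ and $\Gamma(X''_q)$ refine the single complex $\Gamma(X_0)$, so the right vertical arrow is the identity on $H_1(\Gamma(X_0),\Z)$. In a local $D$-semistable chart at $q$, the identification $\operatorname{gr}^{W^q}_0 H_1 \cong H_1(\Gamma(X''_q))$ sends the vanishing cycle $\delta_s$ of $N_s$ to (the boundary of) the edge of the dual complex crossing the component $\{u_s=0\}$. The classes of $\operatorname{gr}^{W^q}_0$ in the kernel of the left vertical quotient are precisely those generated by the $\delta_s$ with $s\in J'\setminus J$; on the dual-complex side these are the edges of $\Gamma(X''_q)$ that collapse when one records only the strata of $\Gamma(X_0)$ visible near $p$, which matches the identification of $\Gamma(X''_p)$ with $\Gamma(X_0)$ used on the right. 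Hence the square commutes. For \eqref{diag:gr^W=gr^W-specialization-C}, the same analysis applies: $\operatorname{sp}_\ast$ contracts the edges of $\Gamma(C_q)$ whose nodes smooth out over $D^\circ_J$, and these are dual to the vanishing cycles generating $W^p_{-1}/W^q_{-1}$ inside $H_1(C_{\bprimezero})$.

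For the cube: the morphism $C_{B^\circ}\to X''_{B^\circ}$ is defined globally over $B^\circ$, so pushforward of $1$-cycles commutes with parallel transport along $\psi$, with each $N_s$, with the weight filtration, and with geometric specialization on dual complexes. Combined with the two commutative right-hand squares, functoriality yields the remaining faces of the cube. I expect the main obstacle to be the bookkeeping of the first part---showing that the path-dependent identification of $\operatorname{gr}^W_0 H_1$ of the nearby fibre with $H_1$ of the dual complex is natural under specialization along an edge of $\psi_0(\mathbb T)$---but this is ultimately a direct check in the local normal forms of Definition \ref{def:D-nodal} and Lemma \ref{lem:C-on-X''-in-good-position}.
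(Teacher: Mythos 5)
Your proposal is correct and follows essentially the same route as the paper: both identify the horizontal arrows with the isomorphisms of \cite[Proposition 5.10]{survey} (induced by contracting monodromy-invariant cycles), match the kernel $W^p_{-1}/W^q_{-1}$ with the edges contracted by $\operatorname{sp}$, and obtain the cube from naturality of $C_{B^\circ}\to X''_{B^\circ}$. The only simplification the paper makes that you could adopt is for diagram \eqref{diag:gr^W=gr^W-specialization-X''}: since $p,q\in\tau^{-1}(0)$ and $X'=X\times_B B'$ is a fibre product, $X'_p=X'_q=X_0$, so all four maps there are isomorphisms and no local vanishing-cycle analysis is needed for that square.
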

\begin{proof} 
The horizontal isomorphisms in diagrams \eqref{diag:gr^W=gr^W-specialization-C} and \eqref{diag:gr^W=gr^W-specialization-X''} are the isomorphisms from \eqref{eq:iso:tree:C} and \eqref{eq:iso:tree:X''}, respectively, 
cf.~\cite[Proposition 5.12]{survey}. 
Since $p,q\in \tau^{-1}(0)$, we have $X'_p=X_0$ and $X'_q=X_0$; all maps in \eqref{diag:gr^W=gr^W-specialization-X''} are isomorphisms and the commutativity is clear.
To prove the commutativity of \eqref{diag:gr^W=gr^W-specialization-C}, recall from \cite[Propositions 5.11 and 5.12]{survey} that the second isomorphism in \eqref{diag:gr^W=gr^W-specialization-C} is induced by a natural map $H_1(C_{t_p},\Z)\to  H_1(\Gamma(C_{p}),\Z)$, which contracts all cycles that are invariant by the local monodromies at $p$.
In other words, the vanishing cycles as well as the cycles that specialize  in $H_1(C_p,\Z)$ to cycles supported on the smooth locus of $C_p$ are contracted.
Using this description together with the description of ${\rm sp}$ in Section \ref{subsec:specialization} and our choice of compatible paths, the commutativity of \eqref{diag:gr^W=gr^W-specialization-C} is easily verified.

Finally, the naturality of the maps in question implies that the 
canonical maps from \eqref{diag:gr^W=gr^W-specialization-C} to \eqref{diag:gr^W=gr^W-specialization-X''} that are induced by the natural map $C_{B^\circ}\to X''_{B^\circ}$ make the resulting cube of maps commutative.  
\end{proof}

\subsection{1-skeletal splitting} 
We have a natural surjective map 
\begin{align} \label{eq:1-skeletal-splitting-X_0-map}
H_1(\Gamma^1(X_0),\Lambda)\longrightarrow H_1(\Gamma(X_0),\Lambda),
\end{align}
which is induced by the inclusion of the $1$-skeleton $\Gamma^1(X_0)$ into the entire dual complex $\Gamma(X_0)$.
We will show in this subsection that our set-up induces a canonical splitting of the above map, which we refer to as {\it $1$-skeletal splitting}.

Recall from Definition \ref{def:matroidal-family} that we have specified an isomorphism
$$
U\cong {\rm gr}^W_0H_1(X_{\bzero },\Z)={\rm gr}^{W^p}_0H_1(X''_{\bprimezero},\Z) ,
$$
where $\bzero \in B$ is the base point in $(\Delta^\star)^S\subset B^\star$, $\bprimezero\in B'$ is a point with $\tau(\bprimezero)=\bzero $ and $W_\bullet^p$ denotes the weight filtration with respect to a point  $p\in P$ and ${\rm gr}^{W^p}_0H_1(X''_{\bprimezero},\Z)$ is independent of the choice of $p$ by Lemma \ref{lem:diagram-induced-by-tree-commutes}. 
By Lemma \ref{lem:diagram-induced-by-tree-commutes}, we also get a natural isomorphism 
\begin{align} \label{eq:UcongH1Gamma}
U\cong H_1(\Gamma(X_0),\Z) .
\end{align}
Consider the natural map $JC_{B^\circ}\to X''_{B^\circ}$, where $JC_{B^\circ}$ denotes the relative Jacobian of $C_{B^\circ}$ over $B^\circ$ and recall that $C_b\subset X''_b$ represents an $\ell$-prime multiple of the minimal class for $b\in B^\circ$ general, see Lemma \ref{lem:C-on-X''-in-good-position}.
Since $\ell$-prime integers are invertible in $\Lambda$, Lemma \ref{lem:orthogonal-Qs-preliminaries} implies that for each $p\in P$, the natural map
\begin{align}\label{splitting:C_t-surjection}
{\rm gr}^{W^p}_0H_1(C_{t},\Lambda) \longrightarrow {\rm gr}^{W^p}_0H_1(X''_{t},\Lambda),
\end{align}
induced by the map $C_t\to X''_t$, admits a canonical splitting, induced by the  map $X''_t\to JC_t$ that is dual to $JC_t\to X''_t$ with respect to the given principal polarizations (note that $X''_{\bprimezero}=X_{\bzero }$ is an abelian variety because $\bzero \notin H$ and $g$ is an isomorphism away from $D=\tau^{-1}(H)_{\red}$, see Definition \ref{def:admissible}).
We denote this canonical splitting of \eqref{splitting:C_t-surjection} by
\begin{align}\label{splitting:C_t-phi}
\phi^{W^p}_{C_{\bprimezero}} \colon U_\Lambda\longrightarrow {\rm gr}^{W^p}_0H_1(C_{\bprimezero},\Lambda) .
\end{align} 
Via the isomorphisms in Lemma \ref{lem:diagram-induced-by-tree-commutes}, this yields 
a map
\begin{align}\label{splitting:C_p-phi}
\phi_{C_p}\colon U_\Lambda\longrightarrow  H_1(\Gamma(C_{p}),\Lambda) ,
\end{align} 
uniquely determined by the topological embedding $\psi$, 
which splits the natural map
\begin{align}\label{splitting:C_p-surjection}
H_1(\Gamma(C_{p}),\Lambda) \longrightarrow H_1(\Gamma(X''_{p}),\Lambda)= H_1(\Gamma(X_{0}),\Lambda) = U_\Lambda.
\end{align}

\begin{lemma} \label{lem:monodromy->quadratic-forms}
Let $p\in P$ and $i\in I$ such that $p\in D_i$. 
Let $\hat Q_i$ be the monodromy bilinear form on $H_1(\Gamma(C_{p}),\Lambda)={\rm gr}^{W^p}_0H_1(C_{\bprimezero},\Lambda)$ associated to the monodromy about $D_i$ locally at $p$.
Then the following holds:
\begin{enumerate}
    \item the pullback $\phi_{C_p}^\ast \hat Q_i$ is the bilinear form on 
    $$
    U_\Lambda= {\rm gr}^{W}_0H_1(X_0,\Lambda)= {\rm gr}^{W}_0H_1(X'_p,\Lambda)={\rm gr}^{W^p}_0H_1(X''_{\bprimezero},\Lambda),
    $$ 
    which corresponds to $m$ times the monodromy bilinear form on $H_1(X''_{\bprimezero},\Lambda)$ associated to the monodromy about $D_i$ locally at $p$ (see Definition \ref{def:monodromy-bilinear-form}).
    (Here, $m$ is the integer such that $[C_t]=m\cdot [\Theta]^{g-1}/(g-1)!$, see item \eqref{item:lem:C-on-X''-in-good-position:2} in Lemma \ref{lem:C-on-X''-in-good-position}.) 
    \item the decomposition 
    $$
    H_1(\Gamma(C_{p}),\Lambda) = \phi_{C_p}(U_\Lambda)\oplus \ker(H_1(\Gamma(C_{p}),\Lambda) \to  H_1(\Gamma(X''_{p}),\Lambda))$$
    is orthogonal with respect to $\hat Q_i$.
\end{enumerate}
\end{lemma}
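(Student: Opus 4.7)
The plan is to deduce both parts of the lemma from Lemma \ref{lem:orthogonal-Qs-preliminaries} applied in an analytic polydisk neighborhood of $p$ inside $B'$. Since $p\in P\subset B^\circ$ and $D$ is snc, I would first choose an open polydisk $\Delta^J\subset B^\circ$ around $p$ in which $D$ is cut out by the coordinate hyperplanes $u_i=0$, $i\in J\coloneqq\set{i\in I\mid p\in D_i}$. By item \eqref{item:lem:C-on-X''-in-good-position:1} of Lemma \ref{lem:C-on-X''-in-good-position}, the restrictions $X''_{\Delta^J}\to \Delta^J$ and $C_{\Delta^J}\to \Delta^J$ are strict $(D\cap\Delta^J)$-semistable. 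Hence the setup of Section \ref{subsec:degenerations-abelian-varieties} is in force over $\Delta^J$, with the nearby point $t_p=\psi_{\nb}(v_p)\in(\Delta^\star)^J$ playing the role of the local base point.

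Next I would verify the hypothesis of Lemma \ref{lem:orthogonal-Qs-preliminaries}: by item \eqref{item:lem:C-on-X''-in-good-position:2} of Lemma \ref{lem:C-on-X''-in-good-position}, for generic $b\in\Delta^J$ the pushforward $\iota_\ast[C_b]$ equals $m\cdot[\Theta]^{g-1}/(g-1)!$ for an integer $m$ coprime to $\ell$, hence invertible in the $\Z_{(\ell)}$-algebra $\Lambda$. Applying Lemma \ref{lem:orthogonal-Qs-preliminaries}, with $S$ replaced by $J$ and the coordinate $u_i$ playing the role of $h_s$, then produces a canonical decomposition
\[
{\rm gr}^{W^p}_0 H_1(C_{t_p},\Lambda)\cong {\rm gr}^{W^p}_0 H_1(X''_{t_p},\Lambda)\oplus \ker\bigl(f_\ast\colon {\rm gr}^{W^p}_0 H_1(C_{t_p},\Lambda)\to {\rm gr}^{W^p}_0 H_1(X''_{t_p},\Lambda)\bigr),
\]
which is orthogonal with respect to $\hat Q_i$ and on whose first summand $\hat Q_i$ restricts to the monodromy bilinear form on ${\rm gr}^{W^p}_0 H_1(X''_{t_p},\Lambda)$ about $D_i$.

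The last step is to match this splitting with $\phi_{C_p}$. By construction in \eqref{splitting:C_t-phi}, $\phi^{W^p}_{C_{\bprimezero}}$ is ${\rm gr}^{W^p}_0$ of $\tfrac{1}{m}f^\vee_\ast$ at the base point $\bprimezero$, and Lemma \ref{lem:minimal-class-algebraic} guarantees that this is indeed a splitting of $f_\ast$. Under the canonical isomorphisms \eqref{eq:iso:tree:X''}--\eqref{eq:iso:tree:C} furnished by Lemma \ref{lem:diagram-induced-by-tree-commutes}, $\phi^{W^p}_{C_{\bprimezero}}$ is identified with the analogous splitting at $t_p$, i.e.\ precisely the splitting produced by Lemma \ref{lem:orthogonal-Qs-preliminaries}; and the monodromy bilinear form on ${\rm gr}^{W^p}_0 H_1(X''_{t_p},\Lambda)$ about $D_i$ is identified with the one transported to $U_\Lambda=H_1(\Gamma(X_0),\Lambda)$. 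Parts (1) and (2) then follow directly from the two conclusions of Lemma \ref{lem:orthogonal-Qs-preliminaries}.

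The main subtlety I foresee is a compatibility check rather than a genuine obstacle: the principal polarization on $X''_{t_p}$ used to define $f^\vee$ must agree under \eqref{eq:iso:tree:X''} with the polarization inherited from $\pi^\star\colon X^\star\to B^\star$ that underlies the definition of $\phi_{C_p}$. This is automatic, because the path $\psi(v_p\times[0,1])$ lies entirely in $B^\circ\setminus D$, where the families $X''$ and $C$ are smooth and the polarizations are preserved by parallel transport; this is exactly what the tree-of-paths construction in Section \ref{subsec:tree} was designed to ensure.
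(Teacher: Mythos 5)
Your proof is correct and follows the same route as the paper: the paper's own proof is the one-liner ``this follows from Lemma \ref{lem:orthogonal-Qs-preliminaries} by tensoring with $\Lambda$,'' and your argument is precisely a careful unwinding of that reduction --- localizing at $p$, checking the $\ell$-prime hypothesis via Lemma \ref{lem:C-on-X''-in-good-position}, and matching the splitting of Lemma \ref{lem:orthogonal-Qs-preliminaries} with $\phi_{C_p}$ through the identifications of Lemma \ref{lem:diagram-induced-by-tree-commutes}. The compatibility checks you flag are exactly the ones the paper leaves implicit, so nothing is missing.
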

\begin{proof}
This follows from Lemma \ref{lem:orthogonal-Qs-preliminaries} by applying $- \otimes_{\Z_\ell} \Lambda$. 
\end{proof}

 The main result of this subsection is the following

\begin{proposition} \label{prop:1-skeletal-splitting}
The choice of the topological embedding $\psi$ from Section \ref{subsec:tree} induces a  canonical map
$$
\phi\colon U_\Lambda\longrightarrow H_1(\Gamma^1(X_0),\Lambda)
$$
which splits \eqref{eq:1-skeletal-splitting-X_0-map}, 
(cf.\ \eqref{eq:UcongH1Gamma}).
Moreover, for each $p\in P$, $\phi$ agrees with the following composition
\begin{align} \label{eq:1-skeletal-splitting-composition}
U_\Lambda\stackrel{\phi_{C_p}}\longrightarrow  H_1(\Gamma(C_{p}),\Lambda) \stackrel{\iota_\ast} \longrightarrow H_1(\Gamma^1(X''_{p}),\Lambda)  \longrightarrow H_1(\Gamma^1(X_0),\Lambda),
\end{align}
where $\iota_\ast$ is induced by the natural map $\iota \colon C_p\to X''_p$ from Lemma \ref{lem:C-on-X''-in-good-position} and where the last arrow is the canonical map
$$
H_1(\Gamma^1(X''_{p}),\Lambda) =H_1(\hat\Gamma^1(X''_{p}),\Lambda)   \stackrel{{\varphi_p}_\ast }\longrightarrow
H_1(\Gamma^1(X'_p),\Lambda)=H_1(\Gamma^1(X_0),\Lambda),
$$
induced by the data $\varphi$
of the admissible modification 
$X''\to X'$  (see Definition \ref{def:admissible}). 
\end{proposition}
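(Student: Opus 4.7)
My plan is to fix a basepoint $p \in P$, define $\phi$ via the composition \eqref{eq:1-skeletal-splitting-composition}, and then verify (i) that it splits the surjection \eqref{eq:1-skeletal-splitting-X_0-map}, and (ii) that it is independent of the choice of $p \in P$. The main work is in (ii).

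For (i), I will post-compose \eqref{eq:1-skeletal-splitting-composition} with the natural surjection $H_1(\Gamma^1(X_0),\Lambda) \twoheadrightarrow H_1(\Gamma(X_0),\Lambda) = U_\Lambda$ and show the result is the identity. The key tool is item \eqref{item:def-admissible:1} of Definition \ref{def:admissible}, applied to the admissible modification $g\colon X''\to X'$ and using $X'_p = X_0$ (since $p \in \tau^{-1}(0)$): it provides a homotopy making the composition $\hat\Gamma^1(X''_p) \xrightarrow{\varphi_p} \Gamma^1(X_0) \hookrightarrow \Gamma(X_0) \xrightarrow{\approx} \Gamma(X''_p)$ equal to the tautological inclusion $\hat\Gamma^1(X''_p) \hookrightarrow \Gamma(X''_p)$. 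Applying $H_1(-,\Lambda)$ and using that $\hat\Gamma^1(X''_p)$ is a subdivision of $\Gamma^1(X''_p)$, the post-composition reduces to $\iota_\ast \circ \phi_{C_p}\colon U_\Lambda \to H_1(\Gamma(X''_p),\Lambda) = U_\Lambda$, which is the identity by definition of $\phi_{C_p}$ as a splitting of \eqref{splitting:C_p-surjection}.

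For (ii), since $\mathbb T$ is a connected tree, it suffices to verify that the compositions associated to two points $p, q \in P$ joined by a single edge of $\mathbb T$, with $p$ specializing to $q$, agree in $H_1(\Gamma^1(X_0),\Lambda)$. I will chain three compatibilities with the specialization map $\operatorname{sp}$: (a) Lemma \ref{lem:diagram-induced-by-tree-commutes} for $C$ gives $\operatorname{sp}_\ast \circ \phi_{C_q} = \phi_{C_p}$; (b) since $\iota\colon C_{B^\circ} \to X''_{B^\circ}$ is a morphism over $B^\circ$, the pushforward $\iota_\ast$ commutes with the specialization maps on the dual complexes of $C$ and $X''$; (c) item \eqref{item:def-admissible:2} of Definition \ref{def:admissible} gives compatibility of $\varphi_p,\varphi_q$ with specialization on the $1$-skeleta, while $X' = X \times_B B'$ being a fiber product yields $X'_p = X_0 = X'_q$ so that the induced specialization on $\Gamma^1(X_0)$ is the identity. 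Combining (a)--(c) yields the desired equality of the compositions at $p$ and $q$.

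I expect the main obstacle to lie in compatibility (a): reconciling the two a priori different weight filtrations $W^p_\bullet, W^q_\bullet$ on $H_1(C_\bprimezero,\Lambda)$ with the geometric specialization maps, and verifying that the Abel--Jacobi-dual construction of $\phi_{C_p}$ in \eqref{splitting:C_t-phi} is compatible with the change of filtration encoded in the left vertical arrow of diagram \eqref{diag:gr^W=gr^W-specialization-C}. This ultimately comes from the fact that the dual map $X''_\bprimezero \to JC_\bprimezero$ is the dual of a morphism of polarized abelian schemes over a neighborhood of $\psi(\mathbb T \times [0,1])$, and such morphisms respect limit mixed Hodge structures at every point, hence are compatible with all the relevant weight filtrations simultaneously. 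Lemma \ref{lem:diagram-induced-by-tree-commutes} is designed to encode precisely this compatibility.
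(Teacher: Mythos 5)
Your proposal is correct and follows essentially the same route as the paper: define $\phi$ by the composition \eqref{eq:1-skeletal-splitting-composition}, prove the splitting property by chasing the diagram built from item \eqref{item:def-admissible:1} of Definition \ref{def:admissible} and the fact that $\phi_{C_p}$ splits \eqref{splitting:C_p-surjection}, and prove independence of $p$ by reducing to adjacent vertices of $\mathbb T$ and chaining the specialization compatibilities (a)--(c), exactly as in the paper. You also correctly isolate the one point the paper treats tersely --- that $\mathrm{sp}_\ast\circ\phi_{C_q}=\phi_{C_p}$ requires compatibility of the Abel--Jacobi-dual splittings with the inclusion $W^q_{-1}\subset W^p_{-1}$ --- and your justification via the morphism of polarized abelian schemes respecting limit mixed Hodge structures is the intended one.
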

\begin{proof}
For $p\in P$, let 
$$
\bar \phi_{C_p}\colon U_\Lambda\longrightarrow H_1(\Gamma^1(X_0),\Lambda)
$$ 
be the composition of the maps in \eqref{eq:1-skeletal-splitting-composition}.
To prove the proposition, we will show that this map defines a splitting which is independent of the choice of $p\in P$.

Let us first show that we get a splitting.
To prove this, consider the following diagram, in which homology is taken with coefficients in $\Lambda$:
$$
\xymatrix{ 
H_1(\Gamma(X_0))\ar[d]^-{\phi_{C_p}} \ar[r]^-{\phi_{C_p}}  
& H_1(\Gamma(C_p)) \ar[r]^-{\iota_\ast} & H_1(\Gamma^1(X''_p)) \ar[d] \ar[r]^{{\varphi_p}_\ast } & H_1(\Gamma^1(X'_p)) \ar[r]^= \ar[d] & H_1(\Gamma^1(X_0)) \ar[d] \\
H_1(\Gamma(C_p))\ar[r]^-{\iota_\ast}&H_1(\Gamma^1(X''_p)) \ar[r] & H_1(\Gamma(X''_p)) & H_1(\Gamma(X'_p)) \ar[r]^-= \ar[l]_-\cong & H_1(\Gamma(X_0)). 
}
$$
The two squares on the left and right commute for obvious reasons, and the square in the middle commutes because $X'' \to X'$ is an admissible modification, see item \eqref{item:def-admissible:1} in Definition \ref{def:admissible}. 
Hence the outer square commutes. 
Moreover, the composition $H_1(\Gamma(X_0)) \to H_1(\Gamma^1(X_0))$ on the top row of this diagram is, by definition, the map $\bar \phi_{C_p}$, and the composition $H_1(\Gamma(C_p)) \to H_1(\Gamma(X_0))$ on the bottom row is the map \eqref{splitting:C_p-surjection}. 
Since $\phi_{C_p}$ splits the map \eqref{splitting:C_p-surjection}, precomposing the bottom row with $\phi_{C_p}$ yields the identity. Hence, precomposing the map $H_1(\Gamma^1(X_0)) \to H_1(\Gamma(X_0))$ with the top row is the identity, so that $\overline{\phi}_{C_p}$ splits \eqref{eq:1-skeletal-splitting-X_0-map}, as claimed. 

It remains to show that this splitting is independent from the choice of $p\in P$.
Recall from Section \ref{subsec:tree} the construction of the tree $\psi_0(\mathbb T)\subset B^\circ \cap \tau^{-1}(0)$ whose set of vertices is $P$, and the choice of a tree of nearby points $\psi_{\nb}(\mathbb T) \subset B'\setminus D$.
We further fixed a path from $t$ to a vertex $v_0$ of $\psi_{\nb}(\mathbb T)$ and used it to connect $t$ to any other point of $\psi(\mathbb T \times [0,1])$ via the composition of $\gamma$ with a path in $\psi([0,1]\times \mathbb T)$, which is unique up to homotopy because the latter is contractible. 
We have used the corresponding paths to identify via Lemma \ref{lem:diagram-induced-by-tree-commutes} certain weight graded quotients of the homology above $t$ with the homology of dual complexes above the points $p\in P$.
Our choices have been made compatibly, in the sense that if $p,q\in P$ such that $p$ specializes to $q$, then the maps induced by specializing from $p$ to $q$ are compatible, see Lemma \ref{lem:diagram-induced-by-tree-commutes}.

Since $\mathbb T$ is a tree and $P$ is its set of vertices, it then suffices to prove that if we have points $p,q\in P$ such that $p$ specializes to $q$, the splitting induced by $\overline{\phi}_{C_p}$ agrees with the one induced by $\overline{\phi}_{C_q}$. 
Since $X''\to X'$ is admissible,  item \eqref{item:def-admissible:2} in Definition \ref{def:admissible}  implies that the following diagram commutes:
$$
\xymatrix{
H_1(\hat \Gamma^1(X''_p),\Lambda)=H_1(\Gamma^1(X''_p),\Lambda) \ar[d]^{{\varphi_p}_\ast} &\ar[l]_-{{\rm sp}_\ast} H_1(\hat \Gamma^1(X''_q),\Lambda)=H_1(\Gamma^1(X''_q),\Lambda) \ar[d]^{{\varphi_q}_\ast}\\
H_1(\Gamma^1(X'_p),\Lambda)=H_1(\Gamma^1(X_0),\Lambda)& \ar[l]_-{{\rm sp}_\ast}  H_1(\Gamma^1(X'_q),\Lambda)=H_1(\Gamma^1(X_0),\Lambda).
}
$$

Moreover, 
the composition $\mathrm{sp}_\ast \circ \phi_{C_q}$ of
the map $\phi_{C_q} \colon H_1(\Gamma(X_0),\Lambda) \to H_1(\Gamma(C_q),\Lambda)$ with the specialization map $\mathrm{sp}_\ast \colon H_1(\Gamma(C_q),\Lambda) \to H_1(\Gamma(C_p),\Lambda)$ is equal to the map $\phi_{C_p} \colon H_1(\Gamma(X_0),\Lambda) \to H_1(\Gamma(C_p),\Lambda)$. 
Together with the commutativity of the cubical
diagram given by the natural map from 
\eqref{diag:gr^W=gr^W-specialization-C} to \eqref{diag:gr^W=gr^W-specialization-X''} 
(see Lemma \ref{lem:diagram-induced-by-tree-commutes}), 
it follows that each square in the following diagram commutes, where again homology is taken 
with coefficients in $\Lambda$:
\[
\xymatrix{
H_1(\Gamma(X_0)) \ar[r]^-{\phi_{C_q}}\ar@{=}[d]& H_1(\Gamma(C_q)) \ar[d]^-{\mathrm{sp}_\ast}\ar[r] & H_1(\Gamma^1(X''_q))\ar[d]^-{\mathrm{sp}_\ast} \ar[r]^-{{\varphi_q}_\ast} & H_1(\Gamma^1(X'_q)) \ar[r]^-{=} \ar[d]^-{\mathrm{sp}_\ast} & H_1(\Gamma^1(X_0)) \ar@{=}[d] \\
H_1(\Gamma(X_0)) \ar[r]^-{\phi_{C_p}} & H_1(\Gamma(C_p)) \ar[r] & H_1(\Gamma^1(X''_p)) \ar[r]^-{{\varphi_p}_\ast} & H_1(\Gamma^1(X'_p)) \ar[r]^-{=} & H_1(\Gamma^1(X_0)). 
}
\]
As the composition on the top row equals $\bar \phi_{C_q}$ and the composition on the bottom row equals $\bar \phi_{C_p}$, 
we deduce that the splittings $\bar \phi_{C_q}$ and $\bar \phi_{C_p}$ agree with each other.
This concludes the proof of the proposition.
\end{proof}
 
\subsection{Edge multiplication}

For each $s\in S$, recall from item \eqref{item:lem:reduction:2} in Lemma \ref{lem:reductions/set-up} the component $D_{i_s}$ of $D$ with $\tau(D_{i_s})=H_s$ such that $\tau\colon B'\to B$ is \'etale at the generic point of $D_{i_s}$.
Since $P\subset \tau^{-1}(0)$ contains a point of each open stratum of $D$ contained in $\tau^{-1}(0)$, we can, for each $s\in S$, pick a point $p_s\in P$ such that $p_s\in D_{i_{s}}$. 

\begin{definition} \label{def:Q_s}
Let $s\in S$ and consider the refinement $\hat \Gamma(C_{p_s})$ of $\Gamma(C_{p_s})$ from Proposition \ref{prop:C-on-X''-good-position-Gamma}. 
The {\it monodromy bilinear form}  $\hat Q_s$ on $H_1(\hat \Gamma(C_{p_s}),\Lambda)=H_1(\Gamma(C_{p_s}),\Lambda)$ is the bilinear form that is induced by the local monodromy at $p_s$ around the divisor $D_{i_s}$, see Definition \ref{def:monodromy-bilinear-form}.
\end{definition}

\begin{lemma} \label{lem:Q_s}
Consider the refinement $\hat \Gamma(C_{p_s})$ of $\Gamma(C_{p_s})$ and let $E_{i_s,s}$ be the set of edges of $\hat \Gamma^1(C_{p_s})$ that have bicolor $(i_s,s)$ (see item \eqref{item:prop:C-on-X''-good-position-Gamma:2} in Proposition \ref{prop:C-on-X''-good-position-Gamma}).
Then each edge $e\in E_{i_s,s}$ is an edge of $\Gamma(C_{p_s})$ and we have 
$$
\hat Q_s=\sum_{e\in E_{i_s,s}} x_e^2.
$$
\end{lemma}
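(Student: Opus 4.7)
The plan is to deduce the identity directly from the classical Picard--Lefschetz formula, applied to the local product decomposition of $C\to B^\circ$ at $p_s$ supplied by Proposition \ref{prop:C-on-X''-good-position-Gamma}. First I would dispose of the first assertion: by item \eqref{item:prop:C-on-X''-good-position-Gamma:3a} of Proposition \ref{prop:C-on-X''-good-position-Gamma}, any edge of bicolor $(i_s,s)$ in the refinement $\hat\Gamma(C_{p_s})$ is in fact an edge of the unrefined complex $\Gamma(C_{p_s})$. Hence for each $e\in E_{i_s,s}$ the coordinate linear form $x_e$ is well-defined on $H_1(\hat\Gamma(C_{p_s}),\Lambda)=H_1(\Gamma(C_{p_s}),\Lambda)$, and the statement $\hat Q_s=\sum_{e\in E_{i_s,s}} x_e^2$ makes sense.

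For the equality, I would use that by Lemma \ref{lem:C-on-X''-in-good-position}\eqref{item:lem:C-on-X''-in-good-position:1} the morphism $C_{B^\circ}\to B^\circ$ is $D^\circ$-nodal, so analytically locally at $p_s$ it has the form
\[
\textstyle\prod_{j} \{u_j=x_{(j)}y_{(j)}\}\times (\text{smooth factor})\longrightarrow \prod_j \Delta_{u_j}\times \Delta^{k},
\]
with one $u_j=x_{(j)}y_{(j)}$ factor per node of $C_{p_s}$, the index $j$ being precisely the $I$-color of the corresponding edge of $\Gamma(C_{p_s})$. Restricting $C$ to a small analytic disc in $B^\circ$ that is transverse to $D_{i_s}$ at $p_s$ and avoids all other components of $D$, only those factors with $j=i_s$ degenerate, while the remaining factors give a trivial family of smooth curves. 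The restriction is thus a classical one-parameter semistable degeneration of nodal curves whose vanishing cycles are indexed by the edges of $\Gamma(C_{p_s})$ of $I$-color $i_s$. The Picard--Lefschetz formula (as recalled after Definition \ref{def:monodromy-bilinear-form}) then gives
\[
\hat Q_s \;=\; \sum_{\substack{e\in E(\Gamma(C_{p_s}))\\ I\text{-color}(e)=i_s}} x_e^2.
\]

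To identify the index set with $E_{i_s,s}$, I would invoke item \eqref{item:prop:C-on-X''-good-position-Gamma:3b} of Proposition \ref{prop:C-on-X''-good-position-Gamma}: since $\hat\Gamma(C_{p_s})$ carries no edges of bicolor $(i_s,t)$ with $t\neq s$, every edge of $I$-color $i_s$ must have $S$-color $s$, hence bicolor $(i_s,s)$; combined with item \eqref{item:prop:C-on-X''-good-position-Gamma:3a}, this shows that the $I$-color-$i_s$ edges of $\Gamma(C_{p_s})$ are exactly the elements of $E_{i_s,s}$, and the formula becomes $\hat Q_s=\sum_{e\in E_{i_s,s}} x_e^2$, as required.

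The main technical point I anticipate is the \emph{local product} reduction: $p_s$ may lie on several components of $D$ simultaneously, so one must verify carefully that a loop around $D_{i_s}$ alone picks up Picard--Lefschetz contributions from precisely the $i_s$-colored factors of the local model, leaving the other factors invariant. This is, however, a direct consequence of the nodal normal form in Definition \ref{def:D-nodal} together with the compatibility statements of Proposition \ref{prop:C-on-X''-good-position-Gamma}\eqref{item:prop:C-on-X''-good-position-Gamma:3a}--\eqref{item:prop:C-on-X''-good-position-Gamma:3b}; once this product decomposition is in hand, the one-parameter semistable Picard--Lefschetz input is entirely standard.
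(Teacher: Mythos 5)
Your proposal is correct and follows essentially the same route as the paper's proof: item \eqref{item:prop:C-on-X''-good-position-Gamma:3a} gives the first assertion and the local normal form $\{u_{i_s}=x_sy_s\}\times(\text{smooth})$, Picard--Lefschetz yields the summand $x_e^2$ for each such node, and item \eqref{item:prop:C-on-X''-good-position-Gamma:3b} together with the fact that only $I$-color-$i_s$ edges contribute to $\hat Q_s$ identifies the index set with $E_{i_s,s}$. Your explicit discussion of the transverse-disc reduction is just a slightly more spelled-out version of the same step.
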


\begin{proof}
By item \eqref{item:prop:C-on-X''-good-position-Gamma:3a} in Proposition \ref{prop:C-on-X''-good-position-Gamma}, any edge of $\hat \Gamma(C_{p_s})$ of bicolor $(i_s,s)$ is an edge of $\Gamma(C_{p_s})$.  
Moreover, if $u_{i_s}$ denotes a local equation for $D_{i_s}$, the corresponding singularity of the map $C_{B^\circ}\to B^\circ$ is locally given by the product of $\{u_{i_s}=x_sy_s\}$ with a smooth fibration.
So by the Picard--Lefschetz formula, 
each edge $e$ of bicolor $(i_s,s)$ contributes to $Q_s$ with the summand $x_e^2$.
Moreover, by item \eqref{item:prop:C-on-X''-good-position-Gamma:3b} in Proposition \ref{prop:C-on-X''-good-position-Gamma},   $\hat \Gamma^1(C_{p_s})$ has no edges of bicolor $(i_s,t)$ with $t\in S\setminus \{s\}$.  
The claimed description of $\hat Q_s$ then follows from the fact that only
edges of $I$-color $i_s$ contribute to $\hat Q_s$, cf.\ Definition \ref{def:I-coloring}.
 \end{proof}

Consider the map of $S$-colored graphs
$$
\varphi_{s}\colon \hat \Gamma^1(C_{p_s})\longrightarrow \Gamma^1(X'_{p_s})= \Gamma^1(X_0)
$$
given as the composition
$$
 \hat \Gamma^1(C_{p_s})\longrightarrow \hat \Gamma^1(X''_{p_s})\stackrel{\varphi_{p_s}}\longrightarrow 
\Gamma^1(X'_{p_s}) = \Gamma^1(X_0),
$$
where the first map comes from item \eqref{item:prop:C-on-X''-good-position-Gamma:2} in Proposition \ref{prop:C-on-X''-good-position-Gamma}. 
By item \eqref{item:prop:C-on-X''-good-position-Gamma:3c} in Proposition \ref{prop:C-on-X''-good-position-Gamma}, any edge of $\Gamma^1(X_0)$ of color $s$ is the image of an edge of bicolor $(i_s,s)$.
Moreover, no edge of bicolor $(i_s,s)$ is contracted.
For any edge $e$ of $\Gamma^1(X_0)$ which has color $s$, we may then define $F(e)$  as the subset of edges of $E(\hat \Gamma^1(C_{p_s}))$, given by
\begin{align} \label{align:def:F(e)}
F(e)\coloneqq \{e'\in E(\hat\Gamma^1(C_{p_s}))\mid \varphi_s(e')=e\ \  \text{and $e'$ has bicolor $(i_s,s)$}\} .
\end{align}

\begin{remark}
The elements of $F(e)$ correspond bijectively to the nodes of the curve $C_{p_s}$ that do not smooth along the divisor $D_{i_s}$; this description follows from the fact that $\hat \Gamma^1(C_{p_s})$ has no nodes of bicolor $(i_s,t)$ with $t\neq s$ and any edge of bicolor $(i_s,s)$ of the refinement $\hat \Gamma^1(C_{p_s})$ is in fact an edge of $\Gamma^1(C_{p_s})$, see item \eqref{item:prop:C-on-X''-good-position-Gamma:3} in Proposition \ref{prop:C-on-X''-good-position-Gamma}.
\end{remark}

\begin{definition} \label{def:Gamma^1_s}
For $s\in S$, we define the $S$-colored graph $\Gamma^1_s(X_0)$ to be the unique $S$-colored graph with a morphism $\pi_s\colon \Gamma^1_s(X_0)\to \Gamma^1(X_0)$, which is an isomorphism on vertices and an isomorphism on edges of color $t\in S\setminus \{s\}$, such that in addition, for any $s$-colored edge $e$ of $\Gamma^1(X_0)$, we have an identification of
$
\pi_s^{-1}(e)
$
with the set $F(e)$.
\end{definition} 

By construction, the natural map of $S$-colored graphs
$
\varphi_s\colon \hat \Gamma(C_{p_s})\to \Gamma^1(X_0)
$ 
factors naturally through a map of $S$-colored graphs
$$
f_s\colon \hat \Gamma(C_{p_s})\longrightarrow \Gamma^1_s(X_0) ,
$$
such that $\varphi_s=\pi_s\circ f_s$.

\begin{definition} \label{def:diagonal-quadratic-Gamma1s}
For $s\in S$, let $E_s(\Gamma^1_s(X_0)) $ denote the set of edges of $\Gamma^1_s(X_0)$ of color $s$.
Then we define the \emph{diagonal bilinear form}
$$
Q_s\coloneqq \sum_{e\in E_s(\Gamma^1_s(X_0)) } x_e^2
$$
on $C_1(\Gamma^1_s(X_0),\Lambda)$ and we use the same symbol for its restriction to $H_1(\Gamma^1_s(X_0),\Lambda)$.
\end{definition}

\begin{lemma}\label{lem:f_s-iso-on-edges-of-color-(i_s,s)}
    The map $f_s\colon \hat \Gamma(C_{p_s})\to \Gamma^1_s(X_0)$ induces an isomorphism between the edges of bicolor $(i_s,s)$ of $\hat \Gamma(C_{p_s})$ and the set of edges of color $s$ of $\Gamma^1_s(X_0)$. 
    In particular, $Q_s$ pulls back to $\hat Q_s$ under ${f_s}_\ast\colon H_1(\hat \Gamma (C_{p_s}),\Lambda)\to H_1(\Gamma^1_s(X_0),\Lambda)$. 
\end{lemma}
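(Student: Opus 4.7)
The proof of the first claim should unpack directly from the definitions. By Definition \ref{def:Gamma^1_s}, the morphism $\pi_s\colon \Gamma^1_s(X_0) \to \Gamma^1(X_0)$ is bijective on vertices and on edges of color $t \neq s$, and for each $s$-colored edge $e$ of $\Gamma^1(X_0)$ it identifies $\pi_s^{-1}(e)$ with $F(e)$. Recall that $F(e)$ is by definition the set of bicolor $(i_s, s)$ edges of $\hat \Gamma(C_{p_s})$ mapped to $e$ under $\varphi_s$, cf.~\eqref{align:def:F(e)}. Because $\varphi_s = \pi_s \circ f_s$, the factorization $f_s$ is forced, on each bicolor $(i_s,s)$ edge $e'' \in F(e)$, to pick out the edge of $\pi_s^{-1}(e)$ identified with $e''$. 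Taking the disjoint union of these bijections $F(e) \to \pi_s^{-1}(e)$ over all $s$-colored edges $e \in E_s(\Gamma^1(X_0))$ yields the asserted bijection between the edges of bicolor $(i_s,s)$ of $\hat \Gamma(C_{p_s})$ and the $s$-colored edges of $\Gamma^1_s(X_0)$.

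For the ``in particular'' statement, I plan to combine the bijection above with Lemma \ref{lem:Q_s}, which identifies $\hat Q_s = \sum_{e \in E_{i_s,s}} x_e^2$ on $H_1(\hat \Gamma(C_{p_s}),\Lambda)$, and with Definition \ref{def:diagonal-quadratic-Gamma1s}, which gives $Q_s = \sum_{e' \in E_s(\Gamma^1_s(X_0))} x_{e'}^2$. Fixing compatible orientations across the bijection from part one, the induced chain map ${(f_s)}_\ast$ sends each oriented edge $e \in E_{i_s,s}$ to $\pm f_s(e)$. Provided that no other color-$s$ edge of $\hat \Gamma(C_{p_s})$ is mapped by $f_s$ to the same $e' \in E_s(\Gamma^1_s(X_0))$, the pullback along ${(f_s)}_\ast$ of $x_{e'}$ equals $\pm x_{e(e')}$ for the unique $e(e') \in E_{i_s,s}$ with $f_s(e(e'))=e'$; the sign then disappears upon squaring. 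Summing over $e' \in E_s(\Gamma^1_s(X_0))$ produces $({f_s})_\ast^\ast Q_s = \hat Q_s$ as desired.

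The main delicate point will be justifying that ${(f_s)}_\ast$ produces no cross terms beyond the bijective contribution of bicolor $(i_s,s)$ edges, i.e.\ that $f_s$ is injective on color-$s$ edges. This amounts to understanding how $f_s$ treats a potential color-$s$ edge of $\hat \Gamma(C_{p_s})$ of bicolor $(i,s)$ with $i \neq i_s$: the $S$-color of an $I$-colored edge is controlled by which coordinate divisor $H_s$ of $B$ the nodal parameter deforms in via $\tau\colon B' \to B$, while Proposition \ref{prop:C-on-X''-good-position-Gamma}\eqref{item:prop:C-on-X''-good-position-Gamma:3a}--\eqref{item:prop:C-on-X''-good-position-Gamma:3c}, combined with the étaleness of $\tau$ at the generic point of $D_{i_s}$, constrains the bicolored structure of $\hat \Gamma(C_{p_s})$ so that any such extraneous color-$s$ edges do not interfere with the natural factorization $f_s$ on the bicolor $(i_s,s)$ summand. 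Once this is pinned down, the diagonal form of $Q_s$ and $\hat Q_s$ transports cleanly under $(f_s)_\ast$, completing the proof.
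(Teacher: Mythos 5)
Your argument matches the paper's proof, which simply records that the bijection follows directly from the construction of $\Gamma^1_s(X_0)$ and $f_s$, and that the identity $({f_s}_\ast)^\ast Q_s=\hat Q_s$ is then immediate from Lemma \ref{lem:Q_s}; your unpacking of $\pi_s^{-1}(e)=F(e)$ and of the two diagonal forms is exactly what is meant. The one point you flag as delicate — the absence of cross terms — is real but is closed by the construction of the admissible resolution: each $s$-colored edge of $\Gamma^1(X'_{p_s})$ is subdivided in $\hat\Gamma^1(X''_{p_s})$ into a chain containing exactly one edge of bicolor $(i_s,s)$, and $\varphi_{p_s}$ collapses all the remaining edges of that chain (this is the content of the proof of Corollary \ref{cor:item:3-admissible}, items \eqref{item:cor:item:3-admissible:2} and \eqref{item:cor:item:3-admissible:3}, via the ordering of the blow-ups). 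Since $\hat\Gamma(C_{p_s})\to\hat\Gamma^1(X''_{p_s})$ contracts no edge and only identifies parallel edges (Proposition \ref{prop:C-on-X''-good-position-Gamma}\eqref{item:prop:C-on-X''-good-position-Gamma:2}), every $s$-colored edge of $\hat\Gamma(C_{p_s})$ of bicolor $(i,s)$ with $i\neq i_s$ is contracted by $\varphi_s$ and hence by $f_s$, so $x_{e'}\circ (f_s)_\ast=\pm x_{e(e')}$ with no extra contributions, and the sign disappears upon squaring as you say.
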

\begin{proof}
    The fact that $f_s$ induces an isomorphism between the edges of bicolor $(i_s,s)$ of $\hat \Gamma(C_{p_s})$ and the set of edges of color $s$ of $\Gamma^1_s(X_0)$ follows directly from the construction.
    The second claim is an immediate consequence of this and the description of $\hat Q_s$ given in Lemma \ref{lem:Q_s}. 
\end{proof}

\begin{lemma} \label{lem:splitting-Gamma_s^1}
Let $s\in S$.
Then the natural composition
\begin{align} \label{eq:splitting-Gamma_s^1-surjection}
H_1(\Gamma_s^1(X_0),\Lambda) \stackrel{{\pi_s}_\ast}\longrightarrow H_1(\Gamma^1(X_0),\Lambda)\longrightarrow H_1(\Gamma(X_0),\Lambda)=U_\Lambda
\end{align}
admits a canonical splitting
$$
\phi_{s}\colon U_\Lambda\longrightarrow H_1(\Gamma_s^1(X_0),\Lambda) ,
$$
given by $\phi_s={f_s}_\ast\circ \phi_{C_{p_s}}$. 
Moreover:
\begin{enumerate}
    \item \label{item:lem:splitting-Gamma_s^1:1} This splitting is orthogonal with respect to the bilinear form $Q_s$ on $ H_1(\Gamma_s^1(X_0),\Lambda) $.
    \item \label{item:lem:splitting-Gamma_s^1:2} $\phi_s^\ast Q_s=m\cdot d\cdot y^2_s$, where $y_s^2$ is the rank one bilinear form associated to the realization $S\to U^\ast$, $s\mapsto y_s$. 
\end{enumerate}
\end{lemma}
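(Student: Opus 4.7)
The plan is to verify the three claims—splitting, orthogonality (1), and the identification (2)—in that order, each time pulling back from $\Gamma_s^1(X_0)$ to $\hat\Gamma(C_{p_s})$ via $(f_s)_*$ to invoke the previously established structural lemmas. For the splitting property, I would observe that $\pi_s \circ f_s = \varphi_s$ by construction of $\Gamma_s^1(X_0)$ and $f_s$, so $(\pi_s)_* \circ \phi_s = (\varphi_s)_* \circ \phi_{C_{p_s}}$. Composing further with the surjection $H_1(\Gamma^1(X_0),\Lambda) \twoheadrightarrow H_1(\Gamma(X_0),\Lambda) = U_\Lambda$ exactly reproduces the composition defining $\bar\phi_{C_{p_s}}$ in Proposition \ref{prop:1-skeletal-splitting}, which is a splitting; hence \eqref{eq:splitting-Gamma_s^1-surjection} is split by $\phi_s$.

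For property (2), Lemma \ref{lem:f_s-iso-on-edges-of-color-(i_s,s)} gives $f_s^\ast Q_s = \hat{Q}_s$, so $\phi_s^\ast Q_s = \phi_{C_{p_s}}^\ast \hat{Q}_s$. By Lemma \ref{lem:monodromy->quadratic-forms}, this pullback is the monodromy bilinear form on $U_\Lambda$ associated with the monodromy about $D_{i_s}$ locally at $p_s$. Since $\tau$ is \'etale at the generic point of $D_{i_s}$ with $\tau(D_{i_s}) = H_s$ (item \eqref{item:lem:reduction:2} of Lemma \ref{lem:reductions/set-up}), the choice of compatible paths from Section \ref{subsec:tree} identifies this with the monodromy bilinear form about $H_s$ at $\bzero$ in the original family $\pi^\star \colon X^\star \to B^\star$. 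By the matroidal hypothesis, and the reduction $d_s = d$ for all $s$ from Remark \ref{rem:d_s=d-for-all-s} (already in force by \eqref{eq:d=d_s}), this form is $d \cdot y_s^2$.

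For orthogonality (1), which is the main substantive step, the strategy is to reduce to the known orthogonality on $H_1(\hat\Gamma(C_{p_s}),\Lambda)$ from Lemma \ref{lem:monodromy->quadratic-forms}. Given $\alpha \in K_s \coloneq \ker(H_1(\Gamma_s^1(X_0),\Lambda) \to U_\Lambda)$, I would produce a lift $\tilde\alpha \in H_1(\hat\Gamma(C_{p_s}),\Lambda)$ with $(f_s)_\ast \tilde\alpha = \alpha$ and with $\tilde\alpha$ mapping to zero under $H_1(\hat\Gamma(C_{p_s}),\Lambda) \to H_1(\Gamma(X_0),\Lambda) = U_\Lambda$. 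Given such a lift, the identity $f_s^\ast Q_s = \hat{Q}_s$ gives $Q_s(\phi_s(u), \alpha) = \hat{Q}_s(\phi_{C_{p_s}}(u), \tilde\alpha)$, which vanishes by Lemma \ref{lem:monodromy->quadratic-forms}. The hard part will be constructing the lift: one decomposes $K_s$ into the kernel $P_s$ of $(\pi_s)_\ast$ (spanned by differences of parallel $s$-colored edges) and a complement identified with $\ker(H_1(\Gamma^1(X_0),\Lambda) \to U_\Lambda)$. Elements of $P_s$ lift to $\hat\Gamma(C_{p_s})$ by sending parallel $s$-colored edges $e - e'$ to the differences $f_s^{-1}(e) - f_s^{-1}(e')$ of their unique bicolor-$(i_s,s)$ preimages (Lemma \ref{lem:f_s-iso-on-edges-of-color-(i_s,s)}) and, if the endpoints of these preimages disagree, closing the boundary up with appropriate colored paths in $\hat\Gamma(C_{p_s})$; the complementary piece lifts using the surjectivity of $(\varphi_{p_s})_\ast$ (as $\varphi_{p_s}$ is a retraction of the inclusion $\Gamma^1(X_0) \hookrightarrow \hat\Gamma^1(X''_{p_s})$, per the remark after Definition \ref{def:admissible}) together with the fact that the composition from $H_1(\hat\Gamma(C_{p_s}),\Lambda)$ to $U_\Lambda$ recovers the geometric map from Lemma \ref{lem:diagram-induced-by-tree-commutes}.
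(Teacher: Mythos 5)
Your proposal is correct and follows essentially the same route as the paper: the splitting is obtained by factoring the map $H_1(\hat\Gamma(C_{p_s}),\Lambda)\to U_\Lambda$ through ${f_s}_\ast$ and \eqref{eq:splitting-Gamma_s^1-surjection}, item \eqref{item:lem:splitting-Gamma_s^1:2} is proved by the identical chain $\phi_s^\ast Q_s=\phi_{C_{p_s}}^\ast\hat Q_s=d\cdot y_s^2$ via Lemma \ref{lem:monodromy->quadratic-forms}, the \'etaleness of $\tau$ along $D_{i_s}$, and \eqref{eq:d=d_s}, and item \eqref{item:lem:splitting-Gamma_s^1:1} is reduced via $({f_s}_\ast)^\ast Q_s=\hat Q_s$ to the orthogonality upstairs. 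The only difference is that where the paper disposes of the lifting issue with the one-line assertion that the summands upstairs surject onto the corresponding summands downstairs, you spell out an explicit lift of the kernel (splitting it into $\ker({\pi_s}_\ast)$ and a complement); this is a legitimate elaboration of the same step rather than a different argument.
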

\begin{proof}
Recall that $\phi_{C_{p_s}} \colon U_\Lambda\to H_1(\Gamma(C_{p_s}),\Lambda)$ splits the natural map 
$$
H_1(\hat \Gamma(C_{p_s}),\Lambda)=H_1(\Gamma(C_{p_s}),\Lambda)\longrightarrow H_1(\Gamma(X_0),\Lambda) ,
$$
see \eqref{splitting:C_p-phi} and \eqref{splitting:C_p-surjection}.
The latter map factors as the composition of ${f_s}_\ast$ with the composition in \eqref{eq:splitting-Gamma_s^1-surjection}.
The first result of the lemma follows.

Item \eqref{item:lem:splitting-Gamma_s^1:1} then follows from the following facts: we have $({f_s}_\ast)^\ast Q_s=\hat Q_s$  (see Lemma \ref{lem:f_s-iso-on-edges-of-color-(i_s,s)}), the splitting $\phi_{C_{p_s}}$ is orthogonal with respect to $\hat Q_s$ (see Lemma \ref{lem:monodromy->quadratic-forms}), and the summands in the decomposition of $H_1(\hat \Gamma(C_{p_s}), \Lambda)$ surject onto the corresponding summands in the decomposition of $H_1(\Gamma_s^1(X_0),\Lambda)$. 

To prove item \eqref{item:lem:splitting-Gamma_s^1:2}, note that $\phi_s^\ast Q_s=\phi_{C_{p_s}}^\ast \hat Q_s$, and $\phi_{C_{p_s}}^\ast \hat Q_s$ is by Lemma \ref{lem:monodromy->quadratic-forms} $m$ times the bilinear form on 
$$
U_\Lambda={\rm gr}^W_0 H_1(X_0,\Lambda)={\rm gr}^{W^{p_s}}_0 H_1(X_{t},\Lambda),
$$
associated to the local monodromy around the divisor $D_{i_s}$ in $B'$.
Since $\tau\colon B'\to B$ is \'etale at the generic point of $D_{i_s}$, this agrees via the first identity above with the local monodromy about the divisor $H_s$ in $B$. 
Since $X^\star\to B^\star$ is a matroidal family associated to $(\underline R,S)$ with integral realization $S\to U^\ast$, $s\mapsto y_s$,  item \eqref{item:def:matroidal-family:3} in Definition \ref{def:matroidal-family} together with \eqref{eq:d=d_s}
implies that this bilinear form equals $d \cdot y^2_s$. 
Hence, we have $\phi_s^\ast Q_s=\phi_{C_{p_s}}^\ast \hat Q_s = m\cdot d \cdot y^2_s$, which proves item \eqref{item:lem:splitting-Gamma_s^1:2}.
\end{proof}

\subsection{Construction of $G$ and proof of Theorem \ref{thm:algebraic->d-QE}}

\begin{definition} \label{def:G}
We let $G$ be the $S$-colored graph, given as the fiber product of the graphs $\Gamma^1_s(X_0)$, $s\in S$, over the graph $\Gamma^1(X_0)$. 
\end{definition} 

\begin{remark}\label{rem:Gamma^1_s}
The graph $G$ can explicitly be described as the unique $S$-colored graph together with morphisms of $S$-colored graphs 
$$
\pr_s\colon G\to \Gamma^1_s(X_0)\quad \quad \text{and}\quad \quad g_s=\pi_s\circ \pr_s\colon G\to \Gamma^1(X_0),
$$
which are isomorphisms on the set of vertices, such that, in addition, for any edge $e$ of $\Gamma^1(X_0)$ of color $s$, the set $g_s^{-1}(e)$ is canonically identified with the set $F(e)$ defined in \eqref{align:def:F(e)}, consisting of edges of $\hat \Gamma(C_{p_s})$ of bicolor $(i_s,s)$ which map to $e$ via $\varphi_s$.
\end{remark}

For $s\in S$, consider the natural composition
\begin{align} \label{eq:surjection-splittin-G}
H_1(G,\Lambda)\stackrel{{\pr_s}_\ast}\longrightarrow H_1(\Gamma^1_s(X_0),\Lambda)\stackrel{{\pi_s}_\ast}\longrightarrow H_1(\Gamma^1(X_0),\Lambda)\longrightarrow H_1(\Gamma(X_0),\Lambda) 
\end{align}
and note that this composition does not depend on $s$.

\begin{proposition} \label{prop:splitting-phi_G}
There is a canonical splitting
$$
\phi_{G}\colon U_\Lambda\longrightarrow H_1(G,\Lambda)
$$
of \eqref{eq:surjection-splittin-G} such that for all $s\in S$ we have ${\pr_s}_\ast \circ \phi_G=\phi_s$, where $\phi_s$ is the splitting from \eqref{lem:splitting-Gamma_s^1}.
\end{proposition}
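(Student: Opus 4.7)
The plan is to lift the splittings $\phi_s$ to a common splitting $\phi_G$ of \eqref{eq:surjection-splittin-G} by exploiting the fibre-product description of $G$ at the chain level. The essential input is that for every $s\in S$ one has $\pi_{s\ast}\circ \phi_s=\phi$, where $\phi\colon U_\Lambda\to H_1(\Gamma^1(X_0),\Lambda)$ is the canonical $1$-skeletal splitting furnished by Proposition \ref{prop:1-skeletal-splitting}. Thus the ``base values'' of the various $\phi_s$ coincide on $\Gamma^1(X_0)$, and only their distribution across the multiple preimages $F(e)$ differs.

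First I will unwind the edge structure of the fibre product $G$: since each $\pi_s$ is an isomorphism on vertices and on edges of color $t\neq s$, an edge of $G$ over an $s$-colored edge $e$ of $\Gamma^1(X_0)$ is canonically parametrized by $F(e)$ via $\pr_s$, while for $t\neq s$ the projection $\pr_t$ collapses all $|F(e)|$ such edges to the unique edge of $\Gamma^1_t(X_0)$ over $e$. Using this, I define a chain $\phi_G(u)\in C_1(G,\Lambda)$ edge by edge: to the edge of $G$ corresponding to $e'\in F(e)$ I assign the coefficient that $\phi_s(u)$ places on $e'\in \Gamma^1_s(X_0)$. This prescription is $\Lambda$-linear in $u$ and manifestly canonical.

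The verification splits into two computations. First, an edge-by-edge check using the description above gives $\pr_{s\ast}\phi_G(u)=\phi_s(u)$ as $1$-chains: on $s$-colored edges this is the definition, while on a $t$-colored edge $\tilde e$ of $\Gamma^1_s(X_0)$ over $e\in \Gamma^1(X_0)$ the pushforward collects
\[
\textstyle\sum_{e'\in F(e)}(\phi_t(u))_{e'}=(\pi_{t\ast}\phi_t(u))_e=\phi(u)_e=(\phi_s(u))_{\tilde e},
\]
where the last equality uses $\pi_{s\ast}\phi_s=\phi$ and that $\pi_s$ is an isomorphism on $t$-colored edges. Since $\pr_s$ is a bijection on vertices, the identity $\pr_{s\ast}\partial\phi_G(u)=\partial\phi_s(u)=0$ forces $\partial\phi_G(u)=0$, so $\phi_G(u)\in H_1(G,\Lambda)$.

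Composing $\phi_G$ with the surjection \eqref{eq:surjection-splittin-G} (for any $s$) yields $\pi_{s\ast}\pr_{s\ast}\phi_G=\pi_{s\ast}\phi_s=\phi$, and post-composing with $H_1(\Gamma^1(X_0),\Lambda)\twoheadrightarrow H_1(\Gamma(X_0),\Lambda)=U_\Lambda$ gives the identity on $U_\Lambda$ by Proposition \ref{prop:1-skeletal-splitting}, so $\phi_G$ is indeed a splitting; the required compatibility $\pr_{s\ast}\circ \phi_G=\phi_s$ is the chain-level identity already established. I anticipate no serious obstacle; all the nontriviality has been absorbed into the canonicity of $\phi$ coming from Proposition \ref{prop:1-skeletal-splitting}, which is precisely what makes the edge-by-edge definition of $\phi_G$ consistent across different choices of $s$.
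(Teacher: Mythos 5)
Your construction of $\phi_G$ — assigning to each $s$-colored edge of $G$ the coefficient that $\phi_s$ places on the corresponding edge of $\Gamma^1_s(X_0)$ — is exactly the paper's definition ($\phi_G(\alpha)=\sum_s c_s(\alpha)$ with $c_s(\alpha)$ the $s$-colored part of $\phi_s(\alpha)$), and your key input $\pi_{s\ast}\circ\phi_s=\phi$ is the same identity the paper derives from Proposition \ref{prop:1-skeletal-splitting} and $\varphi_s=\pi_s\circ f_s$. The only (cosmetic) difference is that you verify the chain-level identity $\pr_{s\ast}\phi_G(u)=\phi_s(u)$ and deduce closedness from vertex-bijectivity of $\pr_s$, whereas the paper pushes one step further to $\Gamma^1(X_0)$ and compares with $\phi(u)$; the argument is correct as written.
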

\begin{proof}
In order to define $\phi_G$, we will first define a map
$$
\phi_G\colon U_\Lambda\longrightarrow C_1(G,\Lambda)
$$
and then show that the image of this map lies in $H_1(G,\Lambda)$.

Let $\alpha\in U_\Lambda$.
We then define 
$\phi_G(\alpha)\in C_1(G,\Lambda)$ 
as the sum
$$
\phi_G(\alpha)\coloneqq \sum_{s\in S} c_s(\alpha),
$$
where $c_s(\alpha)$ denotes the linear combination of oriented edges of color $s$, given by the $s$-colored part of $\phi_s(\alpha) \in H_1(\Gamma^1_s(X_0),\Lambda)$.  
This uses that there is a canonical isomorphism between the edges of color $s$ of $G$ and those of $\Gamma^1_s(X_0)$.
In order to show that the above sum $\sum_{s\in S} c_s(\alpha)$ is closed, we first note that its pushforward to $C_1(\Gamma^1(X_0),\Lambda)$ equals
\begin{align} \label{align:sum_spi_spr_sc_s}
\sum_{s\in S} {\pi_s}_\ast ({\pr_s}_\ast(c_s(\alpha)))\in C_1(\Gamma^1(X_0),\Lambda). 
\end{align}
We claim that the class \eqref{align:sum_spi_spr_sc_s} agrees with the $1$-cycle $\phi(\alpha)$, where $\phi$ is the splitting from Proposition \ref{prop:1-skeletal-splitting}.
To prove this, it suffices, for each $s\in S$, to check that the $s$-colored part of this $1$-cycle agrees with the $s$-colored part of $\phi(\alpha)$, cf.\ Section \ref{subsec:convention:graphs}.
But the $s$-colored part of the above sum is nothing but ${\pi_s}_\ast ({\pr_s}_\ast(c_s(\alpha)))$. 
Hence, it suffices to show, for the $s$-colored part $c_s(\alpha) \in C_1(\Gamma^1_s(X_0),\Lambda)$ of the element $\phi_s(\alpha) \in H_1(\Gamma^1_s(X_0),\Lambda)$, that ${\pi_s}_\ast(c_s(\alpha)) \in C_1(\Gamma^1(X_0),\Lambda)$ equals the $s$-colored part of $\phi(\alpha) \in H_1(\Gamma^1(X_0),\Lambda)$. 
For this, it suffices in turn to show that ${\pi_s}_\ast(\phi_s(\alpha)) = \phi(\alpha) \in H_1(\Gamma^1(X_0),\Lambda)$, 
which follows from 
$$
{\pi_s}_\ast \circ \phi_s = {\pi_s}_\ast \circ {f_s}_\ast \circ \phi_{C_{p_s}} = {\varphi_s}_\ast \circ \phi_{C_{p_s}} = \phi .
$$  
We conclude that
$$
\sum_{s\in S} {\pi_s}_\ast ({\pr_s}_\ast(c_s(\alpha)))=\phi(\alpha), 
$$
hence $\sum_{s\in S} {\pi_s}_\ast ({\pr_s}_\ast(c_s(\alpha)))$ is closed.
Since $\pi_s\circ \pr_s\colon G\to \Gamma^1(X_0)$ is an isomorphism on vertices, we then deduce that $\sum_{s\in S}c_s(\alpha)$ must be closed, because its pushforward to $\Gamma^1(X_0)$ is closed.
Hence,
$$
\phi_G(\alpha)\coloneqq \sum_{s\in S} c_s(\alpha) \in H_1(G,\Lambda)
$$
is well-defined.
By construction, $c_s(\alpha)$ depends linearly on $\alpha$ and so $\phi_G$ is linear.
The composition of $\phi_G$ with the natural map 
$$ 
H_1(G,\Lambda)\longrightarrow H_1(\Gamma^1(X_0),\Lambda) 
$$
agrees with $\phi$ from Proposition \ref{prop:1-skeletal-splitting}.
Hence, $\phi_G$ is a splitting of \eqref{eq:surjection-splittin-G}, as desired.
The equality ${\pr_s}_\ast \circ \phi_G=\phi_s$ follows easily from the construction. 
\end{proof}

For $s\in S$, let $E_s$ denote the set of edges of $G$ of color $s$.
By construction, the edges of color $s$ of $G$ are canonically isomorphic to the edges of $\Gamma^1_s(X_0)$ of color $s$.
By slight abuse of notation, we then denote by 
$$
Q_s=\sum_{e\in E_s} x_e^2
$$ 
the diagonal bilinear form on $H_1(G,\Lambda)$ and note that it descends to the bilinear form on $H_1(\Gamma^1_s(X_0),\Lambda)$ from Definition \ref{def:Q_s} that we denote by the same symbol.

\begin{proposition}\label{prop:orthogonal-Q_s}
Let $s\in S$.
The splitting $\phi_G$ of \eqref{eq:surjection-splittin-G} is orthogonal with respect to $Q_s$. 
Moreover, the pullback $\phi_G^\ast Q_s$ agrees with $m\cdot d\cdot y_s^2$, where $y_s\in U_\Lambda^\ast $ is the linear form induced by the integral realization $S\to U^\ast$, $s\mapsto y_s$.
\end{proposition}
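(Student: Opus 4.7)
The plan is to reduce the proposition to the corresponding facts for the splitting $\phi_s$ established in Lemma \ref{lem:splitting-Gamma_s^1}, by exploiting the compatibility ${\pr_s}_\ast \circ \phi_G = \phi_s$ furnished by Proposition \ref{prop:splitting-phi_G}.

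First, I would verify that the projection $\pr_s \colon G \to \Gamma^1_s(X_0)$ induces a bijection between the color-$s$ edges of $G$ and those of $\Gamma^1_s(X_0)$. This is immediate from the description of $G$ as a fibre product in Remark \ref{rem:Gamma^1_s}: for any color-$s$ edge $e$ of $\Gamma^1(X_0)$, both $g_s^{-1}(e) \subset G$ and $\pi_s^{-1}(e) \subset \Gamma^1_s(X_0)$ are canonically identified with $F(e)$, and $\pr_s$ realises this identification (on color-$t$ edges with $t \neq s$ the map $\pr_s$ may collapse several edges to one, but such edges do not contribute to $Q_s$). Consequently, for all $\alpha,\beta \in H_1(G,\Lambda)$ one has
\[
Q_s(\alpha,\beta) = Q_s\bigl(({\pr_s})_\ast \alpha,\,({\pr_s})_\ast \beta\bigr),
\]
as both sides depend only on the color-$s$ coordinates of the chains, which correspond under the above bijection.

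Second, I would derive both assertions. Because the surjection \eqref{eq:surjection-splittin-G} factors through $({\pr_s})_\ast$, the map $({\pr_s})_\ast$ sends the kernel $K_G$ of \eqref{eq:surjection-splittin-G} into the kernel $K_s$ of \eqref{eq:splitting-Gamma_s^1-surjection}. Hence, for $u \in U_\Lambda$ and $\beta \in K_G$, Lemma \ref{lem:splitting-Gamma_s^1}(1) together with ${\pr_s}_\ast \circ \phi_G = \phi_s$ gives
\[
Q_s(\phi_G(u),\beta) = Q_s\bigl(\phi_s(u),\,({\pr_s})_\ast \beta\bigr) = 0,
\]
which is the required orthogonality. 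Analogously, for $u,v \in U_\Lambda$,
\[
\phi_G^\ast Q_s(u,v) = Q_s(\phi_s(u),\phi_s(v)) = \phi_s^\ast Q_s(u,v) = d \cdot y_s^2(u,v),
\]
the last equality being Lemma \ref{lem:splitting-Gamma_s^1}(2).

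The main (though modest) obstacle is the first step, namely the combinatorial bookkeeping verifying that $\pr_s$ induces a bijection on color-$s$ edges and hence preserves the diagonal bilinear form $Q_s$; once this is in place, the proposition follows formally from Lemma \ref{lem:splitting-Gamma_s^1} and Proposition \ref{prop:splitting-phi_G}.
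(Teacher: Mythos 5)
Your proposal is correct and follows essentially the same route as the paper's proof: both reduce $Q_s$-computations on $H_1(G,\Lambda)$ to $H_1(\Gamma^1_s(X_0),\Lambda)$ via the fact that $\pr_s$ is an isomorphism on color-$s$ edges (so $Q_s$ is preserved under ${\pr_s}_\ast$), and then invoke ${\pr_s}_\ast\circ\phi_G=\phi_s$ together with Lemma \ref{lem:splitting-Gamma_s^1}. No gaps.
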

\begin{proof} 
Let $\alpha\in U_\Lambda$ and 
$\beta\in \ker(H_1(G,\Lambda)\to H_1(\Gamma(X_0),\Lambda))$.
We will show that $Q_s$, viewed as a bilinear form, satisfies $Q_s(\phi_G(\alpha),\beta)=0$.
Since $Q_s$ is supported on edges of color $s$ and the natural map $\pr_s\colon G\to \Gamma^1_s(X_0)$ is a morphism of $S$-colored graphs which is an isomorphism 
on edges of color $s$, we have
$$
Q_s(\phi_G(\alpha),\beta)=Q_s({\pr_s}_\ast(\phi_G(\alpha)),{\pr_s}_\ast(\beta)).
$$
Since $\phi_s=\pr_s\circ \phi_G$ (see Proposition \ref{prop:splitting-phi_G}), we get ${\pr_s}_\ast(\phi_G(\alpha)) \in \phi_s(U_\Lambda) $; as
$$
{\pr_s}_\ast(\beta) \in \ker(H_1(\Gamma^1_s(X_0),\Lambda) \to H_1(\Gamma(X_0),\Lambda)=U_\Lambda),
$$
the above right hand side vanishes by Lemma \ref{lem:splitting-Gamma_s^1}.
Hence, the splitting $\phi_G$ is orthogonal with respect to $Q_s$.

Let now $\alpha,\beta\in U_\Lambda$.
Then, by the same reasoning as above, we have
$$
Q_s(\phi_G(\alpha),\phi_G(\beta))=Q_s({\pr_s}_\ast(\phi_G(\alpha)),{\pr_s}_\ast(\phi_G(\beta)))=Q_s(\phi_s(\alpha),\phi_s(\beta)).
$$
The above right hand side agrees with $m\cdot d\cdot y_s(\alpha)y_s(\beta)$, see Lemma \ref{lem:splitting-Gamma_s^1}.
Hence,  $\phi_G^\ast Q_s=m\cdot d\cdot y^2_s$, as we want.
This concludes the proof of the proposition.
\end{proof}

\begin{proof}[Proof of Theorem \ref{thm:algebraic->d-QE}] 
We replace the positive integer $d$ by the positive multiple $md$.
By Propositions \ref{prop:splitting-phi_G} and \ref{prop:orthogonal-Q_s}, there is a decomposition
$$
H_1(G,\Lambda)=U_\Lambda\oplus U'
$$
which is orthogonal with respect to $Q_s$ and such that $Q_s$ restricts to $d\cdot y^2_s$ on $U_\Lambda$, for all $s\in S$.
This says that there is a quadratic $\Lambda$-splitting of level $d$ of $(\underline R,S)$ into the graph $G$, see Definition \ref{def:d-Lambda-splitting-graph}.
This concludes the proof of the theorem. 
\end{proof}

\section{From quadratic splittings to solutions in Albanese graphs}
\label{sec:splitting->solution}
Throughout this section, $\ell$ denotes a prime number. 

\subsection{Albanese graphs}

\begin{definition} \label{def:color-profile-map}
Let $S$ be a finite set and let $G$ be an $S$-colored oriented graph.
Let $\Lambda$ be a ring.
Then the \emph{color profile map} (with coefficients in $\Lambda$) is the unique linear map 
$$
\lambda \colon C_1(G,\Lambda)\longrightarrow \Lambda^S
$$
that sends an edge of color $s$, viewed as $1$-chain via the given orientation, to the $s$-th basis vector $e_s$ of $\Lambda^S$.

Assume in addition that $G$ is $\Lambda$-weighted, i.e.\ for each edge $e$ of $G$ we are given a weight $a(e)\in \Lambda$.
Then the \emph{weighted color profile map} is the unique linear map 
$$
\lambda^w\colon C_1(G,\Lambda)\longrightarrow \Lambda^S
$$
that sends an oriented edge $e$ of color $s$ to the element $a(e)\cdot e_s\in \Lambda^S$.
\end{definition}

\begin{remark}
For a graph $G$ with set of edges $E$, the choice of an orientation of $G$, i.e.\ an orientation for each edge, determines a canonical isomorphism $C_1(G,\Lambda)=\Lambda^E$.
The color profile map $\lambda$ can then explicitly be described as the composition
$$
C_1(G,\Lambda)=\Lambda^E=\bigoplus_{s\in S}\Lambda^{E_s} \stackrel{\Sigma^S}\longrightarrow \Lambda^S,
$$
where $\Sigma^S$ is induced by the sum maps $\Sigma_s\colon \Lambda^{E_s}\to \Lambda$, which maps a vector to the sum of its coordinates.
\end{remark}

Let $(\underline R,S)$ be a regular matroid with integral realization $S\to U^\ast$, $s\mapsto y_s$.
Dually, there is an inclusion 
$U\hookrightarrow \Z^S$, which identifies $U_{\Lambda}=U\otimes_{\Z} \Lambda$ 
with a subspace of $\Lambda^S$.

A crucial ingredient of the remaining sections 
is the following set of universal graphs 
associated to $(\underline R,S)$.

\begin{definition} \label{def:Alb}
Let $ \uR= (\underline R,S)$ be a regular 
matroid with integral realization $S\to U^\ast$ 
and let $j\leq r$ be non-negative integers. 
The \emph{$(\ell^r,\ell^j)$-Albanese graph} of 
 $\underline R$  is the oriented $S$-colored graph 
$\Alb_{\ell^r,\ell^j}(\underline R)$ whose set of 
vertices is given by 
$$
V=\Z^S/(\ell^{j} U+\ell^r\Z^S) ,
$$
and such that two vertices $[v],[w]\in V$ are joined by an oriented edge of color $s$ pointing from $[v]$ to $[w]$ if and only if $[w]=[v+e_s]$, where $e_s\in \Z^S$ denotes the $s$-th basis vector. 
\end{definition}

We will also write $\Alb_{\ell^r}(\underline R)\coloneq \Alb_{\ell^r,1}(\underline R)$ and in particular $\Alb_\ell(\underline R)\coloneq \Alb_{\ell,1}(\underline R)$.

\begin{remark} \label{rem:Alb_{1,1}}
The special case where $j=r=0$ leads to the graph $\Alb_{1,1}(\underline R)$, which has only one vertex and for each $s\in S$ a loop of color $s$.
That is, $\Alb_{1,1}(\underline R)$ is a wedge of $|S|$-many circles, one for each color $s\in S$.
\end{remark}

\begin{remark} \label{rem:ell=2-multiple-edges}
    For $\ell=2$, the Albanese graph $\Alb_{2}(\underline R)$ may have vertices $[v]$ and $[w]$ that are joined by multiple edges of the same color, but with reversed orientations.
    Such edges occur if $[v]=[v+2e_s]$ and $[w]=[v+e_s]\neq [v]$.
\end{remark}

\begin{lemma} \label{lem:Alb-well-defined}
The $S$-colored graph $\Alb_{\ell^r,\ell^j}(\underline R)$ does not depend on the choice of integral realization $S\to U^\ast$.
Moreover, its orientation is well-defined up to possibly reversing the orientation of all edges of some colors $s\in S'\subset S$ simultaneously.
\end{lemma}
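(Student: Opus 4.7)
The plan is to reduce the lemma to Lemma \ref{lem:unique-integer-realization}, the uniqueness statement for integral realizations of regular matroids. Given two integral realizations $\varphi_1,\varphi_2\colon S\to U^\ast$ of $(\underline R,S)$ and a basis $B\subset S$ of $\underline R$, that lemma says $\varphi_i$ is represented (viewed as a linear map $\Z^S\to U^\ast$) by a totally unimodular matrix $(\mathds{1}_g|D_i)$, with $(\mathds{1}_g|D_2)$ obtained from $(\mathds{1}_g|D_1)$ by multiplying some rows and some columns by $-1$.

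First I would dualize this picture: the two realizations embed $U$ as saturated sublattices $U_1,U_2\subset \Z^S$. Row sign changes amount to changing the chosen basis of $U$ and therefore leave the image sublattice $U_i\subset \Z^S$ untouched, whereas column sign changes are induced by an automorphism $\sigma\colon \Z^S\to \Z^S$ of the form $\sigma(e_s)=\epsilon_s e_s$ with $\epsilon_s\in\{\pm 1\}$. Setting $S'\coloneq\{s\in S\mid \epsilon_s=-1\}$, one therefore obtains $\sigma(U_1)=U_2$.

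Second, since $\sigma$ also preserves $\ell^r\Z^S$, it descends to a $\Z$-module isomorphism
\[
\bar\sigma\colon \Z^S/(\ell^jU_1+\ell^r\Z^S)\xrightarrow{\sim} \Z^S/(\ell^jU_2+\ell^r\Z^S)
\]
between the vertex sets of the two oriented $S$-colored graphs $G_i$ constructed via Definition \ref{def:Alb} starting from $\varphi_i$. Third, I would check that $\bar\sigma$ extends to an isomorphism of $S$-colored graphs: it preserves colors, preserves orientation on edges of color $s\notin S'$, and reverses orientation on edges of color $s\in S'$. Indeed, an oriented edge of color $s$ from $[v]$ to $[v+e_s]$ in $G_1$ is sent to the edge between $[\sigma v]$ and $[\sigma v+\epsilon_s e_s]$ in $G_2$, with natural orientation if $\epsilon_s=+1$ and reversed orientation if $\epsilon_s=-1$.

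Both assertions of the lemma then follow at once. There is no real obstacle in this argument; the only mildly delicate point is the case $\ell=2$ with the parallel edges of Remark \ref{rem:ell=2-multiple-edges}, but since $\bar\sigma$ is a $\Z$-linear bijection it automatically respects the multiplicities of parallel edges of the same color.
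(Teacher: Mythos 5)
Your proposal is correct and follows essentially the same route as the paper: both reduce to Lemma \ref{lem:unique-integer-realization} and then observe that row sign changes leave the sublattice $U\subset\Z^S$ (hence the graph and its orientation) unchanged, while the column sign changes induce the diagonal automorphism $\sigma$ of $\Z^S$, which identifies the two Albanese graphs while reversing the orientation of all edges whose color lies in $S'$. Your write-up merely makes explicit the dualization and the descent of $\sigma$ to the vertex set, which the paper leaves implicit.
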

\begin{proof}
Fix a basis of $\underline R$.
The chosen integral realization of $\underline R$ then corresponds to a matrix $(\mathds 1_g|D)\in \mathbb Z^{g\times |S|}$, where $g$ denotes the rank of $\underline R$.
By Lemma \ref{lem:unique-integer-realization}, another choice of integral realization of $\underline R$ leads (with respect to the same basis of $\uR$) to a  matrix $(\mathds 1_g|D')$ which may be obtained from $(\mathds 1_g|D)$ by multiplying some rows and columns by $-1$.
Multiplying a row by $-1$ does not change the $S$-colored graph $\Alb_{\ell ^r,\ell ^j}(\underline R)$ nor its orientation, while multiplying the $s$-th column of $(\mathds 1_g|D)$ by $-1$ does not change the $S$-colored graph, but reverses the orientation of all $s$-colored edges of the Albanese graph. 
The lemma follows.
\end{proof}

Assume now that $\Lambda$ is a $\Z_{(\ell)}$-algebra with $\Lambda/\ell^r=\Z/\ell^r$.
Then the set $V$ of vertices of $\Alb_{\ell^r,\ell^j}(\underline R)$ satisfies:
\begin{align} \label{eq:V}
V=\Z^S/(\ell^jU+\ell^r\Z^S)=\Lambda^S/(\ell^jU_\Lambda+\ell^r\Lambda). 
\end{align} 
The natural commutative diagram
\[
\xymatrix{
C_1(\Alb_{\ell^r,\ell^j}(\underline R), \Lambda)  \ar[d]_{\partial} \ar[r]^-{\lambda} & \Lambda^S \ar@{->>}[d] \\
C_0(\Alb_{\ell^r,\ell^j}(\underline R),\Lambda) \ar[r] & V=\Lambda^S/(\ell^jU_\Lambda+\ell^r\Lambda),
}
\] 
in which the lower horizontal map  sends 
$(a_v)_v \in \Lambda^V=
C_0(\Alb_{\ell^r,\ell^j}(\underline R),\Lambda) $ 
to $\sum_v a_v \cdot v \in V$, 
shows that $\im( \lambda\colon H_1(\Alb_{\ell^r,\ell^j}(\underline R),\Lambda)\to \Lambda^S)\subset \ell^jU_\Lambda+\ell^r \Lambda^S$.
The connected oriented $S$-colored graph $\Alb_{\ell^r,\ell^j}(\underline R)$ is universal for this property:

\begin{proposition}[Universal property] \label{prop:universal-property-Alb}
Let $G$ be a connected, oriented, $S$-colored graph.
Let $\Lambda$ be a $\Z_{(\ell)}$-algebra with $\Lambda/\ell^r=\Z/\ell^r$.
Assume that 
$$
\im( \lambda\colon H_1(G,\Lambda)\to \Lambda^S)\subset \ell^jU_\Lambda+\ell^r \Lambda^S.
$$ 
Then, for any $j\leq r$, the choice of a vertex $v_0$ of $G$ defines a canonical map of oriented $S$-colored graphs
$$
{\rm alb}\colon G\longrightarrow \Alb_{\ell^r,\ell^j}(\underline R) ,
$$
which does not contract any edge.
Explicitly, $\alb$ maps a vertex $v$ of $G$ to the element $[\lambda(\gamma_{v})] \in V$, where $\gamma_v\in C_1(G,\Lambda)$ is a $1$-chain with $\partial \gamma_v= v-v_0$. 
\end{proposition}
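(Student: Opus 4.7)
The plan is to verify three things in sequence: (i) that $\mathrm{alb}$ is well-defined on vertices, (ii) that it is well-defined on edges with the expected source, target, and color, and (iii) that the construction is canonical (depending only on $v_0$).

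First I would address well-definedness on vertices. For any vertex $v$, since $G$ is connected, there exists $\gamma_v \in C_1(G,\Lambda)$ with $\partial \gamma_v = v - v_0$ (take any oriented path from $v_0$ to $v$, viewed as a sum of signed edges). If $\gamma_v, \gamma_v'$ are two such chains, then $\gamma_v - \gamma_v' \in Z_1(G,\Lambda) = H_1(G,\Lambda)$, where the last equality uses that $G$ is a $1$-complex so there are no $2$-boundaries. By the hypothesis of the proposition,
\[
\lambda(\gamma_v - \gamma_v') \in \ell^j U_\Lambda + \ell^r \Lambda^S,
\]
which is precisely the kernel of the projection $\Lambda^S \twoheadrightarrow V = \Lambda^S/(\ell^j U_\Lambda + \ell^r \Lambda^S)$ in view of \eqref{eq:V}. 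Hence $[\lambda(\gamma_v)] = [\lambda(\gamma_v')] \in V$, so the vertex map is well-defined.

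Next I would check the edge condition. Let $e$ be an oriented edge of $G$ of color $s$ with source $v$ and target $w$. Viewed as a $1$-chain, $\partial e = w - v$, so $\gamma_w \coloneqq \gamma_v + e$ is a valid chain with $\partial \gamma_w = w - v_0$. By definition of the color profile map (Definition~\ref{def:color-profile-map}), $\lambda(e) = e_s \in \Lambda^S$, and therefore
\[
\mathrm{alb}(w) = [\lambda(\gamma_v) + e_s] = [\lambda(\gamma_v)] + [e_s] = \mathrm{alb}(v) + [e_s] \in V.
\]
By Definition~\ref{def:Alb} of the Albanese graph, this is exactly the condition that $\mathrm{alb}(v)$ and $\mathrm{alb}(w)$ are joined in $\Alb_{\ell^r,\ell^j}(\underline R)$ by an oriented edge of color $s$ pointing from $\mathrm{alb}(v)$ to $\mathrm{alb}(w)$. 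Thus $\mathrm{alb}$ sends each oriented edge of color $s$ to an oriented edge of color $s$, as required for a morphism of oriented $S$-colored graphs.

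Finally, canonicity: the construction uses only the choice of $v_0$ (and the fixed integral realization $S \to U^\ast$, which by Lemma~\ref{lem:Alb-well-defined} determines $\Alb_{\ell^r,\ell^j}(\underline R)$ up to the stated ambiguity). A different choice of base point $v_0'$ would translate $\mathrm{alb}$ by the constant element $[\lambda(\delta)]$, where $\delta$ is any chain with $\partial \delta = v_0 - v_0'$; this yields the expected translation in the abelian group $V$. There is no real obstacle in this proof; the only subtle point is the identification $Z_1(G,\Lambda) = H_1(G,\Lambda)$, which is what allows us to invoke the hypothesis on the image of $\lambda$ restricted to $H_1$ to deduce well-definedness on all cycles.
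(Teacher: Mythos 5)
Your proof is correct and follows essentially the same route as the paper's: well-definedness on vertices via the hypothesis on $\im(\lambda|_{H_1})$ and the identification \eqref{eq:V}, followed by the observation that $\gamma_v + e$ serves as a chain for the endpoint of $e$, which forces $\mathrm{alb}$ to send an $s$-colored edge to the (unique) $s$-colored edge from $\mathrm{alb}(v)$ to $\mathrm{alb}(v)+[e_s]$. The extra remarks on $Z_1 = H_1$ and on base-point dependence are harmless additions.
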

\begin{proof}
Let us first show that the map $\alb$ is well-defined on the set of vertices.
Since $G$ is connected, we can find a $1$-chain $\gamma_v$ with $\partial \gamma_v=v-v_0$ for any vertex $v$ of $G$.
If $\gamma_v'$ is another choice of such a $1$-chain, then $\gamma_v-\gamma_{v}'$ is closed and so
$$
\lambda(\gamma_v-\gamma_{v}')\in  \ell^jU_\Lambda + \ell^r \Lambda ^S
$$
by assumptions.
We deduce that 
$$
[\lambda(\gamma_v)]= [\lambda(\gamma_{v}')] \in V,
$$
where we used \eqref{eq:V}.
This proves that the map $\alb$ is well-defined on the set of vertices.

To prove that the map of vertices extends to a map of $S$-colored oriented graphs, let $e$ be an edge of $G$ of color $s$ satisfying $\partial e=v_2-v_1$ (with respect to the given orientation of $e$).
Let further $\gamma_{v_1}$ be a $1$-chain with $\partial \gamma_{v_1}=v_1-v_0$.
Then $\gamma_{v_1}+e$ is a $1$-chain with boundary $v_2-v_0$.
This shows $[\alb (v_1)+e_s]=[\alb (v_2)]$.
Hence, $\alb(v_1)$ and $\alb(v_2)$ are joined by a unique oriented edge of color $s$ which points from $\alb(v_1)$ to $\alb(v_2)$, and we may send $e$ to this edge.
Then $\alb$ respects the given orientations and $S$-colorings and hence defines a canonical map of oriented $S$-colored graphs,
which does not contract any edge, as claimed. 
\end{proof}

\subsection{$\ell^{j+1}$-indivisible $\Lambda$-solutions in $\Alb_{\ell^r,\ell^j}(\underline R)$}
Recall from Section \ref{subsec:convention:graphs}, that a $1$-chain $b\in C_1(G,\Lambda)$ of an $S$-colored graph $G$ is of color $s\in S$, if it is a linear combination of oriented edges of color $s$.
Note that this is strictly stronger than asking that the color profile of $b$ is a multiple of $e_s$.

\begin{definition} \label{def:solution}
Let $(\underline R,S)$ be a regular matroid with integral realization $S\to U^\ast$.
Let $G$ be an $S$-colored oriented graph and let $\Lambda$ be a $\Z_{(\ell)}$-algebra.
A {\it $\Lambda$-solution of $(\underline R,S)$ in $G$} (or simply: a $\Lambda$-solution) is a collection of $1$-chains $(b_s)_{s\in S}$, $b_s\in C_1(G,\Lambda)$, of $G$ with coefficients in $\Lambda$ such that the following hold: 
\begin{enumerate}
\item \label{item:def:solution-0} The $1$-chain $b_s$ has color $s$;
    \item \label{item:def:solution-1} Consider the inclusion  $U_\Lambda\hookrightarrow \Lambda^S$ induced by the dual of the given realization. 
    If  $\sum_{s\in S} c_se_s\in U_\Lambda$ for some $c_s\in \Lambda$, 
    then $\sum_{s\in S} c_sb_s\in H_1( G,\Lambda)$ is closed. 
\end{enumerate}
For $i,r \geq 0$, we say that a $\Lambda$-solution  $(b_s)_{s\in S}$ is 
\begin{itemize}
    \item  {\it $\ell^{i}$-indivisible} if,  for all $s\in S$, the color profile of $b_s$ is not $\ell^i$-divisible: $\ell^{i} \nmid \lambda(b_s)$,
    \item {\it constant modulo $\ell^r$} if  $\lambda(b_s)\equiv \lambda(b_t) \bmod \ell^r$ for all $s,t\in S$.
\end{itemize}
\end{definition}
 
\begin{remark} \label{remark:constancy}
The notion of being constant modulo $\ell^r$ is not needed for the main results of this paper, the reason being roughly that we manage to reduce those results to calculating kernels of matrices over $\Z/2$ attached to relatively small and manageable matroids like $M(K_5)$ and $M(K_{3,3})$, see Section \ref{thm:splitting-in-cographic=cographic-intro} and Proposition \ref{prop:K_5-K_3,3}. However, in our subsequent work \cite{engel2025optimalityprymtyurinconstructionmathcala6} we need to deal with $\ell = 3$ and $\underline R = M(K_7)$, where the computations become involved and the simplification of working only with $3$-indivisible solutions which are constant modulo $3$---whose existence is easier to exclude---is essential.
\end{remark} 

\begin{remark} \label{rem:solutions-well-defined}
Item \eqref{item:def:solution-1} can be reformulated as follows.
Let $E=\bigsqcup_{s\in S}E_s$ be the set of edges of $G$ and consider the linear map $\Psi:\Lambda^S\to \Lambda^E=C_1(G,\Lambda)$, which sends the basis vector $e_s$ to the $1$-chain $b_s$.
Then the restriction of $\Psi$ to $U_\Lambda$ lands in $H_1(G,\Lambda)$ and so there is a unique map $U_\Lambda\to H_1(G,\Lambda)$ which makes the following diagram commutative:
$$
\xymatrix{
U_\Lambda \ar[d]^{\exists !} \ar@{^{(}->}[r]& \Lambda^S \ar[d]^\Psi\\
H_1(G,\Lambda)\ar@{^{(}->}[r]& \Lambda^E.
} 
$$
Using this description, it easily follows from 
Lemma \ref{lem:unique-integer-realization} that the existence of 
an $\ell ^i$-indivisible $\Lambda$-solution of 
$(\underline R,S)$ in an $S$-colored oriented graph $G$ does not depend on the 
chosen integral realization. 
Indeed, the choice of a basis of $\underline R$ 
yields an isomorphism $U_\Lambda\cong \Lambda^g$,
where $g$ denotes the rank of $\underline R$, and passing to another integral realization changes the matrix that represents the top horizontal map in the given basis by multiplying some rows and columns by $-1$.
Likewise, the existence of $\ell^i$-indivisible $\Lambda$-solutions which are constant modulo $\ell^r$, of $(\underline R,S)$ in a graph $G$, does not depend on the realization.
\end{remark}

The main result of this section is the following theorem.

\begin{theorem} \label{thm:d-QE->solutions}
Let $(\underline R,S)$ be a regular loopless matroid with integral realization $S\to U^\ast$.
Let $\ell$ be a prime, $r$ a positive integer and $\Lambda$ be a $\Z_{(\ell)}$-algebra with $\Lambda\subset \R$ and $\Lambda/\ell^r=\Z/\ell^r$.
Assume that there is a positive integer $d$ and a graph $G$, such that $(\underline R,S)$ admits a quadratic $\Lambda$-splitting of level $d$ in $G$ (see Definition \ref{def:d-Lambda-splitting-graph}).
Write $d=\ell^jd'$ for an integer $d'$ coprime to $\ell$.
Then, for all $j+1\leq r$, the $(\ell^{r},\ell^j)$-Albanese graph $\Alb_{\ell^{r},\ell^j}(\underline R)$ admits an $\ell^{j+1}$-indivisible $\Lambda$-solution of $(\underline R,S)$ which is constant modulo $\ell^r$ (see Definitions \ref{def:Alb} and \ref{def:solution}). 
\end{theorem}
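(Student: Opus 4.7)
My plan is to extract from the quadratic $\Lambda$-splitting a collection $(b_s)_{s\in S}$ of color-$s$ $1$-chains that is an $\ell^{j+1}$-indivisible $\Lambda$-solution in $G$, and then push it forward to $\Alb_{\ell^r,\ell^j}(\uR)$ via the universal property of Proposition \ref{prop:universal-property-Alb}.

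First I would extract the $b_s$ by a rank-one positivity argument. Decomposing $\phi_G(\alpha)$ by color, write $\phi_G(\alpha)_s=\sum_{e\in E_s}a_e(\alpha)\cdot e$ for $\Lambda$-linear functionals $a_e\colon U_\Lambda\to\Lambda$. The identity $\phi_G^\ast Q_s=d\cdot y_s^2$ translates into $\sum_{e\in E_s}a_e(\alpha)^2=d\cdot y_s(\alpha)^2$ in $\Lambda\subset\R$. Since the right-hand side is a rank-one positive form, $\alpha\in\ker y_s$ forces $\sum_e a_e(\alpha)^2=0$ and hence $a_e(\alpha)=0$ for all $e$. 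Each $a_e$ therefore factors through $y_s$; because $\uR$ is loopless, $y_s\colon U_\Lambda\to\Lambda$ is surjective by Lemma \ref{lem:integral-realization}, so $a_e=k_e\cdot y_s$ for a unique $k_e\in\Lambda$, with $\sum_{e\in E_s}k_e^2=d$. Setting $b_s\coloneqq\sum_{e\in E_s}k_e\cdot e\in C_1(G,\Lambda)$ yields $\phi_G(\alpha)=\sum_{s\in S}y_s(\alpha)\cdot b_s$.

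The pair $(b_s)_{s\in S}$ is then a $\Lambda$-solution in $G$: for $\alpha=\sum_s c_s e_s\in U_\Lambda$ we have $c_s=y_s(\alpha)$, so $\sum_s c_s b_s=\phi_G(\alpha)\in H_1(G,\Lambda)$. Its color profile is $\lambda(b_s)=\mu_s\cdot e_s$ with $\mu_s=\sum_{e\in E_s}k_e$, and from the constraint $\sum_e k_e^2=\ell^j d'$ with $\gcd(d',\ell)=1$, the elementary sum-of-squares lemma (Lemma \ref{lem:elementary}, which crucially exploits the embedding $\Lambda\subset\R$) yields $\ell^{j+1}\nmid\mu_s$; hence the solution is $\ell^{j+1}$-indivisible.

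Finally, I would transfer the solution into $\Alb_{\ell^r,\ell^j}(\uR)$ by applying Proposition \ref{prop:universal-property-Alb} to produce a morphism $\alb\colon G\to\Alb_{\ell^r,\ell^j}(\uR)$ of $S$-colored graphs and taking $(\alb_\ast b_s)_s$. Since $\alb$ respects colors, this pushforward is a $\Lambda$-solution with the same color profile and so inherits the $\ell^{j+1}$-indivisibility. The main obstacle will be verifying the hypothesis $\lambda(H_1(G,\Lambda))\subset\ell^j U_\Lambda+\ell^r\Lambda^S$ of Proposition \ref{prop:universal-property-Alb}: this is not a formal consequence of the quadratic splitting, as cycles in the complement $U'\subset H_1(G,\Lambda)$ may have arbitrary $\lambda$-image. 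I expect the remedy to consist in first passing to a connected component of a suitable subgraph (for instance, the one spanned by $\bigcup_s\operatorname{supp}(b_s)$) and then modifying the graph---trimming edges outside this support and, if obstructing cycles persist, absorbing them into color-$s$ adjustments whose contribution to each $\mu_s$ is $\ell^{j+1}$-divisible and so does not disturb the indivisibility count---until the cycle space has $\lambda$-image inside $\ell^j U_\Lambda+\ell^r\Lambda^S$.
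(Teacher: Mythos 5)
Your first two steps are essentially the paper's: the positivity argument showing $x_e|_{U_\Lambda}=k_e\,y_s$ for $e\in E_s$ is Proposition \ref{prop:multiple-y-x}, and the verification that $(b_s)_{s\in S}$ with $b_s=\sum_{e\in E_s}k_e\cdot e$ is a $\Lambda$-solution in $G$ is Proposition \ref{prop:existence-of-solutions}. But the indivisibility step is a genuine gap. From $\sum_{e\in E_s}k_e^2=\ell^jd'$ one cannot conclude $\ell^{j+1}\nmid\sum_{e\in E_s}k_e$: take $\ell=3$, $j=0$, $d=2$, $k_1=1$, $k_2=-1$, giving $\sum k_e^2=2$ but $\sum k_e=0$; or in $\Z_{(2)}$ take $k_1=3/5$, $k_2=4/5$, so $\sum k_e^2=1$ while $\sum k_e=7/5=2\cdot(7/10)$ is $2$-divisible. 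Lemma \ref{lem:elementary} controls cross terms, not the linear sum $\sum k_e$. The quantity the splitting actually controls is the \emph{weighted} color profile $\sum_e k_e\cdot k_e=d$, not the unweighted one $\sum_e k_e$, and these are unrelated.

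The paper's resolution of this --- and simultaneously of the universal-property hypothesis, which you correctly flag as problematic but only address with a heuristic trimming/absorbing procedure --- is a single device you are missing: the refinement $\hat G$. One first introduces the weighted color profile $\lambda^w$ (weight $a(i)=k_i$ on edge $i$) and proves, using $Q_s$-orthogonality of the splitting together with looplessness (which supplies $u_s\in U_\Lambda$ with $y_s(u_s)=1$ and the identity $Q_s(u_s,\alpha)=\lambda_s^w(\alpha)$), that $\lambda^w$ vanishes on $U'$ and maps $H_1(G,\Lambda)$ onto $d\cdot U_\Lambda$ (Proposition \ref{prop:U'=colorless}). Then, using the hypothesis $\Lambda/\ell^r=\Z/\ell^r$, each edge $i$ is subdivided into a chain of $\hat a(i)$ edges of the same color, where $\hat a(i)$ is a positive integer with $\hat a(i)\equiv a(i)\bmod\ell^r$. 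On the refined graph $\hat G$ the \emph{unweighted} color profile agrees with $\lambda^w$ modulo $\ell^r$ (Lemma \ref{lem:lambda^w-vs-lambda}), which yields both $\im(\lambda\colon H_1(\hat G,\Lambda)\to\Lambda^S)\subset\ell^jU_\Lambda+\ell^r\Lambda^S$ (so the Albanese map exists) and $\lambda(\hat b_s)\equiv d\,e_s\bmod\ell^r$, whence $\ell^{j+1}$-indivisibility since $r\geq j+1$. Without this refinement, neither your indivisibility claim nor your appeal to Proposition \ref{prop:universal-property-Alb} goes through.
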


The rest of this section is devoted to a proof of the above theorem; to this end, $\Lambda$ will, for the remainder of this section, denote a $\Z_{(\ell)}$-algebra with $\Lambda\subset \R$ and $\Lambda/\ell^r=\Z/\ell^r$.
(There is no harm in assuming $\Lambda=\Z_{(\ell)}$, which suffices for our applications, but we prefer to make the precise assumptions, needed in our argument, transparent.)
Recall also that the data of a quadratic $\Lambda$-splitting of $\underline R$ in $G$ comes in particular with an $S$-coloring of $G$ and so we may from now on view $G$ as an $S$-colored graph.

As a first reduction step, we reduce to the case where $G$ is connected.
Indeed, if it is not, then we pick a vertex in each connected component and glue these vertices to get a connected $S$-colored graph $G'$.
We then replace $G$ by $G'$.
This does not change the cohomology of $G$ and hence does not change the fact that $(\underline R,S)$ admits a quadratic $\Lambda$-splitting of level $d$ in $G$.
So we henceforth assume that $G$ is connected.

\subsection{Characteristic $1$-chain of color $s$}   
Denote the set of edges of $G$ by $E$ and the set of edges of color $s$ by $E_s$.
Since $G$ is $S$-colored, $E=\sqcup_{s\in S} E_s$. 
We further fix once and for all an orientation on the set of edges of $G$.
This induces a canonical embedding  
$$
H_1(G,\Lambda)\subset \Lambda^E=C_1(G,\Lambda).
$$

\begin{proposition} \label{prop:multiple-y-x} 
    Consider an edge $e\in E_s$ of color $s$ of $G$ and the corresponding coordinate function $x_e \colon \Lambda^E\to \Lambda$.
    Then the linear form $x_e|_{U_\Lambda}\in U_\Lambda^\ast$, given as the composition
    $$
    U_\Lambda \longhookrightarrow H_1(G,\Lambda)\subset \Lambda^E\longrightarrow \Lambda,\ \ \ u\mapsto x_e(u) ,
    $$
    is a multiple of $y_s$, where $y_s$ are the linear forms of the integral 
    realization $S\to U^\ast$, $s\mapsto y_s$, 
    of $(\underline R,S)$.
\end{proposition}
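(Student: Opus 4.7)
The plan is to exploit the quadratic $\Lambda$-splitting together with positivity of sums of squares of real linear forms. The hypothesis in force (inherited from Theorem \ref{thm:d-QE->solutions}) is an embedding $U_\Lambda\hookrightarrow H_1(G,\Lambda)$ on which $Q_s=\sum_{e'\in E_s}x_{e'}^2$ restricts to $d\cdot y_s^2$. Unpacking this, we obtain the identity of bilinear forms
\[
\sum_{e'\in E_s}\bigl(x_{e'}|_{U_\Lambda}\bigr)^2 \;=\; d\cdot y_s^2
\]
on $U_\Lambda$, for each $s\in S$.

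The key step is to extend scalars to $\R$ (possible since $\Lambda\subset \R$) and invoke the elementary fact that over a real vector space, a sum of squares of linear forms has rank equal to the dimension of the linear span of those forms. Since $d$ is a positive integer and $y_s\neq 0$ (by Lemma \ref{lem:integral-realization}, using that $(\uR,S)$ is loopless), the right-hand side has rank exactly one. Hence all the real linear forms $x_{e'}|_{U_\R}$, $e'\in E_s$, must lie in the line spanned by $y_s$. In particular, $x_e|_{U_\R}=\alpha_e\cdot y_s$ for some $\alpha_e\in \R$.

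Finally, to promote $\alpha_e$ to an element of $\Lambda$, I would use Lemma \ref{lem:integral-realization} once more: it guarantees surjectivity of the induced map $y_s\colon U_\Lambda\to \Lambda$. Choosing any $u\in U_\Lambda$ with $y_s(u)=1$ gives $\alpha_e = x_e(u)\in \Lambda$, so that $x_e|_{U_\Lambda} = \alpha_e\cdot y_s$ is a genuine $\Lambda$-multiple of $y_s$.

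There is no real obstacle here; the only nontrivial ingredient is the rank statement for real sums of squares, which is precisely the positivity input flagged in the introductory remark and presumably codified as the forthcoming \emph{Lemma \ref{lem:elementary}}. This is exactly the step where the hypothesis $\Lambda\subset \R$ is indispensable, matching the paper's observation that its arguments do not transport to $\ell$-adic cohomology.
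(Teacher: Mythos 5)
Your proof is correct and follows essentially the same route as the paper: both arguments reduce to the identity $\sum_{e'\in E_s}(x_{e'}|_{U_\Lambda})^2=d\cdot y_s^2$ and exploit positivity over $\R$ to force each summand into the line spanned by $y_s$ (the paper packages this as a coefficient comparison in $\Lambda[y_1,\dots,y_g]$ after extending $s$ to a basis, which is its Lemma \ref{lem:elementary}, while you phrase it via the rank of a real sum of squares). Your extra step identifying the scalar $\alpha_e$ as an element of $\Lambda$ via surjectivity of $y_s$ is handled in the paper by writing $x_e|_{U_\Lambda}$ in the $\Lambda$-basis $y_1,\dots,y_g$ from the outset, but both are fine.
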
 

\begin{proof}
We can extend the element $s\in S$ to a basis $S'\subset S$ of the matroid $(\underline R,S)$.
Assume without loss of generality that $s=1$ and 
$S'=\{1,\dots ,g\}$. 
Then the linear forms $y_1,\dots ,y_g$ form a basis of $U^\ast$. 
It follows that there are unique elements $a_{ei}\in \Lambda$ with
$$
x_e|_{U_\Lambda}=\sum_{i=1}^g a_{ei}y_i .  
$$
Since $U_\Lambda\hookrightarrow H_1(G,\Lambda)$ forms part of the data of a quadratic splitting of $(\underline R,S)$ in $G$ of level $d$, we have
$$
d\cdot y_1^2=\sum_{e\in E_s} x_e^2|_U ,
$$
see Definition \ref{def:d-Lambda-splitting-graph} 
(and recall that $s=1$).
We thus get 
$$
d\cdot y_1^2=\sum_{e\in E_s} x_e^2|_U=\sum_{e\in E_s}\left(\sum_{i=1}^g a_{ei}y_i \right)^2 .
$$

The above equality is an equality of quadratic forms on $U_\Lambda$; since $\Lambda$ has characteristic zero and $y_1,\dots ,y_g$ is a basis of $U_\Lambda^\ast$, we see that the equality holds in fact in the polynomial ring $\Lambda[y_1,\dots ,y_g]$.
Since $\Lambda\subset \R$, we then deduce from the elementary Lemma \ref{lem:elementary} below that $a_{ei}=0$ for all $i\geq 2$ and all $e\in E_s$. 
This proves that $x_e|_U$ is a multiple of $y_s$.
\end{proof}

\begin{lemma} \label{lem:elementary}
Let $\Lambda$ be a subring of $\R$ and consider the polynomial ring $\Lambda[y_1,\dots ,y_g]$ in $g$ variables.
Let $(a_{ji})\in \Lambda^{m\times g}$ be an $m\times g$ matrix and assume there is some $d\in \Lambda$ with
$$
d\cdot y_1^2=\sum_{j=1}^m \left( \sum_{i=1}^g a_{ji}y_i \right)^2.
$$
Then $a_{ji}=0$ for all $i\geq 2$ and $j \geq 1$.
\end{lemma}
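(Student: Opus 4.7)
The plan is to exploit the embedding $\Lambda\subset\R$ to reduce an algebraic identity to a positivity statement over the reals. Expanding both sides of the given identity as polynomials in $\Lambda[y_1,\dots,y_g]$, substitute $y_1=0$ to eliminate the left-hand side. This yields the identity
\[
0 \;=\; \sum_{j=1}^m \Bigl(\,\sum_{i=2}^g a_{ji}y_i\Bigr)^{\!2}
\]
in $\Lambda[y_2,\dots,y_g]\subset \R[y_2,\dots,y_g]$.

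Next, I would evaluate this polynomial identity at arbitrary real values $y_2,\dots,y_g\in\R$. Because each summand $\bigl(\sum_{i=2}^g a_{ji}y_i\bigr)^2$ is a non-negative real number and their sum vanishes, each summand must vanish individually: for every $j$ and every real tuple $(y_2,\dots,y_g)$, we have $\sum_{i=2}^g a_{ji}y_i=0$. A linear form on $\R^{g-1}$ that vanishes identically has all coefficients zero, so $a_{ji}=0$ for every $j\geq 1$ and every $i\geq 2$, as desired.

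There is essentially no obstacle; the only subtlety is to note that the hypothesis $\Lambda\subset \R$ is genuinely used (in positive characteristic the conclusion can fail, which is precisely the reason the paper must insist on real coefficients, cf.\ the remark after Theorem \ref{thm:splitting-in-cographic=cographic-intro-positive-result}).
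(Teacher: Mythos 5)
Your proof is correct and rests on the same key point as the paper's: over a subring of $\R$, a vanishing sum of squares forces each square to vanish. The paper implements this by comparing the coefficient of $y_i^2$ (obtaining $\sum_j a_{ji}^2=0$ for $i\geq 2$) rather than by substituting $y_1=0$ and evaluating at real points, but the two arguments are essentially identical.
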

\begin{proof}
We have
$$
\sum_{j=1}^m \left( \sum_{i=1}^g a_{ji}y_i \right)^2= \sum_{i=1}^g \left(\sum_{j=1}^m a_{ji}^2\right)y_i^2 +\sum_{i_1<i_2} c_{i_1,i_2}y_{i_1}y_{i_2}
$$
for some $c_{i_1,i_2}\in \Lambda$.
Since the above expression agrees with $d\cdot y_1^2$ by assumption, we find that the mixed terms $c_{i_1,i_2}y_{i_1}y_{i_2}$, $i_1<i_2$, vanish and, in addition, $\sum_{j=1}^m a_{ji}^2=0$ for all $i\geq 2$.
Since $\Lambda\subset \R$, this is only possible if $a_{ji}=0$ for all $i\geq 2$, as claimed.
\end{proof}

By Proposition \ref{prop:multiple-y-x}, 
for each $s\in S$ and each edge $i\in E_s$
of $G$ of color $s$, 
there is a unique element $a(i)\in \Lambda$ such that
\begin{align} \label{def:a(i)}
x_i|_{U_\Lambda}=a(i)y_s\in U^\ast_\Lambda.
\end{align}
The coefficients $a(i)$ play an important 
role and lead in particular to the following notion:

\begin{definition} \label{def:b_s-characteristic-1-chain}
The {\em characteristic} $1$-chain of color $s$ is the $1$-chain
$$
b_s\coloneqq \sum_{i\in E_s}a(i)\cdot i\in C_1(G,\Lambda) .
$$
\end{definition}

Associated to the characteristic $1$-chains $b_s$ of color $s$, there is the linear map
$$
\Psi:\Lambda^S \longrightarrow \Lambda^E=\bigoplus_{s\in S} \Lambda^{E_s},\quad \quad \sum_{s\in S} c_s e_s\mapsto \sum_{s\in S} c_s b_s ,
$$
which identifies to the direct sum of the maps $\Lambda\to \Lambda^{E_s}$, $1\mapsto b_s$.
We may then consider the following diagram
\begin{align} \label{diag:existence-of-solutions}
\begin{split}
\xymatrix{
U_\Lambda\ar@{^{(}->}[d] \ar@{^{(}->}[r] & \Lambda^S \ar[d]^{\Psi}  \\
H_1(G,\Lambda ) \ar@{^{(}->}[r] & \Lambda^E=\bigoplus_{s\in S} \Lambda^{E_s}
} 
\end{split}
\end{align} 
where the top horizontal arrow is induced by the dual of the realization $S\to U^\ast$ of $\underline R$, the lower horizontal arrow is induced by the choice of orientation of $G$, and the left vertical map is induced by the fact that there is a $\Lambda$-splitting of $(\underline R,S)$ in $G$ of level $d$. 

\begin{proposition} \label{prop:existence-of-solutions}
The diagram \eqref{diag:existence-of-solutions} commutes.
In other words, if $u=\sum_s c_se_s\in U_\Lambda \subset \Lambda^S$, then the $1$-chain $\sum_s c_s b_s\in C_1(G,\Lambda) = \Lambda^E$ is closed and agrees in fact with $u\in U_\Lambda$ via the embedding $U_\Lambda\subset H_1(G,\Lambda)$.
\end{proposition}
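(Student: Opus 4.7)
The plan is to verify commutativity edgewise, reducing everything to the defining identity $x_i|_{U_\Lambda} = a(i)\cdot y_s$ for any edge $i\in E_s$.

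First I would unwind what the two compositions from $U_\Lambda$ to $\Lambda^E$ do to an element $u\in U_\Lambda$. The top-right composition $U_\Lambda \hookrightarrow \Lambda^S \xrightarrow{\Psi} \Lambda^E$ requires me to know the image of $u$ in $\Lambda^S$; this image is dictated by the dual of the realization $S\to U^\ast$, $s\mapsto y_s$, so $u$ corresponds to the tuple $(y_s(u))_{s\in S}$, i.e.\ $u = \sum_{s\in S} y_s(u)\, e_s$ in $\Lambda^S$. Applying $\Psi$ then yields
\[
\Psi(u) = \sum_{s\in S} y_s(u)\, b_s = \sum_{s\in S} \sum_{i\in E_s} y_s(u)\,a(i)\cdot i\in \Lambda^E.
\]
Reading off the $i$-th coordinate (for $i\in E_s$) gives $a(i)\,y_s(u)$.

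Next I would compute the other composition $U_\Lambda \hookrightarrow H_1(G,\Lambda)\subset \Lambda^E$. Here the $i$-th coordinate of the image of $u$ is, by definition, $x_i(u)$. By Proposition \ref{prop:multiple-y-x} and the defining equation \eqref{def:a(i)}, we have $x_i(u) = a(i)\, y_s(u)$ whenever $i\in E_s$. Thus both images have identical coordinates on every edge, and so agree as elements of $\Lambda^E$.

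This proves that the two maps $U_\Lambda\to \Lambda^E$ coincide, i.e.\ the diagram \eqref{diag:existence-of-solutions} commutes on the nose. As an immediate consequence, for $u=\sum_s c_s e_s\in U_\Lambda$ the chain $\sum_s c_s b_s = \Psi(u)$ lies in the image of the natural inclusion $H_1(G,\Lambda)\hookrightarrow \Lambda^E$ and coincides there with $u$. I do not anticipate any genuine obstacle, as the argument is a bookkeeping exercise: the real work was already done in Proposition \ref{prop:multiple-y-x}, which via Lemma \ref{lem:elementary} pinned $x_i|_{U_\Lambda}$ to a scalar multiple of $y_s$, thereby making the definition of $b_s$ the unique one compatible with the embedding.
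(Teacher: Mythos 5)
Your argument is correct and is essentially the paper's own proof: both reduce the commutativity to the coordinatewise identity $x_i(u)=a(i)\,y_s(u)$ for $i\in E_s$, which is exactly the content of Proposition \ref{prop:multiple-y-x} and the definition \eqref{def:a(i)}, combined with the observation that $u=\sum_s c_s e_s\in\Lambda^S$ means $c_s=y_s(u)$. No gaps; the bookkeeping is the same as in the paper.
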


\begin{proof}
Let $u\in U_\Lambda$.
We first think about $U_\Lambda$ as a subspace of $\Lambda^S$ and write 
$$
u=\sum_s c_se_s\in U_\Lambda  \subset \Lambda^S 
$$
for some $c_s\in \Lambda$.
This means that $y_s(u)=c_s$ for all $s\in S$.   

Now view $U_\Lambda$ as a subspace of $H_1(G,\Lambda)$.
By \eqref{def:a(i)}, we have for all $i\in E_s$ the identity
    $$
    x_i|_{U_\Lambda}=a(i)y_s \in U^\ast_\Lambda .
    $$
Hence, for all $s\in S$ and $i\in E_s$,  we find
$$
x_i(u)= a(i)\cdot y_s(u)= a(i)\cdot c_s \in \Lambda .
$$
This in turn precisely means that
$$
u=\sum_{s\in S}\sum_{i\in E_s} c_s\cdot a(i)\cdot  i  = \sum_{s\in S} c_s\cdot b_s \in U_\Lambda .
$$
This proves the proposition.
\end{proof}

\subsection{Weighted color profile}
In this section, we use the elements $a(i)\in \Lambda$ from \eqref{def:a(i)} to define a $\Lambda$-weighting on the oriented $S$-colored graph $G$ as follows.

\begin{definition} \label{def:G-weight}
We define the \emph{weight} of an edge 
$i \in E_s$ to be the element $a(i)\in \Lambda$, 
where $a(i)$ is as in \eqref{def:a(i)}. 
\end{definition}

Associated to this weighting of $G$, there is a weighted color profile map
\begin{align} \label{def:lambda^w}
\lambda^w\colon C_1(G,\Lambda)\longrightarrow \Lambda^S ,
\end{align}
which is the unique $\Lambda$-linear map which sends an edge $i\in E_s$ of color $s$ to $a(i)\cdot e_s$ (see also Definition \ref{def:color-profile-map}).
The $s$-th component of this map is denoted by
$$
\lambda_s^w\colon C_1(G,\Lambda)\longrightarrow \Lambda .
$$
We then have $\lambda^w(x)=\sum_s \lambda^w_s(x)e_s$ for $x \in C_1(G,\Lambda)$.

Recall, moreover, the $Q_s$-orthogonal decomposition
$$
H_1(G,\Lambda)=U_\Lambda \oplus U' ,
$$
induced by the quadratic $\Lambda$-splitting 
of $(\underline R,S)$ in $G$, 
see Definition \ref{def:d-Lambda-splitting-graph}.

\begin{proposition} \label{prop:U'=colorless}
The weighted color profile map $\lambda^w$ from \eqref{def:lambda^w} has the following properties:
\begin{enumerate}
    \item \label{item:prop:U'=colorless:1} If $u'\in U'$, then $\lambda^w(u')=0$.
    \item \label{item:prop:U'=colorless:2} The characteristic $1$-cycle $b_s$ of color $s$ (see Definition \ref{def:b_s-characteristic-1-chain}) satisfies $\lambda_s^w(b_s)=d$. 
    \item \label{item:prop:U'=colorless:3} The image of the restriction of $\lambda^w$ to $H_1(G,\Lambda)$ agrees with $d\cdot U_\Lambda\subset \Lambda^S$.
\end{enumerate} 
\end{proposition}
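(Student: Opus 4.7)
The plan is to prove (2) first by a direct computation, and then derive both (1) and (3) from a single master identity comparing $Q_s$ with $\lambda_s^w$. The loopless hypothesis enters only to guarantee, via Lemma \ref{lem:integral-realization}, that $y_s \colon U_\Lambda \to \Lambda$ is surjective, and hence nonzero as a linear form.

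For (2), I would unwind $\lambda_s^w(b_s) = \sum_{i \in E_s} a(i)^2$ directly from Definitions \ref{def:b_s-characteristic-1-chain} and \ref{def:G-weight}. The identity \eqref{def:a(i)}, namely $x_i|_{U_\Lambda} = a(i)\, y_s$ for every $i \in E_s$, yields
$$\sum_{i \in E_s} a(i)^2 \cdot y_s^2 \;=\; \sum_{i \in E_s} \bigl(x_i|_{U_\Lambda}\bigr)^2 \;=\; d \cdot y_s^2,$$
where the second equality is property (2) of a quadratic $\Lambda$-splitting of level $d$ (Definition \ref{def:d-Lambda-splitting-graph}). Since $(\underline R, S)$ is loopless, $y_s \neq 0$ in $U^\ast$ by Lemma \ref{lem:integral-realization}, and because $\Lambda$ embeds in $\R$ the quadratic form $y_s^2$ on $U_\Lambda$ is nonzero. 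Scalar cancellation then gives $\sum_{i \in E_s} a(i)^2 = d = \lambda_s^w(b_s)$.

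The key identity driving (1) and (3) is the following: for any $u \in U_\Lambda$ and any $v \in H_1(G, \Lambda)$,
$$Q_s(u, v) \;=\; \sum_{i \in E_s} x_i(u) \, x_i(v) \;=\; y_s(u) \cdot \sum_{i \in E_s} a(i)\, x_i(v) \;=\; y_s(u) \cdot \lambda_s^w(v),$$
where the middle equality uses Proposition \ref{prop:multiple-y-x}. For (1), take $v = u' \in U'$. By the $Q_s$-orthogonality of the decomposition $H_1(G, \Lambda) = U_\Lambda \oplus U'$ (Definition \ref{def:d-Lambda-splitting-graph}), the left-hand side vanishes for every $u \in U_\Lambda$. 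Choosing $u$ with $y_s(u) = 1$ (which exists by Lemma \ref{lem:integral-realization}) forces $\lambda_s^w(u') = 0$; summing over $s$ gives $\lambda^w(u') = 0$.

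For (3), part (1) reduces the image of $\lambda^w|_{H_1(G,\Lambda)}$ to $\lambda^w(U_\Lambda)$. Applying the master identity with $v = u \in U_\Lambda$ and using (2) yields
$$\lambda_s^w(u) \;=\; \sum_{i \in E_s} a(i)\cdot a(i)\, y_s(u) \;=\; d \cdot y_s(u),$$
so $\lambda^w(u) = d \cdot (y_s(u))_{s \in S}$, which under the inclusion $U_\Lambda \hookrightarrow \Lambda^S$ is exactly $d \cdot u$. Hence $\lambda^w(H_1(G,\Lambda)) = d \cdot U_\Lambda$. There is no serious obstacle in this argument; the only delicate point is that the loopless hypothesis, via primitivity of $y_s$, is essential both for the scalar cancellation in (2) and for extracting (1) from $Q_s$-orthogonality.
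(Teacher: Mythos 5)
Your proof is correct and follows essentially the same route as the paper's: both rest on the identity $Q_s(u,\alpha)=y_s(u)\,\lambda_s^w(\alpha)$ (the paper's \eqref{eq:Q_s=lambda^w}), the $Q_s$-orthogonality of the splitting, and looplessness (via Lemma \ref{lem:integral-realization}) to produce $u$ with $y_s(u)=1$. The only cosmetic difference is that you verify items (2) and (3) by direct computation with the coefficients $a(i)$, whereas the paper routes them through Proposition \ref{prop:existence-of-solutions} (identifying $b_s$ with the $s$-colored part of $u_s$); both are sound.
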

\begin{proof} 
By assumption in Theorem \ref{thm:d-QE->solutions}, $(\underline R,S)$ is loopless.
By Lemma \ref{lem:integral-realization}, this means that for each $s\in S$, there  exists some $u_s\in U_\Lambda$ and
elements $c_t\in \Lambda$, $t\in S\setminus s$, with
\begin{align} \label{def:u_s}
u_s= e_s + \sum_{t\in S\setminus s} c_te_t\in U_\Lambda.
\end{align} 
If we view $u_s$ as an element of $U_\Lambda\subset H_1(G,\Lambda)$, then this implies by Proposition \ref{prop:existence-of-solutions} that the $s$-colored part of the closed $1$-chain $u_s$ agrees with the characteristic $1$-chain $b_s$, see Definition \ref{def:b_s-characteristic-1-chain}. 

Consider the bilinear form 
$Q_s=\sum_{i\in E_s}x_i^2$ 
on $C_1(G,\Lambda)$.
We then have
$$
Q_s(u_s,\alpha )=Q_s(b_s,\alpha) \quad \quad \text{for all $\alpha\in C_1(G,\Lambda)$,}
$$
because $Q_s$ is supported on the edges of color $s$.
Moreover, if we write $\alpha=\sum_{i\in E} \alpha_i \cdot i$ as a linear combination of edges, then, by linearity, and because $Q_s=\sum_{i\in E_s}x_i^2$, we get
$$
Q_s(b_s,\alpha)=\sum _{i\in E_s}\alpha_i Q_s(b_s,i)=\sum _{i\in E_s}\alpha_i Q_s(a(i)i,i)=\sum_{i\in E_s}a(i)\cdot \alpha_i.
$$
The above right hand side agrees with $\lambda_s^w(\alpha)$.
We have thus shown that
\begin{align}\label{eq:Q_s=lambda^w}
Q_s(u_s,\alpha )=\lambda_s^w(\alpha )\quad \quad \text{for all $\alpha \in C_1(G,\Lambda)$} .
\end{align}

Since $u_s\in U_\Lambda$ and the decomposition $H_1(G,\Lambda)=U_\Lambda\oplus U'$ is $Q_s$-orthogonal, \eqref{eq:Q_s=lambda^w} implies that 
$$
\lambda^w(u')=Q_s(u_s,u')=0\quad \quad \text{for all $u'\in U'$.}
$$
This proves item \eqref{item:prop:U'=colorless:1} in the proposition.

To prove item \eqref{item:prop:U'=colorless:2}, 
note that $ \lambda^w_s(b_s)=\lambda^w_s(u_s) $, because the $s$-colored part of the closed $1$-chain $u_s$ is given by $b_s$.
Moreover, $\lambda^w_s(u_s)=Q_s(u_s,u_s)$, by \eqref{eq:Q_s=lambda^w}.
Finally, $Q_s(u_s,u_s)=d \cdot y_s(u_s)^2$, because  $Q_s|_{U_\Lambda}=d\cdot y_s^2$.
By \eqref{def:u_s}, $y_s(u_s)=1$ and so 
$$
\lambda^w_s(b_s)=d\cdot y_s(u_s)^2=d.
$$
 This proves item \eqref{item:prop:U'=colorless:2}.

Finally, to prove item \eqref{item:prop:U'=colorless:3}, recall the commutative diagram \eqref{diag:existence-of-solutions}, see Proposition \ref{prop:existence-of-solutions}.
By item \eqref{item:prop:U'=colorless:1}, proven above, and the decomposition $H_1(G,\Lambda)=U_\Lambda\oplus U'$, the image of $\lambda^w$ agrees with the image of its restriction to $U_\Lambda$.
Let now $u\in U_\Lambda$, view $U_\Lambda$ as a subspace of $\Lambda^S$, and write $u=\sum c'_se_s$ for some $c'_s\in \Lambda$.
By the commutative diagram \eqref{diag:existence-of-solutions}, we then have
$$
\lambda^w(u)=\lambda^w\left(\sum_{s\in S} c'_sb_s\right)=\sum_{s\in S}c'_s\lambda_s^w(b_s)\cdot e_s .
$$
By item \eqref{item:prop:U'=colorless:2}, proven above, $\lambda_s^w(b_s)=d$ and we get
$$
\lambda^w(u)=\sum_{s\in S}c'_sd\cdot e_s=d\sum_{s\in S} c'_se_s=d\cdot u\in \Lambda^S.
$$
This proves item \eqref{item:prop:U'=colorless:3} and hence concludes the proof of the proposition.
\end{proof}

\subsection{Refinement of $G$ and its Albanese image}
In this section we finish the proof of Theorem \ref{thm:d-QE->solutions}.
To this end, we write $d=\ell^jd'$ for an integer $d'$ coprime to $\ell$ and we fix an integer $r\geq j+1$.
Our goal is to show that the $(\ell^{r},\ell^j)$-Albanese graph $\Alb_{\ell^{r},\ell^j}(\underline R)$ admits an $\ell^{j+1}$-indivisible $\Lambda$-solution (which is constant modulo $\ell^r$), see Definitions \ref{def:Alb} and \ref{def:solution}.

To begin with, for each edge $i$ of $G$, 
we let $\hat a(i)\in \Z$ denote the smallest positive integer with the property that
$$
\hat a(i)\equiv a(i) \bmod \ell^r,
$$
where $a(i)\in \Lambda$ are the elements from \eqref{def:a(i)}. 
Note that $\hat a(i)$ exists because $\Lambda/\ell^r=\Z/\ell^r$ by assumption.

\begin{definition}
We denote by $\hat G$ the oriented $S$-colored graph that is given by the refinement of $G$, where each edge $i\in E_s$ of color $s$ of $G$ is replaced by a chain of $\hat a(i)$-many consecutive edges of the same color and same orientation.
\end{definition}

We define 
$$
\Xi:C_1(G,\Lambda)\longrightarrow C_1(\hat G,\Lambda)
$$
as the unique linear map which sends an edge $i\in E_s$ of $G$ of color $s$ to the chain of $\hat a(i)$-many edges of $\hat G$ of the same color and same orientation that corresponds to $i$ in the refinement $\hat G$ of $G$. 
Note that $\Xi$ induces the canonical isomorphism
$$
\Xi|_{H_1(G,\Lambda)}\colon H_1(G,\Lambda)\stackrel{\cong}\longrightarrow H_1(\hat G,\Lambda)
$$
induced by the homeomorphism $G\stackrel{\approx}\to \hat G$ given by the fact that $\hat G$ is a refinement of $G$.

Note that $\hat G$ is, by definition, unweighted.
In fact, the refinement is chosen in such a way that the weighted color profile of $G$ compares to the (unweighted) color profile map
$$
\lambda:C_1(\hat G,\Lambda)\longrightarrow \Lambda^S
$$
of $\hat G$, as follows:

\begin{lemma} \label{lem:lambda^w-vs-lambda}
The following diagram commutes modulo $\ell^r$:
$$
\xymatrix{
C_1(G,\Lambda)\ar[d]_{\Xi} \ar[rd]^-{\lambda^w}&\\
C_1(\hat G,\Lambda)\ar[r]_-{\lambda}&\Lambda^S.
}
$$ 
\end{lemma}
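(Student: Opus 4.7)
The plan is to verify the commutativity by linearity, checking the identity on each basis edge of $G$.

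Both maps $\lambda^w$ and $\lambda \circ \Xi$ are $\Lambda$-linear by construction, so it suffices to check the equality modulo $\ell^r$ on each generator of $C_1(G,\Lambda)$, namely on each (oriented) edge $i \in E_s$ of $G$. For such an edge, I would first compute directly from Definition \ref{def:color-profile-map} and \eqref{def:lambda^w} that
$$
\lambda^w(i) = a(i) \cdot e_s \in \Lambda^S,
$$
where $a(i) \in \Lambda$ is the element attached to $i$ in \eqref{def:a(i)}. On the other side, unraveling the definition of $\Xi$, the image $\Xi(i) \in C_1(\hat G, \Lambda)$ is the sum of $\hat a(i)$ oriented edges of $\hat G$, each of color $s$ and all oriented consistently with $i$. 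Applying the (unweighted) color profile map $\lambda$ of $\hat G$ then yields
$$
\lambda(\Xi(i)) = \hat a(i) \cdot e_s \in \Lambda^S.
$$

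To conclude, I would invoke the defining property of $\hat a(i)$: it is the (smallest positive) integer congruent to $a(i)$ modulo $\ell^r$. Hence $a(i) \equiv \hat a(i) \pmod{\ell^r}$ in $\Lambda$, which gives $\lambda^w(i) \equiv \lambda(\Xi(i)) \pmod{\ell^r}$, as claimed. Linearity extends the congruence to arbitrary elements of $C_1(G,\Lambda)$, completing the proof.

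There is no real obstacle here: the lemma is essentially a bookkeeping statement that justifies the choice of the refinement $\hat G$, namely that the purpose of replacing each edge $i$ by a chain of $\hat a(i)$ unweighted edges is exactly to translate the weighted color profile $\lambda^w$ on $G$ into the unweighted color profile $\lambda$ on $\hat G$, up to the error of order $\ell^r$ coming from the reduction $a(i) \mapsto \hat a(i)$.
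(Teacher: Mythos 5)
Your proof is correct and follows essentially the same route as the paper: reduce by linearity to a single edge $i\in E_s$, compute $\lambda^w(i)=a(i)\cdot e_s$ and $\lambda(\Xi(i))=\hat a(i)\cdot e_s$, and conclude from the congruence $\hat a(i)\equiv a(i)\bmod \ell^r$ built into the definition of $\hat a(i)$.
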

\begin{proof}
By linearity, it suffices to  check that $(\lambda \circ \Xi)(i) \equiv \lambda^w(i) \bmod \ell^r$ for  a single edge $i\in E_s$ of color $s$ of $G$.
Then $\Xi(i)$ is a chain of $\hat a(i)$-many edges of the same color and same orientation.
Hence, $\lambda(\Xi(i))=\hat a(i)\cdot e_s$.
On the other hand, $\lambda^w(i)=a(i)\cdot e_s$.
The result thus follows from the fact that  $\hat a(i)\equiv a(i) \bmod \ell^r$.
\end{proof}

\begin{proof}[Proof of Theorem \ref{thm:d-QE->solutions}]
By Lemma \ref{lem:lambda^w-vs-lambda}, item \eqref{item:prop:U'=colorless:3} in Proposition \ref{prop:U'=colorless} implies that 
$$
\im( \lambda \colon H_1(\hat G,\Lambda)\longrightarrow \Lambda^S)\equiv \ell^j U_\Lambda \mod \ell^r \Lambda^S,
$$
where we used that $d=\ell^jd'$, where $d'$ is coprime to $\ell$ and hence invertible in $\Lambda$. 
Thus,
\begin{align}\label{align:im-lambda-Ghat}
\im( \lambda \colon H_1(\hat G,\Lambda)\longrightarrow \Lambda^S)\subset \ell^j U_\Lambda + \ell^r \Lambda^S.
\end{align}
Recall that $G$ is connected by the first reduction step in the proof of Theorem \ref{thm:d-QE->solutions} (see the paragraph after the theorem).
Hence, $\hat G$ is connected as well.  
By Proposition \ref{prop:universal-property-Alb}, \eqref{align:im-lambda-Ghat} implies that the choice of a vertex of $\hat G$ defines a canonical morphism of oriented $S$-colored graphs
$$
\alb \colon \hat G\longrightarrow \Alb_{\ell^r,\ell^j}(\underline R) .
$$ 
Recall the characteristic $1$-chain $b_s\in C_1(G,\Lambda)$ from Definition \ref{def:b_s-characteristic-1-chain} and define 
$$
\hat b_s\coloneqq \Xi(b_s)\in C_1(\hat G,\lambda) .
$$ 
(By the definition of $\Xi$, $\hat b_s$ is the $1$-chain on $\hat G$ that is obtained from $b_s$ by replacing each edge $i\in E_s$ of $G$ by the corresponding chain of $\hat a(i)$-many edges of $\hat G$ of the same color and same orientation.)

Let now $c_s\in \Lambda$, for $s \in S$, be such that $\sum_{s\in S} c_se_s\in U_\Lambda\subset \Lambda^S$.
By Proposition \ref{prop:existence-of-solutions}, $\sum_{s\in S} c_sb_s\in C_1(G,\Lambda)$ is closed.
Since $\Xi$ respects closedness, we find that
$$\textstyle 
\Xi(\sum_{s\in S}c_sb_s)=\sum_{s\in S}c_s\hat b_s\in H_1(\hat G,\Lambda) .
$$
This shows that $(\hat b_s)_{s \in S}$ is a $\Lambda$-solution of $(\underline R,S)$ in $\hat G$, see Definition \ref{def:solution}.
By Lemma \ref{lem:lambda^w-vs-lambda}, we have
$
\lambda(\hat b_s)\equiv \lambda^w(b_s) \bmod \ell^r
$.
By item \eqref{item:prop:U'=colorless:2} in Proposition \ref{prop:U'=colorless}, we deduce that 
\begin{align} \label{eq:lambda(hat b_s)=constant}
\lambda(\hat b_s)\equiv de_s \bmod \ell^r.
\end{align}
Since $d=\ell^jd'$ with $d'$ coprime to $\ell$, and since $r \geq j+1$ by assumption, we see that the $\Lambda$-solution $(\hat b_s)_{s\in S}$ of $(\underline R,S)$ in $\hat G$ is in fact $\ell^{j+1}$-indivisible. Because the Albanese morphism $\alb$ is a morphism of oriented $S$-colored graphs, 
the collection of $1$-chains 
$$
\alb_\ast \hat b_s\in C_1(\Alb_{\ell^r,\ell^j}(\underline R),\Lambda) 
$$ 
with $s\in S$ is then a $\Lambda$-solution of $(\underline R,S)$ in $\Alb_{\ell^r,\ell^j}(\underline R)$ which 
is still $\ell^{j+1}$-indivisible, because $\alb$ does not contract any edge. 
For the same reason, \eqref{eq:lambda(hat b_s)=constant} implies that the solution is constant modulo $\ell^r$ in the sense of Definition \ref{def:solution}.
This concludes the proof of Theorem \ref{thm:d-QE->solutions}.
\end{proof}

\section{From $\ell^{j}$-indivisible solutions to $\ell$-indivisible solutions} 
\label{sec:l^j-indivisible->l-indivisible}

The main result of this section is the following $\ell$-power descent
theorem for solutions in Albanese graphs, improving 
Theorem \ref{thm:d-QE->solutions} from the previous section.

\begin{theorem}\label{thm:reduction-to-2-solution}
Let $(\underline R,S)$ be a regular loopless matroid with integral realization $S\to U^\ast$.
Let $j\leq i\leq r$ be non-negative integers, $\ell$ a prime and let $\Lambda$ be an $\Z_{(\ell)}$-algebra.
Assume that $(\underline R,S)$ has an $\ell^i$-indivisible $\Lambda$-solution $(b_s)_{s\in S}$ in $\Alb_{\ell^{r},\ell ^j}(\underline R)$. 
Then $(\underline R,S)$ has an $\ell^{i-j}$-indivisible $\Lambda$-solution $(b'_s)_{s\in S}$ in $\Alb_{\ell^{r-j},1}(\underline R)$. 
If, moreover, $(b_s)_{s\in S}$ is constant modulo $\ell^r$, then we can arrange that $(b'_s)_{s\in S}$ is constant modulo $\ell^{r-j}$.
\end{theorem}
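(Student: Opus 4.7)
The strategy is to construct a natural morphism of oriented $S$-colored graphs
$$p\colon \Alb_{\ell^r,\ell^j}(\underline R)\twoheadrightarrow \Alb_{\ell^{r-j},1}(\underline R)$$
induced by the inclusion of sublattices $\ell^j U + \ell^r\Z^S\subseteq U + \ell^{r-j}\Z^S$ in $\Z^S$ (which is easily checked on vertices and extends canonically to oriented, colored edges), and then to combine a pushforward argument with a ``division by $\ell^j$'' step enabled by the loopless hypothesis on $\underline R$.

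Let $(b_s)_{s\in S}$ be the given $\ell^i$-indivisible $\Lambda$-solution in $\Alb_{\ell^r,\ell^j}(\underline R)$ with color profiles $\lambda(b_s) = a_s e_s$. The first key observation is that looplessness of $\underline R$ forces $a_s\in\ell^j\Lambda$ for every $s\in S$. Indeed, by Lemma \ref{lem:integral-realization} one can, for each $s$, choose a vector $u^{(s)}\in U$ whose $s$-th coordinate equals $1$; applying the $\Lambda$-solution condition to $u^{(s)}\in U_\Lambda$ produces a closed $1$-cycle whose color profile $\sum_t (u^{(s)})_t\, a_t\, e_t$ lies in $\lambda(H_1(\Alb_{\ell^r,\ell^j},\Lambda)) = \ell^j U_\Lambda + \ell^r\Lambda^S\subseteq\ell^j\Lambda^S$, and inspecting the $s$-th coordinate forces $a_s\in\ell^j\Lambda$. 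Write $a_s = \ell^j\alpha_s$; the indivisibility bound then reads $v_\ell(\alpha_s) = v_\ell(a_s) - j < i-j$. Moreover, dividing the color-profile relation $D_a(U_\Lambda)\subseteq\ell^j U_\Lambda + \ell^r\Lambda^S$ (encoding the solution condition for $(b_s)$) by $\ell^j$ yields the analogous relation $D_\alpha(U_\Lambda)\subseteq U_\Lambda + \ell^{r-j}\Lambda^S$ for the diagonal map $D_\alpha\colon \Lambda^S\to\Lambda^S$ with entries $\alpha_s$, which is precisely the combinatorial condition needed downstairs.

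The main step --- and the main obstacle --- is to upgrade this color-profile-level relation for $(\alpha_s)$ to an honest $\Lambda$-solution $(b'_s)$ in $\Alb_{\ell^{r-j},1}(\underline R)$ with $\lambda(b'_s) = \alpha_s e_s$. Since $U_\Lambda + \ell^{r-j}\Lambda^S$ equals $\lambda(H_1(\Alb_{\ell^{r-j},1},\Lambda))$ by Proposition \ref{prop:universal-property-Alb}, and since it is a saturated (hence projective) $\Lambda$-submodule of the free $\Lambda$-module $\Lambda^S$, the surjection $\lambda\colon H_1(\Alb_{\ell^{r-j},1},\Lambda)\twoheadrightarrow U_\Lambda + \ell^{r-j}\Lambda^S$ admits a $\Lambda$-linear section. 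I would use this section to lift $D_\alpha$ to a $\Lambda$-linear map $\zeta\colon U_\Lambda\to H_1(\Alb_{\ell^{r-j},1},\Lambda)$, and then construct the $b'_s$ by exploiting the freedom in the choice of section to arrange that, for each $s$, the $s$-colored components of the cycles $\zeta(u)$ for $u\in U_\Lambda$ are all proportional (in $u$) to a single $s$-colored chain $b'_s$ with profile $\alpha_s e_s$. This gives $\sum_s c_s b'_s = \zeta(u)$ for $u=\sum c_s e_s\in U_\Lambda$, so that $(b'_s)$ is a $\Lambda$-solution, and the $\ell^{i-j}$-indivisibility is immediate from the bound on $v_\ell(\alpha_s)$. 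The technical heart of the argument lies in the careful construction of the section, working inside the Cayley-graph structure of the target Albanese graph and using looplessness to guarantee the compatibility between colors.
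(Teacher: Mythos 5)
Your opening reductions are sound: the quotient map $p\colon \Alb_{\ell^r,\ell^j}(\uR)\to\Alb_{\ell^{r-j},1}(\uR)$ exists for the reason you give, and your use of looplessness (via Lemma \ref{lem:integral-realization}) to show that each color profile $\lambda(b_s)=a_s e_s$ satisfies $a_s\in\ell^j\Lambda$ is correct and is essentially the same computation the paper performs inside the proof of Proposition \ref{prop:homotopy-solution}. The transfer of indivisibility ($a_s\notin\ell^i\Lambda$ and $a_s=\ell^j\alpha_s$ imply $\alpha_s\notin\ell^{i-j}\Lambda$) is also fine.

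The gap is exactly where you flag "the main obstacle", and it is not merely technical: the step cannot work as described, because after it you retain only the scalars $(\alpha_s)$ and the lattice condition $D_\alpha(U_\Lambda)\subseteq U_\Lambda+\ell^{r-j}\Lambda^S$, and this data does not suffice to build a $\Lambda$-solution. A $\Lambda$-solution requires, for each $s$, a \emph{single} chain $b_s'$ of pure color $s$ such that the $s$-colored part of the closed cycle attached to every $u\in U_\Lambda$ equals $y_s(u)\cdot b_s'$; an arbitrary $\Lambda$-linear section of $\lambda$ gives closed cycles with the right profiles but with no reason for their $s$-colored parts to be proportional to a fixed chain, and no mechanism is offered to force this. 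Worse, if the principle "any $(\alpha_s)$ with $D_\alpha(U_\Lambda)\subseteq\im\lambda$ is realized by a solution" were true, then taking $\Lambda=\F_2$, $r=1$, $j=0$, $\alpha_s=1$ would produce a $2$-indivisible $\F_2$-solution of \emph{every} regular loopless matroid in $\Alb_{2,1}$, contradicting Proposition \ref{prop:K_5-K_3,3} for $M(K_5)$ and $M(K_{3,3})$. So the reconstruction step provably fails in general; you must transport the actual chains $b_s$, not just their profiles. (Note also that the naive pushforward $p_\ast b_s$ does not help: it is a genuine solution downstairs, but with profile $\ell^j\alpha_s e_s$, and one cannot divide the chain itself by $\ell^j$ in $\Lambda$.) The paper resolves this by a different device: it embeds a rescaled copy $\widehat\Alb_{\ell^{r-j},1}$ (each edge subdivided into $\ell^j$ pieces) into $\Alb_{\ell^r,\ell^j}$ and constructs a self-map $H$ of the Albanese torus, homotopic to the identity (Lemma \ref{lem:def:H}), which retracts the whole graph onto this subgraph as a morphism of colored graphs. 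Pushing the chains forward along $H$ preserves color profiles because $H\simeq\mathrm{id}$, and the division by $\ell^j$ then happens combinatorially through the refinement bijection of Lemma \ref{lem:Hat-Alb-solutions}, where each subdivided edge collapses back to one edge. Some such transport of the cycles themselves is indispensable, and your proposal does not supply it.
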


\subsection{Albanese tori}

\begin{definition} \label{def:Albanese-torus}
Let $(\underline R,S)$ be a regular matroid with integral realization $S\to U^\ast$, and let $j\leq r$ be non-negative integers.
Then the \emph{$(\ell^r,\ell^j)$-Albanese torus} of $(\underline R,S)$ is the real 
$|S|$-dimensional torus
$$
\mathcal T_{\ell^r,\ell^j}(\underline R)\coloneqq \R^S/(\ell^jU+\ell^r\Z^S)  .
$$ 
\end{definition}

\begin{remark} \label{rem:Alb-torus-well-defined}
It follows from Lemma \ref{lem:unique-integer-realization} that $\mathcal T_{\ell^r,\ell^j}(\underline R)$ depends only on $(\underline R,S)$ and not on the chosen integral realization $S\to U^\ast$, cf.\ proof of Lemma \ref{lem:Alb-well-defined}.
\end{remark}

\begin{remark} \label{rem:Alb-torus-polyhedral}
It will frequently be convenient to endow $\mathcal T_{\ell^r,\ell^j}(\underline R)$ with the structure of a polyhedral complex, induced by the unit cube tiling of $\R^S$.
For instance, using this polyhedral structure, the Albanese graph $\Alb_{\ell^r,\ell^j}(\underline R)$ from Definition \ref{def:Alb} is easily identified to the $1$-skeleton of $\mathcal T_{\ell^r,\ell^j}(\underline R)$. 
\end{remark}

The remainder of Section \ref{sec:l^j-indivisible->l-indivisible} is devoted to a proof of Theorem \ref{thm:reduction-to-2-solution}.
To this end we fix a regular loopless matroid $(\underline R,S)$  with integral realization $S\to U^\ast$. 
We then write for simplicity 
$$
\Alb_{\ell^r,\ell^j}\coloneqq \Alb_{\ell^r,\ell^j}(\underline R)\ \ \text{and}\ \ \mathcal T_{\ell^r,\ell^j}\coloneqq \mathcal T_{\ell^r,\ell^j}(\underline R).
$$
By Remark \ref{rem:Alb-torus-polyhedral}, there is a natural inclusion of polyhedral complexes 
$$
\Alb_{\ell^r,\ell^j} \longhookrightarrow \mathcal T_{\ell^r,\ell^j} .
$$
We consider the canonical topological covering map
$$
\xi:\mathcal T_{\ell^r,\ell^j}=\R^S/(\ell^jU+\ell^r\Z^S)\longrightarrow \mathcal T_{\ell^{j},\ell^j}=\R^S/\ell^j\Z^S ,
$$
given by quotiening out $\ell^j\Z^S/\ell^jU + \ell^r\Z^r$. 
This map induces topological coverings of the respective Albanese graphs, that we denote by the same symbol:
$$
\xi \colon \Alb_{\ell^r,\ell^j}\longrightarrow \Alb_{\ell^j,\ell^j}.
$$

\subsection{A refinement of the Albanese graph and the Albanese torus}
From now on we fix the non-negative integers $j\leq r$ from Theorem \ref{thm:reduction-to-2-solution}.

\begin{definition} \label{def:hat-Alb}
For non-negative integers $a\leq b $, let $\widehat \Alb_{\ell^{b},\ell^a}$ be the $S$-colored graph obtained from $\Alb_{\ell^{b},\ell^a}$ by replacing each edge by a chain of $\ell^j$ consecutive edges of the same color and the same orientation.

Similarly, we denote by $\widehat {\mathcal T}_{\ell^b,\ell^a}$ the polyhedral structure on $\mathbb R^S/(\ell^a U+\ell^b \mathbb Z^S)$, induced by the tiling of $\R^S$ with cubes of side length $1/\ell^j$. 
\end{definition}

In view of the polyhedral structure on  $\mathcal T_{\ell^b,\ell^a}=\R^S/(\ell^a U+\ell^b \mathbb Z^S)$ whose 1-skeleton identifies canonically to  $\Alb_{\ell^{b},\ell^a}$, see Remark \ref{rem:Alb-torus-polyhedral}, we see that the 1-skeleton of $\widehat {\mathcal T}_{\ell^b,\ell^a}$ contains canonically the graph $\widehat \Alb_{\ell^{b},\ell^a}$.

\begin{lemma} \label{lem:Hat-Alb-Hat-mathcalT}
Let $a\leq b$ be non-negative integers.
Then multiplication by $\ell^j$ induces a canonical identification of polyhedral complexes
$$
\widehat {\mathcal T}_{\ell^b,\ell^a} \stackrel{\cong} \longrightarrow  \mathcal T_{\ell^{b+j},\ell^{a+j}}.
$$
Precomposing this map with the inclusion of $\widehat \Alb_{\ell^b,\ell^a}$ into the $1$-skeleton of $\widehat {\mathcal T}_{\ell^b,\ell^a} $ induces a natural embedding of oriented $S$-colored graphs
$$
\iota \colon \widehat \Alb_{\ell^b,\ell^a}\longhookrightarrow \Alb_{\ell^{b+j},\ell^{a+j}}
$$
\end{lemma}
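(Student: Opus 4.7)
The plan is purely a bookkeeping argument tracking how multiplication by $\ell^j$ interacts with the two tilings. I would proceed as follows.

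First, observe that multiplication by $\ell^j$ is a linear self-map of $\R^S$ which restricts to a $\Z$-linear bijection $\ell^a U + \ell^b \Z^S \xrightarrow{\sim} \ell^{a+j} U + \ell^{b+j} \Z^S$. It therefore descends to a homeomorphism
$$
\mu \colon \R^S/(\ell^a U+\ell^b\Z^S) \stackrel{\approx}\longrightarrow \R^S/(\ell^{a+j}U+\ell^{b+j}\Z^S).
$$
Since $\mu$ scales lengths by $\ell^j$, the standard unit cube tiling on the target pulls back to the tiling of the source by cubes of side length $1/\ell^j$. By the definitions of $\widehat{\mathcal T}_{\ell^b,\ell^a}$ (the polyhedral structure induced by the $1/\ell^j$-cube tiling) and $\mathcal T_{\ell^{b+j},\ell^{a+j}}$ (the one induced by the unit cube tiling), this proves the first claim.

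Second, I would identify $\widehat\Alb_{\ell^b,\ell^a}$ with an explicit subcomplex of the $1$-skeleton of $\widehat{\mathcal T}_{\ell^b,\ell^a}$. A vertex of $\widehat\Alb_{\ell^b,\ell^a}$ corresponds, by construction, either to a vertex $[v]$ of $\Alb_{\ell^b,\ell^a}$ (with $v\in\Z^S$) or to an interior subdivision point of an $s$-colored edge, of the form $[v+m e_s/\ell^j]$ for $v\in\Z^S$ and $1\leq m\leq\ell^j-1$; two consecutive such points are joined by an oriented $s$-colored edge. All of these points lie in the $0$-skeleton of $\widehat{\mathcal T}_{\ell^b,\ell^a}$, and the joining segments are $1$-cells of the refined cube tiling, so we have a canonical inclusion $\widehat\Alb_{\ell^b,\ell^a}\hookrightarrow\widehat{\mathcal T}_{\ell^b,\ell^a}$ which is linear on each edge and respects colors and orientations.

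Third, I would compose with $\mu$ and verify that the image lies inside $\Alb_{\ell^{b+j},\ell^{a+j}}$. Under $\mu$, the vertex $[v+m e_s/\ell^j]$ is sent to $[\ell^j v + m e_s]\in V_{\ell^{b+j},\ell^{a+j}}=\Z^S/(\ell^{a+j}U+\ell^{b+j}\Z^S)$, and two consecutive such vertices differ by $e_s$; this is precisely the definition (Definition \ref{def:Alb}) of an oriented edge of color $s$ in $\Alb_{\ell^{b+j},\ell^{a+j}}$. Hence the composition defines a morphism $\iota$ of oriented $S$-colored graphs as claimed.

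Finally, injectivity of $\iota$ on both vertices and edges is immediate: $\mu$ is a homeomorphism of the ambient tori, and the inclusion $\widehat\Alb_{\ell^b,\ell^a}\hookrightarrow\widehat{\mathcal T}_{\ell^b,\ell^a}$ is injective by construction, so $\iota$ is the restriction of the injection $\mu\circ(\text{inclusion})$. The only real care is in matching the two polyhedral structures, which is a scaling computation, so there is no substantive obstacle here; the lemma is essentially a tautology once the scaling factor is recognised.
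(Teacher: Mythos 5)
Your argument is correct and follows the same route as the paper, which simply declares the first claim clear and derives the second from the identification of $\Alb_{\ell^{b+j},\ell^{a+j}}$ with the $1$-skeleton of $\mathcal T_{\ell^{b+j},\ell^{a+j}}$ (Remark \ref{rem:Alb-torus-polyhedral}); you have merely written out the lattice and scaling bookkeeping explicitly.
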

\begin{proof}
The first claim is clear. The second one follows because the $1$-skeleton of $\mathcal T_{\ell^{b+j},\ell^{a+j}}$ is canonically isomorphic to $\Alb_{\ell^{b+j},\ell^{a+j}}$, see Remark \ref{rem:Alb-torus-polyhedral}. 
\end{proof}

\begin{lemma}\label{lem:Hat-Alb-solutions}
Assume that $(\underline R,S)$ has an $\ell^i$-indivisible $\Lambda$-solution in 
$\widehat{\Alb}_{\ell^{r-j},1}$ (which is constant modulo $\ell^r$).
Then  $(\underline R,S)$ has an $\ell^{i-j}$-indivisible $\Lambda$-solution in 
$\Alb_{\ell^{r-j},1}$ (which is constant modulo $\ell^{r-j}$).
\end{lemma}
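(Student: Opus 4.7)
The plan is to produce the solution in $\Alb_{\ell^{r-j},1}$ directly from the given solution in $\widehat{\Alb}_{\ell^{r-j},1}$ by exploiting the fact that, since each edge of $\Alb_{\ell^{r-j},1}$ is refined independently into a chain of $\ell^j$ consecutive edges of the same color, the intermediate vertices of these refined chains each have degree $2$ with both incident edges of the same color. The key structural claim is that every $\hat b_s$ in the given solution necessarily has \emph{constant coefficient along each refined chain of color-$s$ edges}, and the sought $b_s$ is then obtained by collapsing each such chain back to the corresponding single edge of $\Alb_{\ell^{r-j},1}$ with the common coefficient.

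To prove the constant-coefficient property, I would use looplessness (carried over from the standing assumption of the section, cf.\ Theorem \ref{thm:reduction-to-2-solution}) via Lemma \ref{lem:integral-realization}: for each $s\in S$ there exists $u_s=e_s+\sum_{t\neq s}c_te_t\in U_\Lambda$, so that
$$
\alpha_s\ \coloneqq\ \hat b_s+\sum_{t\neq s}c_t\hat b_t
$$
is closed in $\widehat{\Alb}_{\ell^{r-j},1}$ by Definition \ref{def:solution}\eqref{item:def:solution-1}. At an intermediate vertex $v'$ of any refined color-$s$ chain, the only two incident edges of $\widehat{\Alb}_{\ell^{r-j},1}$ both have color $s$ and carry the same orientation, and only $\hat b_s$ contributes to the signed boundary of $\alpha_s$ at $v'$. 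Closedness at $v'$ therefore forces the coefficients of consecutive edges within the chain to coincide.

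Having established this, I would write $\hat b_s=\sum_{e\in E_s}c_e(\hat e_1+\cdots+\hat e_{\ell^j})$, where $E_s$ denotes the set of color-$s$ edges of $\Alb_{\ell^{r-j},1}$ and $(\hat e_k)_{1\leq k\leq \ell^j}$ the refined chain above $e$, and then set $b_s\coloneqq\sum_{e\in E_s}c_e\cdot e\in C_1(\Alb_{\ell^{r-j},1},\Lambda)$. Closedness of $\sum c_sb_s$ for every $(c_s)\in U_\Lambda$ follows by comparing the signed boundary equation at a vertex $v$ of $\Alb_{\ell^{r-j},1}$ with the corresponding equation at $v$ regarded as a vertex of $\widehat{\Alb}_{\ell^{r-j},1}$: the edges of $\widehat{\Alb}_{\ell^{r-j},1}$ incident to $v$ are precisely the first/last edges of the refined chains above the edges of $\Alb_{\ell^{r-j},1}$ incident to $v$, and by the constancy just established they carry exactly the same coefficients as in the corresponding coarse edge; closedness of $\sum c_s\hat b_s$ therefore transfers directly to $\sum c_sb_s$.

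The indivisibility claim is then immediate: since each color-$s$ edge of $\Alb_{\ell^{r-j},1}$ contributes $\ell^j$ color-$s$ edges to $\widehat{\Alb}_{\ell^{r-j},1}$ with the same coefficient $c_e$, we have $\lambda(\hat b_s)=\ell^j\lambda(b_s)$, so $\ell^i\nmid\lambda(\hat b_s)$ forces $\ell^{i-j}\nmid\lambda(b_s)$. The only delicate step is the first one; once the constant-coefficient property is in hand, everything else follows formally from the fact that $\widehat{\Alb}_{\ell^{r-j},1}$ and $\Alb_{\ell^{r-j},1}$ share the same underlying topological space with the refinement only introducing degree-$2$ vertices along colored chains.
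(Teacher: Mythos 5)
Your proof is correct and follows the same route as the paper: the paper simply asserts that, by looplessness, the edge-subdivision map $C_1(\Alb_{\ell^{r-j},1},\Lambda)\to C_1(\widehat{\Alb}_{\ell^{r-j},1},\Lambda)$ induces a bijection on $\Lambda$-solutions, and you supply exactly the missing verification (constancy of coefficients along each refined chain via closedness at the degree-$2$ intermediate vertices, using Lemma \ref{lem:integral-realization}). The bookkeeping $\lambda(\hat b_s)=\ell^j\lambda(b_s)$ and the resulting indivisibility statement are also as intended.
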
 

\begin{proof}
Recall that $\widehat{\Alb}_{\ell^{r-j},1}$ is obtained from $\Alb_{\ell^{r-j},1}$ by dividing each edge $e$ of $\Alb_{\ell^{r-j},1}$ into a chain of $\ell^j$ consecutive edges of the same orientation and the same color.
Since $(\underline R,S)$ is loopless, 
the natural linear map $C_1(\Alb_{\ell^{r-j},1},\Lambda) \to C_1(\widehat \Alb_{\ell^{r-j},1},\Lambda)$, that sends an edge to the sum of the consecutive edges that divide it, induces a bijection between the spaces of $\Lambda$-solutions.
The lemma follows from this. 
\end{proof}

In order to prove Theorem \ref{thm:reduction-to-2-solution}, our goal is to produce solutions of $\Alb_{\ell ^r,\ell ^j}$ that are supported on the image of $\widehat{\Alb}_{\ell^{r-j},1}$ and to apply Lemma \ref{lem:Hat-Alb-solutions} to conclude.
The idea is to push solutions from $\Alb_{\ell ^r,\ell ^j}$ to $\widehat{\Alb}_{\ell^{r-j},1}$, viewed as a subgraph of  $\Alb_{\ell ^r,\ell ^j}$ via Lemma \ref{lem:Hat-Alb-Hat-mathcalT}, along a suitable map of the ambient Albanese torus, which we discuss next.

\subsection{Homotopy}
For a real number $x$, 
we denote by $\lceil x \rceil $ the round-up of $x$.
For a vector $x\in \R^S$ we denote by $\lceil x \rceil $ the vector, given by applying $\lceil - \rceil $ componentwise.
We then consider the fundamental domain $[0,\ell ^j]^S$ of $\mathcal T_{\ell^j,\ell^j}=\R^S/\ell ^j\Z^S$ together with the continuous self-map
\begin{align} \label{eq:tilde-h}
\tilde h\colon [0,\ell ^j]^S\longrightarrow [0,\ell ^j]^S,
\end{align}
given by
$$
\tilde h(x)\coloneqq \begin{cases} 
 \ell^j\cdot x  &\text{if $x\in [0,1]^S$}\\
  \ell^j \lceil x/\ell ^j\rceil  &\text{otherwise}.
\end{cases}
$$ 

\begin{lemma} \label{lem:r}
The map $\tilde h$ descends to a continuous self-map  
$$
h\colon \mathcal T_{\ell^j,\ell^j}=\R^S/\ell^j\Z^S\longrightarrow \mathcal T_{\ell^j,\ell^j}=\R^S/\ell^j\Z^S
$$
which is homotopic to the identity.
\end{lemma}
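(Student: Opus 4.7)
The plan is threefold: verify that $\tilde h$ is continuous on the fundamental domain $[0,\ell^j]^S$, show that it descends to a continuous self-map $h$ of the torus $\mathcal T_{\ell^j,\ell^j}=\R^S/\ell^j\Z^S$, and then exhibit an explicit homotopy from $h$ to the identity. Each step reduces to an elementary coordinate-by-coordinate analysis.

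For continuity, I would check that the two cases defining $\tilde h$ agree along their overlap, which is the locus of points $x\in[0,1]^S$ with some coordinate equal to $1$. The second case, $\ell^j\lceil x/\ell^j\rceil$, takes coordinate value $\ell^j$ on any entry $x_i\in(0,\ell^j]$ and $0$ on any entry $x_i=0$; both formulas therefore agree on the overlap after taking the identifications on $\partial[0,\ell^j]^S$ into account. For the descent, recall that $\mathcal T_{\ell^j,\ell^j}$ is obtained from $[0,\ell^j]^S$ by identifying opposite faces $\{x_i=0\}\sim\{x_i=\ell^j\}$; the key point is that whenever $x_i=0$ one gets $\tilde h(x)_i=0$, and whenever $x_i=\ell^j$ one gets $\tilde h(x)_i=\ell^j$, so identified points map to points that coincide modulo $\ell^j\Z^S$, and the composition with the quotient $[0,\ell^j]^S\twoheadrightarrow\mathcal T_{\ell^j,\ell^j}$ factors through $\tilde h$.

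Finally, for the homotopy, consider the straight-line homotopy
$$
H_t(x)\coloneqq (1-t)\,x+t\,\tilde h(x),\qquad t\in[0,1].
$$
Since each coordinate of $\tilde h(x)$ lies in $[0,\ell^j]$, the convex combination stays in $[0,\ell^j]^S$, and the same face identifications are respected at every time $t$ (the $i$-th coordinate equals $0$ when $x_i=0$ and equals $\ell^j$ when $x_i=\ell^j$). Hence $H_t$ descends to a continuous family $\mathcal T_{\ell^j,\ell^j}\times[0,1]\to\mathcal T_{\ell^j,\ell^j}$ with $H_0=\mathrm{id}$ and $H_1=h$, establishing the claim. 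The main technical obstacle is the bookkeeping on boundary faces and corners of the cube where the definition of $\tilde h$ switches between cases; everything else is formal.
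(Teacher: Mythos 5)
Your straight-line homotopy $H_t(x)=(1-t)x+t\tilde h(x)$ is a perfectly good (and more explicit) alternative to the paper's argument, which instead observes that $\tilde h$ is a product of one-variable maps and reduces to the evident statement for the circle. However, your continuity and descent checks contain a real error, because you are working with the displayed case-split read literally, and that map is neither continuous nor well defined on the torus. The two branches do \emph{not} agree on the overlap $\{x\in[0,1]^S:\exists\, i,\ x_i=1\}$: at $x=(1/2,1)$ (with $|S|=2$) the first branch gives $(\ell^j/2,\ \ell^j)$ while the second gives $(\ell^j,\ \ell^j)$, and these are not congruent modulo $\ell^j\Z^S$. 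Likewise descent fails for the literal formula: the identified boundary points $(1/2,0)\sim(1/2,\ell^j)$ are sent to $(\ell^j/2,0)$ and $(\ell^j,\ell^j)$ respectively, because flipping one coordinate from $0$ to $\ell^j$ changes which branch applies to \emph{all} the other coordinates. So the step you describe as "bookkeeping on boundary faces" is precisely where the argument breaks.

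The fix, which is what the paper's proof actually uses, is to read $\tilde h$ componentwise: $\tilde h(x)_s=\ell^j x_s$ for $x_s\in[0,1]$ and $\tilde h(x)_s=\ell^j$ for $x_s\in[1,\ell^j]$, i.e.\ $\tilde h=\prod_{s\in S}h_1$ for the obvious one-variable map $h_1$ (this is the reading forced by Remark \ref{rem:h} and the figure). With that definition everything in your outline goes through coordinate by coordinate: the two one-variable branches agree at $x_s=1$, the endpoints $0$ and $\ell^j$ are fixed so the map descends, and your convex combination $H_t$ preserves each face $\{x_s=0\}$ and $\{x_s=\ell^j\}$ and hence descends to a homotopy from the identity to $h$. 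You should state this componentwise reading explicitly before running the argument; as written, the proof verifies properties of a map that does not have them.
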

\begin{proof} 
Since $\tilde h$ is defined componentwise, the lemma reduces to the claim that the self-map of the interval $[0,\ell^j]$, which maps $x\in [0,1]$ to $\ell^jx$ and maps $x\in [1,\ell ^j]$ to $\ell^j \lceil x/\ell ^j\rceil=\ell^j$, is continuous and descends to a self-map of the circle $\R/\ell ^j\Z$ which is homotopic to the identity.
This claim is clear and so the lemma follows.
\end{proof}

\begin{remark} \label{rem:h}
We can describe the map $h$ as follows: 
The map $\tilde h$ is induced by first 
retracting the cube $[0,\ell ^j]^S$ of side length 
$\ell^j$ suitably to the unit cube $[0,1]^S$  
and then expanding the unit cube to $[0,\ell^j]^S$ by multiplication by $\ell ^j$. See Figure \ref{fig:map-h}.
\end{remark}

\begin{figure}[h]
    \centering
    \includegraphics[width=0.5\linewidth]{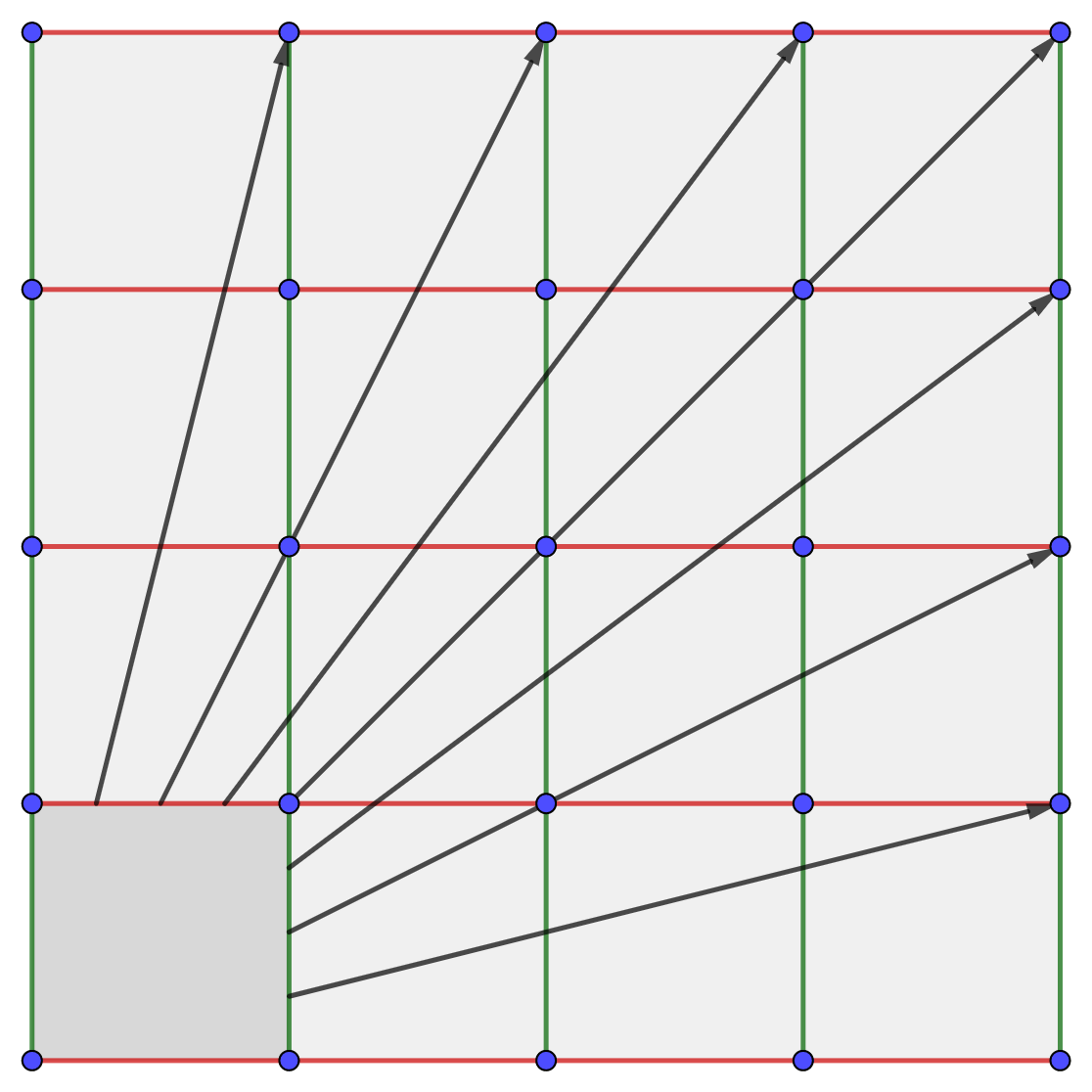}
    \caption{Depiction of the map
    $h\colon [0,2^2]^2\to [0,2^2]^2$ (i.e., $\ell=j=|S|=2$).}
    \label{fig:map-h}
\end{figure}

By Lemma \ref{lem:r}, the map $\tilde h$ from \eqref{eq:tilde-h} extends to a continuous map 
\begin{align} \label{eq:tilde-h-extension}
\tilde h\colon \R^S\longrightarrow \R^S ,
\end{align}
which is equivariant with respect to the natural $\ell^j \Z^S$-action on source and target that is given by translation; we denote this map by slight abuse of notation by the same letter.

Recall the graph $\Alb_{1,1}$, which is a graph with one vertex and one oriented edge for each $s\in S$; i.e.\ it is a wedge of $|S|$-many circles.
We consider the refinement $\widehat{\Alb}_{1,1}$ of $\Alb_{1,1}$ from Definition \ref{def:hat-Alb}.
By Lemma \ref{lem:Hat-Alb-Hat-mathcalT}, there is a canonical inclusion of oriented $S$-colored graphs
\begin{align} \label{eq:hat-Alb_1,1}
\widehat{\Alb}_{1,1}\longhookrightarrow \Alb_{\ell^j,\ell^j} 
\end{align}
which we use to identify $\widehat{\Alb}_{1,1}$ with a subgraph of $ \Alb_{\ell^j,\ell^j}$.
Via the inclusion $\Alb_{\ell^j,\ell^j}\hookrightarrow \mathcal T_{\ell^j,\ell^j}$, we thus get a canonical embedding of 
topological spaces
\begin{align} \label{eq:hat-Alb-11-hat-T}
\widehat{\Alb}_{1,1}\longhookrightarrow \mathcal T_{\ell^j,\ell^j}.
\end{align}
As before, we identify the refinement $\widehat \Alb_{\ell^j,\ell^j}$ of $\Alb_{\ell^,\ell^j}$ 
canonically with a subgraph of the $1$-skeleton of $\widehat{\mathcal T}_{\ell^j,\ell^j}$.
This yields a canonical embedding of topological spaces:
\begin{align} \label{eq:hat-Alb-lj-hat-T}
\widehat \Alb_{\ell^j,\ell^j}\longhookrightarrow \widehat{\mathcal T}_{\ell^j,\ell^j}\approx {\mathcal T}_{\ell^j,\ell^j} .
\end{align}

\begin{lemma} \label{lem:wegde-of-circles-image-h} 
The continuous map $h\colon {\mathcal T}_{\ell^j,\ell^j}\to {\mathcal T}_{\ell^j,\ell^j}$ restricts to a map of oriented $S$-colored graphs
$$
h\colon \widehat{\Alb}_{\ell^j,\ell ^j}\longrightarrow \widehat{\Alb}_{1,1} ,
$$
where both sides are viewed as subspaces of $\mathcal T_{\ell ^j,\ell ^j}$ via the inclusions \eqref{eq:hat-Alb-lj-hat-T} and \eqref{eq:hat-Alb-11-hat-T}, respectively. 
\end{lemma}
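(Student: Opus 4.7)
The plan is to unwind the embeddings of both $\widehat{\Alb}_{\ell^j,\ell^j}$ and $\widehat{\Alb}_{1,1}$ into $\mathcal{T}_{\ell^j,\ell^j}$ concretely, and then apply the componentwise description of $\tilde h$ from the proof of Lemma \ref{lem:r}.

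First I would describe the two subgraphs as subsets of the fundamental domain $[0,\ell^j]^S$. Using Lemma \ref{lem:Hat-Alb-Hat-mathcalT} for $a = b = 0$, the embedding \eqref{eq:hat-Alb-11-hat-T} identifies $\widehat{\Alb}_{1,1}$ with the union of the coordinate axes through the origin, i.e.\ with the subset of points $(x_s)_s \in \mathcal{T}_{\ell^j,\ell^j}$ such that all but at most one coordinate is $\equiv 0 \bmod \ell^j$, and the exceptional coordinate lies in $\Z/\ell^j\Z$. On the other hand, $\widehat{\Alb}_{\ell^j,\ell^j}$ sits inside $\mathcal T_{\ell^j,\ell^j}$ as the $1$-skeleton of the refined polyhedral structure $\widehat{\mathcal T}_{\ell^j,\ell^j}$, hence as the subset of points where at most one coordinate is non-integral, and that coordinate belongs to $(1/\ell^j)\Z$.

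Next I would check that vertices map to vertices. Any vertex $x$ of $\widehat{\Alb}_{\ell^j,\ell^j}$ has the property that for every $s \in S$, the coordinate $x_s$ is either (i) an integer in $\{0,1,\dots,\ell^j\}$ or (ii) of the form $k/\ell^j$ with $1 \leq k \leq \ell^j-1$. Since $\tilde h$ acts componentwise, in case (i) we get $\tilde h(x)_s = 0$ (if $x_s = 0$) or $\ell^j \equiv 0$ (if $x_s \geq 1$), while in case (ii) we have $x_s \in [0,1]$ and $\tilde h(x)_s = \ell^j x_s = k \in \Z$. Since at most one coordinate is of the non-integer type, $h(x)$ has all but at most one coordinate $\equiv 0 \bmod \ell^j$, and the exceptional coordinate is an integer; hence $h(x) \in \widehat{\Alb}_{1,1}$.

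Then I would analyze edges: a directed edge of $\widehat{\Alb}_{\ell^j,\ell^j}$ has the form $[v+(k/\ell^j)e_s,\ v+((k+1)/\ell^j)e_s]$ with $v \in \Z^S \cap [0,\ell^j-1]^S$, $s \in S$, and $0 \leq k \leq \ell^j-1$, oriented along $+e_s$ and colored $s$. If $v_s = 0$, the $s$-coordinate ranges through $[k/\ell^j,(k+1)/\ell^j] \subset [0,1]$, on which $\tilde h$ is multiplication by $\ell^j$, sending the segment onto $[ke_s,(k+1)e_s]$; the other coordinates are constant integers $v_t$ and are therefore sent to $0 \bmod \ell^j$ as in the vertex case. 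The image is the directed edge $[ke_s,(k+1)e_s]$ of $\widehat{\Alb}_{1,1}$, again of color $s$ and with orientation preserved by the strict monotonicity of $\tilde h$ on $[0,1]$. If instead $v_s \geq 1$, the $s$-coordinate lies entirely in $[1,\ell^j]$, on which $\tilde h \equiv \ell^j$, so the entire edge is contracted to the origin, which is a vertex of $\widehat{\Alb}_{1,1}$; this is allowed by the convention on morphisms of (colored) graphs used throughout the paper (cf.\ Section \ref{subsec:specialization}).

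The verification is essentially bookkeeping, and the main (minor) technical point is keeping track of the boundary case $v_s = 0$, $k = \ell^j - 1$, where the $s$-coordinate reaches exactly $1$; here both branches of the definition of $\tilde h$ agree, so no ambiguity arises. Combining vertex and edge analyses yields the desired factorization $h\colon \widehat{\Alb}_{\ell^j,\ell^j} \to \widehat{\Alb}_{1,1}$ as a morphism of oriented $S$-colored graphs.
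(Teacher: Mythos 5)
Your proposal is correct and is essentially an expanded version of the paper's argument: the paper disposes of this lemma with ``this follows easily from the construction of $\tilde h$ and $h$,'' and your vertex/edge bookkeeping in the fundamental domain $[0,\ell^j]^S$ is exactly the verification being elided, including the key points that non-contracted $s$-colored edges land on $s$-colored edges of the subdivided loop and that contracted edges (those with $v_s\geq 1$) go to the base vertex, which is permitted for morphisms of colored graphs in this paper. One small slip: your case enumeration for vertices is incomplete, since the non-integral coordinate of a vertex of $\widehat{\Alb}_{\ell^j,\ell^j}$ has the form $v_s+k/\ell^j$ and need not lie in $[0,1]$ when $v_s\geq 1$; this case is harmless (such a coordinate lies in $[1,\ell^j]$, where $\tilde h\equiv \ell^j\equiv 0$) and you in fact treat it correctly in the edge analysis, but it should be stated for vertices as well. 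A second cosmetic point is that your opening descriptions conflate the underlying topological subspaces of $\widehat{\Alb}_{1,1}$ and $\widehat{\Alb}_{\ell^j,\ell^j}$ with their vertex sets; the subsequent argument does not depend on this.
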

\begin{proof} 
This follows easily from the construction of $\tilde h$ and $h$.  
\end{proof}

\begin{lemma} \label{lem:covering-wedge-circles}
The following diagram is Cartesian:
$$
\xymatrix{
\widehat \Alb_{\ell^{r-j},1} \ar@{^{(}->}[r]^{\iota} \ar[d]&\Alb_{\ell^r,\ell ^j}\ar[d]^{\xi}\\
\widehat \Alb_{1,1} \ar@{^{(}->}[r]^{\iota}&\Alb_{\ell ^j,\ell ^j},
}
$$
where the horizontal maps are the embeddings from Lemma \ref{lem:Hat-Alb-Hat-mathcalT} and the vertical maps are the natural covering maps, respectively.
\end{lemma}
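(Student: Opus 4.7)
The plan is to lift this square of graphs to a Cartesian square of real tori carrying their natural unit cube polyhedral structures, and then restrict to the appropriate subgraphs. Concretely, I would consider the commutative square
\[
\xymatrix{
\mathcal T_{\ell^{r-j},1} \ar[r]^{\cdot \ell^j} \ar[d]_{\pi} & \mathcal T_{\ell^r,\ell^j} \ar[d]^{\xi} \\
\mathcal T_{1,1} \ar[r]^{\cdot \ell^j} & \mathcal T_{\ell^j,\ell^j}
}
\]
where $\pi$ is the natural quotient map of tori and the horizontal arrows are multiplication by $\ell^j$. Commutativity is clear since all four tori are quotients of $\mathbb R^S$. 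Both horizontal arrows are in fact isomorphisms of real tori: the top one sends the lattice $U+\ell^{r-j}\mathbb Z^S$ bijectively to $\ell^j U+\ell^r\mathbb Z^S$, while for the bottom, $U+\mathbb Z^S=\mathbb Z^S$ so $\mathcal T_{1,1}=\mathbb R^S/\mathbb Z^S$ and $\cdot \ell^j$ sends $\mathbb Z^S$ bijectively to $\ell^j\mathbb Z^S$. Hence the square is trivially Cartesian as a square of real tori.

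Next, I would identify each subdivided graph as an explicit subspace of the ambient torus. Tracing through the construction of $\iota$ in Lemma \ref{lem:Hat-Alb-Hat-mathcalT}, the image $\iota(\widehat \Alb_{\ell^b,\ell^a})$ inside $\mathcal T_{\ell^{b+j},\ell^{a+j}}$ equals $(\cdot \ell^j)(\Alb_{\ell^b,\ell^a})$, where $\Alb_{\ell^b,\ell^a}$ is viewed as the $1$-skeleton of $\mathcal T_{\ell^b,\ell^a}$ for its unit cube structure: scaling a unit edge by $\ell^j$ produces a length-$\ell^j$ segment, which in the target's unit cube structure is exactly the chain of $\ell^j$ consecutive unit edges comprising the subdivided edge of $\widehat \Alb_{\ell^b,\ell^a}$. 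Since $\pi$ is polyhedral with respect to the unit cube structures on its source and target, one furthermore has $\pi^{-1}(\Alb_{1,1})=\Alb_{\ell^{r-j},1}$ as subsets of $\mathcal T_{\ell^{r-j},1}$.

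Combining these with commutativity of the tori square (and using that $\cdot \ell^j$ is a bijection on the bottom row), for any subset $A\subset \mathcal T_{1,1}$ one obtains the identity $(\cdot \ell^j)(\pi^{-1}(A))=\xi^{-1}((\cdot \ell^j)(A))$ inside $\mathcal T_{\ell^r,\ell^j}$. Specializing to $A=\Alb_{1,1}$ yields
\[
\iota(\widehat \Alb_{\ell^{r-j},1})=\xi^{-1}(\iota(\widehat \Alb_{1,1}))
\]
as subspaces of $\mathcal T_{\ell^r,\ell^j}$. Because $\xi$ and both instances of $\iota$ come from polyhedral maps preserving coordinate directions, this identity of subspaces upgrades to an identity of $S$-colored oriented subgraphs of $\Alb_{\ell^r,\ell^j}$, which is precisely the Cartesian property claimed. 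The only delicate point is the description in the previous paragraph---in particular, keeping the subdivided graph $\widehat \Alb_{\ell^b,\ell^a}$ (a proper subgraph of the $1$-skeleton of the refined torus $\widehat{\mathcal T}_{\ell^b,\ell^a}$) distinct from that full $1$-skeleton, which equals $\Alb_{\ell^{b+j},\ell^{a+j}}$ under the identification $\cdot \ell^j$. Once this bookkeeping is clean, matching vertices, edges, colors and orientations is immediate since every arrow in sight is the restriction of a polyhedral coordinate map.
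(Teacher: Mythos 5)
Your proof is correct and follows essentially the same route as the paper: both pass to the ambient Albanese tori, use the multiplication-by-$\ell^j$ identification of Lemma \ref{lem:Hat-Alb-Hat-mathcalT}, and reduce the claim to the compatibility of the covering maps with the unit-cube polyhedral structures (the paper phrases this as gluing two Cartesian squares through the tori, you phrase it as $(\cdot\,\ell^j)(\pi^{-1}(A))=\xi^{-1}((\cdot\,\ell^j)(A))$, which is the same computation). Your explicit care in distinguishing $\widehat\Alb_{\ell^b,\ell^a}$ from the full $1$-skeleton of $\widehat{\mathcal T}_{\ell^b,\ell^a}$ is exactly the right bookkeeping point.
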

\begin{proof}
By Lemma \ref{lem:Hat-Alb-Hat-mathcalT} we have a canonical identification of polyhedral complexes $ \widehat{\mathcal T}_{\ell^{r-j},1}\cong \mathcal T_{\ell^r,\ell^j} $ and $ \widehat{\mathcal T}_{1,1}\cong \mathcal T_{\ell^j,\ell ^j} $.
Using this, we get a natural commutative diagram
$$
\xymatrix{
\widehat \Alb_{\ell^{r-j},1}\ar@{^{(}->}[r]\ar[d] & \ar@{}[dl] | {\Box} \widehat{\mathcal T}_{\ell^{r-j},1}\cong \mathcal T_{\ell^r,\ell^j}\ar[d]^{\xi} & \Alb_{\ell^r,\ell ^j} \ar@{_{(}->}[l] \ar[d]^{\xi} \ar@{}[dl] | {\Box} \\
\widehat {\Alb}_{1,1}\ar@{^{(}->}[r] & \widehat{\mathcal T}_{1,1}\cong \mathcal T_{\ell^j,\ell ^j} & \ar@{_{(}->}[l]\Alb_{\ell^{j},\ell^j} \\
}
$$
where the horizontal maps are the natural embeddings (which are maps of polyhedral complexes) and the vertical maps are induced by the covering map $\xi$.
Note moreover that the squares on the left and the right are Cartesian.
The claim in the lemma then follows because the image of the left lower horizontal map is contained in the image of the right lower horizontal one.
\end{proof}

\begin{definition} \label{def_H}
    We define $H\colon \mathcal T_{\ell^r,\ell ^j}\to \mathcal T_{\ell^r,\ell ^j}$ to be the map of topological spaces 
    that is induced by the equivariant map 
    $\tilde h\colon \R^S\to \R^S$ from \eqref{eq:tilde-h-extension}.
\end{definition}

\begin{lemma} \label{lem:def:H}
The map $H$ is homotopic to the identity and makes following diagram commutative:
$$
\xymatrix{
\mathcal T_{\ell^r,\ell ^j}\ar[r]^{H}\ar[d]^{\xi}&\mathcal T_{\ell^r,\ell ^j}\ar[d]^{\xi}\\
\mathcal T_{\ell ^j,\ell ^j}\ar[r]^{h}& \mathcal T_{\ell ^j,\ell ^j}.
}
$$
\end{lemma}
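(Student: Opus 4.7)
The plan is to verify the three implicit claims in order: well-definedness of $H$, commutativity of the square, and homotopy to the identity.

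First, I would check that $\tilde h$ descends. By Lemma \ref{lem:r} (and the sentence preceding \eqref{eq:tilde-h-extension}), the extension $\tilde h\colon \R^S\to \R^S$ is equivariant under translations by the lattice $\ell^j\Z^S$. Since $U\subset \Z^S$ by our conventions on integral realizations (see Section \ref{subsubsection:matroids}), we have $\ell^j U\subset \ell^j\Z^S$, and of course $\ell^r\Z^S\subset \ell^j\Z^S$ because $j\leq r$. Hence $\tilde h$ is equivariant with respect to the sublattice $\ell^j U+\ell^r\Z^S\subset \ell^j\Z^S$, so it descends to a continuous self-map $H$ of $\mathcal T_{\ell^r,\ell^j}=\R^S/(\ell^jU+\ell^r\Z^S)$, as required in Definition \ref{def_H}.

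Next I would verify commutativity of the square. Both $H$ and $h$ are, by construction, induced from the same equivariant map $\tilde h\colon \R^S\to \R^S$, and $\xi$ is induced from the identity $\R^S\to \R^S$ (it is simply the quotient by the larger lattice $\ell^j\Z^S/(\ell^jU+\ell^r\Z^S)$). Commutativity of $\xi\circ H=h\circ \xi$ is therefore immediate from the universal property of quotients.

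Finally, to see that $H$ is homotopic to the identity, I would use the straight-line homotopy
\[
\tilde H\colon \R^S\times[0,1]\longrightarrow \R^S,\qquad \tilde H(x,t)\coloneqq (1-t)x+t\,\tilde h(x).
\]
For any $v\in\ell^j\Z^S$, the equivariance of $\tilde h$ gives
\[
\tilde H(x+v,t)=(1-t)(x+v)+t(\tilde h(x)+v)=\tilde H(x,t)+v,
\]
so $\tilde H$ is $\ell^j\Z^S$-equivariant in the first variable. As above, equivariance for the sublattice $\ell^jU+\ell^r\Z^S$ follows, so $\tilde H$ descends to a continuous homotopy $\mathcal T_{\ell^r,\ell^j}\times[0,1]\to \mathcal T_{\ell^r,\ell^j}$ from the identity at $t=0$ to $H$ at $t=1$. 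None of the three steps involves a real obstacle; the only subtlety to watch is that the lattice $\ell^jU+\ell^r\Z^S$ defining $\mathcal T_{\ell^r,\ell^j}$ sits inside $\ell^j\Z^S$, which is precisely the equivariance lattice of $\tilde h$, and this is ensured by $U\subset\Z^S$ together with $j\leq r$.
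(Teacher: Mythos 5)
Your proof is correct, and for the key step it takes a different route from the paper. The commutativity argument is the same in both: $H$, $h$ are induced by the same equivariant map $\tilde h$ and $\xi$ by the identity, so the square commutes by the universal property of quotients. For the homotopy, however, the paper argues downstairs-then-up: it invokes Lemma \ref{lem:r} ($h\simeq \mathrm{id}$ on $\mathcal T_{\ell^j,\ell^j}$) and then lifts that homotopy through the covering $\xi$ via the homotopy lifting property. You instead work entirely upstairs on $\R^S$, writing down the explicit straight-line homotopy $\tilde H(x,t)=(1-t)x+t\,\tilde h(x)$ and checking it is $\ell^j\Z^S$-equivariant, hence descends to $\mathcal T_{\ell^r,\ell^j}$ since $\ell^jU+\ell^r\Z^S\subset\ell^j\Z^S$. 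Your computation of equivariance is correct, and the endpoints are visibly $\mathrm{id}$ and $H$. This buys you something concrete: the covering-space argument a priori only produces a homotopy from $H$ to \emph{some} lift of the identity, i.e.\ to a deck transformation of $\xi$, and one must still see that this deck transformation is trivial; your direct construction sidesteps that issue entirely and also makes the well-definedness of $H$ (Definition \ref{def_H}) explicit. The only hypotheses you use --- $U\subset\Z^S$ and $j\leq r$ --- are exactly the ones needed, so the argument is complete.
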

\begin{proof}
Commutativity of the diagram is clear, because $h$ is induced by $\tilde h$ as well.
The fact that $H$ is homotopic to the identity follows from the homotopy lifting property together with the fact that $h$ is homotopic to the identity, see Lemma \ref{lem:r}. 
\end{proof}

It follows from Lemmas \ref{lem:wegde-of-circles-image-h}, \ref{lem:covering-wedge-circles} and \ref{lem:def:H} that $H$ restricts to a 
morphism of oriented $S$-colored graphs
$$
H \colon \widehat \Alb_{\ell^r,\ell ^j}\longrightarrow \widehat \Alb_{\ell^{r-j},1} ,
$$
that we denote, by slight abuse of notation, by the same symbol.
This map has the following important property,
where we recall that $(\underline R,S)$ 
is loopless by the assumption in this section:

\begin{proposition} \label{prop:homotopy-solution} 
Let $(b_s)_{s\in S}$ be a $\Lambda$-solution of $(\underline R,S)$ in $\Alb_{\ell^r,\ell ^j}$.
Let $\hat b_s\in C_1(\widehat \Alb_{\ell^r,\ell ^j},\Lambda)$ be the corresponding $1$-chain obtained via refinement.
Then:
\begin{enumerate}
    \item \label{item:prop:homotopy-solution:1} The collection of $1$-chains
$$
H_\ast \hat b_s\in C_1(\widehat \Alb_{\ell^{r-j},1},\Lambda)
$$
with $s\in S$ is a $\Lambda$-solution of $(\underline R,S)$ in $\widehat \Alb_{\ell^{r-j},1}$.
    \item \label{item:prop:homotopy-solution:2} 
    The color profiles of $b_s$ and $H_\ast \hat b_s$ coincide: $\lambda(b_s)=\lambda(H_\ast \hat b_s )$.
\end{enumerate}
\end{proposition}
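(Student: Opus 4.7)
For part (1) of the proposition, I verify the two conditions in Definition \ref{def:solution}. Since $H\colon \widehat{\Alb}_{\ell^r,\ell^j} \to \widehat{\Alb}_{\ell^{r-j},1}$ is a morphism of $S$-colored graphs, each $H_\ast \hat b_s$ remains of color $s$. The refinement map $e \mapsto \sum_i e_i$ from $C_1(\Alb_{\ell^r,\ell^j},\Lambda)$ to $C_1(\widehat{\Alb}_{\ell^r,\ell^j},\Lambda)$ is a chain map, so for $(c_s) \in U_\Lambda$ the closedness of $\sum_s c_s b_s$ passes to $\sum_s c_s \hat b_s$, and then to $H_\ast\bigl(\sum_s c_s \hat b_s\bigr) = \sum_s c_s H_\ast \hat b_s$ because $H_\ast$ commutes with $\partial$.

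Part (2) is the substantive claim; I plan to analyze $H_\ast$ edgewise, reduce the equality of color profiles to a combinatorial identity, and establish that identity using the solution condition together with looplessness. Let $e = [v, v+e_s]$ be a color-$s$ edge of $\Alb_{\ell^r,\ell^j}$ with refinement $\hat e = \sum_{i=1}^{\ell^j} e_i$. Because $\tilde h$ is defined componentwise and is $\ell^j\Z$-equivariant, $\tilde h_1$ expands the $s$-extent of each $e_i$ linearly by a factor of $\ell^j$ when $v_s \equiv 0 \bmod \ell^j$, and collapses it to a point otherwise. Call $e$ \emph{good} or \emph{bad} accordingly. Then $H_\ast \hat e$ is a chain of $\ell^j$ consecutive color-$s$ edges of $\widehat{\Alb}_{\ell^{r-j},1}$ in the good case and vanishes in the bad case. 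Writing $b_s = \sum_e c_e \cdot e$, this yields $\lambda(H_\ast \hat b_s) = \ell^j G^s_0 \cdot e_s$ with $G^s_0 = \sum_{e \text{ good}} c_e$, while $\lambda(b_s) = \mathrm{Total}_s \cdot e_s$ with $\mathrm{Total}_s = \sum_e c_e$. Thus part (2) reduces to the identity $\mathrm{Total}_s = \ell^j \cdot G^s_0$.

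For each residue $k \in \{0, 1, \ldots, \ell^j - 1\}$, define the layer sum $G^s_k = \sum_{e : v_s(e) \equiv k \bmod \ell^j} c_e$. The residue $v_s \bmod \ell^j$ is well-defined on $V_{\ell^r,\ell^j}$ because looplessness and Lemma \ref{lem:integral-realization} together ensure that the $s$-coordinate image of the defining lattice $\ell^j U + \ell^r \Z^S$ equals $\ell^j \Z$. Since $\mathrm{Total}_s = \sum_k G^s_k$, the identity follows once all $G^s_k$ are shown to agree. To this end, apply the linear functional $L_k\colon C_0(\Alb_{\ell^r,\ell^j},\Lambda) \to \Lambda$ summing coefficients over vertices $w$ with $w_s \equiv k \bmod \ell^j$ to the identity $\sum_t c_t \partial b_t = 0$, which holds for every $(c_t) \in U_\Lambda$. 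For $t \neq s$, each color-$t$ edge has both endpoints in the same $s$-layer (since $e_t$ has zero $s$-component), so $L_k \partial b_t = 0$. For $t = s$, a brief telescoping computation yields $L_k \partial b_s = G^s_{k-1} - G^s_k$. Hence $c_s(G^s_{k-1} - G^s_k) = 0$ for every $(c_t) \in U_\Lambda$; looplessness provides some $(c_t) \in U$ with $c_s = 1$, so $G^s_{k-1} = G^s_k$ for each $k$.

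The main obstacle is part (2): a priori there is no reason $\lambda$ should be preserved by $H_\ast$, since $H$ collapses the bad short edges and keeps the good ones. The required balance fails for generic $1$-chains of color $s$ and only holds for chains belonging to a solution; the layer-sum argument is precisely the mechanism that converts the global solution constraint into this local identity.
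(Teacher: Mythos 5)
Part (1) of your proposal is essentially the paper's argument: refinement and $H_\ast$ are chain maps of $S$-colored graphs, so the closedness condition in Definition \ref{def:solution} is preserved.

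Part (2) is correct but proved by a genuinely different route. The paper never computes $H_\ast$ on chains: it uses that $H$ is homotopic to the identity of the torus $\mathcal T_{\ell^r,\ell^j}$ (Lemmas \ref{lem:r} and \ref{lem:def:H}), pushes the closed chain $\alpha=\sum_t c_t b_t$ (with $c_s=1$, supplied by looplessness via Lemma \ref{lem:integral-realization}) forward to the circle $(\R/\ell^j\Z)\cdot e_s$, and identifies $\lambda_s(\alpha)$ with $\ell^j$ times the resulting winding number, a homotopy invariant; equality of the color profiles then falls out without any edgewise analysis. You instead compute $H_\ast\hat e$ explicitly from the formula for $\tilde h$ (full expansion when the tail of $e$ lies in the $0$-th $s$-layer, collapse otherwise), reduce to the identity $\sum_e c_e=\ell^j G_0^s$, and derive it by showing all layer sums $G^s_k$ coincide, which you extract from $\partial\bigl(\sum_t c_t b_t\bigr)=0$ by applying the layer functionals $L_k$. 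This layer-balance identity is precisely the combinatorial shadow of the paper's homotopy invariance (the closed chain crosses each wall $\{x_s=k\}$ with the same total weight), so your argument is a legitimate, more elementary substitute: it trades the soft topological input for an explicit description of $H_\ast\hat b_s$, at the cost of depending on the precise formula for $\tilde h$ rather than only on $H\simeq\mathrm{id}$. One minor imprecision: well-definedness of $v_s\bmod\ell^j$ on $V_{\ell^r,\ell^j}$ needs only the containment $\mathrm{pr}_s(\ell^jU+\ell^r\Z^S)\subseteq\ell^j\Z$, which is automatic from $U\subseteq\Z^S$ and $r\geq j$; looplessness is not needed for that step (it is needed, as you correctly use it, to produce an element of $U_\Lambda$ with $s$-coordinate $1$).
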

\begin{proof}
Since 
$$
H\colon \widehat \Alb_{\ell^r,\ell ^j}\longrightarrow \widehat \Alb_{\ell^{r-j},1} 
$$
is a map of $S$-colored graphs, and $(b_s)_{s\in S}$ is a $\Lambda$-solution, we find that for any $c_s\in \Lambda$, $s\in S$, with $\sum_s c_se_s\in U_\Lambda$, we have
$\sum_s c_sb_s\in H_1(\Alb_{\ell^r,\ell ^j},\Lambda)$.
Hence, 
$$
\sum_s c_s\hat b_s\in H_1(\widehat \Alb_{\ell^r,\ell ^j},\Lambda) .
$$
We thus conclude
$$
H_\ast \sum_s c_s\hat b_s=\sum_s c_s H_\ast \hat b_s\in H_1(\widehat \Alb_{\ell^{r-j},1},\Lambda ) .
$$ 
This proves that the collection of $1$-chains $H_\ast \hat b_s$, $s\in S$, is a $\Lambda$-solution of $(\underline R,S)$ in $\widehat \Alb_{\ell^{r-j},1}$, as we want in item \eqref{item:prop:homotopy-solution:1} of the proposition.

It remains to prove item \eqref{item:prop:homotopy-solution:2}.
Note that the map $H \colon \widehat \Alb_{\ell^r,\ell ^j}\to \widehat \Alb_{\ell^{r-j},1}$ contracts edges and so it is not directly suitable to compare color profiles.
Instead, we argue as follows.
Let $s\in S$.
Since $(\underline R,S)$ is loopless, there is an element
$$
u_s=\sum_{t\in S }c_te_t\in U_\Lambda \quad \quad \text{with $c_s=1$}, 
$$
see  Lemma \ref{lem:integral-realization}. 
Since $(b_s)_{s\in S}$ is a solution, we conclude that
$$
\alpha\coloneqq \sum_{t\in S} c_tb_t=b_s+\sum_{t\in S\setminus \{s\}}c_tb_t\in H_1(\Alb_{\ell^r,\ell ^j},\Lambda)
$$
is closed.
The $s$-color profile $\lambda_s(\alpha)=\lambda_s(b_s)$ can then be computed as follows.
 
Consider the composition
$$
f \colon \Alb_{\ell^r,\ell ^j}\longhookrightarrow \mathcal T_{\ell^r,\ell ^j}\stackrel{\xi}\longrightarrow  \mathcal T_{\ell^j,\ell ^j} =\R^S/\ell^j\Z^S \stackrel{\pr_s}\longrightarrow (\R/\ell ^j\Z)\cdot e_s .
$$ 
This identifies to a map of $S$-colored oriented graphs, where $(\R/\ell ^j\Z)\cdot e_s$ denotes the graph that consists of a chain of $\ell^j$ edges of color $s$ and of the same orientation, where the starting and end point of the chain are glued to give a circle.
Moreover, the map $f$ does not contract any edge of color $s$.
Hence,
$$
\lambda_s(\alpha)=\lambda_s(f_\ast \alpha)=\ell ^j \mu ,
$$
where $\mu\in \Lambda$ is the unique element such that
$$
f_\ast \alpha=\mu\cdot [(\R/\ell ^j\Z)\cdot e_s]\in H_1((\R/\ell ^j\Z)\cdot e_s,\Lambda),
$$
where $[(\R/\ell ^j\Z)\cdot e_s]$ denotes the positively oriented  generator of $H_1((\R/\ell ^j\Z)\cdot e_s,\Lambda)$.

Let $f' \colon \widehat \Alb_{\ell^r,\ell ^j} \to (\R/\ell^j\Z)\cdot e_s$ (resp.\ $f'' \colon \widehat \Alb_{\ell^{r-j},1} \to (\R/\ell^j\Z)\cdot e_s$) be the composition of the inclusion $\widehat \Alb_{\ell^r,\ell ^j} \hookrightarrow \widehat{\mathcal T}_{\ell^{r},\ell^j} \approx \mathcal T_{\ell^r, \ell^j}$ (resp.\ $\widehat \Alb_{\ell^{r-j},1} \hookrightarrow  \widehat{\mathcal T}_{\ell^{r-j},1}\cong \mathcal T_{\ell^r,\ell^j}$) with the map $\pr_s \circ \xi \colon \mathcal T_{\ell^r, \ell^j} \to (\R/\ell^j\Z) \cdot e_s$.
Note that
\begin{align} \label{eq:f-ast-alpha}
f_\ast \alpha= f'_\ast \sum_{t\in S} c_t\hat b_t =f''_\ast H_\ast \sum_{t\in S} c_t\hat b_t \in  H_1((\R/\ell ^j\Z)\cdot e_s,\Lambda),
\end{align}
where the first equality uses that $\alpha= \sum_{t\in S} c_t b_t $ and $\sum_{t\in S} c_t\hat b_t$ yield the same class in singular homology of $\mathcal T_{\ell ^r,\ell ^j}$, and the second equality uses that $H$, when viewed as a map $ T_{\ell^r,\ell ^j}\to \mathcal T_{\ell^r,\ell ^j}$, is homotopic to the identity. 
More precisely, the second equality follows from the commutative diagram 
\[
\xymatrix{
\widehat \Alb_{\ell^r, \ell^j} 
\ar[r]\ar[d]^H & \mathcal T_{\ell^r, \ell^j}\ar[d]^H \ar[r] & \mathcal T_{\ell^j, \ell^j} \ar[d]^h \ar[r] & (\R / \ell^j\Z) \cdot e_s \ar[d]^{h_s} 
\\
\widehat \Alb_{\ell^{r-j},1}
\ar[r] & \mathcal T_{\ell^r, \ell^j} \ar[r]& \mathcal T_{\ell^j, \ell^j} \ar[r] & (\R / \ell^j\Z) \cdot e_s,
}
\]
where the rightmost vertical arrow $h_s$
is induced by $\tilde h$ from \eqref{eq:tilde-h} 
(for $S = \set{s}$) hence homotopic to the identity,
and the two horizontal compositions coincide with $f'$ and $f''$ respectively. 
By \eqref{eq:f-ast-alpha} and  the above description of $\lambda_s$, we then deduce 
that 
$$\textstyle 
\lambda_s\left( \alpha \right)=
\lambda_s\left( H_\ast \sum_{t\in S} c_t\hat b_t\right).
$$ 
Hence,
$$\textstyle 
\lambda_s(b_s)=\lambda_s(\alpha)=
\lambda_s\left( H_\ast \sum_{t\in S} c_t\hat b_t\right) 
=\lambda_s\left( H_\ast \hat b_s\right),
$$
where the last equality uses that $H_\ast b_t$ is a $1$-chain of color $t$, because $H\colon \widehat{\Alb}_{\ell^r,\ell ^j}\to \widehat{\Alb}_{\ell^{r-j},1}$ is a map of $S$-colored graphs.
This proves item \eqref{item:prop:homotopy-solution:2} in the proposition and hence concludes the proof.
\end{proof}

We are now in the position to finish the proof of Theorem \ref{thm:reduction-to-2-solution}.

\begin{proof}[Proof of Theorem \ref{thm:reduction-to-2-solution}]
By Lemma \ref{lem:Hat-Alb-solutions}, it suffices to produce  an $\ell^{i}$-indivisible $\Lambda$-solution in $\widehat{\Alb}_{\ell^{r-j},1}$ (which is constant modulo $\ell^r$).
This is achieved by Proposition \ref{prop:homotopy-solution}. 
\end{proof}

\section{From $2$-indivisible solutions to cographic matroids} \label{sec:2-dindivisible->cographic}

In this section we prove the following, where we recall that $\Alb_{2}(\underline R)\coloneq \Alb_{2,1}(\underline R)$:

\begin{theorem} \label{thm:2-indivisible-solution=>cographic}
Let $(\underline R,S)$ be a regular matroid with integral realization $S\to U^\ast$.
Assume that $(\underline R,S)$ admits a $2$-indivisible $\Z/2$-solution in $\Alb_{2}(\underline R)$.
Then $(\underline R,S)$ is cographic.
\end{theorem}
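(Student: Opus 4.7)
The approach I would take is via Tutte's excluded minor theorem: a regular matroid is cographic if and only if it does not have $M(K_5)$ or $M(K_{3,3})$ as a minor (see \cite[Corollary 13.2.5]{oxley-matroids}). So I will prove the contrapositive. Assume $(\underline R,S)$ is not cographic. Then it admits either $M(K_5)$ or $M(K_{3,3})$ as a minor, and it suffices to establish two things:
(i) the property ``$(\underline R,S)$ admits a $2$-indivisible $\Z/2$-solution in $\Alb_{2,1}(\underline R)$'' is closed under taking minors, and
(ii) neither $M(K_5)$ nor $M(K_{3,3})$ has a $2$-indivisible $\Z/2$-solution in its associated $(2,1)$-Albanese graph.
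Combining (i) and (ii) with the minor assumption yields a contradiction.

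For (i), I would treat deletion and contraction of an element $s_0\in S$ separately. Unpacking Definition \ref{def:Alb}, the Albanese graph $\Alb_{2,1}(\underline R\setminus s_0)$ is the quotient of $\Alb_{2,1}(\underline R)$ obtained by collapsing all edges of color $s_0$ (this follows from the compatibility of the vertex set $\Z^S/(U+2\Z^S)$ with deletion of a coordinate, using the fact that deletion keeps $U$ unchanged when $\rk(\underline R\setminus s_0)=\rk(\underline R)$, and induces a surjection of vertex sets in the general case). Pushing a solution $(b_s)_{s\in S}$ along this quotient map and discarding the color-$s_0$ component produces the required family $(b'_s)_{s\in S\setminus s_0}$, and $2$-indivisibility of the color profile is preserved because the map does not identify edges of different colors. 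For contraction $\underline R/s_0$, the vertex set $\Z^{S\setminus s_0}/(\bar U+2\Z^{S\setminus s_0})$, where $\bar U=U\cap \Z^{S\setminus s_0}$, is identified with the $\ker\subset \Z^S/(U+2\Z^S)$ of the $s_0$-coordinate projection, so $\Alb_{2,1}(\underline R/s_0)$ sits naturally inside $\Alb_{2,1}(\underline R)$ as the subgraph on these vertices with the color-$s_0$ edges removed. A solution on $\Alb_{2,1}(\underline R)$ then restricts (modulo an averaging over the $\Z/2$-fibre) to give a $2$-indivisible solution on this subgraph. The technical care needed here is to verify that the restriction respects the closedness condition in Definition~\ref{def:solution}\eqref{item:def:solution-1}, which follows from the corresponding fact that the inclusion $\bar U\hookrightarrow\Z^{S\setminus s_0}$ agrees with $U\cap \Z^{S\setminus s_0}\hookrightarrow\Z^{S\setminus s_0}$.

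For (ii), I would write Definition \ref{def:solution} out as an explicit linear-algebra problem over $\F_2$. Since $\Lambda=\Z/2$, a $1$-chain $b_s$ of color $s$ is just a $\F_2$-linear combination of the color-$s$ edges of $\Alb_{2,1}$, and $2$-indivisibility means it involves an odd number of such edges. The closedness constraints from item \eqref{item:def:solution-1}, one for each element of a basis of $U_{\F_2}$, together with the parity constraints on $\lambda(b_s)$, assemble into a single matrix equation $Ax=v$ over $\F_2$, where $x$ encodes the $(b_s)_{s\in S}$ edge by edge. Solvability is equivalent to $v\in\operatorname{im} A$, which reduces to comparing $\rk(A)$ with $\rk(A\,|\,v)$. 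For $M(K_5)$ and $M(K_{3,3})$ the Albanese graph, its color-$s$ edge sets and the forms $y_s$ are all completely explicit, so this is a finite rank computation over $\F_2$. This is precisely the content of the referenced Proposition~\ref{prop:K_5-K_3,3}, which one can verify by hand or machine, e.g. via the Magma listings included later in the paper.

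The main obstacle is the contraction half of the minor-closure argument: deletion is essentially tautological from the functoriality of $\Alb_{2,1}$, but contraction requires identifying $\Alb_{2,1}(\underline R/s_0)$ with a suitable sub-object of $\Alb_{2,1}(\underline R)$ and checking that a solution really does restrict, while preserving both the color-support condition and $2$-indivisibility. Once minor-closure is established, the reduction to $M(K_5)$ and $M(K_{3,3})$ via Tutte turns the theorem into an explicit (small) computation, which is then the clean ending of the proof.
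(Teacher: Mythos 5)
Your proposal is correct and follows the paper's own route exactly: the paper proves this theorem by combining Tutte's excluded-minor characterization with Proposition~\ref{prop:closed-under-minors} (minor-closure, handled just as you describe --- deletion via the quotient collapsing the deleted color, contraction via the embedded translates of $\Alb_{2,1}(\underline R/s_0)$ and pushforward along the translation map, which is your ``averaging over the $\Z/2$-fibre'') and Proposition~\ref{prop:K_5-K_3,3} (the explicit $\F_2$-rank computation for $M(K_5)$ and $M(K_{3,3})$, which the paper carries out in SAGE on the reduced Albanese graph). No gaps; the only caveat is that the literal ``restriction'' in the contraction step must indeed be replaced by the pushforward you parenthetically invoke, since closedness is not preserved under naive restriction to a subgraph.
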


\subsection{Closed under taking minors}

In this section we show that having $\ell^i$-indivisible $\Lambda$-solutions in $\Alb_{\ell^r,\ell ^j}$ 
is a condition that is closed under taking minors. 

\begin{proposition} \label{prop:closed-under-minors} 
Let $(\underline R',S')$ be a minor of a regular matroid $(\underline R,S)$.
Let $\ell$ be a prime number and let $0\leq j\leq i \leq r$ be integers.
Let $\Lambda$ be a $\Z_{(\ell)}$-algebra.
If $(\underline R,S)$ admits an $\ell^i$-indivisible $\Lambda$-solution 
$(b_s)_{s\in S}$ 
in $\Alb_{\ell^r,\ell^j}(\underline R)$, then $(\underline R',S')$ admits an $\ell^i$-indivisible $\Lambda$-solution $(b_s')_{s\in S'}$ 
in  $\Alb_{\ell^r,\ell^j}(\underline R')$. If, moreover, $(b_s)_{s\in S}$ is constant modulo $\ell^r$, then we can arrange that $(b'_s)_{s\in S'}$ is also constant modulo $\ell^{r}$.
\end{proposition}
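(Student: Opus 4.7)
My plan is to reduce to the cases of single deletion and single contraction of an element $s_0 \in S$, iterated to treat arbitrary minors, and in each case construct an explicit chain-level transfer that sends $\ell^i$-indivisible $\Lambda$-solutions of $\uR$ in $\Alb_{\ell^r,\ell^j}(\uR)$ to $\ell^i$-indivisible $\Lambda$-solutions of $\uR'$ in $\Alb_{\ell^r,\ell^j}(\uR')$.

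For the \emph{deletion} step $\uR' = \uR \setminus s_0$, I would observe that the coordinate projection $\pi \colon \Z^S \twoheadrightarrow \Z^{S'}$ (where $S' = S \setminus \{s_0\}$) carries the realization lattice $U \subset \Z^S$ onto $U_{\uR'} \subset \Z^{S'}$, regardless of whether $s_0$ is a coloop. Hence $\pi$ descends to a morphism of oriented graphs $\pi \colon \Alb_{\ell^r,\ell^j}(\uR) \to \Alb_{\ell^r,\ell^j}(\uR')$ which sends edges of color $s \in S'$ to edges of the same color (possibly identifying parallels) and which kills $s_0$-colored edges on the chain level. Setting $b'_s \coloneqq \pi_\ast b_s$ for $s \in S'$ then yields a $\Lambda$-solution: given $(c_s)_{s \in S'} \in U_{\uR',\Lambda}$ one lifts to $(c_s)_{s \in S} \in U_\Lambda$ via surjectivity of $U \twoheadrightarrow U_{\uR'}$, applies $\pi_\ast$ to the closed chain $\sum_s c_s b_s$, and drops the (automatically trivial) $s_0$-contribution. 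Additivity of the color profile under $\pi_\ast$ preserves $\ell^i$-indivisibility.

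The main obstacle is the \emph{contraction} step $\uR' = \uR/s_0$, which I reduce to the subcase where $s_0$ is a non-loop (otherwise $\uR/s_0 = \uR \setminus s_0$). Here $U_{\uR'} = \ker(y_{s_0}\colon U \to \Z)$ is saturated of corank one in $U$, and $y_{s_0}(U) = \Z$ by Lemma \ref{lem:integral-realization}. The coordinate projection does \emph{not} descend to a graph map $\Alb(\uR) \to \Alb(\uR')$. Instead, I would analyze the subgraph $G \subset \Alb_{\ell^r,\ell^j}(\uR)$ obtained by discarding all $s_0$-colored edges: its connected components $G_0, \ldots, G_{\ell^j - 1}$ are the fibres of the well-defined $s_0$-coordinate map $V \to \Z/\ell^j$, and the translation $\tau \coloneqq [e_{s_0}]$ cyclically permutes them. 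A direct calculation using $y_{s_0}(U) = \Z$ shows
\[
\Z^{S'} \cap (\ell^j U + \ell^r \Z^S) \;=\; \ell^j U_{\uR'} + \ell^r \Z^{S'},
\]
and this identifies the component $G_0$ of $[0]$ with $\Alb_{\ell^r,\ell^j}(\uR/s_0)$ as oriented $S'$-colored graphs.

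I would then ``fold'' the given solution into $G_0$ by setting
\[
b'_s \;\coloneqq\; \sum_{k=0}^{\ell^j - 1} \tau^{-k}\bigl(b_s|_{G_k}\bigr) \;\in\; C_1(G_0, \Lambda) \cong C_1(\Alb_{\ell^r,\ell^j}(\uR/s_0), \Lambda).
\]
For $(c_s)_{s \in S'} \in U_{\uR',\Lambda}$, extension by zero lies in $U_\Lambda$, so $\sum_{s \in S'} c_s b_s$ is closed in $\Alb(\uR)$; since this chain avoids $s_0$-edges and non-$s_0$-edges preserve the $s_0$-coordinate mod $\ell^j$, its restriction to each $G_k$ is closed in $G_k$, whence translating by $\tau^{-k}$ and summing yields that $\sum c_s b'_s$ is closed in $G_0$. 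Translation invariance of $\lambda$ gives $\lambda_s(b'_s) = \sum_k \lambda_s(b_s|_{G_k}) = \lambda_s(b_s)$, preserving $\ell^i$-indivisibility. The decisive input is the component-counting identity above, which rests on the regular-matroid property $y_{s_0}(U) = \Z$ for non-loops.
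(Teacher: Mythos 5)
Your proposal is correct and follows essentially the same route as the paper: deletion is handled by pushing forward along the coordinate-projection-induced graph morphism $\Alb_{\ell^r,\ell^j}(\uR)\to\Alb_{\ell^r,\ell^j}(\uR')$, and contraction by embedding $\Alb_{\ell^r,\ell^j}(\uR')$ into $\Alb_{\ell^r,\ell^j}(\uR)$ (via $U'=U\cap\Z^{S'}$) and folding the solution onto one copy by translations. The only differences are cosmetic — the paper contracts a whole independent set at once and folds via a retraction on the union of all translates, whereas you contract one element at a time using powers of $\tau=[e_{s_0}]$ and, usefully, make explicit the lattice identity $\Z^{S'}\cap(\ell^jU+\ell^r\Z^S)=\ell^jU'+\ell^r\Z^{S'}$ that the paper leaves implicit.
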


\begin{proof}
Note first that the question of whether $(\underline R,S)$ has an $\ell^i$-indivisible $\Lambda$-solution in $\Alb_{\ell^r,\ell^j}(\underline R)$ is by Lemma \ref{lem:unique-integer-realization} independent of the chosen realization, see Lemma \ref{lem:Alb-well-defined} and Remark \ref{rem:solutions-well-defined}. 
It thus suffices to prove the proposition for the integral realization $S'\to (U')^\ast$ of  $(\underline R',S')$  that is induced by a given integral realization $S\to U^\ast$, $s\mapsto y_s$, of $(\underline R,S)$.

By the definition of a minor, it suffices to prove the proposition in the case where  $(\underline R',S')$  is a deletion or a contraction of $(\underline R,S)$.

\begin{case} \label{case:closed-minors:1}
$S'\subset S$ and  $(\underline R',S')$ is the deletion of $\underline R$ to $S'$.
\end{case}

In this case, let $(U')^\ast\subset U^\ast$ be the span of the linear forms $y_s$ with $s\in S'$.
Then the induced integral realization of $(\underline R',S')$ is given by $S'\to(U')^\ast$, $s\mapsto y_s$.
The dual of this yields an embedding $U'\hookrightarrow \Z^{S'}$.
With respect to this realization, $\Alb_{\ell^r,\ell^j}(\underline R')$ can be realized as the 1-skeleton 
$$
\Alb_{\ell^r,\ell^j}(\underline R')\subset \R^{S'}/(\ell^jU'+\ell^r\Z^{S'})
$$
of $\R^{S'}/(\ell^jU'+\ell^r\Z^{S'})$, see Remark \ref{rem:Alb-torus-polyhedral}.
A similar description holds for
$$
\Alb_{\ell^r,\ell^j}(\underline R)\subset \R^{S}/(\ell^jU+\ell^r\Z^{S}) .
$$
This description shows that the natural projection map $\Z^S\to \Z^{S'}$, which maps $U$ into $U'$,
 induces a map of oriented $S$-colored graphs
$$
\pi\colon \Alb_{\ell^r,\ell^j}(\underline R)\longrightarrow \Alb_{\ell^r,\ell^j}(\underline R'),
$$
where we view the $S'$-colored graph $\Alb_{\ell^r,\ell^j}(\underline R')$ as an $S$-colored graph (without any edge of color $s\in S\setminus S'$). 
This map does not contract any edge of color $s\in S'$ and so $\lambda(\gamma )=\lambda(\pi_\ast \gamma)$ for any $1$-chain $\gamma$ of color $s\in S'$.
It follows that if $(b_s)_{s\in S}$ is an $\ell^i$-indivisible $\Lambda$-solution of $\Alb_{\ell^r,\ell^j}(\underline R)$ (which is constant modulo $\ell^r$), then $(\pi_\ast b_s)_{s\in S'}$ is an $\ell^i$-indivisible $\Lambda$-solution of $\Alb_{\ell^r,\ell^j}(\underline R')$ (which is constant modulo $\ell^r$).
This concludes the proof in Case \ref{case:closed-minors:1}.

\begin{case} \label{case:closed-minors:2}
$S'=S\setminus T$ for an independent set $T$ of 
$(\underline R,S)$ and $(\underline R',S')$ 
is the contraction of 
$\underline R$ by $T$.
\end{case}

Consider the realization $S\to U^\ast$ of $\underline R$ and let $\langle T\rangle \subset U^\ast$ be the span of $T$.
The induced realization of $\underline R'$ is then given by $S'\to {U'}^\ast$, where ${U'}^\ast=U^\ast/\langle T\rangle $ is a quotient of $U^\ast$.
Dualizing this quotient map, we get an inclusion $U'\hookrightarrow U$, where
$$
U'=U\cap \Z^{S'}
$$
is the intersection of $U$ with the subspace $\Z^{S'}\subset \Z^S$.
This description shows that there is a natural embedding of Albanese tori
$$
\R^{S'}/(\ell^jU'+\ell^r\Z^{S'})\longhookrightarrow \R^{S}/(\ell^jU+\ell^r\Z^{S})
$$
with associated embedding on Albanese graphs
\begin{align} \label{eq:iota:subgraph-minors}
\iota \colon \Alb_{\ell ^r,\ell ^j}(\underline R')\longhookrightarrow \Alb_{\ell ^r,\ell ^j}(\underline R) .
\end{align}
Each element $[x]\in V\coloneq  \Z^S/(\ell^jU+\ell^r \Z^S)$ acts via translation on $\R^{S}/(\ell^jU+\ell^r\Z^{S})$.
The corresponding action restricts to a self-map of the oriented $S$-colored graph $\Alb_{\ell ^r,\ell ^j}(\underline R)$.
We may thus consider the translates of the image of $\iota$:
$$
G\coloneq \bigcup_{[x]\in V}\Alb_{\ell ^r,\ell ^j}(\underline R')+[x] \longhookrightarrow \Alb_{\ell ^r,\ell ^j}(\underline R) .
$$
We identify $G$ with a subgraph of $\Alb_{\ell ^r,\ell ^j}(\underline R)$ and note that any edge of $\Alb_{\ell ^r,\ell ^j}(\underline R)$ of color $s\in S'$ is contained in $G$. 
Note moreover that there is a natural map of oriented $S$-colored graphs
$$
\pi\colon G\longrightarrow \Alb_{\ell ^r,\ell ^j}(\underline R'),
$$
whose restriction to $\Alb_{\ell ^r,\ell ^j}(\underline R')+[x]$ is given by translation by $[-x]$.

Let now $(b_s)_{s\in S}$ be an $\ell^i$-indivisible $\Lambda$-solution of $\Alb_{\ell ^r,\ell ^j}(\underline R)$ (which is constant modulo $\ell^r$).
We claim that the subcollection $(b_{s})_{s\in S'}$ is an  $\ell^i$-indivisible $\Lambda$-solution of $\underline R'$ in the $S'$-colored graph $\Alb_{\ell ^r,\ell ^j}(\underline R')$ (which is constant modulo $\ell^r$).
To prove this, let $\sum_{s\in S'} c_se_s\in U'_\Lambda$.
We then have
$$
\sum_{s\in S'} c_se_s=\sum_{s\in S'} c_se_s+\sum_{t\in T} 0\cdot e_t \in U_\Lambda .
$$
Since $(b_s)_{s\in S}$ is a $\Lambda$-solution of $\Alb_{\ell ^r,\ell ^j}(\underline R)$, we find that the $1$-chain
$$
\sum_{s\in S'} c_sb_s \in C_1(\Alb_{\ell ^r,\ell ^j}(\underline R),\Lambda )
$$
is closed.
This is a sum of $1$-chains of colors contained in $S'$, hence it is supported on the subgraph $G$ above.
We may then consider the map $\pi\colon G\to \Alb_{\ell ^r,\ell ^j}(\underline R')$ from above and we find that
 the collection of $1$-chains $(\pi_\ast b_{s})_{s\in S'}$ on $\Alb_{\ell ^r,\ell ^j}(\underline R')$ is a $\Lambda$-solution of $\underline R'$ in the $S'$-colored graph $\Alb_{\ell ^r,\ell ^j}(\underline R')$.
This solution is $\ell^i$-indivisible (and constant modulo $\ell^r$) because the same holds for   $(b_s)_{s\in S}$ by assumption and because $\pi$ is a map of oriented $S$-colored graphs that does not contract any edge.
This concludes the proof of the proposition. 
\end{proof}

\subsection{Reduced Albanese graph and excluded minors}

Recall from Remark \ref{rem:ell=2-multiple-edges} that the oriented $S$-colored graph $\Alb_{2}(\underline R)=\Alb_{2,1}(\underline R)$ has the property that for each edge of color $s$ which points from a vertex $v$ to another vertex $w$, there is also an edge of the same color between the same vertices which points in the other direction.
For computations with $\bF_2=\Z/2$-homology, orientations do not play any role and so it will be convenient to contract these multiple edges, as follows.

\begin{definition} \label{def:reduced-Alb}
Let $(\underline R,S)$ be a regular matroid with integral realization $S\to U^\ast$.
The \emph{reduced Albanese graph} $\Alb^{\red}_{2}(\underline R)$ is the unique unoriented $S$-colored graph that admits a morphism of $S$-colored graphs $\pi\colon \Alb_{2}(\underline R)\to \Alb^{\red}_{2}(\underline R)$ which is an isomorphism on edges and which identifies parallel edges of the same color to a single edge.
\end{definition}

Since orientations play no role when working with $\F_2$-coefficients, we may for any $S$-colored graph $G$, such as $\Alb^{\red}_{2}(\underline R)$, define color profile maps $\lambda\colon C_1(G,\F_2)\to \F_2^S$ as well as $\F_2$-solutions in $G$ (and $2$-indivisible $\bF_2$-solutions) analogously to Definitions \ref{def:color-profile-map} and \ref{def:solution}.

\begin{lemma} \label{lem:solutions-Alb->Alb-red}
Let $(\underline R,S)$ be a regular matroid with integral realization $S\to U^\ast$.
If $\underline R$ admits a $2$-indivisible $\bF_2$-solution in $\Alb_{2}(\underline R)$, then it also admits a  $2$-indivisible $\bF_2$-solution in $\Alb_{2}^\red(\underline R)$.
\end{lemma}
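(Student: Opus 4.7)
The plan is to simply push the given solution forward along the natural morphism of $S$-colored graphs $\pi \colon \Alb_{2,1}(\uR) \to \Alb^{\red}_{2,1}(\uR)$ from Definition \ref{def:reduced-Alb}. The key observation is that over $\bF_2$ orientation is irrelevant ($-1 = 1$), so even though $\pi$ collapses pairs of oppositely oriented parallel edges (cf.\ Remark \ref{rem:ell=2-multiple-edges}) to a single unoriented edge, the chain-level pushforward
\[
\pi_\ast \colon C_1(\Alb_{2,1}(\uR),\bF_2) \longrightarrow C_1(\Alb^{\red}_{2,1}(\uR),\bF_2)
\]
is well-defined, commutes with the boundary map, and respects the color decomposition.

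Given a $2$-indivisible $\bF_2$-solution $(b_s)_{s\in S}$ in $\Alb_{2,1}(\uR)$, I would set $b_s' \coloneqq \pi_\ast b_s$ and then check the three conditions of Definition \ref{def:solution} plus the indivisibility condition. First, since $\pi$ is a morphism of $S$-colored graphs, each $b_s'$ is a $1$-chain of color $s$. Second, for any tuple $(c_s)_{s \in S}$ with $\sum_s c_s e_s \in U_{\bF_2}$, the chain $\sum_s c_s b_s$ is closed in $\Alb_{2,1}(\uR)$ by hypothesis, and because $\pi_\ast$ commutes with $\partial$, the pushforward $\sum_s c_s b_s' = \pi_\ast\bigl(\sum_s c_s b_s\bigr)$ is closed in $\Alb^{\red}_{2,1}(\uR)$.

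Third, I would verify that the color profile is preserved under $\pi_\ast$: writing $b_s = \sum_e a_e\, e$, one computes for each $t \in S$ that
\[
\lambda_t(\pi_\ast b_s) \;=\; \sum_{\substack{e' \in E(\Alb^{\red}_{2,1})\\ \text{color }t}} \sum_{\pi(e)=e'} a_e \;=\; \sum_{\substack{e \in E(\Alb_{2,1})\\ \text{color }t}} a_e \;=\; \lambda_t(b_s),
\]
since $\pi$ is a bijection on vertices and restricts to a surjection on color-$t$ edges that only identifies parallel edges of the same color. Hence $\lambda(b_s') = \lambda(b_s) \neq 0$ in $\bF_2^S$, so $(b_s')_{s \in S}$ is a $2$-indivisible $\bF_2$-solution in $\Alb^{\red}_{2,1}(\uR)$, as required.

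There is essentially no obstacle in this argument; the only subtle point is the preliminary observation that working with $\bF_2$ coefficients trivialises the orientation data, after which the proof reduces to a one-line application of functoriality of $\pi_\ast$ on chains.
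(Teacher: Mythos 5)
Your proof is correct and follows essentially the same route as the paper: the paper's argument is exactly that orientations are irrelevant over $\bF_2$, so one may push the solution forward along $\pi\colon \Alb_{2,1}(\underline R)\to \Alb^{\red}_{2,1}(\underline R)$, and the fact that $\pi$ identifies edges without contracting any preserves both closedness of the relevant combinations and the color profiles. Your explicit verification of the three conditions of Definition \ref{def:solution} just spells out what the paper leaves as an observation.
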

\begin{proof}
This follows from the fact that 
we may pushforward any  $2$-indivisible $\bF_2$-solution in $\Alb_{2}(\underline R)$ via the map of $S$-colored graphs $\pi\colon \Alb_{2}(\underline R)\to \Alb^{\red}_{2}(\underline R)$ 
(which does not contract any edge) to get a 
$2$-indivisible $\bF_2$-solution in $\Alb_{2}^\red(\underline R)$.
\end{proof}

Recall the complete graph $K_5$ on 5 vertices with $\binom{5}{2}=10$ edges, and the utility graph $K_{3,3}$ with $6$ vertices and $3\cdot 3=9$ edges.
In what follows, we say that an $\F_2$-solution $(b_s)_{s\in S}$ in an $S$-colored graph $G$ is {\em $2$-divisible}, if $\lambda(b_s)=0\in \F_2^S$ for all $s\in S$.

\begin{proposition} \label{prop:K_5-K_3,3}
Let $(\underline R,S)$ be the graphic matroid $M(K_5)$ or $M(K_{3,3})$.
Then $(\underline R,S)$ does not admit a $2$-indivisible $\bF_2$-solution in $\Alb_{2}(\underline R)$ or $\Alb_{2}^{\red}(\underline R)$.
In fact, any $\bF_2$-solution $(b_s)_{s\in S}$ in $\Alb^{\red}_{2}(\underline R)$ is $2$-divisible.
\end{proposition}
\begin{proof}
By Lemma \ref{lem:Alb-well-defined}, 
the space of $\ell$-indivisible $\bF_\ell$-solutions 
in $\Alb_{\ell}(\underline R)$ is independent 
of the chosen realization $S\to U^\ast$. 
For a regular matroid of rank $g$ on $n$ elements,
the graph ${\rm Alb}_{\ell}$ has $\ell^{n-g}$ vertices 
and $n\cdot \ell^{n-g}$ edges. Ranging over a $g$ element
basis of $U$, condition
\eqref{item:def:solution-1} in Definition \ref{def:solution}
amounts to $g\cdot \ell^{n-g}$ conditions on the 
$\bF_\ell$-coefficients of the edges.
Thus, the space of $\bF_\ell$-solutions of $(\uR, S)$
in its Albanese graph is the right kernel of a
$g\cdot \ell^{n-g}\times n\cdot \ell^{n-g}$ 
matrix over $\F_\ell$.
Those solutions which are $\ell$-divisible, 
i.e.~lie in $\ker(\lambda)$, modulo $\ell$, 
are the right kernel of an augmented,
$(g\cdot \ell^{n-g}+n)\times 
n\cdot \ell^{n-g}$ matrix. 

Thus, to prove that $(\uR, S)$ does not admit
an $\ell$-indivisible $\bF_\ell$-solution in 
${\rm Alb}_{\ell}(\uR)$, it suffices to verify that the
matrix and its augmentation
have the same rank. For ${\rm Alb}_{2}$, 
Lemma \ref{lem:solutions-Alb->Alb-red} allows us to
further reduce the computational difficulty by
considering ${\rm Alb}_{2}^{\rm red}$ whose solutions
correspond to a submatrix with half the columns 
(i.e.~edges) as ${\rm Alb}_{2}$.

$M(K_{3,3})$ and $M(K_5)$ are, respectively, matroids
of rank $g=5$ on $n=9$ elements, and rank $g=4$ 
on $n=10$ elements.
Thus, the space of solutions for 
${\rm Alb}_{2}^{\rm red}$
are kernels of $80\times 72$ 
and $256\times 320$ matrices
over $\bF_2$ respectively, while the 
$2$-divisible solutions
are kernels of matrices of size
$89\times 72$ and $266\times 320$, respectively.

Computing the kernel in SAGE \cite{sagemath, code}, 
one finds that indeed, all solutions are 
$2$-divisible---these 
kernels are, respectively, of 
rank $15$ and $103$ over $\bF_2$
for $M(K_{3,3})$ and $M(K_5)$. 
This proves the proposition.
\end{proof}

\begin{proposition}\label{prop:R10-even-sol} 
The $\underline{R}_{10}$ matroid does not admit 
$2$-indivisible $\mathbb F_2$-solutions 
in $\Alb^{\red}_{2}(\underline R_{10})$.
In fact, any $\bF_2$-solution is $2$-divisible.
\end{proposition}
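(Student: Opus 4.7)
The plan is to adapt the computational strategy of Proposition \ref{prop:K_5-K_3,3} to $\uR_{10}$. Since the ``in fact'' conclusion implies the main assertion, I aim directly at the refined statement: for every $\bF_2$-solution $(b_s)_{s \in S}$ in $\Alb^{\red}_{2,1}(\uR_{10})$, each color profile $\lambda(b_s)$ vanishes in $\bF_2$.

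First I would fix an explicit integral realization of $\uR_{10}$ by a totally unimodular $5 \times 10$ matrix. By Lemma \ref{lem:Alb-well-defined} and Remark \ref{rem:solutions-well-defined}, neither the underlying $S$-colored graph $\Alb^{\red}_{2,1}(\uR_{10})$ nor its space of $\bF_2$-solutions depends on this choice, as orientations play no role modulo $2$. Since $\uR_{10}$ has rank $g = 5$ on $n = 10$ elements, $\Alb^{\red}_{2,1}(\uR_{10})$ has $2^{n-g} = 32$ vertices and $\tfrac{1}{2} n\cdot 2^{n-g} = 160$ edges.

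Applying condition \eqref{item:def:solution-1} of Definition \ref{def:solution} vertex-by-vertex and for each of $g = 5$ basis elements of $U_{\bF_2} \subset \bF_2^S$, I obtain $g \cdot 2^{n-g} = 160$ linear closedness conditions on the $160$ edge-coefficients of a putative solution, encoded by a $160 \times 160$ matrix $M$ over $\bF_2$. The solution space $\ca S$ is the appropriate null space of $M$, and for each $s \in S$ the color profile yields a linear functional $\lambda_s \colon \bF_2^{160} \to \bF_2$. The refined conclusion is equivalent to saying that, for each $s$, appending $\lambda_s$ as an extra column to $M$ does not increase the rank. This is a rank comparison between two explicit $\bF_2$-matrices of sizes $160 \times 160$ and $160 \times 161$, which I would verify in SAGE \cite{sagemath} for each of the ten colors; the same run also produces the dimension of $\ca S$.

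The main obstacle is the bookkeeping: correctly implementing the identification of vertices with cosets in $\bF_2^{10}/U_{\bF_2}$, correctly handling the collapse of oriented parallel pairs of same-colored edges in $\Alb_{2,1}(\uR_{10})$ under the passage to $\Alb^{\red}_{2,1}(\uR_{10})$ (cf.\ Remark \ref{rem:ell=2-multiple-edges} and Definition \ref{def:reduced-Alb}), and assembling $M$ and the $\lambda_s$ from the chosen totally unimodular matrix representing $\uR_{10}$. Once this setup is in place, the rank computations are routine and should confirm the predicted equalities. One could in principle exploit the large automorphism group of $\uR_{10}$ (acting transitively on the ten elements) to reduce the problem to a single rank equality, but a direct brute-force computation already suffices.
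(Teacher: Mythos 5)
Your proposal is correct, and it coincides with the \emph{secondary} argument the paper itself records: the paper's proof of this proposition first deduces the non-existence of $2$-indivisible solutions structurally, from minor-closedness (Proposition \ref{prop:closed-under-minors}) together with the fact that every one-element deletion of $\uR_{10}$ is isomorphic to $M(K_{3,3})$, so that a $2$-indivisible solution for $\uR_{10}$ would push forward to one for $M(K_{3,3})$, contradicting Proposition \ref{prop:K_5-K_3,3}; it then remarks that one may instead compute directly in $\Alb^{\red}_{2,1}(\uR_{10})$ that all solutions are $2$-divisible, the solution space having rank $35$ over $\bF_2$. Your route is exactly this direct computation, and your bookkeeping is right: $g=5$, $n=10$ gives $32$ vertices, $160$ edges of the reduced graph, and $5\cdot 32=160$ closedness conditions, hence a $160\times 160$ matrix whose left kernel is the solution space, with the refined claim equivalent to the rank not increasing upon appending each $\lambda_s$. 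What the paper's minor argument buys is that no new computation is needed and the reduction exploits the structure of $\uR_{10}$; note, however, that to obtain the ``in fact'' clause this way one must use that the $K_{3,3}$ computation actually proves the stronger statement that \emph{all} color profiles vanish (not merely that some profile vanishes), since for a fixed $s_1$ one deletes some $s_0\neq s_1$ and reads off $\lambda(b_{s_1})=0$ in the minor. Your direct computation avoids that subtlety entirely and yields the refined statement in one pass, at the cost of one more (routine) rank computation.
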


\begin{proof}  
The first claim follows from Proposition \ref{prop:closed-under-minors},
the fact that any $1$-element deletion of $\uR_{10}$ is 
$M(K_{3,3})$, and Proposition \ref{prop:K_5-K_3,3}.
One may also directly compute that all $\F_2$-solutions
in ${\rm Alb}_{2}^{\rm red}(\uR_{10})$ are 
$2$-divisible (the rank of the solution space 
over $\bF_2$ is $35$).
\end{proof}

\subsection{Proof of Theorem \ref{thm:2-indivisible-solution=>cographic}}

\begin{proof}[Proof of Theorem \ref{thm:2-indivisible-solution=>cographic}]
By a theorem of Tutte, see \cite[p.\ 441, Corollary 13.3.4]{oxley-matroids}, a regular matroid is cographic if and only if it does not have the graphic matroid associated to $K_5$ or $K_{3,3}$ as a minor.
The theorem thus follows from Propositions \ref{prop:closed-under-minors} and \ref{prop:K_5-K_3,3}.
\end{proof}

\section{Conclusions, applications, and discussion}  
\label{sec:application}

\subsection{Quadratic splittings in cographic matroids}

\begin{proof}[Proof of Theorem \ref{thm:reduction-to-combinatorics-intro}]
Let $B$ and $B^\star=B\setminus H$ be as in Section \ref{subsec:set-up} and let $\pi^\star \colon X^\star \to B^\star$ be a matroidal family of principally polarized abelian varieties associated to a regular matroid $(\underline R,S)$ with integral realization $S\to U^\ast$, see Definition \ref{def:matroidal-family}.
Let $\iota\colon C_t\to X_t$ be a non-constant morphism from a curve $C_t$ to a very general fiber $X_t$ of $\pi^\star$ with $\iota_\ast [C_t]=m[\Theta]^{g-1}/(g-1)!$.
Let $\ell$ be a prime that is coprime to $m$.
Note that there is an isomorphism of fields $ \overline{\C(B)}\cong \C$ under which the geometric generic fiber of $\pi^\star$ is identified with $X_t$. 
Hence, the geometric generic fiber of $\pi^\star$ contains a curve whose cohomology class is an $\ell$-prime multiple of the minimal class.
(Alternatively, this conclusion can also be derived from a standard Hilbert scheme argument.)
It thus follows from Theorem \ref{thm:algebraic->d-QE} and Remark \ref{rem:Lambda-splitting-graph-versus-general} that there is a positive integer $d$, such that the regular matroid $\underline R$ admits a quadratic $\Z_{(\ell)}$-splitting of level $d$ into a cographic matroid.
\end{proof}

\begin{proof}[Proof of Theorem \ref{thm:splitting-in-cographic=cographic-intro}]
Clearly, any cographic matroid admits, 
for any ring $\Lambda$ and any positive integer $d$,
a $\Lambda$-splitting of level $d$ into a cographic matroid.
Conversely, let $(\underline R,S)$ be a regular matroid which admits a $\Z_{(2)}$-splitting of some level $d$ in a cographic matroid associated to a graph $G$.
Up to performing some deletions, we may assume that $(\underline R,S)$ is loopless.
Moreover, by Lemma \ref{lem:Lambda-splitting-graph-versus-general}, we may without loss of generality assume that $(\underline R,S)$ admits a $\Z_{(2)}$-splitting of level $d$ in a graph (see Definition \ref{def:d-Lambda-splitting-graph}).
It thus follows from Theorems \ref{thm:d-QE->solutions}, \ref{thm:reduction-to-2-solution}, and \ref{thm:2-indivisible-solution=>cographic} that $\underline R$ is cographic. 
Here, we used that any $2$-indivisible $\Z_{(2)}$-solution in $\Alb_{2}(\underline R)$ naturally gives rise to a $2$-indivisible $\Z/2$-solution in $\Alb_{2}(\underline R)$.
This concludes the proof of the theorem.
\end{proof}

\begin{proof}[Proof of Theorem \ref{thm:splitting-in-cographic=cographic-intro-positive-result}]
Let $(\underline R,S)$ be a regular matroid of rank $g$. 
By \cite[Proposition 4.10]{survey}, we may then construct a family $\pi^\star \colon X^\star\to B^\star$ of principally polarized abelian varieties of dimension $g$ which, with respect to an snc extension $B^\star\subset B$ and a point $0\in B\setminus B^\star$ in the boundary, is a matroidal family associated to $\underline R$, see Definition \ref{def:matroidal-family}.
The very general fiber of $\pi^\star$ has the property that $(g-1)!$ times its minimal class is clearly represented by an algebraic curve (namely by the intersection of $g-1$ generic translates of $\Theta$).
It thus follows from Theorem \ref{thm:reduction-to-combinatorics-intro}
that, for any ring $\Lambda$ in which $(g-1)!$ is invertible, $(\underline R,S)$ admits a $\Lambda$-splitting of some level $d$ into a graph, and hence, by Lemma \ref{lem:Lambda-splitting-graph-versus-general}, into a cographic matroid.
This concludes the proof of the theorem.
\end{proof}

\subsection{Curves on very general fibers of matroidal families}

\begin{proof}[Proof of Theorem \ref{thm:matroidal-intro}]
The first paragraph of Section \ref{subsec:outline}
proves Theorem \ref{thm:matroidal-intro} as a formal consequence of Theorems \ref{thm:reduction-to-combinatorics-intro} and \ref{thm:splitting-in-cographic=cographic-intro}, proven above. 
\end{proof}

\begin{remark}
Recall from Remarks \ref{rem:matroidal-family:maximal-degeneration} and \ref{rem:matroidal-family:existence}, that for any regular matroid $\underline R$ of rank $g$ on an $n$-element ground set $S$, there exists a matroidal family of $g$-dimensional principally polarized abelian varieties over an 
$n$-dimensional base that is associated to $\underline R$, cf.\ \cite[Proposition 4.10]{survey}. 
Theorem \ref{thm:matroidal-intro} then shows that if $\underline R$ is not cographic, then the very general fiber of such a family does not contain a curve whose cohomology class is an odd multiple of the minimal class. 
In particular, the integral Hodge conjecture fails for the very general fiber, see Lemma \ref{lem:alg-min->l-prime-multiple-is-effective}.
\end{remark}

\begin{corollary}\label{cor:matroidal-topological-body}
Let $B$ be a smooth quasi-projective variety and let $H\subset B$ be an snc divisor which restricts to the coordinate hyperplanes on an embedded polydisc $\Delta^S\subset B$, centered at a distinguished point $0\in B$.
Let $B^\star=B\setminus H$ and let $\pi^\star\colon X^\star\to B^\star$ be a matroidal family of principally polarized abelian varieties, associated to a regular matroid $\underline R$. 
Then the restriction $ X^\star_{(\Delta^\star)^S}\to (\Delta^\star)^S$ to the punctured polydisc does not 
analytically deform to a family of Jacobians of curves if and only if $\underline R$ is not cographic, and these conditions imply that the integral Hodge conjecture fails for a very general fiber $X_t$. 
\end{corollary}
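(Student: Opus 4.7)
The plan is to combine Theorem \ref{thm:matroidal-intro} with the characterization of cographic matroids as those $\uR$ for which $X^\star_{(\Delta^\star)^S}\to (\Delta^\star)^S$ deforms analytically to a family of Jacobians of curves (recalled in the introduction from \cite[Remark 2.28]{survey}), via Lemma \ref{lem:alg-min->l-prime-multiple-is-effective} as a bridge from ``algebraic'' to ``smooth projective curve''.

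First I would use the contrapositive of the cited characterization: the hypothesis that $X^\star_{(\Delta^\star)^S}\to (\Delta^\star)^S$ does not deform to a family of Jacobians of curves is precisely equivalent to $\uR$ being non-cographic. So we are in a position to invoke Theorem \ref{thm:matroidal-intro}.

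Second, I would reduce the failure of the integral Hodge conjecture to showing that the minimal curve class $[\Theta]^{g-1}/(g-1)!\in H_2(X_t,\Z)$ is not algebraic on the very general fibre $X_t$. Since this class is integral and Hodge, its non-algebraicity is literally a failure of the integral Hodge conjecture on $X_t$. (No reduction to the very general p.p.a.v.\ is required: we just need to exhibit one Hodge class not represented by algebraic cycles.)

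Third, suppose for contradiction that $[\Theta]^{g-1}/(g-1)!$ is represented by a $\Z$-linear combination of algebraic curves on $X_t$. Then $1$ is an odd (i.e.\ $2$-prime) multiple of the minimal class realized by algebraic curves, so Lemma \ref{lem:alg-min->l-prime-multiple-is-effective} (applied with $\ell=2$) produces a smooth projective curve $C_t\subset X_t$ with $[C_t]=m\cdot [\Theta]^{g-1}/(g-1)!$ for some odd $m$. Applying Theorem \ref{thm:matroidal-intro} to the inclusion $C_t\hookrightarrow X_t$, and using that $\uR$ is not cographic, we conclude that $m$ must be even, a contradiction. Hence the minimal class is not algebraic on $X_t$, completing the proof.

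There is essentially no hard step here: the work has all been done in Theorem \ref{thm:matroidal-intro}, Lemma \ref{lem:alg-min->l-prime-multiple-is-effective}, and the topological characterization of cographic matroids. The only mild subtlety is that Theorem \ref{thm:matroidal-intro} requires the map from the curve into $X_t$ to land in a \emph{very general} fibre, which is automatic since $X_t$ is very general by assumption. We should also note that in order to apply Lemma \ref{lem:alg-min->l-prime-multiple-is-effective}, we need $g\geq 3$; however the case $g\leq 2$ is vacuous for this corollary, since every regular matroid of rank $\leq 2$ is cographic (indeed graphic) and therefore $X^\star_{(\Delta^\star)^S}\to (\Delta^\star)^S$ deforms to a family of Jacobians, so that the hypothesis is never satisfied in these ranks.
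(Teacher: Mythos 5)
Your proof is correct and follows essentially the same route as the paper: translate the non‑deformability hypothesis into $\uR$ being non‑cographic via the monodromy/topological characterization, then combine Lemma \ref{lem:alg-min->l-prime-multiple-is-effective} (with $\ell=2$) and Theorem \ref{thm:matroidal-intro} to rule out algebraicity of the minimal curve class. Your extra remarks on the $g\geq 3$ hypothesis and the vacuity in low rank are correct and slightly more careful than the paper's one‑line deduction.
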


\begin{proof} 
By Theorem \ref{thm:matroidal-intro}, it suffices to show that $\underline R$ is cographic if and only if the restriction $ X^\star_{(\Delta^\star)^S}\to (\Delta^\star)^S$ to the punctured polydisc 
analytically deforms to a family of Jacobians of curves. 
If $\uR$ is cographic, then
$ X^\star_{(\Delta^\star)^S}\to (\Delta^\star)^S$ 
deforms to a family of Jacobians,
by \cite[Remark 2.31]{survey} 
and the fact that any cographic matroid can be realized via a matroidal family of Jacobians of curves, see e.g.~\cite[Example 4.13]{survey}.

Conversely, suppose 
$X^\star_{(\Delta^\star)^S}\to (\Delta^\star)^S$ 
deforms to a family of Jacobians.
The monodromy cone $\cC_\uR$ associated to $\uR$, 
see \cite[Definition 4.6]{survey}, is a cone
of the matroidal fan, see \cite[Remark 4.15]{survey}.
As this monodromy cone is unchanged under the deformation, we may assume that the image of
$(\Delta^\star)^S$ under the classifying map is contained in the Schottky
locus. 
The collection of cographic 
cones forms a subfan of the matroidal fan and defines a toroidal extension 
$\cA_g\hookrightarrow \cA_g^{\rm cogr}$.
As the closure of the 
Schottky locus in $\cA_g^{\rm cogr}$ is compact,
coinciding with the image of the Torelli map $\overline{\cM}_g\to \cA_g^{\rm cogr}$, $\cC_\uR$ is contained in the
union of all cographic cones. As
the matroidal fan is a fan (in particular,
the intersection of two cones is a face
of each), $\cC_\uR$ must
be a cographic cone. 
\end{proof}

\subsection{Application to IHC for abelian varieties} \label{subsec:applicationIHC}

\begin{proof}[Proof of Theorem \ref{thm:main:IHC:intro}]
Let $(X,\Theta)$ be a very general principally polarized abelian variety of dimension $g\geq 4$ and let $Z\subset X$ be an equidimensional closed subscheme of codimension $c$ with $2\leq c\leq g-1$.
Since $X$ is very general, the Mumford--Tate group of $X$ is maximal and so there is some non-negative integer $m$ with
 $$
[Z]=m\cdot [\Theta]^{c}/c!\in H^{2c}(X,\Z) .
$$ 
To prove the theorem, it suffices to show that $m$ is even.

Let us first assume that $g=4$ and $c=3$.
Let $(\underline R,S)$ be the graphic matroid $M(K_{5})$ with integral realization $S\to \Z^S/H_1(K_5,\Z)=U^\ast$, where $S$ denotes the set of edges of $K_5$, equipped with some orientation.
Then $S$ has $10$ elements, and ${\rm{rank}}(\underline R) =\dim U=4$.
By \cite[Proposition 4.10]{survey}, there is a matroidal family $\pi^\star\colon X^\star \to B^\star$ of principally polarized abelian fourfolds associated to $\underline R$, cf.\ Definition \ref{def:matroidal-family} and Remarks \ref{rem:matroidal-family:maximal-degeneration} and \ref{rem:matroidal-family:existence}.
Since $K_5$ is not planar, $M(K_5)$ is not cographic.
It thus follows from Theorem \ref{thm:matroidal-intro} that the very general fiber $X_t$ of $\pi^\star$ does not contain a curve whose cohomology class is an odd multiple of the minimal class.
This proves the case $g=4$ and $c=3$, because $X$ specializes to $X_t$.
(In fact, $X_t$ is a very general principally polarized abelian variety of dimension $4$, because $\dim B^\star=10$ and one can check that the moduli map $B^\star\to \mathcal A_4$ is dominant.)
The case $g=4$ and $c=2$ follows from the simple observation that $[\Theta]\cdot [\Theta]^2/2=[\Theta]^3/2$ is an odd multiple of the minimal curve class on $X$. 
This proves the theorem for $g=4$.

Let now $g\geq 5$ and let $(X,\Theta)$ be a very general principally polarized abelian variety of dimension $g$. 
We specialize $(X,\Theta)$ to a product $(Y,\Theta_Y)\times (E,\Theta_E)$ where $E$ is an elliptic curve and $(Y,\Theta_Y)$ is a very general principally polarized abelian variety of dimension $g-1$.
Let $p:Y\times E\to Y$ and $q:Y\times E\to E$ be the projections.
Let further $Z_0\subset Y\times E$ be the specialization of $Z\subset X$.
Then we find that
$$
[Z_0]=\frac{m}{c!}\cdot p^\ast [\Theta_Y]^c +\frac{m}{(c-1)!}\cdot p^\ast [\Theta_Y]^{c-1}\cup q^\ast [\Theta_E] \in H^{2c}(Y\times E,\Z) .
$$ 
Hence,
$$
p_\ast [Z_0]=m[\Theta_Y]^{c-1}/(c-1)!\in H^{2c-2}(Y,\Z)\quad \quad \text{and}\quad \quad \iota^\ast [Z_0]=m [\Theta_Y]^c/c!\in H^{2c}(Y,\Z),
$$
where $\iota \colon Y\hookrightarrow Y\times E$ denotes the inclusion.
Since $2\leq c\leq g-1$ and $(Y,\Theta_Y)$ is very general of dimension $g-1$, we conclude by induction that $m$ is even.
This concludes the proof of Theorem \ref{thm:main:IHC:intro}.
\end{proof}

\begin{proof}[Proof of Corollary \ref{cor:abelian-4-folds-IHC}]
Let $(X,\Theta)$ be a very general principally polarized abelian  
variety of dimension $g \in \set{4,5}$. 
By Theorem \ref{thm:main:IHC:intro}, it suffices to show that $2[\Theta]^c/c!$ is algebraic for $2 \leq c \leq g-1$.
This is clear for $c=2$ and follows for $c = g-1$ from the fact that $(X,\Theta)$ is a Prym variety.
It thus remains to deal with the case $g=5$ and $c=3$.
In this case, we use the Prym curve $C\subset X$ with class $[C]=2\cdot[\Theta]^4/4!$ and 
consider the sum map $C^{(2)}\to X$, whose image has, by a standard Pontryagin product computation, class 
$4 \cdot[\Theta^3]/3!
$.
Since $6\cdot [\Theta]^3/3!$ is algebraic as well, we find that $2\cdot [\Theta]^3/3!$ is algebraic. 
\end{proof}

\subsection{Application to cubic threefolds} \label{subsec:cubic-threefold-application}

\begin{proof}[Proof of Theorem \ref{thm:main:cubics:intro}]
Consider the Segre cubic threefold 
$$
Y_0\coloneq \left\{\sum_{i=0}^5x_i=\sum_{i=0}^5x_i^3=0 \right \}\subset \mathbb P^5,
$$
which is the unique cubic threefold with 10 nodes. 
Let $B$ be a smooth quasi-projective variety with an embedded polydisc $\Delta^{10}\subset B$ and an snc divisor $H\subset B$ that restricts to the coordinate hyperplanes on $\Delta^{10}$.
Choose $B$ such that there is a flat family of cubic threefolds $Y\to B$ which is smooth over $B^\star=B\setminus H$ and such that the fiber over $0$ is the Segre cubic $Y_0$ from above.
Assume moreover that $Y\to B$ has maximal variation in the sense that the restriction $Y_{\Delta^{10}}\to \Delta^{10}$ is a universal deformation of $Y_0$ (note that ${\rm Def}_{Y_0}=\Delta^{10}$ and such deformations $Y$
exist, for instance, over a suitable 
\'etale open neighborhood of $0\in \mathbb A^{10}$).
Let $\pi^\star \colon X^\star\coloneq JY^\star\to B^\star$ be the associated smooth projective family of intermediate Jacobians.

It follows from a result of Gwena \cite{gwena} that $\pi^\star$ is a matroidal family, associated to the matroid $\underline R_{10}$, see also \cite[Examples 2.16 and 4.7]{survey}.
Since $\underline R_{10}$ is not cographic, we conclude from Theorem \ref{thm:matroidal-intro} that the very general fiber $JY_t$ of $JY^\star\to (\Delta^\star)^{10}$ does not contain a curve whose cohomology class is an odd multiple of the minimal class.  
This concludes the proof of the theorem 
because $JY_t$ has Picard rank $1$.
\end{proof}

\begin{proof}[Proof of Corollary \ref{cor:cubic-intro}]
By Theorem \ref{thm:main:cubics:intro}, the very general cubic threefold $Y$ has the property that the minimal curve class of its intermediate Jacobian $JY$ is not algebraic.
This implies by \cite{voisin-universalCHgroup} that $Y$ does not admit a decomposition of the diagonal.
Hence, $Y$ is neither stably nor retract rational, nor $\bA^1$-connected, see for instance \cite{lange-Sch} and the references therein.
\end{proof}

\begin{proof}[Proof of Corollary \ref{cor:isogeny-product}]
Let $X$ and $Y$ be abelian varieties and let $f\colon X\times Y\to \prod_ iJC_i$ be an isogeny to a product of Jacobians of curves.
Assume that $f$ has odd degree.
Then $f_\ast \colon H_2(X\times Y,\Z_{(2)})\to H_2 (\prod_iJC_i,\Z_{(2)})$ induces an isomorphism.
By \cite{beckmann-degaayfortman}, the integral Hodge conjecture holds for curves classes on $\prod_iJC_i$.
Since $H_2(X,\Z_{(2)})$ admits a split embedding in $H_2(X\times Y,\Z_{(2)})$, it follows that any $\Z_{(2)}$-linear combination of Hodge classes in $H_2(X,\Z)$ is a $\Z_{(2)}$-linear combination of algebraic classes.
In the special case where $X$ is a $g$-dimensional principally polarized abelian variety with theta divisor $\Theta$, this implies that an odd multiple of $[\Theta]^{g-1}/(g-1)!$ is algebraic.
The corollary thus follows from Theorems \ref{thm:main:IHC:intro} and \ref{thm:main:cubics:intro}.
\end{proof}

\subsection{Discussions and open problems}

\begin{definition}
Let $\ell$ be a prime number.
We define $\mathcal M_\ell$ as the class of matroids consisting of all regular matroids $(\underline R,S)$ that admit a $\Z/\ell$-solution $(b_s)_{s\in S}$ in $\Alb_{\ell}(\underline R)$ with constant nonzero color profile $\lambda(b_s)=c\in (\Z/\ell)^\ast$ for all $s\in S$.
\end{definition}
 
\begin{remark}
By Proposition \ref{prop:closed-under-minors}, $\mathcal M_\ell$ is closed under taking minors.
\end{remark}

\begin{remark}
In the above definition, we could drop the condition on the constancy of the color profile and only ask that the color profile is not zero, i.e.~$\lambda_s(b_s)\neq 0\in \Z/\ell$ for some $s\in S$.
This gives rise to a class of matroids $\widetilde {\mathcal M_\ell}$, closed under taking minors by Proposition \ref{prop:closed-under-minors}, which satisfies $\mathcal M_\ell\subset \widetilde {\mathcal M}_\ell$.
The results of this paper show that in fact $\mathcal M_2=\widetilde {\mathcal M}_2$ is the class of cographic matroids, because both classes are minor-closed, contain the cographic matroids, but do not contain $M(K_5)$ and $M(K_{3,3})$. 
In general, it is easier to show that a matroid does not lie in $\mathcal M_\ell$ than it is to show that it does not lie in $\widetilde {\mathcal M}_\ell$. While we do not make use of this observation in the current paper, it will be important in the consecutive paper \cite{engel2025optimalityprymtyurinconstructionmathcala6}, where we show $M(K_7)\notin \mathcal M_3$ (while we do not know whether this matroid lies in $\widetilde{\mathcal M_3}$), cf.~Remark \ref{remark:constancy}.
\end{remark}

\begin{remark} \label{rem:class-C_l--l-geq-g}
By Lemma \ref{lem:Lambda-splitting-graph-versus-general} together with Theorems \ref{thm:d-QE->solutions} and \ref{thm:reduction-to-2-solution}, we see that any regular matroid $\underline R$ that admits a $\Z_{(\ell)}$-splitting of some level $d$ in a cographic matroid is contained in $\mathcal M_\ell$.
Theorem \ref{thm:splitting-in-cographic=cographic-intro-positive-result} thus implies that $\underline R\in \mathcal M_\ell$ for all $\ell\geq g$, where $g$ denotes the rank of $\underline R$.
\end{remark}

\begin{remark} \label{rem:K5,K33,R10-ell-odd} 
If $\ell$ is odd, then one can check that $M(K_5)$ and $M(K_{3,3})$ are contained in $\mathcal M_\ell$ and so is, for instance, $\underline R_{10}$. 
Indeed, by Theorem \ref{thm:splitting-in-cographic=cographic-intro} and Remark \ref{rem:class-C_l--l-geq-g}, 
it suffices to prove these claims for the prime $\ell=3$, which can be done  
via a similar computation as in Proposition \ref{prop:K_5-K_3,3} and \ref{prop:R10-even-sol}.
Alternatively, one can observe that the relevant matroidal families of principally polarized abelian varieties can be realized as Prym varieties and so the Prym curve yields a curve whose cohomology class is twice the minimal class.
Using this we can argue as in the proof of Theorem \ref{thm:splitting-in-cographic=cographic-intro} to conclude.
\end{remark}

We have just proven that $\mathcal M_2$ is the class of cographic matroids; it can be characterized by Tutte's theorem via two excluded minors: the graphic matroids associated to $K_5$ and $K_{3,3}$. 
The class $\mathcal M_\ell$  of regular matroids is by Proposition \ref{prop:closed-under-minors}  closed under taking minors. Hence, by the Robertson-Seymour theorem and its conjectural extension to regular matroids, $\mathcal M_\ell$ is expected to be determined by a finite list of excluded minors. 
This leads to the following natural open problem.

\begin{problem}
Let $\ell$ be an odd prime.
Characterize the subclass $\mathcal M_\ell$ of the class of regular matroids via a finite list of excluded minors.
\end{problem} 

\begin{definition} \label{def:distance}
Let $(\underline R,S)$ be a regular matroid.
The {\it radical distance} of $(\underline R,S)$ to the class of cographic matroids is the integer
$$
d(\underline R)\coloneq {\rm lcm}\{\ell \mid \underline R\notin \mathcal M_\ell\}
$$
\end{definition}

By Theorem \ref{thm:2-indivisible-solution=>cographic}, $d(\underline R)=1$ if and only if $\underline R$ is cographic.
By Remark \ref{rem:class-C_l--l-geq-g} (resp.\ Theorem \ref{thm:splitting-in-cographic=cographic-intro-positive-result}), $d(\underline R)$ is always finite and universally bounded by the product of all primes which are smaller than ${\rm rank}(\underline R)$.

Some of the main results of this paper may then be summarized as follows.

\begin{theorem} \label{thm:distance-1}
Let $(\underline R,S)$ be a regular matroid.
Let $\pi^\star \colon X^\star \to B^\star$ be a matroidal family of principally polarized abelian varieties associated to a  $(\underline R,S)$, see Definition \ref{def:matroidal-family} and Remark \ref{rem:matroidal-family:existence}.
Let $C_t\subset X_t$ be a curve on a very general fiber of $\pi^\star$ with
$$
[C_t]=m\cdot [\Theta]^{g-1}/(g-1)! \in H_2(X_t,\Z) .
$$
Then $m$ is divisible by $d(\underline R)$.
\end{theorem}
\begin{proof}
By the Chinese remainder theorem, it suffices to show $m\equiv 0\bmod \ell$ for all primes $\ell$ with $\underline R\not \in \mathcal M_\ell$.
For a contradiction, assume $\underline R \not \in \mathcal M_\ell$ and $m$ is coprime to $\ell$.
Then, Theorems \ref{thm:algebraic->d-QE}, \ref{thm:d-QE->solutions}, and \ref{thm:reduction-to-2-solution} show that $\underline R\in \mathcal M_\ell$, which contradicts our assumptions.
\end{proof}

\begin{proposition}
Let $K_{3,5}$ be the complete bipartite graph on $3+5=8$ vertices. 
Then the graphic matroid
$M(K_{3,5})$ is an excluded minor 
for the class $\cM_3$. In particular, 
for $g \geq 7$,
the minimal multiple of the minimal class which is algebraic on a very general
principally polarized abelian $g$-fold is at least $6$.
\end{proposition}
\begin{proof}
From the 
computations in \cite{code}, 
$M(K_{3,5})\notin \cM_3$ and any
$1$-element deletion or contraction
of $M(K_{3,5})$ lies in $\cM_3$. Thus
$M(K_{3,5})$ is an excluded minor for the class
$\cM_3$. Since $\rank M(K_{3,5})=7$, the proposition 
follows from Remark \ref{rem:matroidal-family:existence}, Lemma \ref{lem:alg-min->l-prime-multiple-is-effective}, Theorem \ref{thm:distance-1} and Proposition \ref{prop:closed-under-minors}; indeed $M(K_{3,5})$
is a minor of $M(K_{3,n})$ (of rank $2+n$),
for all $n\geq 5$. 
(Alternatively, one can use a specialization/induction argument, as in the proof of Theorem \ref{thm:main:IHC:intro}, to reduce to the case $g=7$.)
\end{proof}
 
\begin{remark} In a sequel to this paper, see \cite{engel2025optimalityprymtyurinconstructionmathcala6}, 
we prove the same
result for principally polarized abelian $6$-folds,
where one also has an upper bound of $6$ by \cite{adfio}. \end{remark}

We conclude with the following natural combinatorial problems.

\begin{problem}\label{problem:d(R)-unbounded}
Are there regular matroids $\underline R$ whose radical distance $d(\underline R)$ from the class of cographic matroids is arbitrarily large? 
\end{problem}

\begin{problem}
What is (the asymptotic behavior of) 
the function 
$$
g\mapsto d(g) \coloneqq {\rm lcm}\{d(\underline R)\mid \text{$\underline R$ is a regular matroid of rank $g$}\} ?
$$
\end{problem}

Note that Problem \ref{problem:d(R)-unbounded} asks precisely whether $d(g)$ is unbounded in $g$.
The interest in the function $d(g)$ lies in the following result:

\begin{theorem} \label{thm:distance-2}
The integer
$d(g)$ divides the minimal positive integer multiple of the minimal class which is algebraic on a very general principally polarized abelian variety of dimension $g$. 
\end{theorem}
\begin{proof}
Let $C\subset X$ be a curve on a very general principally polarized abelian variety $(X,\Theta)$ of dimension $g$.
Then $[C]=m\cdot [\Theta]^{g-1}/(g-1)!$ for some integer $m$ and it suffices to show that $m$ is divisible by $d(g)$. This
follows from Theorem \ref{thm:distance-1} together with the existence of matroidal families of $g$-dimensional principally polarized abelian varieties associated to any regular matroid $\underline R$ of rank $g$, see \cite[Proposition 4.10]{survey}.
\end{proof}

\printbibliography

\end{document}